\def\E{\ifmmode{\mathbb E}\else{$\mathbb E$}\fi} 
\def\N{\ifmmode{\mathbb N}\else{$\mathbb N$}\fi} 
\def\R{\ifmmode{\mathbb R}\else{$\mathbb R$}\fi} 
\def\Q{\ifmmode{\mathbb Q}\else{$\mathbb Q$}\fi} 
\def\C{\ifmmode{\mathbb C}\else{$\mathbb C$}\fi} 
\def\H{\ifmmode{\mathbb H}\else{$\mathbb H$}\fi} 
\def\Z{\ifmmode{\mathbb Z}\else{$\mathbb Z$}\fi} 
\def\P{\ifmmode{\mathbb P}\else{$\mathbb P$}\fi} 
\def\T{\ifmmode{\mathbb T}\else{$\mathbb T$}\fi} 
\def\SS{\ifmmode{\mathbb S}\else{$\mathbb S$}\fi} 
\def\DD{\ifmmode{\mathbb D}\else{$\mathbb D$}\fi} 
\newcommand{\del}{\partial}
\newcommand{\Cont}{{\operatorname{Cont}}}
\newcommand{\ben}{\begin{enumerate}}
\newcommand{\een}{\end{enumerate}}
\newcommand{\be}{\begin{equation}}
\newcommand{\ee}{\end{equation}}
\newcommand{\bea}{\begin{eqnarray}}
\newcommand{\eea}{\end{eqnarray}}
\newcommand{\beastar}{\begin{eqnarray*}}
\newcommand{\eeastar}{\end{eqnarray*}}
\newcommand{\bc}{\begin{center}}
\newcommand{\ec}{\end{center}}
\theoremstyle{theorem}
\newtheorem{thm}{Theorem}[section]
\newtheorem{cor}[thm]{Corollary}
\newtheorem{lem}[thm]{Lemma}
\newtheorem{prop}[thm]{Proposition}
\theoremstyle{definition}
\newtheorem{defn}[thm]{Definition}
\newtheorem{rem}[thm]{Remark}
\newtheorem{ques}[thm]{Question}
\newtheorem{situ}[thm]{Situation}
\newtheorem*{thm*}{thm}
\numberwithin{equation}{section}
\def\R{{\mathbb R}}
\def\osc{{\hbox{\rm osc }}}
\def\Crit{{\hbox{Crit}}}
\def\E{{\mathbb E}}
\def\Z{{\mathbb Z}}
\def\C{{\mathbb C}}
\def\R{{\mathbb R}}
\def\P{{\mathbb P}}
\def\N{{\mathbb N}}
\def\11{{\mathbb I}}
\def\delbar{{\overline \partial}}
\def\C{\mathbb{C}}
\def\Z{\mathbb{Z}}
\def\T{\mathbb{T}}
\def\Q{\mathbb{Q}}
\def\E{\ifmmode{\mathbb E}\else{$\mathbb E$}\fi} 
\def\N{\ifmmode{\mathbb N}\else{$\mathbb N$}\fi} 
\def\R{\ifmmode{\mathbb R}\else{$\mathbb R$}\fi} 
\def\Q{\ifmmode{\mathbb Q}\else{$\mathbb Q$}\fi} 
\def\C{\ifmmode{\mathbb C}\else{$\mathbb C$}\fi} 
\def\H{\ifmmode{\mathbb H}\else{$\mathbb H$}\fi} 
\def\Z{\ifmmode{\mathbb Z}\else{$\mathbb Z$}\fi} 
\def\P{\ifmmode{\mathbb P}\else{$\mathbb P$}\fi} 
\def\SS{\ifmmode{\mathbb S}\else{$\mathbb S$}\fi} 
\def\DD{\ifmmode{\mathbb D}\else{$\mathbb D$}\fi} 
\def\R{{\mathbb R}}
\def\osc{{\hbox{\rm osc}}}
\def\Crit{{\hbox{Crit}}}
\def\E{{\mathbb E}}
\def\Z{{\mathbb Z}}
\def\C{{\mathbb C}}
\def\R{{\mathbb R}}
\def\N{{\mathbb N}}
\def\JJ{{\mathcal J}}
\def\delbar{{\overline \partial}}
\def\CA{{\mathcal A}}
\def\CC{{\mathcal C}}
\def\CD{{\mathcal D}}
\def\CF{{\mathcal F}}
\def\CH{{\mathcal H}}
\def\CJ{{\mathcal J}}
\def\CL{{\mathcal L}}
\def\CM{{\mathcal M}}
\def\CP{{\mathcal P}}
\def\CP{{\mathcal P}}
\def\CT{{\mathcal T}}
\def\CW{{\mathcal W}}
\def\darr#1{\raise1.5ex\hbox{$\longleftrightarrow$}
\mkern-16.5mu #1}
\def\roughly#1{\raise.3ex\hbox{$#1$\kern-.75em
\lower1ex\hbox{$\sim$}}}
\def\opname#1{\mathop{\kern0pt{\rm #1}}\nolimits}
\def\Im{\opname{Im}}
\def\dim{\opname{dim}}
\def\rank{\opname{rank}}
\def\supp{\operatorname{supp}}
\def\Dev{\operatorname{Dev}}
\def\span{\operatorname{span}}
\def\Cont{\operatorname{Cont}}
\def\Crit{\operatorname{Crit}}
\def\Spec{\operatorname{Spec}}
\def\Sing{\operatorname{Sing}}
\def\GFQI{\frak{G}}
\def\Index{\operatorname{Index}}
\def\Image{\operatorname{Image}}
\begin{document}

\quad \vskip1.375truein

\def\mq{\mathfrak{q}}
\def\mp{\mathfrak{p}}
\def\mH{\mathfrak{H}}
\def\mh{\mathfrak{h}}
\def\ma{\mathfrak{a}}
\def\ms{\mathfrak{s}}
\def\mm{\mathfrak{m}}
\def\mn{\mathfrak{n}}
\def\mz{\mathfrak{z}}
\def\mw{\mathfrak{w}}
\def\Hoch{{\tt Hoch}}
\def\mt{\mathfrak{t}}
\def\ml{\mathfrak{l}}
\def\mT{\mathfrak{T}}
\def\mL{\mathfrak{L}}
\def\mg{\mathfrak{g}}
\def\md{\mathfrak{d}}
\def\mr{\mathfrak{r}}
\def\Cont{\operatorname{Cont}}
\def\Crit{\operatorname{Crit}}
\def\Spec{\operatorname{Spec}}
\def\Sing{\operatorname{Sing}}
\def\GFQI{\text{\rm GFQI}}
\def\Index{\operatorname{Index}}
\def\Cross{\operatorname{Cross}}
\def\Ham{\operatorname{Ham}}
\def\Fix{\operatorname{Fix}}
\def\Graph{\operatorname{Graph}}
\def\id{\text\rm{id}}
\def\HI{\operatorname{HI}}

\title[Contact instanton Floer cohomology]
{Legendrian contact instanton cohomology and its
spectral invariants on the one-jet bundle}
\author{Yong-Geun Oh, Seungook Yu}
\address{Center for Geometry and Physics, Institute for Basic Science (IBS),
77 Cheongam-ro, Nam-gu, Pohang-si, Gyeongsangbuk-do, Korea 790-784
\& POSTECH, Gyeongsangbuk-do, Korea}
\email{yongoh1@postech.ac.kr}
\address{POSTECH, \&
Center for Geometry and Physics, Institute for Basic Science (IBS),
77 Cheongam-ro, Nam-gu, Pohang-si, Gyeongsangbuk-do, Korea 790-784}
\email{yso1460@postech.ac.kr}
\thanks{This work is supported by the IBS project \# IBS-R003-D1}

\date{December 12, 2023}

\begin{abstract} In the present paper, we develop the Floer-style elliptic Morse theory
for the Hamiltonian-perturbed contact action functional attached to the Legendrian links.
Motivated by the present authors' construction \cite{oh-yso:weinstein} of the perturbed action functional defined on the Carnot path space introduced in \cite{oh-yso:weinstein} as the canonical generating function,
we apply a Floer-type theory to the aforementioned functional and associate the \emph{Legendrian contact instanton cohomology},
denote by $HI^*(J^1B,H;R)$, to each Legendrian submanifold
contact isotopic to the zero section of one-jet bundle. 
Then we give a Floer theoretic
construction of Legendrian spectral
invariants and establish their basic properties. This theory subsumes
the Lagrangian intersection theory and spectral invariants on the cotangent
bundle previously developed by the first-named author in \cite{oh:jdg,oh:cag},
and its extension to exact immersed Lagrangian submanifolds.
The main ingredient for the study is the interplay between
the geometric analysis of the Hamiltonian-perturbed contact instantons
and the calculus of contact Hamiltonian geometry.
\end{abstract}

\keywords{one-jet bundle, effective action functional,
Legendrian submanifolds,  perturbed contact instantons,
Legendrian contact instanton cohomology,
Legendrian spectral invariants, energy estimates}

\maketitle

\tableofcontents

\section{Introduction}

This is a sequel to the series \cite{oh:contacton-Legendrian-bdy}-\cite{oh:shelukhin-conjecture},
 which is also the first paper in another series of papers
in preparation in which we make a Floer theoretic construction of a system of Legendrian spectral invariants
for Legendrian links on tame contact manifolds and investigate their entanglement structure.
However,  after the necessary analytic foundation of
the equation are all established in \cite{oh:contacton-Legendrian-bdy}-\cite{oh:shelukhin-conjecture},
the present paper is self-contained,  the nature of which is largely dynamical and topological.
The heart of the matter of the present paper, besides the construction of Legendrian
contact instanton cohomology and its spectral invariants, lies in the interplay between the geometric analysis
 of perturbed contact instantons and
the calculus of contact Hamiltonian geometry.

This being said, the main purpose of the present paper is to carry out the contact counterpart  of the first named author's Floer theoretic construction of the spectral invariants on the one-jet bundle which we believe coincides with the Viterbo-type $\GFQI$ spectral invariants
used by Th\'eret in \cite{theret}, Bhupal \cite{bhupal} and Sandon \cite{sandon}.
(See Appendix \ref{sec:gfqi-spectral-invariants} for some quick review on the latter construction.)

\subsection{Contact Hamilton's equation and perturbed action functional}

The starting point of the work \cite{oh:jdg} was
Weinstein's ingenious observation that the classical action functional is a generating function of
the time-one image $\phi_H^1(o_{T^*B})$ of the zero section $T^*B$ under the Hamiltonian flow of 
$H = H(t,x)$.  Weinstein is motivated by Laudenbach-Sikorav's broken-trajectory approximation
of the action functional. We will call this observation \emph{Weinstein's de-approximation} of 
Laudenbach-Sikorav's construction of $\GFQI$. 

Similarly as in the symplectic case treated in \cite{oh:jdg},
the first step for our purpose in the present paper is to formulate the Legendrian counterpart of
Weinstein's de-approximation. More precisely, let
$$
\lambda = dz - pdq
$$
be the standard contact one-form on $J^1B$ and $\xi =: \ker \lambda$ the associated
contact structure. We then consider the
Legendrian submanifold $R = \psi_H^1(o_{J^1B})$ which is the time-one image of the contact flow $\psi_H^t$
associated to the time-dependent function $H = H(t,y)$ with $y = (x,z) \in J^1B$.
We denote by
$$
\pi_B: J^1B \to B,\quad \pi_{\text{\rm cot}}
: J^1B \to T^*B, \quad z: J^1B \to \R
$$
the obvious projections. Then we write
$$
\lambda = dz - \pi^*\theta
$$
where $\theta = pdq$ is the Liouville one-form on the cotangent bundle $T^*B$.

The contact Hamilton's equation $\dot y = X_H(t,y)$ can be split into
\be\label{eq:contact-Hamilton's-eq}
\begin{cases}
(\dot \gamma - X_H(t,\gamma(t))^\pi = 0 \\
\gamma^*\lambda + H(t,\gamma(t))\, dt = 0
\end{cases}
\ee
as done in \cite{oh:contacton-Legendrian-bdy}.
The following perturbed action functional is introduced in \cite{oh:contacton-Legendrian-bdy}, \cite{oh:entanglement1}
for the general Legendrian submanifold in any contact manifold which we now specialize to the case of $J^1B$
equipped with the canonical contact form $\lambda = dz - \pi^*\theta = dz - pdq$.

Let $H = H(t,y)$ be a contact Hamiltonian and a pair $(R_0,R_1)$ of Legendrian submanifolds of $J^1B$.
We denote by $\psi_H^t$ its flow and introduce the notation
\be\label{eq:phiHt}
\phi_H^t:= \psi_H^t (\psi_H^1)^{-1}
\ee
following the notation from \cite{oh:entanglement1}. While $\psi_H^t(y)$ is the $H$
contact Hamiltonian trajectory with \emph{initial condition $y$}, i.e., at $t = 0$,
$\psi_H^t (\psi_H^1)^{-1}(x)$ is the $H$ contact Hamiltonian trajectory with \emph{final condition $x$}, i.e., at $t = 1$.
(See the introduction of \cite{oh:cag} for a similar usage of systematic notations in the symplectic
context. However we warn readers not to get confused with
the standard notation $\phi_H^t$ for the symplectic Hamiltonian flow used above $\phi_H^1$.)

Motivated by the results from \cite{oh:entanglement1,oh:perturbed-contacton-bdy},
we consider the functional $\CA_H: \CL(R_0,R_1) \rightarrow \R$ defined by
\be\label{eq:perturbed-action}
\CA_H(\gamma) : =
\int_0^1 e^{g_{(\phi^t_H)^{-1}} (\gamma(t))} (\lambda(\dot \gamma (t)) + H_t (\gamma (t))) dt
\ee
where the function $g_\psi$ is the \emph{conformal exponent} of $\psi$ defined by
$\psi^*\lambda = e^{g_\psi}\lambda$.
However we need to make a couple of adjustments. First of all note that
$\CA_H(\gamma) = 0$ for any contact Hamiltonian trajectories. Secondly to be consistent with
the classical action functional on the cotangent bundle which is given by
$$
\CA_H^{\text{\rm cl}}(\gamma) = \int_\gamma p\, dq - H(t,\gamma(t))\, dt
$$
the following variation of $\CA_H$ turns out to be the right choice for our purpose,
after consideration of contact analog to Weinstein's observation in \cite{oh-yso:weinstein}.

\begin{defn}[Effective action functional] We define $\widetilde \CA_H: \CL(J^1B)\to \R$ to be
\bea\label{eq:tildeCAH-intro}
\widetilde{\mathcal{A}}_H(\gamma) & = & - \CA_H (\gamma) + z(\gamma(1)) \nonumber \\
& = & - \int_0^1 e^{g_{(\phi_H^t)^{-1}} (\gamma (t))} (\lambda_{\gamma(t)} (\dot \gamma (t)) + H(\gamma(t))) dt + z(\gamma(1)).
\eea
\end{defn}
Note that $\widetilde \CA_H(\gamma) = z(\gamma(1))$ \emph{on shell}, i.e.,
for any contact Hamiltonian trajectory.
(See Lemma \ref{lem:CAHu=CAw} for the relationship between
the perturbed action functional \eqref{eq:perturbed-action} and
the standard Reeb action functional in contact geometry.)

\subsection{Legendrian contact instanton cohomology}

From now on, we will  assume $B$ is a compact smooth manifold without boundary, 
unless mentioned otherwise.

We recall  the following standard definitions in contact geometry.
\begin{defn} Let $\lambda$ be a contact form of a contact manifold $(M, \xi)$ and $R \subset M$ a connected Legendrian submanifold. Denote by $\mathfrak{Reeb} (M,\lambda)$ (resp. $\mathfrak{Reeb} (M,R;\lambda)$) the set of closed Reeb orbits (resp. the set of self Reeb chords of $R$).
\begin{enumerate}
\item We define ${\rm Spec} (M,\lambda)$ to be the set
$$
{\rm Spec} (M,\lambda) = \bigg\{ \int_\gamma \lambda \mid \lambda \in \mathfrak{Reeb} (M,\lambda) \bigg\}
$$
and call the \emph{action spectrum} of $(M,\lambda)$.
\item We define the \emph{period gap} to be the constant given by
$$
T (M,\lambda) := \inf \bigg\{ \int_\gamma \lambda \mid \lambda \in \mathfrak{Reeb} (M,\lambda) \bigg\} > 0
$$
\end{enumerate}
We define ${\rm Spec} (M,R;\lambda)$ and the associated $T (M,\lambda;R)$ similarly using the set $\mathfrak{Reeb} (M,R;\lambda)$ of Reeb chords of $R$.
\end{defn}
We set $T (M,\lambda) = \infty$ (resp. $ T (M,\lambda;R) = \infty $) if there is no closed Reeb orbit (resp. no $R$-Reeb chord). Then we define
$$
T_\lambda (M;R) := \min \{ T (M,\lambda), T (M,\lambda;R) \}
$$
and call it the \emph{(chord) period gap} of $R$ in $M$.

Next we consider a two-component link of the type $(\psi_H^1(R_0),R))$.
The following form of the curves  
\be\label{eq:translated-Ham-chord-intro}
\gamma^\pm(t) = \phi^t_H\left(\overline{\gamma}^\pm_{T_\pm} (t)\right)
\ee
appear as the asymptotic limits of finite energy solutions
\eqref{eq:perturbed-contacton-bdy-intro} \cite{oh:entanglement1,oh:perturbed-contacton-bdy}
where
$$
(\overline{\gamma}^\pm, T_\pm) \in \mathfrak{Reeb}\left(\psi^1_H(R_0), R_1\right).
$$
The following definition is introduced by the first named author in \cite{oh:shelukhin-conjecture}.

\begin{defn}[Translated Hamiltonian chords] \label{defn:trans-Ham-chords-intro}
Let $(R_0,R_1)$ be a 2-component Legendrian link of $(M,\lambda)$.
We call a curve $\gamma$ of the form \eqref{eq:translated-Ham-chord-intro}
a \emph{translated Hamiltonian chord} from $R_0$ to $R_1$ with $\gamma(0)  \in R_0$. We denote by
$$
\mathfrak X^{\text{\rm trn}}((R_0,R_1);H)
$$
the set thereof.
\end{defn}
The set $\mathfrak X^{\text{\rm trn}}((R_0,R_1);H)$ plays the role of generators of the
Floer cohomology associated to \eqref{eq:perturbed-contacton-bdy-intro} below.

We form a $\frac{1}{2}\Z$-graded free $\Z_2$-module
$$
CI^*(H:B) = \Z_2 \{\mathfrak{X}(H;o_{J^1B},o_{J^1B})\}.
$$
Its boundary map is defined by counting the cardinality of the moduli space
$$
\CM (H,J; \gamma^-, \gamma^+) := \widetilde \CM (H,J; \gamma^-, \gamma^+)/\sim
$$
of finite energy perturbed contact instantons $u : \R \times [0,1] \rightarrow J^1B$ 
which are finite energy solutions of 
\be\label{eq:perturbed-contacton-bdy-intro}
\begin{cases}
(du - X_H \otimes dt)^{\pi,(0,1)} = 0, \quad d(e^{g_{H,u}}(u^*\lambda + H dt)\circ j) = 0 \\
u(\tau,0),\; u(\tau,1) \in o_{J^1B}\\
u(-\infty) = \gamma^-, \, u(+\infty) = \gamma^+
\end{cases}
\ee
for each $\gamma^-, \gamma^+ \in \mathfrak{X}(H;o_{J^1B},o_{J^1B})$
satisfying  $\mu(\gamma^+) - \mu(\gamma^-) = 1$.

For the study of the perturbed equation \eqref{eq:perturbed-contacton-bdy-intro},
we will always take the \emph{CI-bulk data} $(H,J)$ so that they are flat near $t = 0, \, 1$.  
Such a choice for $H = H(t,x)$ can be always assume WLOG by making the associated
contact isotopy constant near $t = 0, \, 1$ by reparameterizing the isotopy.  Such a 
reparameterization does not affect our main purpose of studying contact topology of 
Hamiltonian dynamics. Such a choice of $J$ can be always made again without affecting
the study of moduli spaces, especially the transversality study of the moduli spaces
of \eqref{eq:perturbed-contacton-bdy-intro}.
Since this choice will be much more important than in the symplectic case, we formalize 
the property by naming it.

\begin{defn}[Boundary flatness] We say a pair $(H,J)$ \emph{boundary flat} if 
we have
$$
(H_t,J_t) \equiv (H_0,J_0)
$$
in a neighborhood of ${0, \, 1} \subset [0,1]$.
\end{defn}

\begin{rem} \begin{enumerate}
\item As Floer did the elliptic Morse theory with the symplectic action functional in symplectic
geometry \cite{floer:Morse,floer:Ham}, we now do similar elliptic Morse theory with
the effective action functional $\widetilde \CA_H$.
\item The data such as $(H,J)$ in symplectic geometry is commonly called a \emph{Floer data}.
To avoid confusion of the current pair $(H,J)$ on contact manifolds, we call them a more
neutral name a \emph{CI-bulk data}, where `CI' stands for `contact instanton'.
\end{enumerate}
\end{rem}

The main interest of the present paper for the study of \eqref{eq:perturbed-contacton-bdy-intro}
is the case of  the pair
$$
(R_0,R_1) = (o_{J^1B}, o_{J^1B})
$$
where $o_{J^1B}$ is the zero section in the one-jet bundle.
\begin{lem}\label{lem:upshot}
The pair $(\lambda, o_{J^1B})$ is special in that it does not
carry any \emph{nonconstant}  Reeb chord $(\gamma,T)$, i.e., we have
$$
\mathfrak{Reeb}\left(\lambda; o_{J^1B}, o_{J^1B}\right) = \{\ell_q \mid q \in o_{J^1B} \} 
\cong B
$$
where $\ell_q:[0,1] \to J^1B$ is the constant path valued at $q \in B \cong o_{J^1B}$.
\end{lem}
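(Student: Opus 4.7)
The plan is to reduce the claim to a one-line computation of the Reeb vector field of $\lambda = dz - p\,dq$ on $J^1B$, followed by trivial integration. Work in canonical coordinates $(q, p, z)$ on $J^1B = T^*B \times \R$. Since $d\lambda = dq \wedge dp$, any vector field $V = a\,\partial_q + b\,\partial_p + c\,\partial_z$ satisfies
\[
\iota_V d\lambda = a\,dp - b\,dq, \qquad \lambda(V) = c - p\cdot a.
\]
Imposing the defining conditions $\iota_{R_\lambda} d\lambda = 0$ and $\lambda(R_\lambda) = 1$ forces $a = b = 0$ and then $c = 1$, so $R_\lambda = \partial_z$ globally on $J^1B$.

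Next I would translate the Reeb-chord condition into this coordinate system. A Reeb chord of $(\lambda; o_{J^1B}, o_{J^1B})$ of period $T \geq 0$ is a curve $\gamma = (q, p, z) \colon [0, T] \to J^1B$ with $\dot \gamma = R_\lambda \circ \gamma$ and $\gamma(0),\, \gamma(T) \in o_{J^1B}$. The first condition reads $\dot q \equiv 0$, $\dot p \equiv 0$, $\dot z \equiv 1$, hence $q(t) \equiv q_0$, $p(t) \equiv p_0$, and $z(t) = z_0 + t$. Since $o_{J^1B} = \{p = 0,\ z = 0\}$, the initial condition yields $p_0 = 0$ and $z_0 = 0$, and the final condition yields $z(T) = T = 0$. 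Thus every Reeb chord must be constant, determined by its base point $q_0 \in B \cong o_{J^1B}$; trivially reparameterizing on $[0,1]$ exhibits it as $\ell_{q_0}$, and we obtain the claimed bijection $\mathfrak{Reeb}(\lambda; o_{J^1B}, o_{J^1B}) \cong B$.

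There is essentially no obstacle; the content of the lemma is just that the Reeb flow of $\lambda = dz - p\,dq$ translates $o_{J^1B}$ vertically off itself in the $z$-direction, so no positive-time Reeb chord can return to the zero section. The one thing worth flagging is that the bijection with $B$ uses the canonical identification $o_{J^1B} \cong B$ induced by the projection $\pi_B$, so the displayed parameter set $\{\ell_q \mid q \in o_{J^1B}\}$ and the target $B$ agree without further choice.
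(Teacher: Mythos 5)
Your proof is correct, and it is the standard (indeed essentially the only) argument: compute $R_\lambda = \partial_z$ directly from $\lambda = dz - p\,dq$, observe that the Reeb flow shifts the $z$-coordinate at unit speed, and note that the boundary conditions $z(0) = 0$ and $z(T) = 0$ with $z(T) = z(0) + T$ force $T = 0$. The paper treats this lemma as immediate and does not spell out a proof, but your computation is exactly what is intended (and works for $T$ of either sign, not just $T \geq 0$, should that ever be needed).
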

For the pair, we define the matrix element
\be\label{eq:nHJsigma}
n_{(H,J)} (\gamma^- , \gamma^+ ) := \#_{\Z_2}\left(\CM (\gamma^-, \gamma^+)\right)
\ee
for such a pair $(\gamma^-, \gamma^+)$, and a homomorphism
$$
\delta_{(H,J)} : CI^*(H,J: B) \rightarrow CI^*(H,J: B)
$$
given by
$$
\delta_{(H,J)} ( \gamma^+ ) = \sum_\beta n_{(H,J)} (\gamma^- , \gamma^+) \, \gamma^-.
$$
\begin{rem}[Vanishing of curvature $\mathfrak m_0$]\label{rem:varying-triad}
 The property stated in Lemma \ref{lem:upshot} is \emph{not} preserved under the contact diffeomorphism
for a fixed contact form , i.e.,  the pair $(\lambda; \psi(o_{J^1B}))$ may admit a nonconstant
self Reeb chord. To utilize the property of the zero section, one needs to vary
both arguments of $(\lambda,R_0)$ to $(\psi_*\lambda, \psi(R_0))$ for $R_0 = o_{J^1B}$.
Furthermore to make the Floer-type theory work with the least obstruction, we actually vary the
whole triad $(M,\lambda, J)$ to $(M, \psi_*\lambda,\psi_*J)$. This practice was made 
in our previous article \cite{oh:entanglement1} and will be employed here too.
The upshot of this practice is that it will remove the curvature term 
$\mathfrak m_0$ which plagues the Floer-type theory for the (relative) contact homology
in the literature. This has been one of the major obstructions to defining Floer-type
homology via the machinery of pseudoholomorphic curves on symplectization.
\end{rem}

The following theorem from \cite{oh:entanglement1} is where and how the 
boundary-flatness requirement for the bulk-data $(H,J)$ is utilized in 
\cite{oh:entanglement1,oh:shelukhin-conjecture}.

\begin{thm}[Theorem 1.3 \cite{oh:entanglement1}] \label{thm:bdymap-intro}
Suppose $(M,\xi)$ is tame and $R \subset M$ is a compact Legendrian submanifold. Let $\lambda$ be a tame contact form such that
\begin{itemize}
\item $\psi = \psi^1_H$ and $ \| H \| < T_\lambda (M,R) $.
\item the pair $(\psi(R), R)$ is transversal in the sense that $\psi (R) \pitchfork Z_R$.
\end{itemize}
Let $J$ be a $\lambda$-adapted almost complex structure. Then
$$
\delta_{(H,J)} \circ \delta_{(H,J)} = 0.
$$
Furthermore for two different choices of such $J$ or of $H$, the complex are chain-homotopic to each other.
\end{thm}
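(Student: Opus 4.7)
The plan is to carry out the standard elliptic Floer-theoretic pattern---Fredholm theory, transversality, compactness, gluing, continuation---specialized to the boundary-flat perturbed contact instanton equation \eqref{eq:perturbed-contacton-bdy-intro}. The Fredholm setup goes as usual: linearizing \eqref{eq:perturbed-contacton-bdy-intro} at $u \in \widetilde{\CM}(H,J;\gamma^-,\gamma^+)$ on appropriate weighted Sobolev spaces produces a Fredholm operator whose index equals $\mu(\gamma^+)-\mu(\gamma^-)$, with the asymptotic operators at $\gamma^\pm$ hyperbolic precisely because of the transversality hypothesis $\psi(R)\pitchfork Z_R$. A Sard--Smale argument on the universal moduli space, restricted to the class of boundary-flat CI-bulk data, then produces a residual set of regular pairs $(H,J)$ for which each $\CM(\gamma^-,\gamma^+)$ is a smooth manifold of the expected dimension; in particular the counts $n_{(H,J)}(\gamma^-,\gamma^+)$ of \eqref{eq:nHJsigma} are finite mod $2$ whenever the index difference is $1$.

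The heart of the proof is the compactness of $\CM(\gamma^-,\gamma^+)$ when $\mu(\gamma^+)-\mu(\gamma^-)=2$, up to once-broken trajectories through an intermediate generator. The a priori $L^2$ energy identity for perturbed contact instantons, established in the previous papers of the series, bounds the $\pi$-energy by the action difference $\widetilde{\CA}_H(\gamma^+)-\widetilde{\CA}_H(\gamma^-)$; the quantitative hypothesis $\|H\| < T_\lambda(M,R)$ then keeps this energy strictly below the chord period gap. This strict inequality is exactly what is needed to rule out the three mechanisms that could spoil the naive boundary count: Reeb-chord breaking at the Legendrian boundary, closed Reeb-orbit bubbling in the interior, and the curvature contribution $\mathfrak{m}_0$ coming from nontrivial self Reeb chords of the zero section. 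For the last of these, the varying-triad practice of Remark \ref{rem:varying-triad} replaces the pair $(\lambda,o_{J^1B})$ by $((\psi^1_H)_*\lambda,\psi^1_H(o_{J^1B}))$, so that by Lemma \ref{lem:upshot} the only self Reeb chords of $o_{J^1B}$ in play are the constant ones and $\mathfrak{m}_0$ simply vanishes; tameness of $(M,\xi)$ prevents escape to infinity. Once compactness up to breaking is in hand, the standard gluing theorem for once-broken perturbed contact instantons identifies broken trajectories with the boundary components of the compactified $1$-dimensional moduli space, and the $\Z_2$-count of boundary points of a compact $1$-manifold yields
\begin{equation*}
0 \;=\; \sum_{\gamma^0} n_{(H,J)}(\gamma^-,\gamma^0)\, n_{(H,J)}(\gamma^0,\gamma^+),
\end{equation*}
which is the $(\gamma^-,\gamma^+)$ matrix entry of $\delta_{(H,J)}^2$.

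For chain-homotopy invariance under two boundary-flat choices $(H_\alpha,J_\alpha)$, $\alpha=0,1$, I would choose a smooth boundary-flat homotopy $s\mapsto (H^s,J^s)$ on $\R$ that is $s$-independent outside a compact interval and equals $(H_0,J_0)$, $(H_1,J_1)$ at $s=-\infty,+\infty$ respectively, arranged so that $\|H^s\|<T_\lambda(M,R)$ holds uniformly in $s$; counting index-zero solutions of the corresponding $s$-dependent equation defines a continuation chain map, and a homotopy of homotopies then produces the chain-homotopy operator $K$ in the usual way. Throughout, the hardest step is the compactness/no-bubbling analysis of the previous paragraph: it is exactly there that the interplay between the a priori energy estimates for \eqref{eq:perturbed-contacton-bdy-intro} and the contact Hamiltonian calculus---the very interplay advertised in the introduction---is indispensable, replacing the simple closedness arguments of the symplectic case and, together with the varying-triad device, delivering a well-defined boundary operator with $\delta_{(H,J)}^2=0$.
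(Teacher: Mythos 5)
The paper does not prove this statement; it is imported verbatim from \cite{oh:entanglement1} (stated there as Theorem 1.3), so there is no in-paper proof to compare against. Your reconstruction follows the standard Floer pattern and correctly identifies the decisive quantitative mechanism: the hypothesis $\|H\| < T_\lambda(M,R)$ caps the $\pi$-energy below the chord period gap, which is what rules out Reeb-chord/orbit breaking and the $\mathfrak m_0$ obstruction in the Gromov--Floer--Hofer compactification, while tameness supplies the $C^0$ bound.

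Two points where your sketch drifts, however. First, you invoke Lemma \ref{lem:upshot} and the varying-triad device to conclude that ``the only self Reeb chords of $o_{J^1B}$ in play are the constant ones and $\mathfrak m_0$ simply vanishes.'' That lemma is specific to the zero section of $J^1B$; the theorem you are proving is stated for a general compact Legendrian $R$ in a general tame $(M,\xi)$, where $R$ may well carry nonconstant self-Reeb chords. In the general setting the varying-triad trick only guarantees that the $\mathfrak m_0$-type disks are unperturbed contact instantons with boundary on a fixed $R$, and it is then the strict inequality $E^\pi < T_\lambda(M,R)$---not the absence of chords---that kills them. Conflating the two obscures what is actually doing the work. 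Second, your compactness discussion invokes only the $\pi$-energy identity; a bounded $\pi$-energy alone is not sufficient for Gromov-type compactness of contact instantons. One also needs the vertical ($\lambda$-)energy bound and the asymptotic charge-vanishing result (Theorem \ref{thm:charge-vanishing}) to obtain uniform $C^1$ bounds and well-behaved asymptotics; without these, the bubbling-off analysis and the identification of the compactified boundary with broken trajectories does not go through. These are precisely the contact-instanton-specific ingredients that distinguish this theorem from its symplectic analogue, and a complete proof needs to make them explicit rather than folding them into ``the a priori $L^2$ energy identity.''
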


By specializing to the triad $(J^1B,\lambda,o_{J^1B})$, this enables us to perform the Legendrian analogue 
of Floer's construction
of Lagrangian intersection cohomology \cite{floer:Morse} (for the exact case).

\begin{thm}[Legendrian contact instanton cohomology on $J^1B$]
 Let $H$ be nondegenerate. Suppose $(H,J)$ is boundary flat and Fredholm-regular. 
 Then following hold:
\begin{enumerate}
\item $\delta_{(H,J)} \circ \delta_{(H,J)} = 0$. We define the homology of $(CI^*(H,J;B), \delta_{(H,J)})$
$$
HI^* (H,J;B)=\text{\rm Ker }\,  \delta_{(H,J)} / \text{\rm Im }\, \delta_{(H,J)}
$$
and call it the \emph{(perturbed) contact instanton Floer cohomology} of $(H,J)$ on $B$.
\item The isomorphism type of the graded module $HI^* (H,J;B)$ does not depend on the choice of regular $(H,J)$'s.
\item For each given nondegenerate Hamiltonian $H$, there exists a canonical PSS-type isomorphism
$$
h_H^{\text{\rm PSS}}: H^*(B) \to HI^*(H,J;B).
$$
\end{enumerate}
\end{thm}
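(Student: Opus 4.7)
The plan is to deduce (1) from the general closure theorem stated as Theorem \ref{thm:bdymap-intro}, obtain (2) by a standard continuation plus homotopy-of-homotopies argument, and for (3) transplant the classical PSS spike construction into the contact-instanton setting, exploiting the rigidity of $(\lambda, o_{J^1B})$ recorded in Lemma \ref{lem:upshot}.

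\textbf{Part (1).} I would verify the hypotheses of Theorem \ref{thm:bdymap-intro} for the triad $(J^1B, \lambda, o_{J^1B})$. The Reeb vector field of $\lambda = dz - p\,dq$ is $\partial_z$, which carries no closed orbit, and by Lemma \ref{lem:upshot} the only self-Reeb chords of $o_{J^1B}$ are constants; hence $T_\lambda(J^1B; o_{J^1B}) = \infty$ and the smallness bound $\|H\| < T_\lambda$ is automatic. Tameness of $J^1B$ as the contactization of the compact-base cotangent bundle $T^*B$ is standard. After the triad-adjustment $(M,\lambda,J) \leadsto (M, (\psi_H^1)_*\lambda, (\psi_H^1)_*J)$ described in Remark \ref{rem:varying-triad}, nondegeneracy of $H$ produces the required transversality $\psi_H^1(o_{J^1B}) \pitchfork Z_{o_{J^1B}}$, boundary-flatness ensures the analytic compatibility near $t\in\{0,1\}$, and Fredholm regularity of $J$ cuts out clean $0$- and $1$-dimensional moduli spaces. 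Theorem \ref{thm:bdymap-intro} then gives $\delta_{(H,J)} \circ \delta_{(H,J)} = 0$, and $HI^*(H,J;B) = \ker \delta_{(H,J)}/\operatorname{Im} \delta_{(H,J)}$ is well-defined.

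\textbf{Part (2).} For two Fredholm-regular bulk data $(H_0,J_0)$ and $(H_1,J_1)$ I form a boundary-flat homotopy $(H_s,J_s)_{s\in\R}$, stationary outside a compact $s$-interval, and count rigid finite-energy solutions of the associated $s$-dependent perturbed contacton equation to define a continuation chain map. A homotopy-of-homotopies combined with the chain-homotopy clause of Theorem \ref{thm:bdymap-intro} shows that the composition in either direction is chain-homotopic to the identity, so the induced map on cohomology is an isomorphism. The required $C^0$ and energy bounds for parameterized moduli spaces on the non-compact target $J^1B$ are supplied by the estimates of \cite{oh:contacton-Legendrian-bdy}--\cite{oh:shelukhin-conjecture}.

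\textbf{Part (3).} For the PSS isomorphism I would fix a Morse-Smale pair $(f,g)$ on $B$ and a smooth monotone cutoff $\rho:\R\to [0,1]$ with $\rho(\tau)=0$ for $\tau\ll 0$ and $\rho(\tau)=1$ for $\tau\gg 0$, and define
\[
h_H^{\text{\rm PSS}} : CM^*(f) \longrightarrow CI^*(H,J;B)
\]
by counting rigid hybrid pairs $(\sigma,u)$, where $\sigma:(-\infty,0]\to B$ is a $-\nabla f$ flowline emanating from a critical point $p\in \Crit(f)$ and $u:\R\times[0,1]\to J^1B$ is a finite-energy solution of the $\tau$-dependent version of \eqref{eq:perturbed-contacton-bdy-intro} with Hamiltonian $\rho(\tau)H$ and boundary on $o_{J^1B}$, subject to the matching conditions $u(-\infty)=\ell_{\sigma(0)}$ and $u(+\infty)=\gamma \in \mathfrak{X}(H;o_{J^1B},o_{J^1B})$. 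The left matching is well-posed because $\rho\equiv 0$ for $\tau\ll 0$ and by Lemma \ref{lem:upshot} every finite-energy left asymptote must then be a constant path in $o_{J^1B}\cong B$. The standard analysis of the codimension-one boundary of the $1$-dimensional hybrid moduli establishes the chain-map property; a symmetric right-to-left construction produces a candidate quasi-inverse, and a deformation of $\rho$ identifies both compositions with the identity in cohomology, giving the PSS isomorphism.

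\textbf{Main obstacle.} The heavy analytic groundwork, namely subharmonic-type estimates on the $dz$-component, removal of singularities, strip-shrinking, and Fredholm theory for the perturbed contacton equation, is already in place from the earlier papers of the series. The principal remaining obstacle is uniform $C^0$-control of the hybrid moduli spaces in (3) on the non-compact $J^1B$, together with the transversality analysis at the mixed Morse-contacton matching node. I expect to handle both by combining the $dz$-component estimate of \cite{oh:contacton-Legendrian-bdy} with the conformal rescaling $e^{g_{(\phi_H^t)^{-1}}}$ built into the effective action functional $\widetilde\CA_H$, so that the energy identity for perturbed contactons controls both $u^*\lambda$ and $u^*dz$ uniformly along the $\tau$-family of equations.
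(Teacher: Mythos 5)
Your Parts (1) and (2) track the paper's argument: verify $T_\lambda(J^1B;o_{J^1B}) = \infty$ via Lemma \ref{lem:upshot} so that the smallness hypothesis of Theorem \ref{thm:bdymap-intro} is vacuous, invoke that theorem for $\delta_{(H,J)}^2 = 0$, and build continuation maps along elongated homotopies for independence. These two parts are correct and essentially what the paper does (the chain maps $h_{\beta\alpha;\{J^s\}}$ and $h_{\beta\alpha;\{H^s\}}$ of Section 9).

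For Part (3), however, you take a genuinely different route. You propose the literal PSS spiked-disk construction: counting pairs of a negative-gradient Morse flowline in $B$ concatenated with a contacton for the elongated Hamiltonian $\rho(\tau)H$, matched at $\tau\to -\infty$ using the constancy of asymptotes from Lemma \ref{lem:upshot}. The paper does not do this. It first establishes the isomorphism $H^*(B;\Z_2)\cong HI^*(F,J;B)$ in the special case $F = f\circ\pi_{J^1B}$ for a Morse function $f$ on $B$, citing earlier work (\cite{oh:entanglement1}, \cite{oh:contacton-gluing}) together with Lemma \ref{lem:Reeb-chords-f} (the translated Hamiltonian chords are exactly the constant paths at $\mathrm{Crit}(f)$) and Theorem \ref{thm:grading}(2) (which computes $\mu(\gamma_b) = \frac{n}{2} - \mu_f(b)$). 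The general case is then obtained by composing with the continuation isomorphism from Part (2). So the paper's $h_H^{\text{\rm PSS}}$ is a composition \emph{Morse isomorphism $\circ$ continuation}, not a count of spiked disks. A notable feature of the paper's route is that $F = f\circ\pi_{J^1B}$ generates a \emph{strict} contact isotopy, so the conformal exponent vanishes and the analysis for the base-point case stays inside what the earlier papers already prove. Your spiked-disk approach is plausible and is the natural contact analog of the classical PSS picture, but it requires substantial new analysis that the paper's route sidesteps: transversality and gluing at the mixed Morse–contacton node and uniform $C^0$/energy control for the $\tau$-dependent equation with $\rho(\tau)H$, including the $\tau$-varying conformal factor $e^{g_{(\phi_{\rho(\tau)H}^t)^{-1}}}$. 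You correctly flag these as the principal obstacles; as a complete proof of Part (3) these would all need to be carried out, whereas the paper's version does not require them.
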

An immediate corollary is the following Arnold-type intersection result
for the zero section of $J^1B$, which is also a consequence of
the existence of Legendrian version of $\GFQI$
\cite{viterbo,theret,sandon}.

For any given subset $S \subset M$, we consider the following union
\be\label{eq:S-trace}
Z_S: = \bigcup_{t \in \R} \phi_{R_\lambda}^t(S)
\ee
which is called the \emph{Reeb  trace} of a subset $S\subset M$ in \cite{oh:entanglement1}.
\begin{cor}
For any ambient contact isotopy $\psi^t$ of the zero section $o_{J^1B}$,
we have
$$
\#\left(\psi^1(o_{J^1B}) \cap Z_{o_{J^1B}}\right) \geq \rank H^*(B,\Z_2).
$$
\end{cor}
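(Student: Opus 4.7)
The plan is to read the lower bound directly off the Legendrian contact instanton cohomology constructed in the theorem above. The generators of the chain complex $CI^*(H,J;B)$ are in natural bijection with the intersection points $\psi^1(o_{J^1B}) \cap Z_{o_{J^1B}}$, while the PSS isomorphism identifies its cohomology with $H^*(B;\Z_2)$; the Morse--Floer rank inequality $\rank HI^* \le \rank CI^*$ then yields the corollary.

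Let $H = H(t,y)$ be a contact Hamiltonian generating $\psi^t$, so that $\psi^1 = \psi_H^1$. First I would carry out a transversality reduction: by a $C^\infty$-small perturbation of $H$, boundary-flattened and suitably reparameterized, one arranges that $H$ is nondegenerate, that $(H,J)$ is boundary flat and Fredholm regular, and that $\psi_H^1(o_{J^1B}) \pitchfork Z_{o_{J^1B}}$. In this transversal case the intersection is a finite discrete subset of the compact Legendrian $\psi_H^1(o_{J^1B})$. If the original isotopy fails to be transversal, either the intersection is non-discrete and hence infinite (so the bound is vacuous) or one applies the standard semicontinuity of isolated intersection cardinalities to pass from the perturbed configuration back to the original.

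Next I would identify intersection points with generators of $CI^*(H,J;B)$. Since $\lambda = dz - p\,dq$, the Reeb vector field on $J^1B$ is $R_\lambda = \partial_z$, so that $Z_{o_{J^1B}} = \{(q,0,z) : q \in B,\ z \in \R\}$. For each point $y = (q,0,z_0) \in \psi_H^1(o_{J^1B}) \cap Z_{o_{J^1B}}$, the vertical segment $\bar\gamma(t) = y + t\,\partial_z$ with $t$ running from $0$ to $-z_0$ is the unique (signed) Reeb chord in $\mathfrak{Reeb}(\psi_H^1(o_{J^1B}), o_{J^1B})$ emanating from $y$, since landing on $o_{J^1B}$ forces $p = 0$ and $z = 0$. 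By Definition \ref{defn:trans-Ham-chords-intro}, the curve $\gamma(t) = \phi_H^t(\bar\gamma_T(t))$ is the corresponding translated Hamiltonian chord in $\mathfrak{X}^{\text{\rm trn}}((o_{J^1B},o_{J^1B});H)$, and these translated chords are precisely the generators of $CI^*(H,J;B)$. Hence
\[
\rank_{\Z_2} CI^*(H,J;B) \;=\; \#\bigl(\psi_H^1(o_{J^1B}) \cap Z_{o_{J^1B}}\bigr).
\]

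Combining the general rank inequality $\rank HI^*(H,J;B) \le \rank CI^*(H,J;B)$ with the PSS isomorphism $HI^*(H,J;B) \cong H^*(B;\Z_2)$ from the theorem above yields the stated bound. The main obstacle I anticipate is the transversality/reduction step itself: one must verify that generic perturbations of $H$ simultaneously achieve nondegeneracy of the translated Hamiltonian chords, transversality of $\psi_H^1(o_{J^1B})$ with $Z_{o_{J^1B}}$, boundary flatness, and Fredholm regularity, and that the cardinality estimate survives the passage back to the original $\psi$. The substantive Floer-theoretic work has already been packaged into the preceding theorem, so the remainder of the argument is essentially formal.
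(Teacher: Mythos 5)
Your core argument is correct and is the paper's intended immediate proof in the transversal case: by Lemma~\ref{prop:trnhamchords-trnReebchords}, the points of $\psi_H^1(o_{J^1B}) \cap Z_{o_{J^1B}}$ are in bijection with the translated Hamiltonian chords generating $CI^*(H,J;B)$, and the rank inequality $\rank HI^*\le\rank CI^*$ together with the PSS isomorphism $HI^*(H,J;B)\cong H^*(B;\Z_2)$ gives the bound.

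The gap lies in your reduction from the degenerate-but-discrete case to the transversal one. ``Semicontinuity of isolated intersection cardinalities'' goes the wrong way for your purpose: a degenerate isolated intersection point can split into several transverse points under a $C^\infty$-small perturbation, so the perturbed count can strictly exceed the original one, and knowing the bound for every nearby transversal $H'$ does not give it for the original $\psi$. For a concrete obstruction, take $B=\mathbb{T}^2$ and $F=f\circ\pi_{J^1B}$ where $f$ has exactly one maximum, one minimum and one monkey saddle (consistent with Poincar\'e--Hopf since $\chi(\mathbb{T}^2)=0$); then $\psi_F^1(o_{J^1B})=j^1f$ meets $Z_{o_{J^1B}}$ in exactly $3$ isolated points while $\rank H^*(\mathbb{T}^2;\Z_2)=4$, and every nearby transversal deformation has at least $4$ intersection points. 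So the Betti-number bound genuinely fails when transversality is dropped, and the corollary must be read with the transversality hypothesis $\psi^1(o_{J^1B})\pitchfork Z_{o_{J^1B}}$ implicit (matching the nondegeneracy hypothesis of the preceding theorem and the convention in the $\GFQI$ literature); in the degenerate-but-discrete case only the Lusternik--Schnirelman cuplength bound survives, which is a weaker statement than the one you aimed to prove.
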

We have do doubt that this result holds over the integer coefficients
after the full study of orientations of the moduli space of
perturbed contact instantons which is however postponed elsewhere.

We occasionally write the equation \eqref{eq:perturbed-contacton-bdy-intro} in the following simple suggestive form
as done in \cite{oh:perturbed-contacton-bdy}:
\be\label{eq:perturbed-contacton-intro}
\overline{\del}^\pi_H u = 0, \quad d(e^{g_{H,u}}(u^* \lambda_H \circ j)) = 0
\ee
where we will use the following notations
\beastar
\overline{\del}^\pi_H u & : = & (du - X_H(t,u) \otimes dt)^{\pi(0,1)}\\
u^*\lambda_H & : = & u^*\lambda + u^*H_t\, dt\\
g_{H,u} & :=  & g_{(\phi^t_H)^{-1}}(u).
\eeastar

\subsection{Legendrian spectral invariants via perturbed contact instantons}

Utilizing these background geometric preparation and analytic foundation,
we carry out the Floer theoretic construction of Legendrian spectral invariants,
which satisfy the following properties which are contact counterparts of those proved in
 \cite{oh:jdg,oh:cag}. For this purpose, we introduce the following definitions
\beastar
E^+(H) & := &  \int^{1}_{0} \max_{y} H_t(y)\, dt, \quad
E^-(H) := \int^1_0 - \min_y  H_t(y)\,  dt \nonumber\\
\|H\| &: = & E^+(H) + E^-(H) = \int_0^1 \left(\max_y H_t(y) - \min_y H_t(y)\right)\, dt
\eeastar
similarly as in the symplectic geometry.

\begin{thm}[Theorem \ref{thm:spectral-invariants}]
\label{thm:spectral-invariants-intro} Assume that $B$ is a closed manifold. Let
$H = H(t,y)$ be a contact Hamiltonian and denote $R = \psi_H^1(o_{J^1B})$.
The map $(H;a) \mapsto \rho(H;a)$ for $a \in H^*(B)$ satisfies the following:
\begin{enumerate}
\item (Spectrality)  $\rho(H;a) \in \Spec(o_{J^1B})$ for all $a \neq H^*(B)$.
\item (Hofer continuity)  Let $H, \, H': E \to \R$ be two Hamiltonians. Then
\be\label{eq:Hofer-continuity}
\int_0^1 \min(H_t - H'_t)\, dt \leq \rho(H;a) - \rho(H';a) \leq \int_0^1 \max(H_t - H'_t)\, dt
\ee
\item $\rho(0;a) = 0$ for all $a \in H^*(B)$.
\end{enumerate}
\end{thm}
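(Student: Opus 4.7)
The plan is to follow the min-max recipe from \cite{oh:jdg,oh:cag} in the present contact setting. For a nondegenerate Hamiltonian $H$ and a boundary-flat, Fredholm-regular CI-bulk datum $(H,J)$, define a level function on chains by
$$
\lambda_H(\alpha) := \max\bigl\{\widetilde{\CA}_H(\gamma) \,\big|\, \gamma \in \mathfrak{X}^{\text{\rm trn}}\bigl((o_{J^1B},o_{J^1B});H\bigr),\ n_\gamma \neq 0\bigr\}
$$
for $\alpha = \sum n_\gamma \gamma \in CI^*(H,J;B)$, and then set
$$
\rho(H;a) := \inf\bigl\{\lambda_H(\alpha) \,\big|\, \alpha \in CI^*(H,J;B),\ \delta_{(H,J)}\alpha = 0,\ [\alpha] = h_H^{\text{\rm PSS}}(a) \text{ in } HI^*(H,J;B)\bigr\}.
$$
The first order of business is to check that this value is independent of $J$ (among regular choices) and finite. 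Independence of $J$ is proved by the chain-homotopy part of the previous theorem, together with the observation that the chain homotopy decreases the level filtration up to an $O(\epsilon)$ error coming from the parameterized moduli spaces; taking a limit of regular paths $J_s$ then removes the error, exactly as in \cite{oh:cag}.

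For the Hofer continuity, I would construct a continuation chain map from $(CI^*(H,J),\delta_{(H,J)})$ to $(CI^*(H',J'),\delta_{(H',J')})$ by counting solutions of the $\tau$-dependent equation obtained from a homotopy $\{H^s\}$ of Hamiltonians joining $H'$ to $H$ (flat in $\tau$ outside a compact interval, and boundary-flat in $t$). The key input is the energy identity for perturbed contact instantons, which for such a continuation will produce an inequality of the form
$$
\widetilde{\CA}_H(u(+\infty)) - \widetilde{\CA}_{H'}(u(-\infty)) \leq \int_{-\infty}^\infty \int_0^1 \max_{y}\bigl(H_t^s - \partial_\tau H^s\bigr)\cdots\, dt\, d\tau
$$
whose exact form I will extract from the first-variation computation of $\widetilde{\CA}_{H^s}$ along perturbed contact instanton solutions; the upshot is that the continuation decreases levels by at most $\int_0^1 \max_y(H_t - H'_t)\, dt$. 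Applying the same argument with the roles of $H$ and $H'$ swapped, and using that continuation maps intertwine the PSS maps $h_H^{\text{\rm PSS}}$ and $h_{H'}^{\text{\rm PSS}}$, yields the two-sided inequality \eqref{eq:Hofer-continuity}. The vanishing property $\rho(0;a)=0$ then follows from the Hofer continuity applied to $H = 0$ and an explicit computation: when $H\equiv 0$ the set of translated Hamiltonian chords is exactly $\{\ell_q \mid q \in B\}$ by Lemma \ref{lem:upshot}, and $\widetilde{\CA}_0(\ell_q) = z(\ell_q(1)) = 0$ on the zero section, so every cycle has level $0$ and hence $\rho(0;a) = 0$ for all $a$; the PSS map at $H=0$ reduces to the classical Morse-to-Morse identification up to sign, which guarantees nonvanishing of $\rho(0;a)$'s realizing cycle and hence the value is exactly $0$.

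The hard part will be spectrality. The obstacle is that a priori the infimum defining $\rho(H;a)$ need only lie in the closure of the set of critical values of $\widetilde{\CA}_H$, not in the set itself. My plan is to follow the lim-inf argument of \cite[\S5]{oh:cag}: take a minimizing sequence of cycles $\alpha_k$ with $\lambda_H(\alpha_k)\searrow \rho(H;a)$, argue that the level of each $\alpha_k$ is attained at some translated chord $\gamma_k \in \mathfrak{X}^{\text{\rm trn}}$, and use the fact that $\widetilde{\CA}_H$ takes only finitely many distinct values on $\mathfrak{X}^{\text{\rm trn}}((o_{J^1B},o_{J^1B});H)$ modulo the spectrum $\Spec(o_{J^1B})$ — which for the pair $(\lambda,o_{J^1B})$ on $J^1B$ reduces, via Lemma \ref{lem:upshot}, to the values $\{z(\phi_H^1(q,0,0)) : q \in B\} \subset \R$, a compact subset of $\R$ of measure zero by Sard's theorem applied to the Legendrian $\psi_H^1(o_{J^1B})$. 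This closed-and-nowhere-dense property of the spectrum, combined with the continuity of $\rho(\cdot;a)$ in $H$ established in the previous paragraph and a standard approximation by nondegenerate Hamiltonians, will force $\rho(H;a) \in \Spec(o_{J^1B})$; the delicate technical point I expect to have to argue carefully is the passage from regular Fredholm data to the general nondegenerate case, and then to the fully general case via Hofer-continuous extension.
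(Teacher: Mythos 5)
Your overall min-max scheme, the definition of a level functional, the use of continuation maps to compare $\rho(H;a)$ and $\rho(H';a)$, the vanishing $\rho(0;a)=0$ via the computation for $H\equiv 0$, and the approximation argument for spectrality are all structurally the same as the paper's, apart from a duality in convention (you take $\max$ over the support and $\inf$ over cycles, the paper takes $\min$ over the support and $\sup$ over cocycles, reflecting its ``cohomological'' sign convention in which the action \emph{increases} along the flow; one must be careful that these choices be consistent with the direction of the energy inequality).

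However, there is a genuine gap in your treatment of Hofer continuity, and it is precisely the point the paper flags in Remark~\ref{rem:choice-homotopy}. You propose to use a homotopy $\{H^s\}$ ``joining $H'$ to $H$'' with a generic elongation, and to derive the bound $\int_0^1 \max_y(H_t - H'_t)\,dt$ from ``the first-variation computation of $\widetilde\CA_{H^s}$.'' But in the contact setting, the first variation of the perturbed action functional picks up the conformal exponent $e^{g_{(\phi_{H^s}^t)^{-1}}}$, and the boundary term coming from the moving boundary condition $w(\tau,0)\in\psi_{H^{\chi(\tau)}}^1(o_{J^1B})$ involves the $\tau$-developing Hamiltonian $G$ weighted by conformal factors. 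With the standard linear interpolation $s\mapsto (1-s)H'+sH$ (or indeed any generic homotopy), one does \emph{not} get the conformal-free estimate $\int_0^1\max_y(H_t-H'_t)\,dt$; one gets an estimate involving $e^{g}$ factors that cannot be removed. The paper's mechanism for killing these factors is twofold: (i) use the \emph{curvature-free} form of the perturbed equation obtained by the $\tau$-dependent gauge transformation; and (ii) use a homotopy that \emph{passes through the zero Hamiltonian} together with a non-monotone elongation function $\chi:\R\to[0,1]$ satisfying $\chi\equiv 0$ on $[-1,1]$, $\chi\equiv 1$ for $|\tau|\geq 2$, and $\chi'\leq 0$ on $[-2,-1]$, $\chi'\geq 0$ on $[1,2]$. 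With this choice, the formula \eqref{eq:s-Ham} for $\lambda(\partial w/\partial\tau(\tau,0))$ splits cleanly into a term involving only $H^\alpha$ on $\tau\leq 0$ and only $H^\beta$ on $\tau\geq 0$, and the sign conditions on $\chi'$ turn the boundary contribution into $-\int_0^1\min_y H^\alpha_t\,dt + \int_0^1\max_y H^\beta_t\,dt$ without any conformal exponent appearing. Without this explicit choice, the ``upshot'' you invoke is not available. You should replace the generic homotopy in your argument with this specific through-zero homotopy and elongation, and verify Lemma~\ref{lem:lambdadwdtau} explicitly, before the two-sided bound \eqref{eq:Hofer-continuity} can be asserted.
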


\begin{cor} \begin{enumerate}
\item
If $H \geq H'$, then $\rho(H;a) \geq \rho(H';a)$. In particular it holds that
$\rho(H;a) \geq 0$ if $H \geq 0$.
\item We have $\int_0^1 \min H_t\, dt \leq \rho(H;a) \leq \int_0^1 \max H_t, \, dt$.
\end{enumerate}
\end{cor}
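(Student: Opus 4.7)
The plan is to derive both statements as formal consequences of Hofer continuity and the normalization $\rho(0;a) = 0$ provided by Theorem \ref{thm:spectral-invariants-intro}; no further analysis of the Floer complex or of the perturbed contact instantons is needed once those two properties are in hand.

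For part (1), I would start from the pointwise inequality $H \geq H'$, which gives $H_t(y) - H_t'(y) \geq 0$ for every $(t,y)$, and therefore $\min_y(H_t - H_t')(y) \geq 0$ for each $t \in [0,1]$. Substituting into the lower half of the Hofer-continuity bound \eqref{eq:Hofer-continuity} yields
\[
\rho(H;a) - \rho(H';a) \;\geq\; \int_0^1 \min_y (H_t - H_t')(y)\, dt \;\geq\; 0,
\]
which is the monotonicity statement. The ``in particular'' clause follows by specializing to $H' \equiv 0$ and invoking $\rho(0;a) = 0$.

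For part (2), I would again specialize Hofer continuity to the pair $(H, H' \equiv 0)$. The lower bound becomes
\[
\int_0^1 \min_y H_t(y)\, dt \;\leq\; \rho(H;a) - \rho(0;a) \;=\; \rho(H;a),
\]
and the upper bound becomes
\[
\rho(H;a) \;=\; \rho(H;a) - \rho(0;a) \;\leq\; \int_0^1 \max_y H_t(y)\, dt,
\]
which is exactly the claimed two-sided estimate. In both parts, the substantive content of the proof has already been absorbed into Theorem \ref{thm:spectral-invariants-intro}: the only real obstacle, namely the Hofer-continuity estimate itself, rests on the energy identities for solutions of \eqref{eq:perturbed-contacton-intro} and the chain-level comparison between the complexes $(CI^*(H,J;B),\delta_{(H,J)})$ and $(CI^*(H',J';B),\delta_{(H',J')})$. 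Once that is in place, the corollary reduces, as above, to reading off two straightforward specializations of \eqref{eq:Hofer-continuity}.
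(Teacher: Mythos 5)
Your proposal is correct and follows the same route the paper implicitly takes: the corollary is stated directly after Theorem \ref{thm:spectral-invariants-intro} precisely because it is the immediate specialization $H'\equiv 0$ of the Hofer-continuity estimate \eqref{eq:Hofer-continuity} together with the normalization $\rho(0;a)=0$. In the body of the paper (end of Section 9) the authors do exactly the substitution $H^\beta = H$, $H^\alpha = 0$ to obtain \eqref{eq:H-estimate}, which is your part (2), and part (1) is the obvious sign observation you record.
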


We compare these properties with those that the $\GFQI$ invariants
$c(a;S)$ satisfy which we summarize in Theorem \ref{thm:spec-S} in Appendix \ref{sec:gfqi-spectral-invariants}.
Note that there has been no counterpart of Hofer continuity for $c(a;S)$ proved in the literature
before, as far as we are aware.

\begin{rem}[Choice of homotopy $\{H^s\}$]\label{rem:choice-homotopy} 
We would like to emphasize that \emph{no statement in the above theorem
involves the conformal exponent of the relevant contact Hamiltonians or contactomorphisms}. The same form of inequality is proved
for the Lagrangian Floer cohomology by the first named author
in \cite{oh:jdg}. However the standard linear interpolation homotopy
$s\mapsto (1-s) H + s H'$ used in the proof of the relevant inequality in \cite{oh:jdg}
will produce some inequality involving the conformal factors. We need two different procedures from the symplectic case
e.g., for the proof of \eqref{eq:Hofer-continuity}:
\begin{enumerate}
\item  We have to employ a  homotopy $\{H^s\}$ between $H$ and $H'$ which is a kind different from $s\mapsto (1-s) H + s H'$
and also use a special kind of elongation function $\chi:\R \to [0,1]$ for our purpose.
(See the proof of Theorem \ref{thm:spectral-invariants}.)
\item We have to also use the so called curvature-free perturbed equation.
(See Equation \eqref{eq:perturbed-contacton-bdy-oJ1B} and Remark \ref{rem:curvature-free}.)
\end{enumerate}
\end{rem}
Let $H^\alpha$ and $H^\beta$ be given and let $\{H^s\}_{s \in [0,1]}$ be the homotopy
mentioned in Remark \ref{rem:choice-homotopy} with $H^0 = H^\alpha$, $H^1 = H^\beta$.
Let $\chi$ be the elongation function mentioned therein too.

Along the way, we also establish the following two fundamental a priori crucial estimates.

\begin{thm}[Uniform $\pi$-energy bound; Theorem \ref{thm:Epi-bound}]\label{thm:Epi-bound-intro}
For any given $\gamma^\alpha, \, \gamma^\beta \in \mathfrak{X}(J^1B,H;o_{J^1B}, o_{J^1B})$ and
$u \in \widetilde \CM(H^\chi,J;\gamma^\alpha,\gamma^\beta)$, we have
\be\label{eq:Epi-bound-intro}
E_H^\pi(u) \leq \widetilde \CA_{H^\alpha}(\gamma^\alpha)
 - \widetilde \CA_{H^\beta}(\gamma^\beta) + \int^{1}_{0} \max_{y}(H^{\beta}_{t} - H^{\alpha}_{t}) dt
\ee
\end{thm}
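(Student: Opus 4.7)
The plan is to derive \eqref{eq:Epi-bound-intro} as an integrated form of an action-monotonicity identity along the homotopy. For $u \in \widetilde\CM(H^\chi, J; \gamma^\alpha, \gamma^\beta)$, I set $F(\tau) := \widetilde\CA_{H^{\chi(\tau)}}(u(\tau,\cdot))$ and compute $F'(\tau)$. This derivative has two contributions: the $u$-variation of $\widetilde\CA$ evaluated on $\partial_\tau u$, and the variation of the Hamiltonian parameter $s = \chi(\tau)$. Substituting the first equation of \eqref{eq:perturbed-contacton-bdy-intro} into the first variation formula for $\widetilde\CA_H$ converts the $u$-variation piece into minus the $\pi$-energy density $|(du - X_H \otimes dt)^\pi|^2$. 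Integration by parts in $t$ produces boundary contributions at $t = 0, 1$ that vanish because $u(\tau,0),\, u(\tau,1) \in o_{J^1B}$ and both $\lambda = dz - p\,dq$ and the $z$-coordinate vanish along the zero section. The $s$-variation piece yields the usual homotopy term proportional to $\chi'(\tau)\,(\partial_s H^s)(u(\tau,t))$.

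Integrating the resulting identity
$$
F'(\tau) \;=\; -(\text{$\pi$-energy density})(\tau) \;+\; \chi'(\tau)\cdot(\text{homotopy term})(\tau)
$$
over $\tau \in \R$, the fundamental theorem of calculus together with the asymptotic conditions and the boundary-flat stabilization of $F$ at $\tau = \pm\infty$ give $F(+\infty) - F(-\infty) = \widetilde\CA_{H^\beta}(\gamma^\beta) - \widetilde\CA_{H^\alpha}(\gamma^\alpha)$. Rearranging, using $\int_\R \chi'(\tau)\,d\tau = 1$, and controlling the homotopy term pointwise by $\max_y\bigl(H^\beta_t - H^\alpha_t\bigr)$ then delivers the claimed inequality.

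The main obstacle is executing this last pointwise bound without the conformal exponent $e^{g_{(\phi^t_{H^s})^{-1}}}$ appearing in the final estimate. A naive linear interpolation $H^s = (1-s)H^\alpha + sH^\beta$ with an arbitrary $\chi$ leaves residual $e^g$-weights in front of $\partial_s H^s$, which would spoil the clean $\int_0^1 \max(H^\beta_t - H^\alpha_t)\,dt$ bound. Following Remark \ref{rem:choice-homotopy}, I will resolve this in two ways: first, by recasting the monotonicity identity via the curvature-free form of the perturbed equation (to be introduced as \eqref{eq:perturbed-contacton-bdy-oJ1B}), whose derivation no longer carries the $e^g$-factor in front of the $\partial_s H^s$ term; and second, by choosing the homotopy $\{H^s\}$ (different from linear interpolation) together with an adapted elongation function $\chi$ so that the remaining contribution is bounded cleanly by $\max_y(H^\beta_t - H^\alpha_t)$. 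Once these adapted choices are in place, the rest of the argument is the elementary integration sketched above.
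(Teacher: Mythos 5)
Your high-level plan (differentiate an action functional in $\tau$, absorb the equation into a $\pi$-energy density, bound the leftover homotopy contribution) is the right shape, and you correctly diagnose the central danger, namely that conformal exponents will pollute the estimate unless the homotopy and elongation function are chosen carefully. But the proposal stops exactly where the work is, and two of the details you do commit to are incompatible with how the paper actually closes the argument.

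First, a concrete error: you claim to "use $\int_\R \chi'(\tau)\,d\tau = 1$." The elongation function $\chi$ in \eqref{eq:chi} is \emph{not} the standard monotone ramp from $0$ to $1$; it equals $1$ for $|\tau|\geq 2$, equals $0$ for $|\tau|\leq 1$, is nonincreasing on $[-2,-1]$ and nondecreasing on $[1,2]$. Hence $\int_\R \chi' = \chi(+\infty)-\chi(-\infty) = 0$. This $\chi$ is precisely what encodes the "through the zero Hamiltonian" homotopy that makes the conformal factors disappear, and it cannot be replaced by a monotone ramp without reintroducing them. Second, your picture of the homotopy term as an interior integral $\int_0^1 \chi'(\tau)\,\partial_s H^s(u)\,dt$ is not how the paper organizes the estimate. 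The paper first applies the $\tau$-dependent gauge transformation so that the object being differentiated is the \emph{unperturbed} contact action $\CA(w(\tau)) = \int_0^1 w(\tau)^*\lambda$, not $\widetilde\CA_{H^{\chi(\tau)}}(u(\tau))$. After this transformation all the conformal-exponent baggage is absorbed into the change of variables, and the Hamiltonian dependence survives only as the \emph{moving Legendrian boundary condition} $w(\tau,0)\in\psi_{\chi(\tau),1}(o_{J^1B})$. Differentiating in $\tau$ and integrating by parts in $t$ then yields the $\pi$-energy density plus a single boundary term $\lambda\bigl(\tfrac{\partial w}{\partial\tau}(\tau,0)\bigr)$, which by \eqref{eq:dpsidtau}--\eqref{eq:s-Ham} evaluates to $\chi'(\tau)H^{\alpha}_{\chi(\tau)}(w(\tau,0))$ for $\tau\leq 0$ and $\chi'(\tau)H^{\beta}_{\chi(\tau)}(w(\tau,0))$ for $\tau\geq 0$. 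It is then the sign pattern $\chi'\leq 0$ on $(-\infty,0]$ and $\chi'\geq 0$ on $[0,\infty)$, together with the substitution $s=\chi(\tau)$ on each half-line separately, that produces the clean bound $\int_0^1 \max_y(H^\beta_t - H^\alpha_t)\,dt$ as in Lemma \ref{lem:lambdadwdtau} --- not a single use of $\int_\R\chi'=1$.

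In short: you have correctly flagged that the naive linear-homotopy computation fails and that the curvature-free form plus a special $(\{H^s\},\chi)$ are required, but the proposal does not actually carry out that step, and the one structural claim you make about $\chi$ contradicts the $\chi$ that makes the argument work. To turn this into a proof you need to (i) gauge transform so that the functional whose $\tau$-derivative you compute is $\CA(w(\tau))$, (ii) isolate the homotopy contribution as the boundary term $\lambda(\partial_\tau w(\tau,0))$, (iii) use the explicit two-sided time-rescaling homotopy through the identity, and (iv) exploit the signs of $\chi'$ on the two half-lines to perform the change of variables $s = \chi(\tau)$ separately on each side.
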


We also recall the definition of the vertical energy for contact instantons introduced
in \cite{oh:contacton} (for the closed string case) and \cite{oh:entanglement1} for the open string case
and establish its a priori bound applied to the continuity equation \eqref{eq:perturbed-contacton-intro}.

\begin{thm}[Uniform vertical energy bound; Theorem \ref{thm:Eperp-bound}]\label{thm:Eperp-bound-intro}
Let $u$ be any finite energy solution of \eqref{eq:perturbed-contacton-intro}. Then we have
$$
E_H ^\perp(u) \leq |\widetilde \CA_{H^\alpha}(\gamma^\alpha)| 
+ |\widetilde \CA_{H^\beta}(\gamma^\beta)|
+ E^+(H^\beta) + E^-(H^\alpha).
$$
\end{thm}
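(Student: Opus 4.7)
The plan is to derive the vertical energy estimate through an action-theoretic boundary decomposition followed by uniform max/min bounds on the $\tau$-dependent Hamiltonian $H = H^\chi$. By the definition introduced in \cite{oh:contacton,oh:entanglement1}, the perturbed vertical energy takes the form
$$
E_H^\perp(u) \;=\; \tfrac{1}{2}\int_{\R\times [0,1]} u^*\lambda_H \wedge \bigl(u^*\lambda_H\circ j\bigr),
$$
where $u^*\lambda_H := u^*\lambda + u^*H\, dt$. I would work on the truncated rectangle $[-T,T]\times[0,1]$ and pass to $T\to\infty$ at the end, invoking the finite-energy convergence $u(\tau,\cdot)\to\gamma^\alpha$ and $u(\tau,\cdot)\to\gamma^\beta$ as $\tau\to\mp\infty$ respectively.

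The first key observation is that the second equation in \eqref{eq:perturbed-contacton-intro} says $e^{g_{H,u}}(u^*\lambda_H)\circ j$ is a closed $1$-form on the simply connected strip, hence exact; write $e^{g_{H,u}}(u^*\lambda_H)\circ j = d\ell_u$ for a primitive $\ell_u$. I would substitute $u^*\lambda_H\circ j = e^{-g_{H,u}}d\ell_u$ into the integral above and integrate by parts using the exact structure $\lambda = dz - \pi^*\theta$ on $J^1B$ to isolate a $d$-exact piece whose interior contribution drops out. This produces an identity of the schematic form
$$
E_H^\perp(u) \;=\; \int_{\partial([-T,T]\times[0,1])}\! \alpha_u,
$$
for a $1$-form $\alpha_u$ built from $u^*\lambda_H$, $\ell_u$, $z\circ u$, and $e^{-g_{H,u}}$.

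Next I would evaluate the four boundary contributions. On $t=0$ and $t=1$, the Legendrian boundary condition $u(\tau,0), u(\tau,1)\in o_{J^1B}$ combined with $\lambda|_{T o_{J^1B}} = 0$ (since $p\equiv 0$ and $z\equiv 0$ on the zero section) annihilates the $u^*\lambda$ part of $\alpha_u$, leaving a purely $H$-dependent integrand. Bounding this pointwise by $\max_y H^\chi_t$ on the $+\infty$-bound end and by $-\min_y H^\chi_t$ on the $-\infty$-bound end, and using that $\chi$ is constant outside a compact set so that $H^\chi$ agrees with $H^\beta$ and $H^\alpha$ there, delivers the terms $E^+(H^\beta)$ and $E^-(H^\alpha)$. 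On the vertical sides $\tau=\pm T$, the asymptotic convergence together with the very definition \eqref{eq:tildeCAH-intro} of $\widetilde\CA_H$ identify the limits as $\pm\widetilde\CA_{H^\beta}(\gamma^\beta) \mp \widetilde\CA_{H^\alpha}(\gamma^\alpha)$ up to boundary $z$-terms that cancel against the $t=0,1$ pieces; absolute values then absorb the sign ambiguity and produce $|\widetilde\CA_{H^\alpha}(\gamma^\alpha)| + |\widetilde\CA_{H^\beta}(\gamma^\beta)|$.

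The main technical obstacle will be correctly tracking the conformal factor $e^{g_{H,u}}$ when peeling off the exact piece and bookkeeping signs. Unlike the $\pi$-energy identity (Theorem \ref{thm:Epi-bound-intro}), where the exponent enters as a positive weight multiplying a manifestly nonnegative integrand, here it couples nonlinearly to the Hamiltonian perturbation along all four edges of the rectangle. Verifying that these coupled terms organize into the asymmetric split $E^+(H^\beta) + E^-(H^\alpha)$ rather than a symmetric Hofer-norm $\|H\|$ requires exploiting the direction of integration (increasing $\tau$ connects to $H^\beta$) together with the Legendrian-zero-section geometry of $J^1B$, which alone makes the $dz$-boundary contributions exact and nullable.
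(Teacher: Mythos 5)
Your starting point is already off: the quantity you write down,
\[
E_H^\perp(u)\;=\;\tfrac12\int_{\R\times[0,1]} u^*\lambda_H \wedge \bigl(u^*\lambda_H\circ j\bigr),
\]
is not the paper's vertical energy. The paper's $E^\perp$ (Definitions \ref{defn:CC}, \ref{defn:CC-energy}) is a Hofer--Gromov type capped energy: after gauge-transforming to $w$ (so that $e^{g_{H,u}}u^*\lambda_H = w^*\lambda$ and $w^*\lambda\circ j = df$ by charge vanishing), one takes
\[
E^\perp(w) \;=\; \sup_{\varphi\in\CC}\int (-w^*\lambda)\wedge d(\psi(f)),\qquad \psi'=\varphi,\ 0\le\psi\le 1.
\]
The test function $\psi$ is not a cosmetic detail: your raw integral is essentially $\int|df|^2$, which has no a priori bound, whereas the $\psi$-cap is exactly what makes the supremum finite and is what converts the asymptotic boundary contributions into $|T^\pm|$ rather than uncontrolled quantities.

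Two further steps in your outline would fail as written. First, after integration by parts you claim the interior piece ``drops out''; in the paper's computation the interior term is $-\psi(f)\,dw^*\lambda = -\tfrac12\psi(f)|d^\pi w|^2\le 0$, and it is \emph{discarded using the sign}, not because it vanishes. Second, your boundary analysis on $t=0,1$ is inconsistent: in the $u$-picture both boundary arcs lie in $o_{J^1B}$, and restricting to a horizontal edge kills $dt$, so both $u^*\lambda$ and $H\,dt$ contributions would seem to vanish — leaving no source for the $E^\pm(H)$ terms. The mechanism that actually produces them is the gauge transformation to $w$, where the $t=0$ boundary becomes the \emph{moving} Legendrian $\psi_{\chi(\tau),1}(o_{J^1B})$; then $\lambda(\del_\tau w(\tau,0)) = \chi'(\tau)\,H^{\chi(\tau)}_1(\cdots)$ is nonzero (cf.\ \eqref{eq:s-Ham}), and the sign condition on $\chi'$ on $(-\infty,0]$ versus $[0,\infty)$ splits it into $E^-(H^\alpha)$ and $E^+(H^\beta)$, while the $t=1$ edge contributes zero because $w(\tau,1)\in o_{J^1B}$ is fixed. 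Working with $w$ from the start also makes the conformal factor $e^{g_{H,u}}$ — which you flag as ``the main technical obstacle'' with no resolution — disappear entirely.
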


\subsection{Discussion and what remains to do}

\subsubsection{Relationship with (immersed) exact Lagrangian
Floer theory on the cotangent bundle}

When restricted to the cotangent bundle $T^*B$, our Legendrian contact instanton cohomology on $J^1B$
subsumes the Floer theory of \emph{immersed} exact Lagrangian Floer theory
with the invariance property wider than under the ambient Hamiltonian
isotopies. (See \cite{EHS-gafa}, \cite{ono:lag-leg} and \cite{akhao-joyce}
for related results studied by the traditional symplectic Floer theory.)
In this regard, the following facts are the ingredients which are
relevant:
\begin{enumerate}
\item There is a canonical lifting of Hamiltonian isotopy of
any \emph{immersed exact Lagrangian submanifold
equipped with its Liouville primitive} of $T^*B$ to a contact
isotopy of Legendrian submanifolds. We have only to lift the
Liouville primitives which is well-known to have explicit formula.
(See \cite[Proposition 3.4.8]{oh:book1}.)
\item The resulting contact isotopy is strict, i.e., we can realize
the isotopy by a strict contact isotopy and hence no conformal
exponent will appear for the lifted theory.
\item There is a canonical way of associating $(dz - \pi^*\theta)$-adapted 
CR almost complex structures on $J^1B$ to each
$\omega_0 = - d\theta$ tame almost complex structures on $T^*B$.
(See Appendix \ref{sec:JonT*B} for the explanation.)
\item Each perturbed Floer trajectory  on $T^*B$ can be naturally
lifted to a perturbed contact instantons.
(See Appendix \ref{sec:JonT*B} for the explanation.)
\item There is a canonical conversion rule from the effective action 
functional $\widetilde \CA_H$ and the classical action $\CA_H^{\text{\rm cl}}$
when the Hamiltonian $H$ on $T^*B$ is lifted to $J^1B$.
(See Proposition \ref{prop:lifted-action}.)
\end{enumerate}

This also implies that our Legendrian spectral invariants
naturally subsume the Lagrangian spectral invariants
on the cotangent bundle defined by
the first-named author in \cite{oh:cag} with wider invariance
properties than what is established therein. 

\subsubsection{Axioms of Legendrian spectral invariants}

In a sequel \cite{oh-yso:equivalence},
we will prove the following coincidence result
of our spectral invariants and the $\GFQI$ spectral invariants.

\begin{thm}[\cite{oh-yso:equivalence}] Let $S$ be a  $\GFQI$ of the Legendrian submanifold $R = \psi_H^1(o_{J^1B})$. Then
$$
\rho(H;a) = \pm c(a;S)
$$
(where the uniform choice of $\pm$ depends on the sign conventions used in the literature).
\end{thm}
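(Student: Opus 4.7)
My approach is a Morse-to-Floer comparison in the spirit of Milinkovi\'c--Oh, combined with the axiomatic behavior (spectrality and Hofer continuity) of the two invariants. The overall strategy is to first identify the two action spectra as subsets of $\R$, then establish the identity in a base case where both invariants reduce to classical Morse theory on $B$, and finally to propagate the equality to arbitrary $H$ via a chain-level comparison.

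The first step identifies the action spectra. For any translated Hamiltonian chord $\gamma$ of the pair $(o_{J^1B},o_{J^1B})$, the on-shell identity $\widetilde\CA_H(\gamma) = z(\gamma(1))$ exhibits the critical value of $\widetilde\CA_H$ as the $z$-coordinate of the endpoint $\gamma(1) \in R \cap Z_{o_{J^1B}}$. On the GFQI side, the critical values of any generating function $S$ of $R$ are, by the defining property of a generating function, precisely the $z$-coordinates of the wavefront of $R$ at the points of $R \cap Z_{o_{J^1B}}$. Hence the action spectrum $\mathrm{Spec}(o_{J^1B};H)$ and the critical value set $\mathrm{Crit}(S)$ agree as subsets of $\R$, up to the sign ambiguity.

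The base case is most transparent when $H \equiv 0$, where both invariants vanish by their respective normalization axioms, and extends to $C^1$-small $H$ for which $R = j^1 f$ is the one-jet graph of a small function $f : B \to \R$. In this regime one may take $S = f \circ \pi + Q$ for a nondegenerate quadratic form $Q$ on the fiber, and both spectral invariants reduce to the same classical Morse min-max value of $f$ against the class $a$. On the Floer side this follows from the PSS isomorphism of Theorem \ref{thm:spectral-invariants-intro}(3), which for small $H$ collapses to the Morse theory of $f$ on $B$; on the GFQI side it is the defining property of $c(a;S)$.

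For the propagation step I would construct a direct chain-level comparison between the Morse complex of a GFQI $S$ of $R$ and the Legendrian contact instanton complex $CI^*(H,J;B)$, following the template of the Lagrangian comparison in \cite{oh:cag}. The comparison map would count ``hybrid'' trajectories that solve a $-\nabla S$ gradient equation on one half and the perturbed contact instanton equation \eqref{eq:perturbed-contacton-bdy-intro} on the other, glued at a common endpoint lying over $R \cap Z_{o_{J^1B}}$; one then shows that the induced map intertwines the two PSS isomorphisms with $H^*(B)$ and preserves the action filtration on the nose. The hardest part will be the uniform energy estimate controlling the hybrid moduli space, since one must simultaneously handle the non-compactness of the GFQI fibers at infinity (using the quadratic-at-infinity condition) and the non-compactness of the contact instanton moduli space at infinity (using the $\pi$-energy and vertical energy bounds of Theorems \ref{thm:Epi-bound-intro} and \ref{thm:Eperp-bound-intro}). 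Matching the action-filtered complexes at the level of cohomology then yields $\rho(H;a) = \pm c(a;S)$, with the sign fixed by a direct comparison of the grading and orientation conventions.
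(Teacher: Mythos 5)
The paper does not contain a proof of this theorem. It is an announced result deferred to the sequel \cite{oh-yso:equivalence}, which the bibliography lists as ``in preparation.'' The only information the paper gives about the intended argument is the single sentence that the equivalence proof will be ``the contact counterpart of Milinkovic's equivalence proof \cite{milinkovic1} based on the result from \cite{oh-yso:weinstein},'' where \cite{oh-yso:weinstein} realizes the effective action functional $\widetilde\CA_H$ as the \emph{canonical} generating function of $R=\psi_H^1(o_{J^1B})$ via Weinstein's de-approximation of the Laudenbach--Sikorav broken-trajectory construction. So there is no in-paper proof to compare against; one can only compare your proposal against the paper's declared strategy.

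With that caveat, your route differs from the signaled one in a way that leaves real gaps. The Milinkovic-type argument never attempts a PSS comparison between the Morse complex of an \emph{arbitrary} GFQI $S$ and the contact instanton complex; it uses the specific broken-trajectory finite-dimensional reduction $S_N$ of $\widetilde\CA_H$, which is automatically a GFQI by \cite{oh-yso:weinstein}, identifies its filtered Morse homology with the filtered contact instanton homology (an adiabatic/degeneration argument rather than a glued hybrid moduli problem), and then handles a general $S$ by quoting Viterbo's uniqueness theorem for GFQIs. Your proposal (a) never invokes Viterbo uniqueness, so even if your hybrid construction worked it would establish the equality only for the specific $S$ you feed into it; and (b) does not explain how the total space $E$ of an arbitrary GFQI couples to $J^1B$ so that a ``$-\nabla S$ gradient line glued to a contact instanton at a common endpoint over $R\cap Z_{o_{J^1B}}$'' is an elliptic problem with a priori estimates --- the Morse flow lives in $E$ and the instanton lives in $J^1B$, and for a generic $S$ there is no natural interpolating space, which is exactly what the canonical generating function provides. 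Finally, your opening sentence appeals to ``the axiomatic behavior (spectrality and Hofer continuity) of the two invariants,'' but the paper stresses that Hofer-type continuity of $c(a;S)$ is \emph{not} known and is in fact stated as a corollary of this very equivalence theorem; if the propagation step leans on Hofer continuity of $c$, the argument is circular, while if only the weaker known $C^0$-continuity (Theorem~\ref{thm:spec-S}(2)) is used you must verify it suffices.
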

Our equivalence proof will be the contact counterpart of Milinkovic's equivalence proof
\cite{milinkovic1} based on the result from \cite{oh-yso:weinstein}.
One corollary of this coincidence theorem will prove that $c(a;S)$ share the inequality
\eqref{eq:Hofer-continuity} provided the generating function $S$ generates the time-one image
$R = \psi_H^1(o_{J^1B})$ of the Hamiltonian $H$. (Compare this with the properties
of $c(a;S)$ laid out in Theorem \ref{thm:spec-S} in Appendix.)
In particular the Hofer-type continuity will also hold for $c(a;S)$.

It seems plausible that the following also hold similarly as
for the Lagrangian spectral invariants from \cite{oh:cag}, \cite{MVZ}.

\begin{ques} Do the following properties hold for the assignment $(H,a) \mapsto \rho(H;a)$?
\begin{enumerate}
\item Denote by $\mu = PD[pt] \in H^n(B)$ the orientation class, i.e., the Poincar\'e dual to the point class.
Then
$$
\rho(H;\mu) = - \rho(H;1).
$$
\item For any $a, \, b \in H^*(B)$,
$$
\rho(H*H';a\cup b) \leq \rho(H;a) + \rho(H';b)
$$
where $H*H'$ is the concatenation of $H$ and $H'$.
\item $\rho(H;1) \geq 0$ for all $H$.
\end{enumerate}
\end{ques}
These properties are known to hold for the case of \emph{mean-normalized Hamiltonians}
 for the symplectic spectral invariants constructed in \cite{oh:jdg}, \cite{MVZ}.
The proofs of these properties will involve the product structure on the contact instanton Floer cohomology $HI^*$
which will be studied elsewhere.

\subsubsection{$C^0$ contact dynamics}

M\"uller and Spaeth initiated contact dynamics in the $C^0$-level
\cite{mueller-spaeth1,mueller-spaeth2,mueller-spaeth3} motivated
by its symplectic version \cite{oh:hameo1}.
They have discovered many interesting new phenomena in the
contact case, and they especially discovered the importance of
the conformal exponent in their study of $C^0$ contact dynamics and of the
$C^0$-completion of the contact diffeomorphism group.
However their study on the effect of the conformal exponent is not
optimal in that the implicit relationship between the contact
Hamiltonian and the conformal exponent
$$
\frac{\del g_{\psi_H^t}}{\del t} = -R_\lambda[H](\psi_H^t)
$$
is not exploited. Our definitions of contact action functional \eqref{eq:tildeCAH-intro}
and of spectral invariants nicely combine the two which hints how one
should approach the $C^0$ aspect of contact dynamics.

In fact \eqref{eq:Hofer-continuity}
enables us to continuously extend the map $H \mapsto \rho(H;a)$ to any
$C^0$-limit of Hamiltonians which sets the grounds for the
$C^0$-study of Legendrian spectral invariants
as in the symplectic geometry. It would be an interesting
and important study to understand what are common points and
what are different points  between
Lagrangian spectral invariants and the current Legendrian spectral
invariants and their dynamical consequences on the group of contactomorphisms. We refer to \cite{seyfad}, \cite{oh:lag-capacity}
for some detailed study of spectral invariants in view of
$C^0$-Hamiltonian dynamics. In view of the recent great success of
the study of Floer theoretic spectral invariants in
$C^0$-Hamiltonian dynamics,  and 2 dimensional area-preserving
dynamics \cite{GHMSS},  \cite{polterov-shelukhin}, triggered by
\cite{GHS},  we anticipate that our Legendrian contact instanton spectral
invariants will play similar role in the study of $C^0$ contact
dynamics. This is a subject of future study.

\subsubsection{Further relevant open problems}

The following problems are those which are natural
continuation of the study made in the present paper. In general
one may search for the Legendrian analogues to the following
known results for the topology of Lagrangian submanifolds in the
cotangent bundle to name a few:
\begin{enumerate}
\item Existence of a graph-selector of the type
\cite{amorim-oh-santos} for compact Legendrian submanifold of $J^1B$
of degree 1 projection to the base $B$.
\item Study of Hofer-type geometry of the set of Legendrian submanifolds
studied in \cite{oh:jdg}, \cite{milinkovic2}, \cite{oh:lag-capacity}.
We suspect that this kind of study for Legendrian submanifolds
will have interesting applications to thermodynamics through
the contact geometric study of thermodynamic equilibria and their
interactions. (See \cite{mrugala,MNSS}, \cite{BCT} and
\cite{lim-oh} to name a few and many references therein
about contact geometric study of thermodynamics.)
\item Another obvious direction of researches is
to construct a Fukaya-type category and to ask various categorial
questions such as  the generation results of the types
given \cite{nadler-zaslow,nadler},
\cite{fukaya-seidel-smith1,fukaya-seidel-smith2} and \cite{abouzaid}.\end{enumerate}
\bigskip

Indeed, the first-named author is currently working on the construction of
the Fukaya-type category on general contact manifolds as a continuation
of the study of his quantitative study of contact topology of compact
Legendrian submanifolds of tame contact manifolds \cite{oh:entanglement1},
and investigate general entanglement structure of  the system of
spectral invariants arising from the moduli spaces of (perturbed)
contact instantons intertwining the long-range interaction of
components of general Legendrian links on general contact manifolds
in \cite{oh:entanglement2}, and their applications elsewhere.

\bigskip

\noindent{\bf Conventions and Notations:}

\medskip

\begin{enumerate}
\item {(Contact Hamiltonian)} The contact Hamiltonian of a time-dependent contact vector field $X_t$ is
given by
$$
H: = - \lambda(X_t).
$$
We denote by $X_H$ the contact vector field whose associated contact Hamiltonian is given by $H = H(t,x)$, and its flow by
$ \psi^t_H $.
\item When $\psi = \psi^1_H$, we say $H$ generates $\psi$ and write $H \mapsto \psi$.
\item We write $\phi^t_H := \psi^t_H \circ (\psi^1_H)^{-1}$. (Warning: \emph{Do not get confused with 
the common notation $\phi_H^t$ for the symplectic Hamiltonian flow}.)
\item {(Gauge transformation $\Phi_H$)} For given $H$, we call
a gauge transformation the one-to-one correspondence
$$
\Phi_H: \CL(\psi_H^1(R_0),R_1) \to \CL(R_0,R_1)
$$
defined by $(\Phi_H)^{-1}(\gamma) =: \overline \gamma$ for $\gamma \in\CL(R_0,R_1)$  
i.e., $\Phi_H(\overline \gamma) = \gamma$.
\item {($\gamma$ and $\overline \gamma$)} Throughout the paper, we consistently denote by $\overline \gamma$ the gauge-transformed
Reeb chord of $(\psi_H^1(R_0),R_1)$ of $\gamma$
when $\gamma$  is a Hamiltonian chord of $(R_0,R_1)$.
\item We will try to consistently use the following notations whenever appropriate:
\begin{itemize}
\item $(q,p,z)$ a point of $J^1B$ or the canonical coordinates thereof
\item $x = (q,p)$ a point of $T^*B$, \
\item $y = (x,z)$ a point in $J^1B$.
\end{itemize}
\item {(Reeb vector field)} We denote by $R_\lambda$ the Reeb vector field for the contact form $\lambda$
and its flow by $\phi^t_{R_\lambda}$.
\item {($J'$)} For given $J = \{J_t\}$, we denote by $J'$ defined by
$$
J' = \{J_t'\}_{0 \leq t \leq 1}, \quad J_t' := (\phi^t_H)^*J_t = (d \phi^t_H)^{-1} J_t (d \phi^t_H).
$$
\item $\gamma$, $\overline \gamma$; $\gamma$ is a translated Reeb chord and
$\overline \gamma$ the associated Reeb  chord.
\end{enumerate}

\section{Preliminaries}

\subsection{Nondegeneracy of Reeb chords}
\label{subsec:reeb-chords}

In this subsection, we consider the case of Reeb chords which corresponds to the
case of contact Hamiltonian $H \equiv -1$ in our sign convention.

Let  $(R_0, R_1)$ be a pair of Legendrian submanifolds. 
Consider the boundary value problem
\be\label{eq:Reeb-problem-bdy}
\left\{
\begin{array}{l}
\dot {\overline \gamma} (t) = T R_\lambda (\overline \gamma (t)), \\
\overline \gamma (0) \in R_0, \quad \overline \gamma (1) \in R_1
\end{array} \right.
\ee
for $\overline \gamma : [0,1] \rightarrow M$.
We denote by $(\overline \gamma, T)$ a solution of \eqref{eq:Reeb-problem-bdy} and denote by
$ \mathfrak{Reeb}(R_0, R_1) $ the space of such solutions.

\begin{defn}
We say a Reeb chord $(\overline \gamma, T)$ of $(R_0, R_1)$ is \emph{nondegenerate} if the linearization map
$ \Psi_{\overline \gamma} = d\phi^T_{R_\lambda} : \xi_{\overline \gamma (0)} \rightarrow \xi_{\overline \gamma (1)} $ satisfies
$$ \Psi_{\overline \gamma} (T_{\overline \gamma (0)} R_0) \pitchfork T_{\overline \gamma (1)} R_1 \quad {\rm in} \quad \xi_{\overline \gamma (1)}$$
or equivalently
$$ \Psi_{\overline \gamma} (T_{\overline \gamma (0)} R_0) \pitchfork T_{\overline \gamma (1)} Z_{R_1} \qquad {\rm in} \quad T_{\overline \gamma (1)} M. $$
\end{defn}
Here $Z_{R_1}$ is the Reeb trace of $R_1$, i.e.
$$ 
Z_{R_1} = \bigcup_{t \in \R} \phi^t_{R_\lambda} (R_1). 
$$
(When $T = 0$, it is well-known that the constant loop is nondegenerate in the Morse-Bott sense.
See \cite{oh:contacton-transversality} for the details of its proof.)

\subsection{Some contact Hamiltonian calculus}
\label{subsec:contact-Hamiltonian}

In this section, we organize some useful results concerning the contact Hamiltonian dynamics
which will enable us to systematically study the change of spectral invariants of Legendrian submanifolds
under the Legendrian isotopy. Majority of the results in this section are widely known to experts.
(See \cite{mueller-spaeth2}, for example.) We also refer readers to
\cite{BCT} for a nice exposition on contact Hamiltonian mechanics to get a grasp on it.

Let $(M,\xi)$ be a co-oriented contact manifold and let $\lambda$ be a
contact form with $\xi = \ker \lambda$. Denote by $\Cont(M,\xi)$ (resp. $\Cont_0(M,\xi)$)
the set of contact diffeomorphisms (resp. the identity component thereof).
We denote by $R_\lambda$ the Reeb vector field of $\lambda$.

\begin{defn} For given coorientation preserving contact diffeomorphism $\psi$ of $(M,\xi)$
we call the function $g$ appearing in
$$
\psi^*\lambda = e^g \lambda
$$
the \emph{conformal exponent} for $\psi$ and denote it by $g = g_\psi$.
\end{defn}

The following lemma is a straightforward consequence of the identity
$$
(\phi \psi)^* \lambda = \psi^* \phi^* \lambda.
$$

\begin{lem}
Let $\lambda$ be given and denote by $g_\psi$ the function $g$ appearing above associated to $\psi$. Then
\begin{enumerate}
\item $g_{\phi \psi} = g_\phi \circ \psi + g_\psi$ for any $\phi, \psi \in \Cont (M,\xi)$,
\item $g_{\psi^{-1}} = - g_\psi \circ \psi^{-1}$ for any $\psi \in \Cont (M,\xi)$.
\end{enumerate}
\end{lem}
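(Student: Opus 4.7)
The plan is to derive both identities directly from the defining relation $\psi^*\lambda = e^{g_\psi}\lambda$ together with the functoriality of pullback, $(\phi\psi)^*\lambda = \psi^*(\phi^*\lambda)$, already hinted at in the statement. No further contact-geometric input is needed, so the proof is essentially a one-line algebraic manipulation for each item.

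For part (1), I would start from the right-hand side of the pullback identity and unfold the two pullbacks in turn: $\psi^*(\phi^*\lambda) = \psi^*(e^{g_\phi}\lambda) = (e^{g_\phi}\circ\psi)\,\psi^*\lambda = (e^{g_\phi}\circ\psi)\,e^{g_\psi}\,\lambda$. Comparing with the left-hand side $(\phi\psi)^*\lambda = e^{g_{\phi\psi}}\lambda$ and using that $\lambda$ is nowhere vanishing, I get the pointwise scalar identity $e^{g_{\phi\psi}} = e^{g_\phi\circ \psi + g_\psi}$; taking logarithms yields the cocycle relation $g_{\phi\psi} = g_\phi\circ\psi + g_\psi$.

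For part (2), I would specialize (1) to the pair $(\phi,\psi) = (\psi^{-1},\psi)$. Since the identity is a strict contactomorphism, its conformal exponent vanishes, so $g_{\mathrm{id}} = 0$. Plugging in gives $0 = g_{\psi^{-1}}\circ\psi + g_\psi$, and then precomposing both sides with $\psi^{-1}$ yields $g_{\psi^{-1}} = -g_\psi \circ \psi^{-1}$.

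There is no real obstacle here: everything is a tautological consequence of the chain rule for pullbacks and the non-vanishing of $\lambda$. The only thing that must be handled with minor care is the distinction between $g_\phi\circ\psi$ (composition of functions) and $g_\phi$ itself in the intermediate step, which is automatic once one remembers that $\psi^*(f\lambda) = (f\circ\psi)\,\psi^*\lambda$ for any smooth function $f$.
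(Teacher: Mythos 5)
Your proof is correct and matches the paper's approach exactly: the paper states the lemma as a ``straightforward consequence'' of $(\phi\psi)^*\lambda = \psi^*\phi^*\lambda$, which is precisely the identity you unfold, and your derivation of (2) from (1) via $g_{\mathrm{id}}=0$ is the standard way to fill in the rest.
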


\begin{defn} A vector field $X$ on $(M,\xi)$ is called \emph{contact} if
there exists a smooth function $f: M \to \R$ such that
$$
\CL_X \lambda = f \lambda.
$$
The associated function $H$ defined by
\be\label{eq:contact-Hamiltonian}
H = - \lambda(X)
\ee
is called the \emph{contact Hamiltonian} of $X$. We also call $X$ the
contact Hamiltonian vector field associated to $H$.
\end{defn}

A straightforward calculation shows
$$
f = - R_\lambda[H].
$$
For given general function $H$, the associated contact Hamiltonian vector field
$X_H$ has decomposition
$$
X_H = X_H^\pi - H R_\lambda \in \xi \oplus \R \langle R_\lambda \rangle
$$
where the projection $X_H^\pi$ to $\xi$ is uniquely determined by the equation
$$
X_H^\pi \rfloor d\lambda = dH - R_\lambda[H] \lambda.
$$
(This is a special case of \cite[Lemma 2.1]{oh-wang2} with different sign convention.
 Also see \cite[Section 2]{mueller-spaeth2} for some relevant discussions.)

\begin{lem}\label{lem:XH-decompose} Solving contact Hamilton's equation $\dot x = X_H(t,x)$ is equivalent to
finding $\gamma:\R \to M$ that satisfies
\be\label{eq:XH-decompose}
(\dot \gamma - X_H(t,\gamma(t)))^\pi = 0, \quad \gamma^*\lambda + H(t,\gamma(t))\, dt = 0
\ee
\end{lem}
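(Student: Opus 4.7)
The plan is to verify the equivalence by splitting the velocity vector $\dot\gamma$ along the canonical decomposition $TM = \xi \oplus \R\langle R_\lambda\rangle$ and matching components with those of $X_H$. The decomposition $X_H = X_H^\pi - H R_\lambda$ recorded just above the statement already exhibits the $\xi$-part and the Reeb part of $X_H$; note in particular $\lambda(X_H) = -H$ as dictated by the sign convention \eqref{eq:contact-Hamiltonian}. For any curve $\gamma$ one likewise has the pointwise splitting $\dot\gamma = \dot\gamma^\pi + \lambda(\dot\gamma)\, R_\lambda$.

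For the forward implication I would assume $\dot\gamma = X_H(t,\gamma(t))$ and project both sides onto $\xi$ along $R_\lambda$, which yields $(\dot\gamma - X_H(t,\gamma(t)))^\pi = 0$, the first equation of \eqref{eq:XH-decompose}. Contracting both sides with $\lambda$ instead gives $\lambda(\dot\gamma) = \lambda(X_H) = -H(t,\gamma(t))$, and rewriting this identity as a relation of one-forms on $\R$ after pullback by $\gamma$ produces the second equation $\gamma^*\lambda + H(t,\gamma(t))\,dt = 0$.

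For the reverse direction I would observe that the two components of $\dot\gamma - X_H$ with respect to the splitting $TM = \xi \oplus \R\langle R_\lambda\rangle$ are exactly $(\dot\gamma - X_H)^\pi$ and $\lambda(\dot\gamma - X_H)\, R_\lambda = (\lambda(\dot\gamma) + H)\, R_\lambda$. The first vanishes by the first equation of \eqref{eq:XH-decompose}, and the second vanishes by the second equation, so $\dot\gamma = X_H(t,\gamma(t))$ as desired.

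There is no serious analytic obstacle; the statement is essentially a bookkeeping consequence of the decomposition $X_H = X_H^\pi - H R_\lambda$. The only point requiring care is the sign: one must use $H = -\lambda(X_H)$ rather than the opposite sign, otherwise the second equation of \eqref{eq:XH-decompose} would appear with the wrong sign in front of $H$.
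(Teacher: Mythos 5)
Your proof is correct, and it is the obvious argument the paper has in mind: the lemma is stated without proof immediately after the decomposition $X_H = X_H^\pi - H R_\lambda$, so splitting $\dot\gamma - X_H$ along $TM = \xi \oplus \R\langle R_\lambda\rangle$ and noting $\lambda(X_H) = -H$ is exactly the intended reasoning. Your attention to the sign convention $H = -\lambda(X_H)$ is well placed, as that is the only point at which a careless reader could go astray.
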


We also state the following lemma for the later purpose, whose proof is
a straightforward calculation. (See \cite[Lemma 2.2]{mueller-spaeth2}.)

\begin{lem}\label{lem:inverse-Hamiltonian} Let $\psi_t$ be a contact isotopy satisfying
$\psi_t^*\lambda = e^{g_t} \lambda$ and generated by the vector field $X_t$ with
its contact Hamiltonian  $H(t,x) = H_t(x)$.
\begin{enumerate}
\item Then the inverse isotopy $\psi_t^{-1}$ is generated
by the contact Hamiltonian, denoted by $\overline H$,
\be\label{eq:inverse-Hamiltonian}
\overline H(t,x) = - e^{-g_t(x)} H(t, \psi_t(x)).
\ee
\item If $\psi'_t$ is another contact isotopy with corresponding $g'_t$ and $H'_t$,
then the product $\psi_t' \psi_t$ is generated by the Hamiltonian
\be\label{eq:product-Hamiltonian}
H '\# H(t,x): = H'(t,x) + e^{g_t'((\psi_t')^{-1}(x))} H(t, (\psi_t')^{-1}(x)).
\ee
\end{enumerate}
\end{lem}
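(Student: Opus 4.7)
\textbf{Proof plan for Lemma \ref{lem:inverse-Hamiltonian}.} Both assertions follow from direct computation: one identifies the time-dependent generator of the family of contactomorphisms on the right-hand side, applies the defining relation $H = -\lambda(X)$, and then uses the conformal relation $\psi_t^*\lambda = e^{g_t}\lambda$ to rewrite the pullback in the required form.

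\emph{Part (1).} The plan is to find the generator $Y_t$ of $\psi_t^{-1}$ by differentiating the identity $\psi_t\circ \psi_t^{-1} = \id$ in $t$. The chain rule gives
$$
0 = X_t(x) + d\psi_t\bigl(\psi_t^{-1}(x)\bigr)\cdot Y_t\bigl(\psi_t^{-1}(x)\bigr),
$$
so that, after relabeling by $y = \psi_t^{-1}(x)$,
$$
Y_t(y) = -(d\psi_t)^{-1}\bigl(X_t(\psi_t(y))\bigr).
$$
Thus $\overline H(t,y) = -\lambda_y(Y_t(y)) = \lambda_y\bigl((d\psi_t)^{-1}(X_t(\psi_t(y)))\bigr)$. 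Now the conformal identity $\psi_t^*\lambda = e^{g_t}\lambda$ rewritten contravariantly gives
$$
\lambda_y\bigl((d\psi_t)^{-1} w\bigr) = e^{-g_t(y)}\lambda_{\psi_t(y)}(w), \qquad w \in T_{\psi_t(y)}M,
$$
and applying this with $w = X_t(\psi_t(y))$ together with $\lambda(X_t) = -H_t$ yields \eqref{eq:inverse-Hamiltonian}.

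\emph{Part (2).} The plan is analogous: differentiate the composition $(\psi'_t\psi_t)(x)$ in $t$ by the chain rule to get the generator $Z_t$ of $\psi'_t\psi_t$,
$$
Z_t\bigl(\psi'_t\psi_t(x)\bigr) = X'_t\bigl(\psi'_t\psi_t(x)\bigr) + d\psi'_t\bigl(X_t(\psi_t(x))\bigr).
$$
Setting $y = \psi'_t\psi_t(x)$, so that $\psi_t(x) = (\psi'_t)^{-1}(y)$, this becomes
$$
Z_t(y) = X'_t(y) + d\psi'_t\bigl|_{(\psi'_t)^{-1}(y)}\bigl(X_t((\psi'_t)^{-1}(y))\bigr).
$$
Applying $-\lambda_y$ to both sides, the first term yields $H'(t,y)$ directly, while for the second term the defining pullback identity $(\psi'_t)^*\lambda = e^{g'_t}\lambda$ evaluated at $(\psi'_t)^{-1}(y)$ gives
$$
\lambda_y\Bigl(d\psi'_t\bigl|_{(\psi'_t)^{-1}(y)}(v)\Bigr) = e^{g'_t((\psi'_t)^{-1}(y))}\,\lambda_{(\psi'_t)^{-1}(y)}(v)
$$
for $v \in T_{(\psi'_t)^{-1}(y)}M$. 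Taking $v = X_t((\psi'_t)^{-1}(y))$ and using $\lambda(X_t) = -H_t$, the second term contributes $+e^{g'_t((\psi'_t)^{-1}(y))} H(t,(\psi'_t)^{-1}(y))$, which gives \eqref{eq:product-Hamiltonian}.

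\emph{Main obstacle.} There is no real analytic difficulty here; the calculation is standard and the only pitfall is bookkeeping of evaluation points. One must be careful that pullback relations like $\psi^*\lambda = e^{g_\psi}\lambda$ translate into \emph{two} distinct identities — one for $d\psi$ applied to a tangent vector at the source, one for $(d\psi)^{-1}$ applied to a vector at the target — with different evaluation points for the conformal factor. Keeping these straight (and consistent with the composition convention $g_{\phi\psi} = g_\phi\circ\psi + g_\psi$ of the preceding lemma) is essentially the entire content of the argument.
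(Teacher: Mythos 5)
Your proof is correct, and the computations are the standard ones. The paper itself does not supply a proof of this lemma---it quotes it from M\"uller--Spaeth \cite[Lemma 2.2]{mueller-spaeth2} and only remarks that the proof is a straightforward calculation; your argument (differentiate the relevant composition by the chain rule, apply $H=-\lambda(X)$, and rewrite using the conformal relation $\psi^*\lambda = e^{g_\psi}\lambda$ with the conformal factor evaluated at the correct point) is precisely that standard calculation.
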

In particular, we have
\be\label{eq:barH'-sharp-H}
\overline H' \# H(t,x) = e^{-g_t'}(H(t,\psi'(t)) - H'(t,\psi'(t))).
\ee
We remark that these formulae are reduced to the standard formulae
in the Hamiltonian dynamics in symplectic geometry,
if $\psi_t, \, \psi_t'$ are $\lambda$-strict contactomorphisms so that $g_t \equiv 1 \equiv g_t'$.

\subsection{Perturbed contact action functional $\CA_H$}
\smallskip

To obtain a good variational problem, we need to study the action functional
defined by the first-named author in \cite{oh:perturbed-contacton-bdy}
which is called
the \emph{perturbed action functional}. In fact, \cite{oh:perturbed-contacton-bdy} studies the action functional on general contact manifold $(M,\xi)$, while in the present paper
we focus on the 1-jet bundles $M = J^1B$ with the standard contact structure
$$
\xi = \ker \lambda = \ker (dz - pdq)
$$
where $B$ is a closed manifold of dimension $n$. From now on we assume that $H$ is a
\emph{compactly supported} Hamiltonian function on $J^1B$, which will be sufficient for
the purpose of the present paper.
We denote by
$$
\CH := C^\infty_c ([0,1] \times J^1B, \R)
$$
the space of compactly supported Hamiltonians. For each $r \in \R_+$, we define
\be \label{eq:CKR}
\CH_r = \{H \in C^\infty([0,1]\times J^1B,\R) \mid \supp H \subset D_r(J^1B)\}
\ee
which provides a natural filtration of the space $\CH $. Then we have
$$
\CH =  \bigcup_{r \in \R_+} \CH_r
$$
and equip the union $\cup_r \CH_r$ with the direct limit topology of $\{\CH_r\}_{r > 0}$.

\begin{defn}[Perturbed action functional]
Let $H = H(t,y)$ be a contact Hamiltonian and a pair $(R_0,R_1)$ of Legendrian submanifolds of $J^1B$.
Consider the free path space
$$
\CL := C^{\infty}([0,1];J^1B) = \{ \gamma: [0,1] \rightarrow J^1B \}.
$$
and a path space
$$
\CL(R_0,R_1) = \CL(J^1B;R_0,R_1) := \{ \gamma \in \CL \mid \gamma(0) \in R_0, \; \gamma(1) \in R_1 \}.
$$
We define a functional $\CA_H: \CL(R_0,R_1) \rightarrow \R$ given by
\be\label{eq:perturbed-action2}
\CA_H(\gamma) := \int_0^1 e^{g_{(\phi^t_H)^{-1}} (\gamma(t))} \gamma^* \lambda_H
= \int_0^1 e^{g_{(\phi^t_H)^{-1}} (\gamma(t))} (\lambda(\dot \gamma (t)) + H_t (\gamma (t))) dt
\ee
Here $\lambda_H := \lambda + H dt$ with slight abuse of notation as in \cite{oh:perturbed-contacton-bdy}.
\end{defn}

Note that, when $H = 0$, we have
$$\CA_0 (\gamma) = \CA (\gamma) = \int_0^1 \gamma^* \lambda $$
which is the standard contact action functional for the Reeb dynamics.

The following lemma connects the perturbed action functional with the unperturbed (standard) one.

\begin{lem}[Lemma 2.2, \cite{oh:perturbed-contacton-bdy}]\label{lem:CAHu=CAw}
For each given path $\gamma \in \CL(R_0,R_1)$, consider the path $\overline \gamma \in \CL(\psi^1_H(R_0), R_1)$ defined by
$$ \overline \gamma (t) := (\phi^t_H)^{-1}(\gamma(t)) $$
where $\phi^t_H := \psi^t_H \circ (\psi^1_H)^{-1} $.
Then we have
$$
\CA_H(\gamma) = \CA(\overline \gamma).
$$
\end{lem}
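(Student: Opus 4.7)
The plan is to verify the claim at the integrand level and then integrate. Set $\psi_t := (\phi_H^t)^{-1}$ so that $\overline\gamma(t) = \psi_t(\gamma(t))$; the goal is the pointwise identity
\[
\lambda_{\overline\gamma(t)}\bigl(\dot{\overline\gamma}(t)\bigr) \;=\; e^{g_{(\phi_H^t)^{-1}}(\gamma(t))}\bigl(\lambda_{\gamma(t)}(\dot\gamma(t)) + H(t,\gamma(t))\bigr),
\]
after which the lemma will follow by integrating over $[0,1]$. The chain rule gives
\[
\dot{\overline\gamma}(t) \;=\; \left.\tfrac{\partial \psi_t}{\partial t}\right|_{\gamma(t)} \,+\, d(\psi_t)_{\gamma(t)}\bigl(\dot\gamma(t)\bigr),
\]
so I will evaluate $\lambda_{\overline\gamma(t)}$ separately on the two summands and add the results.

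The spatial summand is immediate: by the very definition of the conformal exponent $g_{\psi_t}$,
\[
\lambda_{\overline\gamma(t)}\bigl(d(\psi_t)_{\gamma(t)}(\dot\gamma(t))\bigr) \;=\; (\psi_t^*\lambda)_{\gamma(t)}(\dot\gamma(t)) \;=\; e^{g_{\psi_t}(\gamma(t))}\,\lambda_{\gamma(t)}(\dot\gamma(t)).
\]
For the temporal summand, I differentiate the identity $\psi_t\circ\phi_H^t\equiv\id_{J^1B}$ in $t$. Since $\phi_H^t = \psi_H^t(\psi_H^1)^{-1}$ is itself an integral flow of the contact vector field $X_H$ (only the initial condition has been shifted, so that $\phi_H^1=\id$), this differentiation yields
\[
\left.\tfrac{\partial \psi_t}{\partial t}\right|_y \;=\; -\,d(\psi_t)_y\bigl(X_H(t,y)\bigr)
\]
for every $y\in J^1B$. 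Specializing to $y=\gamma(t)$, pairing with $\lambda_{\overline\gamma(t)}$, and applying the pull-back identity once more together with the defining relation $H = -\lambda(X_H)$ converts the temporal term into $e^{g_{\psi_t}(\gamma(t))}\,H(t,\gamma(t))$.

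Summing the two contributions produces exactly the integrand of $\CA_H(\gamma)$ in \eqref{eq:perturbed-action2}, completing the proof after integration over $t\in[0,1]$. There is no genuine analytic obstacle in this argument; the only thing to watch carefully is the location of basepoints for $d\psi_t$, which fixes where the conformal factor must be read off, namely at $\gamma(t)$ rather than $\overline\gamma(t)$, so that the resulting factor is $e^{g_{(\phi_H^t)^{-1}}(\gamma(t))}$ and matches the definition of $\CA_H$.
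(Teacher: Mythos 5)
Your proof is correct, and the approach — verifying the integrand-level identity by splitting $\dot{\overline\gamma}$ via the chain rule into a temporal and a spatial piece, handling the spatial piece with the definition $\psi_t^*\lambda = e^{g_{\psi_t}}\lambda$ and the temporal piece by differentiating $\psi_t\circ\phi_H^t=\id$ together with $\lambda(X_H)=-H$ — is the natural one. The paper does not reproduce the proof (the lemma is simply cited from [Oh10]), so there is no internal argument to compare against, but what you wrote is essentially the standard computation one would expect to find there; the observation that $\phi_H^t = \psi_H^t(\psi_H^1)^{-1}$ is still an integral flow of $X_H$ (with only the time-one normalization changed) is the one small point that needs to be noticed, and you handle it correctly. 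The only cosmetic remark is that in the displayed pointwise identity you could also record that $e^{g_{(\phi_H^t)^{-1}}(\gamma(t))}$ can be rewritten as $e^{-g_{\phi_H^t}(\overline\gamma(t))}$ via the cocycle relation $g_{\psi^{-1}} = -g_\psi\circ\psi^{-1}$, which is the form that appears most often in the companion computations; but as you note, reading the conformal factor off at $\gamma(t)$ is exactly what matches the definition \eqref{eq:perturbed-action2}, so no change is needed.
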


We now quote the following first variation formula of the action functional
\eqref{eq:perturbed-action2} on the free space $\CL$ from \cite{oh:perturbed-contacton-bdy}.

\begin{prop}[Proposition 2.3, \cite{oh:perturbed-contacton-bdy}]
For any vector field $\eta$ along $\gamma \in \CL$, we have
\bea\label{eq:perturbed-first-var}
\delta \CA_H (\gamma)(\eta)
&=& \int_0^1 d\lambda\left((d \phi^t_H)^{-1}(\eta(t)), (d \phi^t_H)^{-1}(\dot \gamma - X_H(t,\gamma(t)))\right) \, dt \nonumber \\
& & + \lambda(\eta(1)) - e^{g_{(\psi^1_H}(\gamma(0))} \lambda(\eta(0))
\eea
\end{prop}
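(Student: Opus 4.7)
The plan is to derive the formula from the already-established Lemma \ref{lem:CAHu=CAw}, which identifies $\CA_H(\gamma)$ with the standard Reeb action $\CA(\overline \gamma) = \int_0^1 \overline\gamma^*\lambda$ under the gauge transformation $\overline\gamma(t) := (\phi_H^t)^{-1}(\gamma(t))$. Since $\gamma\mapsto\overline\gamma$ is a pointwise-in-$t$ diffeomorphism, an arbitrary variation $\eta$ along $\gamma$ induces the variation
$$
\overline\eta(t) \;=\; (d(\phi_H^t)^{-1})(\eta(t))
$$
along $\overline\gamma$. Thus I only need the classical first variation of $\CA$, together with a careful chain-rule computation of $\dot{\overline\gamma}$ and of the boundary data.

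The heart of the computation is differentiating $\Phi_t := (\phi_H^t)^{-1}$. Since $\phi_H^t$ is the flow of $X_H$ characterized by $\phi_H^1=\id$ (rather than $\phi_H^0 = \id$), differentiating the identity $\Phi_t\circ \phi_H^t = \id$ in $t$ yields
$$
\partial_t \Phi_t(x) \;=\; -(d\Phi_t)\bigl(X_H(t,x)\bigr).
$$
Applying the total derivative to $\overline\gamma(t) = \Phi_t(\gamma(t))$ then gives
$$
\dot{\overline\gamma}(t) \;=\; \partial_t\Phi_t(\gamma(t)) + (d\Phi_t)(\dot\gamma(t)) \;=\; (d(\phi_H^t)^{-1})\bigl(\dot\gamma(t) - X_H(t,\gamma(t))\bigr),
$$
which is precisely what is needed to produce the integrand of \eqref{eq:perturbed-first-var} once substituted into $d\lambda(\overline\eta,\dot{\overline\gamma})$.

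The last step is the standard variation formula for the unperturbed Reeb action, obtained from Cartan's magic formula applied to $\overline\gamma^*\lambda$:
$$
\delta \CA(\overline\gamma)(\overline\eta) \;=\; \int_0^1 d\lambda\bigl(\overline\eta(t),\dot{\overline\gamma}(t)\bigr)\,dt + \lambda(\overline\eta(1)) - \lambda(\overline\eta(0)).
$$
I then evaluate the boundary terms using the endpoint values of $\phi_H^t$. Since $\phi_H^1 = \id$, we have $\overline\eta(1) = \eta(1)$ and so $\lambda(\overline\eta(1)) = \lambda(\eta(1))$. Since $\phi_H^0 = (\psi_H^1)^{-1}$, we have $(\phi_H^0)^{-1} = \psi_H^1$, giving $\overline\eta(0) = (d\psi_H^1)(\eta(0))$ and hence
$$
\lambda(\overline\eta(0)) \;=\; ((\psi_H^1)^*\lambda)_{\gamma(0)}(\eta(0)) \;=\; e^{g_{\psi_H^1}(\gamma(0))}\,\lambda(\eta(0))
$$
by definition of the conformal exponent. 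Assembling these pieces reproduces \eqref{eq:perturbed-first-var}.

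The main obstacle is essentially bookkeeping: one must consistently keep track of the nonstandard convention $\phi_H^t = \psi_H^t(\psi_H^1)^{-1}$, which makes $\phi_H^t$ a contact isotopy that equals the identity at $t=1$ rather than at $t=0$. This is what produces the conformal factor $e^{g_{\psi_H^1}}$ on the $t=0$ boundary while leaving the $t=1$ boundary free of any such factor — the asymmetry that distinguishes \eqref{eq:perturbed-first-var} from the purely symplectic analogue and that makes this perturbed action the correct one for the one-jet bundle.
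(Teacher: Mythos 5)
Your proof is correct and uses the natural approach: reduce $\CA_H$ to the unperturbed Reeb action $\CA$ via Lemma \ref{lem:CAHu=CAw}, compute $\dot{\overline\gamma}=(d\phi_H^t)^{-1}(\dot\gamma - X_H)$ by differentiating $(\phi_H^t)^{-1}\circ\phi_H^t=\id$, and evaluate the boundary terms of the classical first-variation formula using $\phi_H^1=\id$ and $(\phi_H^0)^{-1}=\psi_H^1$, which produces the conformal factor $e^{g_{\psi_H^1}(\gamma(0))}$ at $t=0$. The present paper simply cites the proposition from \cite{oh:perturbed-contacton-bdy}, and since Lemma \ref{lem:CAHu=CAw} is the gauge-transformation identity from the same source, your derivation is the one the statement is built to invite.
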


An immediate corollary of this first variation formula shows that the Legendrian boundary condition is a natural boundary condition for the action functional $\CA_H$ in that it kills the boundary contribution in the first variation:

\begin{cor}
Let $(R_0,R_1)$ be a Legendrian pair. Then we have
$$
\delta \CA_H (\gamma)(\eta)
= \int_0^1 d\lambda(d(\phi^t_H)^{-1}(\eta(t)), d(\phi^t_H)^{-1}(\dot \gamma - X_H(t,\gamma(t)))) dt
$$
on $\CL(R_0,R_1)$.
\end{cor}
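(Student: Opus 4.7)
The plan is to derive the corollary as an immediate consequence of the first variation formula stated in the preceding Proposition (the analogue of Proposition 2.3 of \cite{oh:perturbed-contacton-bdy}). Starting from
\begin{align*}
\delta \CA_H (\gamma)(\eta)
&= \int_0^1 d\lambda\bigl((d \phi^t_H)^{-1}(\eta(t)), (d \phi^t_H)^{-1}(\dot \gamma - X_H(t,\gamma(t)))\bigr) \, dt \\
&\quad + \lambda(\eta(1)) - e^{g_{(\psi^1_H)}(\gamma(0))} \lambda(\eta(0)),
\end{align*}
the only thing that needs to be verified is the vanishing of the two boundary contributions when we restrict the variation to the subspace of admissible variations for the path space $\CL(R_0,R_1)$.

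First I would identify the tangent space $T_\gamma \CL(R_0,R_1)$. A curve of paths $\gamma_s \in \CL(R_0,R_1)$ with $\gamma_0 = \gamma$ must satisfy $\gamma_s(0) \in R_0$ and $\gamma_s(1) \in R_1$ for all $s$. Differentiating at $s = 0$, any admissible variation $\eta$ must satisfy the Legendrian boundary conditions
$$
\eta(0) \in T_{\gamma(0)} R_0, \qquad \eta(1) \in T_{\gamma(1)} R_1.
$$
Since $R_0$ and $R_1$ are Legendrian in $(J^1B, \xi)$, we have $T_{\gamma(i)} R_i \subset \xi_{\gamma(i)} = \ker \lambda_{\gamma(i)}$ for $i = 0, 1$. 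Consequently $\lambda(\eta(0)) = 0$ and $\lambda(\eta(1)) = 0$, so both boundary terms in the first variation formula vanish identically regardless of the value of the conformal factor $e^{g_{(\psi^1_H)}(\gamma(0))}$.

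Substituting these vanishings into the formula above yields
$$
\delta \CA_H (\gamma)(\eta) = \int_0^1 d\lambda\bigl(d(\phi^t_H)^{-1}(\eta(t)), d(\phi^t_H)^{-1}(\dot \gamma - X_H(t,\gamma(t)))\bigr) \, dt
$$
for every $\eta \in T_\gamma \CL(R_0,R_1)$, which is exactly the claim. There is no genuine obstacle here; the entire content of the corollary is the observation that the Legendrian condition is precisely what is needed to kill the $\lambda$-boundary contributions produced by integration by parts, so the Legendrian boundary condition is a natural one for the variational problem associated to $\CA_H$.
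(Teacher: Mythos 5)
Your proof is correct and is exactly the argument the paper intends: the variations $\eta$ tangent to $\CL(R_0,R_1)$ satisfy $\eta(i) \in T_{\gamma(i)}R_i \subset \xi_{\gamma(i)} = \ker\lambda$, so both boundary terms in the first variation formula vanish, including the one carrying the conformal factor. The paper treats this as immediate and gives no separate proof, so your write-up matches what the authors have in mind.
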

Motivated by this first variation formula, we consider the following action functional.

\begin{defn}[Effective action functional] We define $\widetilde \CA_H: \CL(J^1B)\to \R$ to be
\bea\label{eq:tildeCAH}
\widetilde{\mathcal{A}}_H(\gamma) & = & - \CA_H (\gamma) + z(\gamma(1)) \nonumber \\
& = & - \int_0^1 e^{g_{(\phi_H^t)^{-1}} (\gamma (t))} (\lambda_{\gamma(t)} (\dot \gamma (t)) + H(\gamma(t))) dt 
+ z(\gamma(1))
\eea
\end{defn}

The following provides the relationship between 
$\widetilde \CA_H$ and the classical action functional $\CA_H^{\text{\rm cl}}$.
\begin{prop}\label{prop:lifted-action} 
Suppose that $H$ is lifted from $T^*B$, i.e., $H$ has the form
$H(t, q,p, z) = H(t,q,p)$. Denote by $\varphi_H^t: T^*B \to T^*B$
the symplectic Hamiltonian flow generated by $H$.
 Let $L$ be the Lagrangian submanifold given by
$L = \varphi_H^1(o_{T^*B})$. We set
$$
\ell(t) = \pi_{\text{\rm cot}}(\gamma(t))
$$
for a curve $\gamma$ in $J^1B$ with $\gamma(1) \in o_{J^1B}$. Then we have
\be\label{eq:RL}
\widetilde \CA_{H}(\gamma) 
= \CA_H^{\text{\rm cl}}(\ell) + h(\ell(0))
\ee
where $h: L \to \R$ is the canonical Liouville primitive such that
$$
\psi_H^1(o_{J^1B}) = \{(q,p,z) \mid (q,p) \in L, \, z = h(q,p) \}.
$$
\end{prop}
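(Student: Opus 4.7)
The plan is to exploit the special structure available when $H$ is lifted from $T^*B$: the contact vector field $X_H$ on $J^1B$ then preserves $\lambda$ strictly, so that the conformal exponents all vanish and the effective action simplifies dramatically. Concretely, since $H = H(t,q,p)$ is independent of $z$, we have $R_\lambda[H] = \partial_z H = 0$, so by the general identity $\CL_{X_H}\lambda = -R_\lambda[H]\,\lambda$ (recorded in the discussion preceding Lemma \ref{lem:XH-decompose}), the flow $\psi_H^t$ is strict contact and $g_{\psi_H^t} \equiv 0$. Applying the cocycle formula $g_{\phi\psi} = g_\phi\circ\psi + g_\psi$ to $\phi_H^t = \psi_H^t(\psi_H^1)^{-1}$ then yields $g_{(\phi_H^t)^{-1}} \equiv 0$ as well.

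With all conformal exponentials equal to $1$, the effective action reduces to
\[
\widetilde\CA_H(\gamma) = -\int_0^1 \bigl(\lambda(\dot\gamma(t)) + H(t,\gamma(t))\bigr)\,dt + z(\gamma(1)).
\]
Writing $\gamma(t) = (q(t),p(t),z(t))$ and using $\lambda = dz - p\,dq$, I get $\lambda(\dot\gamma) = \dot z - p\dot q$, so the $dz$ part telescopes to $-(z(\gamma(1)) - z(\gamma(0)))$ and cancels the boundary term $z(\gamma(1))$. What remains is exactly
\[
\widetilde\CA_H(\gamma) = z(\gamma(0)) + \int_0^1 \bigl(p(t)\dot q(t) - H(t,q(t),p(t))\bigr)\,dt = z(\gamma(0)) + \CA_H^{\text{\rm cl}}(\ell).
\]

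It remains to identify $z(\gamma(0))$ with $h(\ell(0))$. Under the implicit hypothesis $\gamma(0) \in \psi_H^1(o_{J^1B})$, the parametrization $\psi_H^1(o_{J^1B}) = \{(q,p,h(q,p)) \mid (q,p) \in L\}$ in the statement gives $z(\gamma(0)) = h(\ell(0))$ directly. The only non-trivial task left is to confirm that the function $h$ so defined is the canonical Liouville primitive of $L$, i.e.\ $dh = \theta|_L$. For this I would write the lifted contact vector field explicitly as $X_H = \partial_p H\,\partial_q - \partial_q H\,\partial_p + (p\partial_p H - H)\partial_z$ (determined from $\lambda(X_H) = -H$ and $X_H^\pi \rfloor d\lambda = dH$), so along any $\psi_H^t$-trajectory the $z$-coordinate obeys $\dot z = p\dot q - H$; hence the $z$-value accumulated along the trajectory from $o_{T^*B}$ to $(q,p) \in L$ is the classical action. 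For a one-parameter family of such trajectories parameterized by $s$, a standard integration by parts in $t$ together with Hamilton's equations gives $\partial_s z(1,s) = p(1,s)\,\partial_s q(1,s)$, which is precisely $dh = p\,dq$ on $L$. This last Hamilton--Jacobi calculation is the one piece of real bookkeeping; the conceptual content of the proposition is the vanishing of the conformal exponent under a $z$-independent lift, after which the identification is algebraically forced.
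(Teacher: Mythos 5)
Your proof is correct, and it is shorter and more elementary than the paper's. Where the paper begins by applying Lemma~\ref{lem:CAHu=CAw} to pass to the gauge-transformed path $\overline\gamma=(\phi_H^t)^{-1}\circ\gamma$, decomposes $\lambda = dz - \pi_{\text{\rm cot}}^*\theta$ along $\overline\gamma$ to arrive at $\widetilde\CA_H(\gamma)=\int\overline\gamma^*(\pi_{\text{\rm cot}}^*\theta)+z(\overline\gamma(0))$, and then reconciles the integral with $\ell$ via the strictness identity $\pi_{\text{\rm cot}}^*(\Phi_t^*\theta)=(\phi_H^t)^*dz-\lambda$ and the inverse-Hamiltonian formula \eqref{eq:inverse-Hamiltonian}, you instead read off the vanishing of all conformal exponents directly from $R_\lambda[H]=\partial_z H=0$, collapse the exponential prefactor to $1$ at the first step, and obtain $\widetilde\CA_H(\gamma)=z(\gamma(0))+\CA_H^{\text{\rm cl}}(\ell)$ by nothing more than $\lambda=dz-p\,dq$ and telescoping the $\dot z$ term against the boundary term $z(\gamma(1))$. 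This avoids both the gauge transformation and the pullback bookkeeping of one-forms under $\phi_H^t$; it also makes the boundary contribution transparently $z(\gamma(0))$ rather than $z(\overline\gamma(0))$, which is the form one actually needs for the identification with $h(\ell(0))$. Both routes then require the hypothesis $\gamma(0)\in\psi_H^1(o_{J^1B})$ (so that $\ell(0)\in L$ and $z(\gamma(0))=h(\ell(0))$), and you correctly flag this as an implicit assumption not spelled out in the statement. Finally, both proofs must check that the $z$-graph function $h$ is the Liouville primitive of $L$: the paper asserts this, while you supply the standard Hamilton--Jacobi computation $\partial_s z(1,s)=p(1,s)\,\partial_s q(1,s)$ via $\dot z=p\dot q-H$ along $\psi_H^t$-trajectories, which fills in the detail the paper leaves to the reader. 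Same theorem, different and cleaner path.
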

\begin{proof}
By Lemma \ref{lem:CAHu=CAw}, we can rewrite
$$
\widetilde \CA_H(\gamma) = - \int \overline \gamma^*\lambda 
+z(\gamma(1)).
$$
Furthermore  we have
\beastar
- \int \overline \gamma^*\lambda
& = & - \int \overline \gamma^*(dz - \pi_{\text{\rm cot}}^*\theta)\\
& = & \int (\overline \gamma^*(\pi_{\text{\rm cot}}
^*\theta)) 
- \int  \overline \gamma^*(dz)\\
& = & \int \overline \gamma^*(\pi_{\text{\rm cot}}
^*\theta) - z(\overline \gamma(1))
+ z(\overline \gamma(0))
\eeastar
Since $\phi_H^1 = \psi_H^1 (\psi_H^1)^{-1} = id$, we have 
$z(\overline \gamma(1)) = z(\gamma(1))$ and so derive
\be\label{eq:effective-action}
\widetilde \CA_H(\gamma) = \int \overline \gamma^*(\pi_{\text{\rm cot}}
^*\theta)
+ z (\overline \gamma(0)).
\ee
Denote by $\overline H$ the inverse Hamiltonian \eqref{eq:inverse-Hamiltonian}
of $H$ on $J^1B$.
Then we evaluate
\beastar
\overline \gamma^*(\pi_{\text{\rm cot}}
^*\theta)(\del_t)
& = & \theta\left(\frac{d}{dt} \pi_{\text{\rm cot}}
(\overline \gamma(t))\right)\\
& = & \theta\left(d(\pi_{\text{\rm cot}}
)\left(d(\phi_H^t)^{-1}(\dot \gamma) 
+ X_{\overline H}(\overline \gamma(t))\right)\right) \\
& = & \theta\left(d(\pi_{\text{\rm cot}}
 \circ \phi_H^t)^{-1})\left(\dot \gamma 
+ d(\phi_H^t)(X_{\overline H}((\phi_H^t)^{-1})(\gamma(t)))\right)\right).
\eeastar
But a direct evaluation using the formula
\eqref{eq:inverse-Hamiltonian} proves
$$
\lambda((\phi_H^t)_*X_{\overline H})  = -H
$$
which proves $d(\phi_H^t)(X_{\overline H}((\phi_H^t)^{-1}(\gamma(t)) 
= -X_H(\gamma(t))$. Therefore we have derived
$$
\overline \gamma^*(\pi_{\text{\rm cot}}
^*\theta)(\del_t) = \theta \left(d(\pi_{\text{\rm cot}}
 \circ d(\phi_H^t)^{-1}(\dot \gamma - X_H(\gamma))\right).
$$
In particular, if $\phi_H^t = \psi_H^t (\psi_H^1)^{-1}$ are strict  contactomorphisms, i.e., 
$(\phi_H^t)^*\lambda = \lambda$ for all $t$,
which is the case when the Hamltonian $H$ is lifted from $T^*B$, we get
$$
(\phi_H^t)^*dz - (\varphi_H^t (\varphi_H^1)^{-1})^*\theta  = \lambda
$$
and hence
$$
(\varphi_H^t (\varphi_H^1)^{-1})^*\theta = (\phi_H^t)^*dz - \lambda.
$$
We compute
$$
\theta \left(d(\pi_{\text{\rm cot}}
 \circ \phi_H^t)^{-1}(\dot \gamma
-X_H(\gamma)\right)
= (\varphi_H^t (\varphi_H^1)^{-1})_*\theta(\dot \ell - X_H(\ell)).
$$
Therefore we can rewrite
\beastar
 (\varphi_H^t (\varphi_H^1)^{-1})_*\theta((\dot \ell - X_H(\ell))
 & = &( (\phi_H^t)_*dz - \lambda)((\dot \gamma - X_H(\gamma)) \\
 & = & dz(\dot \gamma) - (\gamma^*\lambda(\del_t) + H) \\
 & = &\ell^*\theta (\del_t)- H(t,\ell(t))
 \eeastar 
By integration, we have shown
$$
\int_0^1 \overline \gamma^*(\pi_{\text{\rm cot}}^*\theta)(\del_t)
=  \CA_H^{\text{\rm cl}}(\ell).
$$
Furthermore we consider the Legendrian embedding 
(resp. Lagrangian embedding) 
$\iota_H^{\text{\rm leg}}: B \to J^1B$ 
(resp. $\iota_H^{\text{\rm lag}} : B \to T^*B$) defined by
$$
\iota_H^{\text{\rm leg}}(q) = \psi_H^1(q,0,0), 
\, \left(\text{\rm resp.} \, \iota_H^{\text{\rm lag}}(q) = \varphi_H^1(q,0)\right)
$$
where $\varphi_H^t$ is the \emph{symplectic Hamiltonian flow} of $H$.
Then we have 
$$
\iota_H^{\text{\rm lag}}(q) = \pi_{\text{\rm cot}}
\left(\iota_H^{\text{\rm leg}}(q)\right).
$$
Now we can check the function $h: L \to \R$ defined by
$$
h(x): = z\left(\iota_H^{\text{\rm leg}} \circ (\iota_H^{\text{\rm lag}})^{-1}(x)\right)
$$
is the Liouville primitive  of the Lagrangian submanifold 
$$
L: = \iota_H^{\text{\rm lag}}(B) =  \pi_{\text{\rm cot}}
(\iota_H^{\text{\rm leg}}(B))
$$
i.e., $i_L^*\theta = dh$, which also satisfies \eqref{eq:RL} by construction.
This concludes the proof.
\end{proof}

\begin{rem} It is very satisfying and amusing to see the right hand side
functional of \eqref{eq:RL} is precisely the same functional used in 
\cite{oh:jdg} which was also called the \emph{effective action functional} therein, 
and hence the consistency of the terminology in the two
cases which are related by the canonical process
given in the above proof.
\end{rem}

We now examine the relationship between the critical points of
the aforementioned constrained action functional and the contact Hamiltonian trajectories.
We represent each Reeb chord between $\psi_H^1(R_0)$ and $R_1$ by a pair
$(\overline \gamma, T)$  and $\overline \gamma:[0,|T|] \to J^1B$ is a Reeb chord with action $T$.
(Here $T \neq 0$ since we assume $\psi(R_0) \cap R_1 = \emptyset$.)
We then consider the curves of the form
\be\label{eq:translated-Ham-chord}
\gamma^\pm(t) = \phi_H^t(\overline \gamma^\pm_{T_\pm}(t))
\ee
where 
$$
(\overline \gamma^\pm,T_\pm) \in \mathfrak{Reeb}(\psi_H^1(R_0),R_1)).
$$
The following definition is introduced  in \cite{oh:shelukhin-conjecture}.
 
\begin{defn}[Translated Hamiltonian chords] \label{defn:trans-Ham-chords}
Let $(R_0,R_1)$ be a 2-component Legendrian link of $(M,\lambda)$.
\begin{enumerate}
\item
We call a curve $\gamma$ of the form \eqref{eq:translated-Ham-chord} a \emph{translated Hamiltonian chord} from $R_0$ to $R_1$ with $\gamma(0) \in R_0$.
We denote by
$$
\mathfrak X^{\text{\rm trn}}((R_0,R_1);H)
$$
the set thereof.
\item
We call the intersection $(\psi_H^1)^{-1}(Z_{R_1})\cap R_0$ the set of \emph{$\lambda$-translated Hamiltonian intersection points}.
\end{enumerate}
\end{defn}

The set $\mathfrak X^{\text{\rm trn}}((R_0,R_1);H)$ will play the role of generators of the
Floer homology associated to \eqref{eq:perturbed-contacton-bdy} which appear as
the asymptotic limits of finite energy solutions of \eqref{eq:perturbed-contacton-bdy}, and
$\mathfrak{Reeb}(\psi_H^1(R_0), R_1)$ the role generators of the Floer homology 
of its gauge transform \eqref{eq:contacton-bdy-J0} below.
(See \cite{oh:entanglement1,oh:perturbed-contacton-bdy} for the proof.)

There are three other ways of viewing the set $\mathfrak X^{\text{\rm trn}}((R_0,R_1);H)$ as follows.

\begin{lem}[Lemma 3.4 \cite{oh:shelukhin-conjecture}]\label{prop:trnhamchords-trnReebchords} We have one-one correspondences
$$
\xymatrix{
(\psi_H^1)^{-1}(Z_{R_1})\cap R_0 \ar[d]\ar[r] & \ar[l]\psi_H^1(R_0) \cap Z_{R_1} \ar[d]\\
\mathfrak X^{\text{\rm trn}}((R_0,R_1);H) \ar[u] \ar[r] & \ar[l] \mathfrak{Reeb}(\psi_H^1(R_0), R_1) \ar[u].
}
$$
\end{lem}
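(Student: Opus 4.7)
The plan is to unwind the definitions and verify that each arrow in the diagram is the obvious tautological map, with its inverse running in the opposite direction; the whole statement is then a short diagram chase.

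I would first treat the two horizontal bijections. The upper horizontal arrow is the restriction of the contactomorphism $\psi_H^1$ to $(\psi_H^1)^{-1}(Z_{R_1}) \cap R_0$; this is evidently a bijection onto $\psi_H^1(R_0) \cap Z_{R_1}$ with inverse $(\psi_H^1)^{-1}$. The lower horizontal arrow is the gauge transformation $\Phi_H^{-1}: \gamma \mapsto \overline\gamma$ with $\overline\gamma(t) := (\phi_H^t)^{-1}(\gamma(t))$. Since $\phi_H^0 = (\psi_H^1)^{-1}$ and $\phi_H^1 = \id$, the endpoints transform as $\overline\gamma(0) = \psi_H^1(\gamma(0)) \in \psi_H^1(R_0)$ and $\overline\gamma(1) = \gamma(1) \in R_1$, and unpacking \eqref{eq:translated-Ham-chord-intro} shows exactly that $\overline\gamma$ is a Reeb chord. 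The inverse sends $(\overline\gamma, T) \mapsto \gamma(t) := \phi_H^t(\overline\gamma(t))$, which is a translated Hamiltonian chord by Definition \ref{defn:trans-Ham-chords-intro}.

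Next I would treat the two vertical bijections. Given $p \in (\psi_H^1)^{-1}(Z_{R_1}) \cap R_0$, set $q := \psi_H^1(p)$, choose $T$ with $\phi^T_{R_\lambda}(q) \in R_1$, put $\overline\gamma(t) := \phi^{tT}_{R_\lambda}(q)$, and finally $\gamma(t) := \phi_H^t(\overline\gamma(t))$; then $\gamma \in \mathfrak X^{\text{\rm trn}}((R_0,R_1);H)$ with $\gamma(0) = (\psi_H^1)^{-1}(q) = p$. Conversely, evaluation at $t = 0$ sends a translated Hamiltonian chord $\gamma$ to a point $\gamma(0) \in R_0$; using $\overline\gamma(0) = \psi_H^1(\gamma(0))$ and $\overline\gamma(1) \in R_1$ one sees that $\psi_H^1(\gamma(0)) \in Z_{R_1}$, hence $\gamma(0) \in (\psi_H^1)^{-1}(Z_{R_1}) \cap R_0$. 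The right-hand vertical correspondence between $\psi_H^1(R_0) \cap Z_{R_1}$ and $\mathfrak{Reeb}(\psi_H^1(R_0), R_1)$ is constructed in the same way without the $\psi_H^1$ twist.

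Finally I would observe that the square commutes: starting from $p$ and going right-then-down or down-then-right yields the same Reeb chord, because both routes are built from the same pair $(q,T)$. Together with the four inverses just constructed, this establishes all four one-to-one correspondences. The main subtlety—and the only real obstacle to a fully crisp statement—is that a single point of $\psi_H^1(R_0) \cap Z_{R_1}$ might a priori lie on several Reeb orbits hitting $R_1$ at distinct elapsed times, so the assignment $(\overline\gamma, T) \mapsto \overline\gamma(0)$ is bijective only once one agrees to record the Reeb time $T$ as part of the intersection datum; this is implicit in the paper's subsequent uses, and is in any case a nonissue under the nondegeneracy hypotheses standing throughout.
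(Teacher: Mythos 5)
The paper does not actually prove this lemma in the text: it is quoted from \cite{oh:shelukhin-conjecture} with the remark ``We refer readers \cite{oh:shelukhin-conjecture} for detailed discussion on these transformations,'' so there is no in-paper argument to compare against. Your unwinding of the definitions is correct and is exactly the argument one expects: the top arrow is $\psi_H^1$, the bottom arrow is the gauge transformation $\gamma \mapsto \overline\gamma = (\phi_H^t)^{-1}(\gamma(t))$ (which preserves the required boundary conditions because $\phi_H^0=(\psi_H^1)^{-1}$, $\phi_H^1=\mathrm{id}$), the vertical arrows are evaluation at $t=0$, and commutativity is immediate since both composites produce $\overline\gamma(0)$.

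Your closing observation is the right one to flag and is slightly understated: for a general Legendrian pair $(R_0,R_1)$, the vertical arrow $(\overline\gamma,T)\mapsto\overline\gamma(0)$ from $\mathfrak{Reeb}(\psi_H^1(R_0),R_1)$ to $\psi_H^1(R_0)\cap Z_{R_1}$ is only surjective; nondegeneracy of the pair gives transversality of the intersection but by itself does not forbid one intersection point from lying on several $R_1$-chords with different times $T$, so the ``one-one correspondences'' are correctly read as correspondences between chords (with their time $T$) and intersection points decorated by $T$. In the situation the paper actually uses --- $R_0=R_1=o_{J^1B}$ in $(J^1B,dz-pdq)$, where the Reeb flow is $\partial_z$ and $Z_{o_{J^1B}}=o_{T^*B}\times\R$ --- each point of $\psi_H^1(o_{J^1B})\cap Z_{o_{J^1B}}$ does determine the time $T$ uniquely (it is $-z$ at that point), so there the map is an honest bijection of sets, which is why the issue never surfaces in the sequel.
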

We refer readers \cite{oh:shelukhin-conjecture} for detailed discussion on these transformations.

\part{Analysis of perturbed contact instantons on one-jet bundles}

\section{Tameness of one-jet bundles}
\label{sec:Floer-data}

Recall from \cite{oh-wang1} the following definition on general
contact manifold.

Let $(M, \xi)$ be a contact manifold.
A \emph{contact triad} for the contact manifold $(M, \xi)$ is a triple $(M,\lambda, J)$
whose explanation is now in order. With $\lambda$ given, we have the Reeb vector field $R_\lambda$
uniquely determined by the equation $R_\lambda \rfloor d\lambda = 0, \, R_\lambda \rfloor \lambda = 1$.
Then we have decomposition $TM = \xi \oplus \R \{R_\lambda\}$. We denote by $\Pi: TM \to TM$ the associated idempotent whose image is $\xi$.
A \emph{CR almost complex structure $J$} is an endomorphism $J: TM \to TM$ satisfying $J^2 = - \Pi$
or more explicitly
$$
(J|_\xi)^2 = - id|_\xi, \quad J(R_\lambda) = 0.
$$
\begin{defn}
We say $J$ is \emph{adapted to $\lambda$} if $d\lambda(Y, J Y) \geq 0$ for all $Y \in \xi$ with equality only when $Y = 0$. The associated contact triad metric is given by
$$
g=g_\xi+\lambda\otimes\lambda.
$$
\end{defn}

\subsection{Definition of tameness of contact manifolds}

Since the one-jet bundle $J^1B$ is not compact, we need to examine the $C^0$-bound of (perturbed) contact instantons
in the study of compactness property of the moduli space thereof.
For this purpose, we introduce a general class of contact manifolds,
called \emph{tame contact manifolds} in \cite[Section 5]{oh:entanglement1}.

We first introduces a class of barrier functions which will control the $C^0$ bounds of contact instantons on noncompact contact manifolds.

\begin{defn}[Reeb-tame function]
Let $(M,\xi)$ be a contact manifold equipped with contact form $\lambda$. A function $\psi : M \rightarrow \R$ is called $\lambda$-\emph{tame} (at infinity) if
$$ \CL_{R_\lambda} d\psi = 0 $$
on $M \setminus K$ for a compact subset $K$.
\end{defn}

The following is a subcase of the notion considered in \cite{oh:entanglement1}
in which a more general notion of \emph{quasi-pseudoconvexity} is introduced.
For the purpose of the present paper, this restricted class of tame
contact manifold will be sufficient.

\begin{defn}[Contact $J$-pseudoconvexity]
Let $J$ be a $\lambda$-adapted CR almost complex structure. Let $U \subset M$ be an open subset. We call a function $\psi : U \rightarrow \R$ \emph{contact $J$-pseudoconvex} if
\bea\label{eq:J-convex}
-d(d\psi \circ J) &\geq& 0 \quad {\rm on}\; \xi, \\
R_\lambda \rfloor d(d\psi \circ J) &=& 0
\eea
on $U$. We call such a pair $(\psi,J)$ a \emph{contact pseudoconvex pair} on $U$.
\end{defn}
In the sense of \cite{oh:entanglement1}, such  contact form $\lambda$ is \emph{tame} (at infinity) in that $\lambda$
admits a contact pseudoconvex pair $(\psi,J)$ on $M \setminus K$ such that $\psi$ is also a $\lambda$-tame exhaustion function of $M \setminus K$.

\subsection{Choice of adapted CR almost complex structures on $J^1B$}
\label{subsec:lifted-Jg}

We start with the set $\JJ^c_g(T^*B)$ consisting of $\omega_0$-compatible almost complex structures
 on the cotangent bundle $T^*B$ equipped with
the canonical symplectic form $\omega_0 = -d\theta$.
This class of almost complex structures was used by Floer in \cite{floer:Witten} and
by the first-named author in \cite{oh:jdg} for the construction
of Lagrangian spectral invariants.

We now canonically lift each element $J \in \JJ^c_g(T^*B)$ to
a natural $\lambda$-adapted CR-almost complex structure
on $J^1B$ by pulling it back to
$\xi$ by the isomorphism $\xi \to T(T^*B)$ induced by the restriction to $\xi$ of
the projection $d\pi: T(J^1B) \to T(T^*B)$. In particular we denote by
\be\label{eq:tildeJg}
\widetilde J_g
\ee
the lift of the Sasakian almost complex structure $J_g$ on $T^*B$ associated to the metric $g$
on $B$ and by
\be\label{eq:metric-gtilde}
\widetilde g = d\lambda(\cdot, \widetilde J_g \cdot) + \lambda \otimes \lambda
\ee
the triad metric on $J^1B$ associated to the triad $(J^1B, \lambda, \widetilde J_g)$. (See Appendix \ref{sec:JonT*B} for the description of Sasakian
almost complex structures.)

Recall that we equip the one-jet bundle $J^1B$ with the standard contact form
$\lambda = dz - pdq$. 
\begin{defn}[Lifted $CR$-almost complex structures]\label{defn:lifted-CR-J} We call a $CR$-almost complex
structure on $\pi_{\text{\rm cot}} 
^{-1}(U) \subset J^1B$ with an open subset $U \subset T^*B$ a \emph{$T^*B$-lift} if it is lifted to $\xi$ on
$$   
\pi_{\text{\rm cot}}    
^{-1}(U)
$$
by $d\pi_{\text{\rm cot}}
$  from an $\omega_0$-compatible almost complex structure on $U \subset T^*B$.
We denote by
$$
\JJ^c_g(J^1B)
$$
the set of $\lambda$-adapted CR almost complex structures  $J$ that
is a $T^*B$-lift on $\pi_{\text{\rm cot}}^{-1}(T^*B \setminus K)$ for a compact subset $K \subset T^*B$.
\end{defn}

We first prove the following general lemma.

\begin{prop}\label{prop:harmonic} Let $J^1B$ be equipped with a $\lambda$-adapted $CR$-almost
complex structure $J$ which coincides with the
$T^*B$ lift $\widetilde J_g$ of a Sasakian almost complex structure
$J_g$ on the region $|p|_g \geq r$ for some $r> 0$.
Then both functions $z$ and $|p|^2_g$ are harmonic with respect to the triad metric $\widetilde g$
on $J^1B$ associated to the triad $(J^1B,\lambda, J)$.
Here the norm $| \cdot |_g$ is defined by the given Riemannian metric $g$ on $B$.
\end{prop}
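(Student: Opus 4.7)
The plan is to reduce the proposition to Laplacian computations on $T^*B$ by exploiting a Riemannian submersion structure. In the region $\{|p|_g \geq r\}$ where $J = \widetilde J_g$, I would first verify that $\pi_{\text{\rm cot}}\colon (J^1B, \widetilde g) \to (T^*B, g_{\text{Sas}})$ is a Riemannian submersion, where $g_{\text{Sas}} = \omega_0(\cdot, J_g\cdot)$ is the Sasakian metric. The key facts: the identity $d\lambda = \pi_{\text{\rm cot}}^*\omega_0$, coming from $\lambda = dz - \pi_{\text{\rm cot}}^*\theta$, combined with the definition $\widetilde J_g = (d\pi_{\text{\rm cot}}|_\xi)^*J_g$, forces $d\pi_{\text{\rm cot}}|_\xi$ to be a pointwise linear isometry onto $(T(T^*B), g_{\text{Sas}})$; and the Reeb vector field $R_\lambda = \partial_z$ spans the vertical distribution of $\pi_{\text{\rm cot}}$, has unit length, and is $\widetilde g$-orthogonal to $\xi$. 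The standard contact-triad identity $\nabla^{\widetilde g}_{R_\lambda} R_\lambda = 0$ then shows the Reeb-orbit fibers are geodesic, and hence minimal.

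For a Riemannian submersion with minimal fibers, the O'Neill-type formula yields
\begin{equation*}
\Delta_{\widetilde g}(\pi_{\text{\rm cot}}^* f) = \pi_{\text{\rm cot}}^*(\Delta_{g_{\text{Sas}}} f) \qquad \text{for all } f \in C^\infty(T^*B).
\end{equation*}
Applied to $|p|_g^2 = \pi_{\text{\rm cot}}^*(|p|_g^2)$, the claim for $|p|_g^2$ on $(J^1B, \widetilde g)$ reduces to its harmonicity on $(T^*B, g_{\text{Sas}})$, which I would verify by a direct computation in a local horizontal--vertical orthonormal frame adapted to the Levi-Civita connection of $g$ on $B$.

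The function $z$ requires a separate argument since it is not the pullback of any function on $T^*B$. From $dz = \lambda + \pi_{\text{\rm cot}}^*\theta$ and the $\widetilde g$-orthogonal splitting $T(J^1B) = \xi \oplus \R\langle R_\lambda\rangle$, one identifies $\nabla^{\widetilde g} z$ as the sum of $R_\lambda$ and the horizontal lift of the $g_{\text{Sas}}$-dual of $\theta$. Taking divergence in an orthonormal frame $\{E_i, F_i, R_\lambda\}$ of $\xi \oplus \R\langle R_\lambda\rangle$ with $J E_i = F_i$, and using the bracket relation $[E_i, F_j] \equiv -\delta_{ij} R_\lambda \pmod{\xi}$ together with $\nabla^{\widetilde g}_{R_\lambda} R_\lambda = 0$, the $p$-dependent contributions cancel and one obtains $\Delta_{\widetilde g} z = 0$.

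The main obstacle is the explicit frame computation underlying the harmonicity of $|p|_g^2$ on $(T^*B, g_{\text{Sas}})$: the horizontal lifts of base vector fields carry Christoffel symbols of $g$, so the proof hinges on showing that their contributions cancel against the vertical-trace contributions coming from $|p|_g^2$ being fiberwise quadratic. Once this base computation is in place, the Riemannian submersion identity transports the conclusion back to $(J^1B, \widetilde g)$.
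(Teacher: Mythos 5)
Your Riemannian--submersion approach targets the Riemannian Laplacian $\Delta_{\widetilde g}$, but that is not what the paper's proof establishes nor what the downstream arguments use. This proposition is invoked in Proposition~\ref{prop:tameness-of-one-jet} to supply the contact $J$-pseudoconvexity inequalities $-d(d\psi\circ J)\geq 0$ on $\xi$ and $R_\lambda\rfloor d(d\psi\circ J)=0$, and it underlies Lemma~\ref{lem:harmonic-subharmonic} (harmonicity/subharmonicity of $z\circ w$ and $|p\circ w|_g^2$ for contact instantons $w$, which is governed by $w^*d(df\circ J)$ together with $d(w^*\lambda\circ j)=0$). All of these are statements about the $2$-form $d(df\circ J)$ --- i.e.\ $J$-pluri(sub)harmonicity --- which is strictly stronger than $\Delta_{\widetilde g}f=0$. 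O'Neill's identity transports only the Riemannian Laplacian across the submersion, so even if every step of your plan closed, you would obtain a weaker conclusion than the $d(df\circ J)$ control the tameness and maximum-principle arguments require. The paper's route is a few lines of exterior calculus: $\lambda\circ J=0$ gives $dz\circ J=\theta\circ J_g$ (equation~\eqref{eq:dzJ}), Lemma~\ref{lem:d|p|2} gives $\theta\circ J_g=-\tfrac12 d|p|_g^2$, hence $dz\circ J=-\tfrac12 d|p|_g^2$ and $d(dz\circ J)=0$ immediately, with no metric machinery.

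There is also a concrete failure in the $|p|_g^2$ half of your outline: the ``direct frame computation'' you defer would not give zero. In the flat model $B=\R^n$ the Sasakian metric on $T^*B$ is Euclidean and $\Delta|p|^2 = 2n > 0$; $|p|_g^2$ is strictly subharmonic, not harmonic. Equivalently, on $J^1B$ one has $d|p|_g^2\circ J = -2(dz\circ J)\circ J$ and, since $J^2=-\Pi$ (not $-\mathrm{id}$) so that $dz\circ J^2 = -(dz-\lambda)$, one gets $-d(d|p|_g^2\circ J)=2\,d\lambda$, which is strictly positive on $\xi$. Thus $|p|_g^2$ is strictly $J$-plurisubharmonic rather than pluriharmonic, and the proposition's wording overstates the claim for $|p|_g^2$; note that Lemma~\ref{lem:harmonic-subharmonic} correctly says ``subharmonic'' for $|p\circ w|_g^2$ and Proposition~\ref{prop:tameness-of-one-jet} only appeals to plurisubharmonicity. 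Your $z$ argument is internally consistent ($z$ is indeed $\Delta_{\widetilde g}$-harmonic, and your submersion, minimal-fiber, and $\nabla^{\widetilde g}z=R_\lambda+\theta^\sharp$ observations are all correct), but the pluriharmonic conclusion that is actually needed follows in one line from $dz\circ J=-\tfrac12 d|p|_g^2$, without any frame computation.
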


\begin{proof}
By the property $\Image \widetilde J_g = \xi$, we have
 $$
 0 = \lambda \circ \widetilde J_g = dz\circ \widetilde J_g - \pi_{\text{\rm cot}}^* \theta \circ  \widetilde J_g
 = dz\circ \widetilde J_g - \theta \circ  J_g.
 $$
Therefore
\be\label{eq:dzJ}
- dz\circ \widetilde J_g = -\theta \circ J_g.
\ee
The following is well-known among the experts and shows that $-\theta$ is the Liouville one-form of $\omega_0$ and $\frac12 |p|_g^2$ is
a symplectization end radial function of the Liouville manifold $T^*B \setminus \{o_{T^*B}\}$.

\begin{lem}\label{lem:d|p|2}  
We have $-\theta \circ J_g = \frac12 d(|p|^2_g)$.
\end{lem}
\begin{proof} This follows from a direct calculation using the definition of
Sasakian almost complex structure $J_g$. See \cite[Section 4 \& 5]{BKO} for an
explicit calculation leading to this formula, especially the last displayed formula
in Section 5 therein. For readers' convenience, we provide the details of calculation in
Appendix \ref{sec:sasaki-J}.
\end{proof}

Then by the choice of $J$, $J$ is a $T^*B$-lift of $J_g$ on the region of
$(q,p,z)$ with $|p|_g > r$ for some $r> 0$. Therefore we have
$$
-d(dz\circ J)  =  -d( \theta \circ J_g)
= d\left(\frac12 d|p|_g^2\right) = 0
$$
provided $|p|_g > r $ for some $r > 0$.
Therefore we have
$$
(\Delta z)\, \omega_0 =- d (dz \circ J) = 0
$$
whenever $(q,p,z)$ satisfies $|p|_g > r$. This proves that the function $z$ is a harmonic function.

For Statement (2), we start with \eqref{eq:dzJ} on the region $|p|_g > r$. Then we compute
\beastar
-d(d|p|^2_g \circ J) & = & -d(d|p|^2_g \circ J_g) = 
2 d((\theta \circ J_g) \circ J_g)) \\
& = & 2d\left((dz \widetilde J_g \widetilde J_g\right) =  -2  d(dz) = 0.
\eeastar
This proves that the function $|p|_g^2$ is a harmonic function on $T^*B$.
This finishes the proof of Proposition \ref{prop:harmonic}.
\end{proof}

\begin{prop}\label{prop:tameness-of-one-jet}
In the one-jet bundle $(J^1B,dz - pdq)$, the function $\psi = \frac12|p|^2 +|z|$ satisfies the $\lambda$-tameness and contact $J$-convexity
outside a compact subset
for any $J \in \CJ^c_g(J^1B)$. In particular $(J^1B,dz - pdq)$ is a tame contact manifold.
\end{prop}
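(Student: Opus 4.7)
My plan is to verify both defining conditions—$\lambda$-tameness and contact $J$-pseudoconvexity of the pair $(\psi, J)$—by direct one-form calculation, reducing each to a consequence of Lemma \ref{lem:d|p|2} and the $\lambda$-adaptedness of $J$. Away from the compact locus where members of $\CJ^c_g(J^1B)$ may differ from the Sasakian lift $\widetilde{J}_g$, all identities collapse to a single clean formula $-d(d\psi\circ J) = d\lambda$, from which both pseudoconvexity clauses fall out at once. The mild non-smoothness of $|z|$ at $\{z=0\}$ is handled by an implicit smoothing whose support is absorbed into the compact subset $K$ appearing in the definition of tameness.

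\textbf{Tameness.} Since $R_\lambda = \partial_z$ and both summands of $\psi$ are $\partial_z$-invariant off $\{z=0\}$, we have $\partial_z \rfloor d\psi = \operatorname{sgn}(z)$, which is locally constant. Cartan's formula applied to the exact one-form $d\psi$ (which is $d$-closed) gives
\[
\CL_{R_\lambda} d\psi \;=\; d(R_\lambda \rfloor d\psi) \;=\; 0.
\]

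\textbf{Pseudoconvexity.} Applying $J_g$ to the identity $-\theta \circ J_g = \tfrac12 d|p|_g^2$ of Lemma \ref{lem:d|p|2} and using $J_g^2 = -\mathrm{id}|_{\xi_{T^*B}}$ gives the dual relation $(\tfrac12 d|p|_g^2) \circ J_g = \theta$ on $T^*B$. Lifting to $J^1B$ via the relations $d\pi_{\text{\rm cot}} \circ J = J_g \circ d\pi_{\text{\rm cot}}$ and $J R_\lambda = 0$, together with $\pi_{\text{\rm cot}}^* \theta = dz - \lambda$, a short direct computation yields the two key one-form identities
\[
d(\tfrac12 |p|_g^2) \circ J \;=\; dz - \lambda, \qquad dz \circ J \;=\; -\tfrac12\, d|p|_g^2
\]
valid on the region where $J = \widetilde{J}_g$. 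Consequently
\[
d\psi \circ J \;=\; dz - \lambda - \tfrac12 \operatorname{sgn}(z)\, d|p|_g^2,
\]
and since $\operatorname{sgn}(z)$ is locally constant off $\{z=0\}$, exterior differentiation collapses to the clean identity
\[
-d(d\psi \circ J) \;=\; d\lambda.
\]
Both pseudoconvexity clauses now follow immediately: $d\lambda(Y, JY) \geq 0$ for $Y \in \xi$ by $\lambda$-adaptedness of $J$, and $R_\lambda \rfloor d\lambda = 0$ by definition of the Reeb vector field.

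\textbf{Exhaustion and conclusion.} Since $B$ is compact, each sublevel set $\{\psi \leq N\}$ is contained in the compact set $\{|p|_g \leq \sqrt{2N}\} \times [-N, N]$, so $\psi$ is an exhaustion function. Choosing $K$ to contain both the compact modification region of $J$ and the smoothing region of $|z|$, the pair $(\psi, J)$ realizes a contact $J$-pseudoconvex $\lambda$-tame exhaustion on $J^1B \setminus K$, whence $(J^1B, dz - p\,dq)$ is a tame contact manifold. The main delicate point, which the central calculation fortunately bypasses, is arranging the compact $K$ to simultaneously accommodate the non-smoothness of $|z|$ at $\{z = 0\}$ and the possibly non-Sasakian locus of $J$; this requires carefully invoking the precise notion of tameness from \cite{oh:entanglement1}, but does not affect the identity $-d(d\psi \circ J) = d\lambda$, which is the content that makes the choice $\psi = \tfrac12|p|_g^2 + |z|$ work.
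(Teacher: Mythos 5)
Your proof is correct and follows essentially the same route as the paper: push the Sasakian identity of Lemma \ref{lem:d|p|2} through the $T^*B$-lift relations to obtain the two one-form identities $\tfrac12 d|p|_g^2\circ J = dz-\lambda$ and $dz\circ J = -\tfrac12 d|p|_g^2$ outside a compact set, split $\{z\neq 0\}$ by the sign of $z$ to absorb the kink in $|z|$, and verify the two clauses of contact $J$-pseudoconvexity together with $\lambda$-tameness (for which $R_\lambda\rfloor d\psi=\operatorname{sgn}(z)$ locally constant is exactly what is needed).

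Where you improve on the paper is in carrying the computation to the single closed-form identity $-d(d\psi\circ J)=d\lambda$ on each half-space. The paper instead cites Proposition \ref{prop:harmonic}, whose statement (2) (harmonicity of $|p|_g^2$) is derived from $(dz\,\widetilde J_g)\,\widetilde J_g = -\,dz$; but a CR almost complex structure satisfies $\widetilde J_g^2 = -\Pi$ rather than $-\mathrm{id}$, so in fact $(dz\,\widetilde J_g)\,\widetilde J_g = -(dz-\lambda)$, and the correct conclusion is $-d(d|p|_g^2\circ J)=2\,d\lambda$ (strict plurisubharmonicity, not harmonicity). Your computation recovers this honestly and both pseudoconvexity clauses then follow at once from $\lambda$-adaptedness of $J$ and $R_\lambda\rfloor d\lambda=0$; the proposition itself is unaffected, since either $=0$ or $=2d\lambda$ yields the required inequality.

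One point where your write-up overclaims: the non-smoothness of $\psi$ at $\{z=0\}$ cannot be ``absorbed into the compact subset $K$'' by a smoothing, because $\{|z|<\varepsilon\}\cong T^*B\times(-\varepsilon,\varepsilon)$ is non-compact ($T^*B$ has non-compact fibers), and any smoothing of $|z|$ in that slab would also destroy $\CL_{R_\lambda}d\psi=0$ there. This is not a defect particular to your argument: the paper's own proof also works region-by-region with the smooth functions $\psi_\pm=\tfrac12|p|^2\pm z$ on $\{z>0\}$ and $\{z\le 0\}$, implicitly treating $\psi=\max(\psi_+,\psi_-)$ as pseudoconvex in the barrier sense sufficient for the maximum principle of Theorem \ref{thm:max-principle}. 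It would be cleaner to state that explicitly than to assert a compactly supported smoothing is available.
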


\begin{proof} We first remark that $\psi(x) = 0$ if and only if $z  = 0 = p$ and so $\psi^{-1}(0) = o_{J^1B}$ is compact.
Consider $\{z = 0\}$ on which $\psi = |p|_g^2$ which are plurisubharmonic.
If $z > 0$, we consider the function $\psi_+ := \frac12 |p|^2 + z$, and if $z \leq 0$, we consider $\psi_- : = \frac12|p|^2 - z$
both of which are positive subharmonic functions on $\{z > 0\}$ and $\{z \leq 0\}$ respectively.

We can easily check that
$$
\CL_{R_\lambda} d\psi = d(R_\lambda \rfloor dz) \equiv 0
$$
for both cases. Since both $|p|^2$ and $z$ are pluri-subharmonic functions
for $\widetilde J_g$ and $J\equiv \widetilde J_g$ outside a
compact subset  by Proposition \ref{prop:harmonic},
we have the conditions of contact $J$-convexity, that is,
\beastar
-d(d\psi \circ J) &\geq& 0 \quad {\rm on}\; \xi, \\
R_\lambda \rfloor d(d\psi \circ J) &=& 0
\eeastar
Combining the above, we have finished the proof.
\end{proof}

\section{Perturbed contact Instantons, energy and gauge transformation}
\smallskip

In this section, we follow \cite[Section 3]{oh:perturbed-contacton-bdy}.
Let $\dot \Sigma = \R \times [0,1]$,
$(R_0,R_1)$ be a pair of Legendrian submanifolds in $J^1B$ and let a contact Hamiltonian $H = H(t,y)$ be given.

\subsection{Gauge transformations}

For a given Hamiltonian $H \in \CH$,  we recall from \cite{oh:entanglement1}
that the transformation
$$
\gamma  \mapsto (\Phi_H)^{-1}(\gamma) =: \overline \gamma
$$
satisfies
$$
\Phi_H(\overline \gamma) (t): = \psi^t_H (\psi^1_H)^{-1}(\overline \gamma (t)).
$$
and defines a bijective correspondence
\be\label{eq:gauge}
\Phi_H : \CL (\psi^1_H (R_0), R_1) \to \CL (R_0, R_1).
\ee

Next we apply $\Phi_H$ $\tau$-wise to the map $u$  by
\be\label{eq:gauge-transformation}
u (\tau,t) = \phi^t_H(w(\tau,t)) = \psi_H^t (\psi_H^1)^{-1}(w(\tau,t))
\ee
or equivalently
$$
\Phi_H (w_\tau) = u_\tau, \quad w_\tau = w(\tau,\cdot), \, u_\tau = u(\tau, \cdot).
$$
We write $u = \Phi_H(w)$ by an abuse of notations and call the map \eqref{eq:gauge}
a \emph{gauge transformation}  following the term
used in \cite{oh:cag}, \cite{oh:entanglement1}. We also consider the gauge transformation for
the \emph{nonautonomous case}, i.e., for the case where $H$ depends also on another parameter $s \in [0,1]$,
$\{H^s\}_{s \in [0,1]}$. These gauge transformation enable us to go back and forth between
the perturbed contact instantons (i.e., $H \neq 0$) and the unperturbed ones (i.e., $H = 0$).

\emph{We will freely do this transformation in our convenience when we give the proofs of
many statements on the action estimate and the index calculations.}

\subsubsection{Gauge transformation of autonomous contact instantons}

Consider the Hamiltonian $H = H(t,y)$ and a $t$-dependent $\lambda$-adapted CR almost complex structure
$J = J(t,y)$, i.e., $J = \{J_t\}_{t \in [0,1]}$. We call such a pair an \emph{autonomous CI-bulk datum} (of
perturbed contact instantons).

\begin{defn} Let $\dot \Sigma = \R \times [0,1] \cong D^2 \setminus \{\pm 1\}$ with
the standard coordinates $(\tau,t)$ of $\R \times [0,1] \subset \R^2$.
 A \emph{Hamiltonian perturbed contact instanton} is a map $u:\dot \Sigma \rightarrow J^1B$
that satisfies the following system of equations
\be\label{eq:perturbed-contacton}
\overline{\del}^\pi_H u = 0, \quad d(e^{g_{H,u}}(u^* \lambda_H \circ j)) = 0
\ee
where we abbreviate
\beastar
\overline{\del}^\pi_H u & : = & (du - X_H(t,u) \otimes dt)_{J_t}^{\pi(0,1)}\\
u^*\lambda_H & : = & u^*\lambda + u^*H_t\, dt\\
g_{H,u} & :=  & g_{(\phi^t_H)^{-1}}\circ u.
\eeastar
\end{defn}

For a given such $(H,J)$, we associate another family of $\lambda$-adapted CR almost complex structures
denoted by $J' = J'(t,y)$ defined as follows.

\begin{defn}[$J'$] Let $H = H(t,y)$ be given.
For each one-parameter family $J = \{J_t\}$ of CR-almost complex structures adapted to $\lambda$,
we consider another family $J_t'$ defined by the relation
\be\label{eq:perturbed-cplx-str}
J' = \{J_t'\}_{0 \leq t \leq 1}, \quad J_t' := (\phi^t_H)^*J_t = (d \phi^t_H)^{-1} J_t (d \phi^t_H)
\ee
of $\lambda$-admissible CR almost complex structures.
\end{defn}

Now we have the following equivalence of two equations.

\begin{prop}\label{prop:equivalent-eqns}
Let $J $ and $J_t'$ be as in \eqref{eq:perturbed-cplx-str}.
Let $\dot \Sigma \cong \R \times [0,1]$ and let $g_{H,u}$ be the conformal exponent function defined as above.
Then $u$ satisfies
\be\label{eq:perturbed-contacton-bdy}
\left\{
\begin{array}{l}
(du - X_H \otimes dt)^{\pi,(0,1)}_J = 0, \quad d(e^{g_{H,u}}(u^*\lambda + H dt)\circ j) = 0 \\
u(\tau,0) \in R_0, \quad u(\tau,1) \in R_1
\end{array} \right.
\ee
with respect to $J = \{J_t\}$ if and only if $w$ satisfies
\be\label{eq:contacton-bdy-J0}
\left\{
\begin{array}{l}
\overline{\del}^\pi_{J'} w= 0, \quad d(w^*\lambda \circ j) = 0 \\
w(\tau,0) \in \psi^1_H(R_0), \quad w(\tau,1) \in R_1
\end{array} \right.
\ee
with respect to $J' = \{J_t'\}$.
\end{prop}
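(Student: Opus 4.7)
The plan is to split the equivalence into three pieces matching the three parts of each system: the boundary conditions at $t=0,1$, the closed-form condition $d(\cdot\circ j)=0$, and the Cauchy--Riemann type $(0,1)$ condition on the $\xi$-part. Throughout I will use the chain-rule identities along $u=\phi_H^t\circ w$, namely $\partial_\tau u = d\phi_H^t(\partial_\tau w)$ and $\partial_t u = d\phi_H^t(\partial_t w) + X_H(t,u)$, which package into the clean bundle-valued identity
\[
du - X_H\otimes dt = d\phi_H^t\circ dw
\]
of $TJ^1B$-valued 1-forms on $\dot\Sigma$.

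For the boundary conditions I use $\phi_H^0 = (\psi_H^1)^{-1}$ and $\phi_H^1 = \mathrm{id}$; these immediately convert $u(\tau,0)\in R_0$ to $w(\tau,0)\in\psi_H^1(R_0)$ and $u(\tau,1)\in R_1$ to $w(\tau,1)\in R_1$. For the closed-form equation I evaluate $\lambda$ on the bundle identity above and use $(\phi_H^t)^*\lambda = e^{g_{\phi_H^t}}\lambda$ together with $\lambda(X_H) = -H$; the $X_H$-term exactly cancels the $H\,dt$ correction, yielding
\[
u^*\lambda + H\,dt = e^{g_{\phi_H^t}(w)}\,w^*\lambda.
\]
The composition law $g_{(\phi_H^t)^{-1}} = -g_{\phi_H^t}\circ(\phi_H^t)^{-1}$ combined with $w = (\phi_H^t)^{-1}\circ u$ gives $g_{H,u} = -g_{\phi_H^t}(w)$, so the two conformal factors cancel and we obtain $e^{g_{H,u}}(u^*\lambda + H\,dt) = w^*\lambda$ as 1-forms on $\dot\Sigma$. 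Hence the condition $d(\cdot\circ j)=0$ is literally identical on both sides.

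For the $\pi$-equation I project the bundle identity to $\xi$. Since $\phi_H^t$ is a contact diffeomorphism it preserves $\xi$ as a subbundle, and the definition $J_t' = (d\phi_H^t)^{-1}J_t\,d\phi_H^t$ on $\xi$ gives the intertwining $J_t\circ d\phi_H^t|_\xi = d\phi_H^t\circ J_t'|_\xi$; applying this to $(du - X_H\otimes dt)^\pi$ transports the $(0,1)_J$-condition over to the $(0,1)_{J'}$-condition for $w$. The hardest step will be the Reeb-component bookkeeping: because $\phi_H^t$ is not a strict contactomorphism, $d\phi_H^t(R_\lambda)$ carries a nontrivial $\xi$-component beyond the conformal rescaling $e^{g_{\phi_H^t}}R_\lambda$, so the $\pi$-projection does not commute with $d\phi_H^t$ on the nose. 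The resulting discrepancy is proportional to $w^*\lambda$ and is exactly controlled by the closed-form equation of the previous step, so it is the simultaneous use of both halves of the system that produces the clean equivalence; this is the technical content already worked out in \cite{oh:perturbed-contacton-bdy}, which we specialize to $J^1B$ here.
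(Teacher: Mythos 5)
Your boundary-condition and closed-form computations are correct and match the intended argument: $\phi_H^0=(\psi_H^1)^{-1}$ and $\phi_H^1=\mathrm{id}$ convert the Legendrian conditions on $u$ to those on $w$, and the identities $\lambda(X_H) = -H$, $(\phi_H^t)^*\lambda = e^{g_{\phi_H^t}}\lambda$, $g_{H,u} = -g_{\phi_H^t}\circ w$ give $e^{g_{H,u}}(u^*\lambda + H\,dt) = w^*\lambda$ on the nose, so the two closed-form conditions are literally the same equation. You also correctly isolate the genuine subtlety: $\pi$ does not intertwine with $d\phi_H^t$, because $d\phi_H^t(R_\lambda)$ carries a nontrivial $\xi$-component.

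However, the way you dispose of that last subtlety is not correct. Decomposing $dw = d^\pi w + (w^*\lambda)\otimes R_\lambda$ and pushing forward gives
\[
(du - X_H\otimes dt)^{\pi,(0,1)}_J \;=\; d\phi_H^t\bigl(\overline{\partial}^{\pi}_{J'}w\bigr)\;+\;\bigl[(w^*\lambda)\otimes(d\phi_H^t R_\lambda)^\pi\bigr]^{(0,1)}_J,
\]
and the $(0,1)$-part of $(w^*\lambda)\otimes Y$, with $Y:=(d\phi_H^t R_\lambda)^\pi$, vanishes if and only if $w^*\lambda\equiv 0$ pointwise or $Y\equiv 0$; neither holds in general, since on-shell $w^*\lambda$ is exactly what carries the Reeb action. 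The closed-form equation $d(w^*\lambda\circ j)=0$ asserts closedness of the charge form, not its vanishing, so it does not "control" or cancel this residual term in any usable way. The actual resolution is structural: $J_t'=(d\phi_H^t)^{-1}J_t\,d\phi_H^t$ is a CR almost complex structure adapted not to $\lambda$ but to the pulled-back contact form $(\phi_H^t)^*\lambda$, whose Reeb vector field is $(d\phi_H^t)^{-1}R_\lambda$; accordingly the $\pi$-projection implicit in $\overline{\partial}^{\pi}_{J'}w$ must be the conjugated projection $\pi_t'=(d\phi_H^t)^{-1}\circ\pi\circ d\phi_H^t$. With that reading the conjugation is exact, $(du - X_H\otimes dt)^{\pi,(0,1)}_J = d\phi_H^t\bigl(\overline{\partial}^{\pi_t'}_{J'}w\bigr)$ with no residual term, and the equivalence is immediate — this is the point that needs to be spelled out when specializing \cite{oh:perturbed-contacton-bdy}, rather than an appeal to the closed-form equation.
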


\subsubsection{Gauge transformation of nonautonomous contact instantons}

In this subsection, we assume that a pair of data $(H^\alpha,J^\alpha)$,
$(H^\beta,J^\beta)$ are given.

We consider the homotopy of the type
$$
(\{H^s\}, \{J^s\})_{s \in [0,1]}
$$
of $H = H(s,t,y), \, J = J(s,t,y)$ with
$H^s = H(s, \cdot)$ and $J^s = J(s, \cdot)$ satisfying
$$
(H^0,J^0) = (H^\alpha, J^\alpha), \quad (H^1,J^1) = (H^\beta, J^\beta).
$$
We call such a pair a \emph{nonautonomous CI-bulk datum}.

In this case, we consider the $s$-dependent gauge transformations
$$
\Phi_{H^s}: (\psi_{H^s}^1(R_0), R_1) \to (R_0,R_1).
$$
We fix an elongation function $\chi: \R \rightarrow [0,1]$ whose precise
expression will be given in \eqref{eq:chi}.

By elongating the parameter $s \in [0,1]$ to the parameter $\tau \in \R$
by putting $s = \chi(\tau)$, it was shown in \cite{oh:entanglement1} by
a straightforward calculation that it transforms \eqref{eq:contacton-bdy-J0} into
\be\label{eq:perturbed-contacton-bdy-HG}
\begin{cases}
(du - X_{H}(u)\, dt + X_{G}(u)\, ds)^{\pi(0,1)} = 0, \\
d\left(e^{g_{H^\chi,u}}(u^*\lambda + u^*H^\chi dt - u^*G\, d\tau) \circ j\right) = 0,\\
u(\tau,0) \in R_0,\, u(\tau,1) \in R_1.
\end{cases}
\ee
where we recall $g_{H,u}$ is the function on $\Theta$ defined as before.

\begin{rem}\label{rem:curvature-free} Appearance of the terms involving $\tau$-developing Hamiltonian
$$
\Dev_\lambda(\tau \mapsto \Psi^\chi(\tau,t)) =: G
$$
is common in the, so called \emph{curvature-free}, Hamiltonian Floer theory,
which is needed to get the equation \eqref{eq:rho-contacton-bdy} as the outcome of
the \emph{$\tau$-dependent gauge transformation $\Psi^\rho$} above.
See \cite{seidel:pi1}, \cite[Section 21.6.2]{oh:book2} for relevant discussion in the symplectic case.
\end{rem}

\subsection{Asymptotic convergence and charge vanishing}
\label{subsec:subsequence-convergence}

In this section, we summarize the asymptotic convergence result proved in \cite{oh:contacton-Legendrian-bdy}
specialized to the case when
$$
\dot \Sigma = \R \times [0,1], \quad (M,\lambda) = (J^1B, dz - pdq).
$$

\begin{defn}
Let
$w: \R \times [0,1] \to J^1B$ be any smooth map with Legendrian boundary condition $(R_0, R_1)$.
We define the \emph{total $\pi$-harmonic energy} $E^\pi(w)$
by
\be\label{eq:endenergy}
E^\pi(w) = E^\pi_{(\lambda,J;\dot\Sigma,h)}(w)
= \frac{1}{2} \int_{\dot \Sigma} |d^\pi w|^2
\ee
where the norm is taken in terms of the triad metric on $J^1B$.
\end{defn}

\begin{defn} Assume $(\lambda, \vec R)$ is nondegenerate
and  $w$ converges in $C^\infty$-topology as $\tau \to \infty$.
 We associate two
natural asymptotic invariants at each puncture defined as
\bea
T & := & \frac{1}{2}\int_{[0,\infty) \times [0,1]} |d^\pi w|^2
+ \int_{\{0\}\times [0,1]}(w|_{\{0\}\times [0,1]})^*\lambda\label{eq:TQ-T}\\
Q & : = & \lim_{r \to \infty} \int_{\{r\}\times [0,1]}((w|_{\{0\}\times [0,1] })^*\lambda\circ j).\label{eq:TQ-Q}
\eea
(Here we only look at positive punctures. The case of negative punctures is similar.)
We call $T$ the \emph{asymptotic contact action}
and $Q$ the \emph{asymptotic contact charge} of the contact instanton $w$ at the given puncture.
\end{defn}

It follows (see \cite[Remark 6.4]{oh:contacton-Legendrian-bdy}) that
$$
T = \frac{1}{2}\int_{[s,\infty) \times [0,1]} |d^\pi w|^2
+ \int_{\{s\}\times [0,1]}(w|_{\{s\}\times [0,1]})^*\lambda, \quad
\text{for any } s\geq 0
$$
does not depend on $s$ whose common value is nothing but $T$.

The following is a special case applied to
the one-jet bundle $J^1B$ proved in \cite{oh:contacton-Legendrian-bdy}.
(See also \cite{oh-yso:index}.)

\begin{thm}[See Corollary 1.9 \cite{oh-yso:index}]\label{thm:charge-vanishing}
Assume $(\lambda,\vec R)$ are nondegenerate.
Suppose that $w(\tau, \cdot)$ satisfies
\eqref{eq:contacton-bdy-J0} and
converges as $\tau \to \infty$ in the strip-like coordinate
at a puncture $p \in \del \dot \Sigma$ with associated Legendrian pair $(R,R')$.
Then its asymptotic charge $Q$ vanishes and the convergence is exponentially fast.
\end{thm}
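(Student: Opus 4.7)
I would prove the theorem in two stages: first the vanishing $Q = 0$, then upgrade subsequential $C^\infty$ convergence to exponential decay via a linearization at the asymptotic Reeb chord $(\overline \gamma, T)$.

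For $Q = 0$, the key input is the closedness of the one-form $w^*\lambda \circ j$ on $\dot \Sigma$ coming from the second line of \eqref{eq:contacton-bdy-J0}. In the strip coordinates $(\tau,t)$ with the standard $j(\del_\tau) = \del_t$, one computes $w^*\lambda \circ j = \lambda(\del_t w)\, d\tau - \lambda(\del_\tau w)\, dt$, so $Q(r) = -\int_0^1 \lambda(\del_\tau w(r,t))\, dt$. The Legendrian boundary condition forces $\lambda(\del_\tau w) \equiv 0$ on $\{t = 0\} \cup \{t = 1\}$, so applying Stokes to $d(w^*\lambda \circ j) = 0$ on $[s,r] \times [0,1]$ reduces $Q(r) - Q(s)$ to
$$
\int_s^r \bigl[\lambda(\del_t w(\tau, 1)) - \lambda(\del_t w(\tau, 0))\bigr]\, d\tau.
$$
The $C^\infty$ convergence to the Reeb chord with $\dot{\overline \gamma} = T R_\lambda(\overline \gamma)$ makes this integrand $C^0$-vanish; combined with finiteness of the total energy of the contact instanton (which places both $\lambda(\del_\tau w)$ and $\pi \del_\tau w$ in $L^2$ along long rectangles), this forces $\lim_{r \to \infty} Q(r) = 0$.

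For exponential decay, linearize at $\overline \gamma$. Using a $\lambda$-adapted exponential map that respects the splitting $TM = \xi \oplus \langle R_\lambda \rangle$, write $w(\tau, t) = \exp^\lambda_{\overline \gamma(t)}(\zeta(\tau, t))$; then \eqref{eq:contacton-bdy-J0} transforms into an evolution equation of the form
$$
\del_\tau \zeta + A_\infty(t)\, \zeta = K(\tau, t, \zeta, \nabla \zeta),
$$
where $A_\infty$ is a symmetric first-order operator on sections of $\overline \gamma^* TM$ with Lagrangian-type boundary conditions induced from $(R, R')$, and $K$ vanishes to second order at $\zeta = 0$. Nondegeneracy of $(\overline \gamma, T)$ is precisely the statement $0 \notin \Spec(A_\infty)$, yielding a spectral gap $\delta_0 > 0$, and the already-proved $Q = 0$ kills the potential zero-drift mode in the Reeb direction. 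Standard weighted Sobolev / Fourier-mode arguments, adapted to the Legendrian boundary as in \cite{oh:contacton-Legendrian-bdy}, then give $\|\zeta(\tau, \cdot)\|_{C^k} \leq C_k e^{-\delta \tau}$ for every $k$ and any $\delta < \delta_0$.

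The main obstacle is the coupling between the $\xi$-holomorphic and Reeb-harmonic parts of the system: the Reeb components $a = \lambda(\del_\tau w)$ and $b = \lambda(\del_t w)$ satisfy the constraint $\del_\tau a + \del_t b = 0$ sourced by the $\xi$-geometry, so unlike in pure pseudoholomorphic curve theory on symplectizations one must verify the symmetry and spectral nondegeneracy of the mixed operator $A_\infty$ under the Legendrian boundary data. This is precisely where the nondegeneracy hypothesis on $(\lambda, \vec R)$ enters together with $Q = 0$ to close the argument, and the detailed analytic verification is carried out in the reference quoted in the statement.
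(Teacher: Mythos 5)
The paper itself does not prove this theorem---it is imported verbatim as Corollary 1.9 of the companion paper \cite{oh-yso:index}---so there is no internal proof to compare against. Assessed on its own terms, your proposal has a genuine gap in the charge-vanishing step, and a more minor issue of separating two parts of the argument that cannot really be separated.

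The Stokes computation is set up correctly: with $a=\lambda(\partial_\tau w)$ and $b=\lambda(\partial_t w)$ one has $w^*\lambda\circ j = b\,d\tau - a\,dt$, $Q(r)=-\int_0^1 a(r,t)\,dt$, and closedness on $[s,r]\times[0,1]$ gives
$$
Q(r)-Q(s)=\int_s^r\bigl[b(\tau,1)-b(\tau,0)\bigr]\,d\tau .
$$
However, the sentence ``the Legendrian boundary condition forces $\lambda(\partial_\tau w)\equiv 0$ on $\{t=0\}\cup\{t=1\}$, so applying Stokes reduces\dots'' is a non-sequitur: the boundary contributions of $w^*\lambda\circ j$ along $t=0,1$ involve only $b=\lambda(\partial_t w)$, never $a$, so the vanishing of $a$ on the boundary does not simplify the Stokes identity at all. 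More seriously, the identity you obtain does not yield $Q=0$. Letting $r\to\infty$ in it gives $Q-Q(s)=\int_s^\infty[\,\cdot\,]\,d\tau$, which is tautologically consistent with any value of $Q$; that the integrand tends to $0$ pointwise (it does, since $b(\tau,i)\to T$) says nothing about the value of the limit $Q$. Finally, the claim that finite total energy places $\lambda(\partial_\tau w)$ in $L^2$ is not established; $E^\pi$ controls only $(\partial_\tau w)^\pi$, and the vertical energy $E^\perp$ is a supremum over weighted integrals $\int\varphi(f)\,|df|^2$, not an $L^2$-norm of $df$, so this inference requires an argument that you do not give. Note that the statement $Q=0$ really does use the Legendrian boundary in an essential way---in the closed-string case $Q$ can be nonzero---and the mechanism is the Dirichlet condition $a|_{t=0,1}=0$ on the Reeb component together with the asymptotic Cauchy--Riemann system satisfied by $(a,b)$ (closedness gives $\partial_\tau a+\partial_t b=0$, asymptotic harmonicity of the potential gives $\partial_\tau b-\partial_t a\to 0$), which by a Liouville-type Fourier argument forces $a\to 0$ and hence $Q=0$. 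Your write-up never touches this mechanism.

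This also shows why the two-stage split ``first $Q=0$, then exponential decay'' is somewhat artificial: the Fourier/ODE analysis of the Reeb components that gives $Q=0$ is the same spectral-gap analysis that gives the exponential rate, so invoking ``the already-proved $Q=0$'' to ``kill the zero-drift mode'' is close to circular as presented. If instead the hypothesis ``converges in the strip-like coordinate'' is read in the strong sense of $C^\infty_{\rm loc}$-convergence of the map (including $\tau$-derivatives), then $\partial_\tau w\to 0$ and hence $Q=0$ directly, with no need for the Stokes detour; either way your current argument does not close. The exponential-decay sketch via linearization at $\overline\gamma$ with the spectral gap coming from nondegeneracy is a reasonable high-level picture, but you yourself defer the details (in particular the boundary conditions for the mixed $\xi\oplus\langle R_\lambda\rangle$ system and the symmetry of $A_\infty$) to the cited reference, so that part is a sketch rather than a proof.
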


\begin{rem}
There are similar results for perturbed contact instantons. (See \cite[Section 8]{oh:perturbed-contacton-bdy}.) However it is sufficient to use the above unperturbed results through the gauge transformation for our purpose
in the present paper.
\end{rem}

\subsection{Off-shell energy of contact instantons}
\label{subsec:lambda-energy}

Now we borrow the discussion from
\cite{oh:contacton}, \cite{oh:entanglement1} applied to
the special case
$$
\dot \Sigma = \R \times [0,1] \cong D^2 \setminus \{\pm 1\}
$$
and define the off-shell energy of contact instantons $E(w)$ that
will have two components, one the $\pi$-energy and the other
the vertical energy or $\lambda$-energy.

\subsubsection{The $\pi$-energy $E^\pi_{J,H}$}

We start with the $\pi$-energy of perturbed contact instanton.
\begin{defn}[The $\pi$-energy of perturbed contact instanton]
Let $u:\R \times [0,1] \rightarrow J^1B$ be any smooth map. We define
$$ E^\pi_{J,H} (u) := \frac{1}{2} \int e^{g_{H,u}} |( d^\pi u - X^\pi_H (u) \otimes dt)^\pi |^2_J. $$
\end{defn}

Then we have the following energy identity between the maps satisfying \eqref{eq:perturbed-contacton-bdy} and those satisfying \eqref{eq:contacton-bdy-J0}, when $J' = \{J'_t\}$ is the one given by \eqref{eq:perturbed-cplx-str}.

\begin{prop}[Proposition 3.8, \cite{oh:perturbed-contacton-bdy}]
Let $J' = \{J_t'\}$ be as in \eqref{eq:perturbed-cplx-str}. For any smooth map $w:\R \times [0,1] \rightarrow J^1B$,
let $u$ be as above. Then
\be\label{eq:perturbed-energy}
E^\pi_{J,H} (u) = E^\pi_{J'} (w).
\ee
\end{prop}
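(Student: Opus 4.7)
The plan is a direct pointwise computation exploiting the gauge transformation $u(\tau,t) = \phi_H^t(w(\tau,t))$ already in use in the paper. First, differentiating and using $\partial_t \phi_H^t = X_H(t,\cdot) \circ \phi_H^t$, the chain rule yields
\[
du - X_H(t,u) \otimes dt = d\phi_H^t \circ dw
\]
as $T_uM$-valued one-forms on $\dot \Sigma$. Since $\phi_H^t$ is a contactomorphism, $d\phi_H^t$ restricts to an isomorphism $\xi_w \to \xi_u$, so projecting this identity to $\xi$ is what will relate $(du - X_H \otimes dt)^\pi$ to $d^\pi w$ through the restriction of $d\phi_H^t$ to $\xi$.

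Next, from $(\phi_H^t)^*\lambda = e^{g_{\phi_H^t}} \lambda$ one derives the pointwise identity
\[
d\lambda(d\phi_H^t X, d\phi_H^t Y) = e^{g_{\phi_H^t}(w)}\, d\lambda(X,Y), \qquad X,Y \in \xi_w,
\]
and, combined with the definition $J'_t = (d\phi_H^t)^{-1} J_t (d\phi_H^t)$ on $\xi$ together with the definition of the triad metric $d\lambda(\cdot,J\cdot)$ on $\xi$, this yields the conformal isometry
\[
|X|^2_{J'_t} = e^{-g_{\phi_H^t}(w)}\, |d\phi_H^t X|^2_{J_t}, \qquad X \in \xi_w.
\]
On the other hand, the composition rule $g_{\psi^{-1}} = -g_\psi \circ \psi^{-1}$ gives $g_{H,u} = g_{(\phi_H^t)^{-1}}(u) = -g_{\phi_H^t}(w)$, hence $e^{g_{H,u}} = e^{-g_{\phi_H^t}(w)}$. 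The conformal factor in the definition of $E^\pi_{J,H}(u)$ therefore precisely cancels the conformal factor produced by the metric pullback, and these three ingredients combine to give the pointwise identity
\[
e^{g_{H,u}}\, |(du - X_H \otimes dt)^\pi|^2_{J_t} = |d^\pi w|^2_{J'_t},
\]
which upon integration over $\dot \Sigma$ yields $E^\pi_{J,H}(u) = E^\pi_{J'}(w)$.

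The hard part will be to carefully justify the pointwise identification of $\pi$-components in the first step. Because $d\phi_H^t$ need not map $R_\lambda$ to a multiple of $R_\lambda$ (its image acquires a $\xi$-component whenever $d g_{\phi_H^t}|_\xi \neq 0$), naively expanding $dw = d^\pi w + \lambda(dw) R_\lambda$ produces an additional term involving $(d\phi_H^t R_\lambda)^\pi$. Controlling this extra contribution, and ensuring that it does not spoil the pointwise identity (or else integrates out against the weight $e^{g_{H,u}}$), is where the precise restriction defining $J'_t$ on $\xi$ and the bijection $d\phi_H^t|_\xi : \xi_w \to \xi_u$ must be used most carefully.
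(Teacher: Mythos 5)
Your three structural ingredients are correct and are indeed the essential pieces: the chain-rule identity $du - X_H \otimes dt = d\phi^t_H \circ dw$, the conformal rescaling $d\lambda(d\phi^t_H X, d\phi^t_H Y) = e^{g_{\phi^t_H}}d\lambda(X,Y)$ for $X,Y \in \xi$, and the cancellation $e^{g_{H,u}} = e^{-g_{\phi^t_H}(w)}$ coming from $g_{\psi^{-1}} = -g_\psi \circ \psi^{-1}$. From these you correctly extract the conformal isometry $|X|^2_{J'_t} = e^{-g_{\phi^t_H}(w)}|d\phi^t_H X|^2_{J_t}$ on $\xi$.

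However, the gap you flag at the end is genuine and is not a cosmetic technicality. Since $\lambda(d\phi^t_H R_\lambda) = ((\phi^t_H)^*\lambda)(R_\lambda) = e^{g_{\phi^t_H}}$, and since for any $Z \in \xi$ one computes from $(\phi^t_H)^*d\lambda = e^{g_{\phi^t_H}}(dg_{\phi^t_H}\wedge\lambda + d\lambda)$ that
\[
d\lambda\bigl((d\phi^t_H R_\lambda)^\pi,\, d\phi^t_H Z\bigr) = -\,e^{g_{\phi^t_H}}\, dg_{\phi^t_H}(Z),
\]
the transverse component $(d\phi^t_H R_\lambda)^\pi$ is nonzero precisely when $dg_{\phi^t_H}|_\xi \neq 0$, which is the generic situation for a non-strict contact Hamiltonian. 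Expanding $dw = d^\pi w + w^*\lambda \otimes R_\lambda$ therefore gives
\[
(du - X_H \otimes dt)^\pi \;=\; d\phi^t_H(d^\pi w) \;+\; w^*\lambda \otimes (d\phi^t_H R_\lambda)^\pi,
\]
and the second summand produces both a cross term and a quadratic term in $|(du - X_H\otimes dt)^\pi|^2_J$ that do not cancel pointwise, nor is there an obvious reason they integrate to zero against the weight $e^{g_{H,u}}$.

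The resolution is that $J'_t = (d\phi^t_H)^{-1}J_t d\phi^t_H$ is a CR almost complex structure adapted not to $\lambda$ but to the pulled-back form $(\phi^t_H)^*\lambda = e^{g_{\phi^t_H}}\lambda$: its kernel is $\R\,(d\phi^t_H)^{-1}R_\lambda = \R\,R_{(\phi^t_H)^*\lambda}$, not $\R\,R_\lambda$. Correspondingly, $\phi^t_H$ is a \emph{strict} contactomorphism $(M,(\phi^t_H)^*\lambda) \to (M,\lambda)$, and with respect to the time-$t$-dependent splitting $TM = \xi \oplus \R\,R_{(\phi^t_H)^*\lambda}$ the projection to $\xi$ does commute with $d\phi^t_H$. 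The $\pi$-projection in the definition of $E^\pi_{J'}(w)$ must be read with respect to this $t$-dependent decomposition; with that convention your pointwise identity holds verbatim with no extra term, and the rest of your computation carries through. Without making this adjustment explicit, your proof is incomplete at exactly the step you identified.
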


We have the crucial action identity for the energy which provides the gradient structure of the perturbed contact instanton equation.

\begin{thm}[Theorem 3.10, \cite{oh:perturbed-contacton-bdy}]
Let $\phi^t_H = \psi^t_H \circ (\psi^1_H)^{-1}$ as above. Let $u$ be any finite energy solution of \eqref{eq:perturbed-contacton-bdy} associated to the pair $(H,J)$ as in \eqref{eq:perturbed-cplx-str} with the asymptotic limits
$$
\gamma_\pm (t) := \lim_{\tau \rightarrow \pm\infty} u(\tau,t).
$$
Let $w$ be the map defined as above and consider the paths given by
$$
\overline{\gamma}_\pm (t) = (\phi^t_H)^{-1} (\gamma_\pm (t)).
$$
Then $\overline{\gamma}_\pm $ are Reeb chords from $\psi^1_H (R_0)$ to $R_1$ and satisfy
\be\label{eq:action-identity}
E^\pi_{J,H} (u) = \CA_H (\gamma_+) - \CA_H (\gamma_-)=\CA (\overline{\gamma}_+) - \CA (\overline{\gamma}_-).
\ee
\end{thm}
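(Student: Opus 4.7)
The plan is to reduce to the unperturbed setting via the gauge transformation, then prove the action identity using a Stokes-type argument justified by the charge-vanishing asymptotic analysis. Concretely, start by applying the gauge transformation \eqref{eq:gauge-transformation} to $u$ so as to produce the map $w(\tau,t) = (\phi^t_H)^{-1}(u(\tau,t))$. By Proposition \ref{prop:equivalent-eqns}, the map $w$ satisfies the unperturbed contact instanton equations \eqref{eq:contacton-bdy-J0} with respect to $J' = \{J'_t\}$ and with Legendrian boundary $(\psi^1_H(R_0), R_1)$. Moreover, the asymptotic limits $\overline{\gamma}_\pm = \lim_{\tau \to \pm\infty} w(\tau,\cdot) = (\phi^t_H)^{-1}(\gamma_\pm)$ are, by the asymptotic convergence analysis for finite energy solutions, genuine Reeb chords in $\mathfrak{Reeb}(\psi_H^1(R_0), R_1)$. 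The energy identity \eqref{eq:perturbed-energy} gives $E^\pi_{J,H}(u) = E^\pi_{J'}(w)$, so it suffices to prove the action identity for $w$.

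Next, use the $J'$-holomorphic character of $d^\pi w$. From $\overline{\del}^{\pi}_{J'} w = 0$ and the $d\lambda$-compatibility of $J'$ on $\xi$, one obtains the pointwise identity
\be\label{eq:proof-identity}
\tfrac{1}{2}|d^\pi w|_{J'}^2 \, d\tau \wedge dt = w^* d\lambda,
\ee
so that
\beastar
E^\pi_{J'}(w) & = & \int_{\R \times [0,1]} w^* d\lambda.
\eeastar
The plan then is to apply Stokes' theorem on the strip. The Legendrian boundary conditions $w(\tau,0) \in \psi^1_H(R_0), \, w(\tau,1) \in R_1$ kill the boundary contributions along $t = 0,\, 1$, since $\lambda$ pulls back to zero on Legendrians. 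The remaining contributions come from $\tau \to \pm\infty$.

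The main delicate point is justifying the limit computation at the ends. Here I would invoke the asymptotic convergence and charge vanishing statement Theorem \ref{thm:charge-vanishing}, together with the exponential decay it provides, to conclude that
\beastar
\lim_{\tau \to \pm \infty} \int_{\{\tau\} \times [0,1]} w^*\lambda = \int_0^1 \overline{\gamma}_\pm^* \lambda = \CA(\overline{\gamma}_\pm),
\eeastar
the final equality being the definition of the Reeb action. The exponential decay suffices because the charge vanishing means the $w^*\lambda$-integral along the slices stabilizes without the Cauchy-principal-value corrections that would otherwise come from the $w^*\lambda \circ j$ charge term. Combining all this yields $E^\pi_{J'}(w) = \CA(\overline{\gamma}_+) - \CA(\overline{\gamma}_-)$.

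To finish, apply Lemma \ref{lem:CAHu=CAw} pointwise in $\tau$ at the asymptotic limits, which gives $\CA_H(\gamma_\pm) = \CA(\overline{\gamma}_\pm)$ and hence the full chain of equalities \eqref{eq:action-identity}. The main obstacle I anticipate is not the algebraic identity \eqref{eq:proof-identity} (which is a standard $J$-holomorphic computation once the right sign conventions are fixed), nor the Stokes step for fixed finite truncations, but rather the passage to the infinite strip: one must ensure $\int_{\{\tau\}\times[0,1]} w^*\lambda$ has genuine limits as $\tau \to \pm \infty$, which is precisely where the charge vanishing theorem from \cite{oh:contacton-Legendrian-bdy} (quoted as Theorem \ref{thm:charge-vanishing}) is indispensable.
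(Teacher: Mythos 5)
Your proposal is correct and is essentially the paper's own argument: after the gauge transformation $w=(\phi_H^t)^{-1}\circ u$ and the identity $\CA_H(u(\tau))=\CA(w(\tau))$ from Lemma \ref{lem:CAHu=CAw}, the key step is the pointwise identity $\tfrac12|d^\pi w|_{J'}^2\,d\tau\wedge dt = w^*d\lambda$, and the passage to the limit is justified by the exponential asymptotic convergence of Theorem \ref{thm:charge-vanishing}. The only cosmetic difference is that the paper (in the closely related Proposition \ref{eq:energy-identity}) phrases this via the first-variation formula $\tfrac{d}{d\tau}\CA(w(\tau))=\int_0^1 d\lambda\big(\tfrac{\del w}{\del\tau},\tfrac{\del w}{\del t}\big)\,dt=|(\tfrac{\del w}{\del\tau})^\pi|^2_{J'}$ followed by integration in $\tau$, rather than via Stokes' theorem on the strip; these are the same calculation.
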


\subsubsection{The $\lambda$-energy $E^\lambda(u)$}

Next we borrow the presentation of $\lambda$-energy
from \cite[Section 5]{oh:contacton}, \cite[Section 11]{oh:entanglement1}
specialized to the current case of
one-jet bundles and for the maps defined on $\R \times [0,1]$.

As mentioned in \cite[Section 11]{oh:entanglement1},  the Riemann surfaces that
are relevant to the purposes of the present paper are of the following three types:
\begin{situ}[Charge vanishing]\label{situ:charge-vanish}
\begin{enumerate}
\item
First, we mention that the \emph{starting} Riemann surface will be an open Riemann surface
$$
\dot \Sigma \cong \R \times [0,1]
$$
together with an contact instanton with Legendrian pair boundary condition $(R_0,R_1)$.
\item $\C$ which will appear in the bubbling analysis at an interior point of $\dot \Sigma$,
\item $\H = \{ z \in \C \mid \Im z \geq 0\}$ which will appear in the bubbling analysis at a boundary point of $\dot \Sigma$.
\end{enumerate}
\end{situ}
An upshot is that \emph{the asymptotic charges vanish in all these three cases.}

This being said, we follow the procedure exercised in \cite{oh:contacton}
for the closed string case. We introduce the following class of test functions.
Especially the automatic charge vanishing in our current circumstance also enables us
to define the vertical part of energy, called the $\lambda$-energy whose definition is in order.

\begin{defn}\label{defn:CC} We define
\be
\CC = \left\{\varphi: \R \to \R_{\geq 0} \, \Big| \, \supp \varphi \, \text{is compact}, \, \int_\R \varphi = 1\right\}
\ee
\end{defn}

Then on the given strip-like neighborhood $\pm [R,\infty) \times [0,1] \cong
D_\delta(p) \setminus \{p\}$ for sufficiently large fixed $R> 0 $, we can write
$$
w^*\lambda \circ j = df
$$
for some function $f$.
\begin{defn}[Contact instanton potential] We call the above function $f$
the \emph{contact instanton potential} of the contact instanton charge form $w^*\lambda \circ j$ on $D_\delta(p) \setminus \{p\}$.
\end{defn}

By the $\tau$-translation, we may assume
$f$ is defined on $[0,\infty) \times S^1 \to \R$. \emph{Using the vanishing of
asymptotic charge}, we can explicitly write the potential as
\be\label{eq:f-defn}
f(z) = \int_{+\infty}^z w^*\lambda \circ j
\ee
where the integral is over any path from $\infty$ to $z$ along a path in
$[0,\infty) \times [0,1]$. By the closedness of $w^*\lambda \circ j$ on
$[0,\infty) \times [0,1]$, the integral is well-defined and satisfies
$w^*\lambda \circ j = df$. (Compare this with  \cite[Formula above (5.5)]{oh:contacton}
where the general case with nontrivial charge is considered.)

We denote by $\psi$ the function determined by
\be\label{eq:psi}
\psi' = \varphi, \quad \psi(-\infty) = 0, \, \psi(\infty) = 1.
\ee
\begin{defn}\label{defn:CC-energy} Let $w$ satisfy $d(w^*\lambda \circ j) = 0$. Then we define
\beastar
E_{\CC}(j,w;p) & = & \sup_{\varphi \in \CC} \int_{D_\delta(p) \setminus \{p\}} df\circ j \wedge d(\psi(f)) \\
& = &\sup_{\varphi \in \CC} \int_{D_\delta(p) \setminus \{p\}}  (- w^*\lambda ) \wedge d(\psi(f)).
\eeastar
\end{defn}

We note that
$$
df \circ j \wedge d(\psi(f)) = \psi'(f) df\circ j \wedge df = \varphi(f) df\circ j  \wedge df \geq 0
$$
since
$$
df\circ j  \wedge df = |df|^2\, d\tau \wedge dt.
$$
Therefore we can rewrite $E_{\CC}(j,w;p)$ into
$$
E_{\CC}(j,w;p) = \sup_{\varphi \in \CC} \int_{D_\delta(p) \setminus \{p\}} \varphi(f) df \circ j \wedge df.
$$
The following proposition shows that  the definition of $E_{\CC}(j,w;p)$ does not
depend on the constant shift in the choice of $f$.

\begin{prop}[Proposition 11.6 \cite{oh:entanglement1}]\label{prop:a-independent}
For a given smooth map $w$ satisfying $d(w^*\lambda \circ j) = 0$,
we have $E_{\CC;f}(w) = E_{\CC,g}(w)$ for any pair $(f,g)$ with
$$
df = w^*\lambda\circ j = dg
$$
on $D^2_\delta(p) \setminus \{p\}$.
\end{prop}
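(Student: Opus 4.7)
The plan is to reduce the claim to the elementary observation that any two smooth primitives of the same closed 1-form on the connected domain $D_\delta(p)\setminus\{p\}$ must differ by an additive constant, and then exploit the translation-invariance of the class $\CC$.

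First I would extract the constant. The hypothesis $df = w^*\lambda\circ j = dg$ implies $d(g-f)\equiv 0$ on $D_\delta(p)\setminus\{p\}$, and since the punctured disk is connected, $g - f \equiv c$ for some constant $c\in\R$; equivalently $g = f + c$ and $dg = df$.

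Next I would introduce the translation map $T_c:\CC\to\CC$ defined by $(T_c\varphi)(s) := \varphi(s-c)$. Shifting the argument of a compactly supported nonnegative function by the real constant $c$ preserves compact support, nonnegativity, and the total integral (via the substitution $s'=s-c$), so $T_c$ maps $\CC$ into $\CC$; the identity $T_{-c}\circ T_c = \id$ shows it is a bijection.

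Then I would perform the change of potential directly inside the variational expression. For each $\varphi\in\CC$,
\[
\int_{D_\delta(p)\setminus\{p\}} \varphi(g)\, dg\circ j\wedge dg
= \int_{D_\delta(p)\setminus\{p\}} \varphi(f+c)\, df\circ j\wedge df
= \int_{D_\delta(p)\setminus\{p\}} (T_{-c}\varphi)(f)\, df\circ j\wedge df,
\]
where in the last step I used $\varphi(f+c) = \varphi\bigl(f - (-c)\bigr) = (T_{-c}\varphi)(f)$ by definition. Taking the supremum over $\varphi\in\CC$ on the left and using that $T_{-c}$ is a bijection of $\CC$ on the right yields
\[
E_{\CC;g}(w) = \sup_{\varphi\in\CC}\int (T_{-c}\varphi)(f)\, df\circ j\wedge df = \sup_{\psi\in\CC}\int \psi(f)\, df\circ j\wedge df = E_{\CC;f}(w),
\]
which is the claim. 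There is no genuine obstacle in the argument; the only substantive input beyond translation-invariance of $\CC$ is connectedness of $D_\delta(p)\setminus\{p\}$, which is what forces the two primitives to differ by a single global constant rather than only locally.
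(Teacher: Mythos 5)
Your argument is correct, and it is the natural (and, to my knowledge, the same) proof as the one in the cited reference: on the connected punctured disk the two primitives differ by a constant $c$, and the class $\CC$ of nonnegative, compactly supported, unit-mass test functions is invariant under the translation $\varphi\mapsto\varphi(\cdot+c)$, so the suprema coincide. The bookkeeping with $T_{-c}$ is carried out correctly, and the only nontrivial input — connectedness of $D_\delta(p)\setminus\{p\}$, which upgrades local to global constancy of $g-f$ — is explicitly noted.
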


This proposition enables us to introduce the following vertical energy where we write $E_\pm^\lambda: = E_{\pm \infty}^\lambda$
on $\R \times [0,1] \cong D^2 \setminus \{\pm 1\}$.

\begin{defn}[Vertical energy]
We define the \emph{vertical energy}, denoted by $E^\perp(w)$, to be the sum
$$
E^\perp(w) = E^\lambda_+(w) + E^\lambda_-(w)
$$
\end{defn}

Now we define the final form of the off-shell energy.
\begin{defn}[Total energy]\label{defn:total-enerty}
Let $w:\dot \Sigma \to Q$ be any smooth map. We define the \emph{total energy} to be
the sum
\be\label{eq:final-total-energy}
E(w) = E^\pi(w) + E^\perp(w).
\ee
\end{defn}

\begin{rem}[Uniform $C^1$ bound]
The upshot  is that the Sachs-Uhlenbeck \cite{sachs-uhlen}, Gromov \cite{gromov:pseudo} and Hofer
\cite{hofer:symplectization} style bubbling-off analysis
can be carried out with this choice of energy. (See \cite{oh:contacton,oh:entanglement1} for
the details of this bubbling-off analysis.) In particular
\emph{all moduli spaces of finite energy perturbed contact instantons we consider in the present paper will
have uniform $C^1$-bounds inside each given moduli spaces.}
\end{rem}

\section{Maximum principle}
\smallskip

In this section, we study $C^0$-bounds of contact instantons and their
perturbed ones. Since we assume that $H$ is compactly supported,
it is enough to consider the case of unperturbed contact instantons
for the adapted CR almost complex structure outside a compact subset.

The upshot of our consideration of tame contact manifold is the
following $C^0$ bounds for contact instantons.

\begin{thm}[$C^0$-bound of (unperturbed) contact instantons]
\label{thm:max-principle}
Let $B$ be a compact manifold, $H$ be a contact Hamiltonian with $\psi^1_H (o_{J^1B}) \pitchfork Z = o_{T^*B}\times \R$ and
$J' \in \widetilde{\CJ}^c$. Let $(\overline{\gamma}_\pm,T_\pm)$ be two Reeb chords with $T_\pm \neq 0$. Suppose $w(\tau,\cdot) \to \overline \gamma^\pm$ as $\tau \to \pm\infty$ respectively.

We take $J'$ so that $J'_t \equiv J_0$ is $t$-independent.
Then there exists a constant $r = r(H,J_0)$ such that for any contact instanton
$w : \R \times [0,1] \rightarrow J^1B$ satisfying
\be\label{eq:contacton-zero-bdy}
\begin{cases}
\overline{\del}^\pi w = 0, \quad d(w^*\lambda \circ j) = 0 \\
w(\tau,0) \in \psi^1_H(o_{J^1B}), \quad w(\tau,1) \in o_{J^1B}\\
\lim_{\tau \rightarrow \pm\infty} w(\tau,t) = \overline{\gamma}_\pm (T_\pm t),
\end{cases}
\ee
we have
$$
{\rm Image}\, w \subset D_r(J^1B)
$$
\end{thm}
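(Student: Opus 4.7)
The plan is to run the Gromov--Hofer type maximum principle on the composite $\psi \circ w$ using the $\lambda$-tame, contact $J$-pseudoconvex exhaustion function $\psi = \tfrac{1}{2}|p|_g^2 + |z|$ from Proposition~\ref{prop:tameness-of-one-jet} (with the standard device of treating the regions $z > 0$ and $z \leq 0$ separately via $\psi_\pm = \tfrac{1}{2}|p|_g^2 \pm z$ when smoothness at $z=0$ is needed). First I would fix $r_0 = r_0(H, J_0) > 0$ large enough that (i) $\supp H \subset D_{r_0}(J^1B)$, (ii) the two Legendrian boundaries $o_{J^1B}$ and $\psi^1_H(o_{J^1B})$ lie in $D_{r_0}$ (both are compact since $B$ is compact and $H$ is compactly supported), (iii) the images of the asymptotic Reeb chords $\overline\gamma_\pm$ lie in $D_{r_0}$ (they are bounded because $\psi^1_H(o_{J^1B}) \pitchfork o_{T^*B}\times \R$ is a transverse intersection of compact sets), and (iv) $\psi$ satisfies the contact $J_0$-pseudoconvexity conditions \eqref{eq:J-convex} on $J^1B \setminus D_{r_0}$. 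Put $C_0 := 1 + \sup_{D_{r_0}} \psi$; it is enough to prove $\psi \circ w \leq C_0$.

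Next I would argue by contradiction. Suppose the open set $U := \{(\tau,t) \mid \psi(w(\tau,t)) > C_0\}$ is nonempty. By the Legendrian boundary condition on $\del \dot \Sigma$, the asymptotic convergence $w(\tau, \cdot) \to \overline\gamma_\pm$ as $\tau \to \pm \infty$, and (ii)--(iii) above, $U$ is relatively compact in $\R \times (0,1)$ with $\psi \circ w \leq C_0$ on $\del U$. Hence $\psi \circ w$ attains its supremum on $\overline U$ at some interior point $(\tau_*, t_*)$ at which $w(\tau_*, t_*) \notin D_{r_0}$. In a whole neighborhood of $(\tau_*, t_*)$ the map $w$ avoids $\supp H$, so $w$ satisfies there the \emph{unperturbed} contact instanton system $(dw)^{\pi,(0,1)}_{J_0} = 0$, $d(w^*\lambda \circ j) = 0$, while $\psi$ is contact $J_0$-pseudoconvex.

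The key step is then the subharmonicity identity
\begin{equation*}
-d\bigl(d(\psi \circ w) \circ j\bigr) \;=\; w^*\!\bigl(-d(d\psi \circ J_0)\bigr)\bigl(d^\pi w, J_0\, d^\pi w\bigr)\, d\tau \wedge dt \;\geq\; 0
\end{equation*}
on that neighborhood. To derive it, I would write $dw = d^\pi w + (w^*\lambda)\, R_\lambda$, use the CR equation $d^\pi w \circ j = J_0 \, d^\pi w$ to expand $d(\psi\circ w)\circ j = (d\psi \circ J_0)(d^\pi w) + (w^*\lambda \circ j)\, d\psi(R_\lambda)$, and then eliminate the Reeb cross-terms after applying $d$ by invoking both $R_\lambda \rfloor d(d\psi \circ J_0) = 0$ and the closedness $d(w^*\lambda \circ j) = 0$ from \eqref{eq:contacton-zero-bdy}. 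Nonnegativity on the right is the first condition of \eqref{eq:J-convex}. The strong maximum principle then forces $\psi \circ w$ to be constant on the connected component of $U$ containing $(\tau_*, t_*)$, whose boundary lies in $\del U$ where $\psi \circ w = C_0 < \psi(w(\tau_*, t_*))$; contradiction. Hence $\psi \circ w \leq C_0$, and any $r$ with $\psi^{-1}([0,C_0]) \subset D_r(J^1B)$ works.

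The main obstacle is the subharmonicity calculation itself: the Reeb summand of $TM = \xi \oplus \R\langle R_\lambda\rangle$ naively contributes to $d(d(\psi \circ w) \circ j)$ a term of indefinite sign, and removing it requires exactly the interplay of both conditions in \eqref{eq:J-convex} with the second contact instanton equation. This is precisely the ingredient that motivates the notion of a tame contact manifold in \cite{oh:entanglement1}, and in the present one-jet setting everything reduces to Proposition~\ref{prop:tameness-of-one-jet}, which supplies the pseudoconvex tame exhaustion $\psi$ explicitly.
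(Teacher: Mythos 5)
Your plan matches the paper's underlying strategy: both rest on the barrier $\psi = \tfrac{1}{2}|p|_g^2 + |z|$ from Proposition~\ref{prop:tameness-of-one-jet} and exploit the conjunction of contact $J$-pseudoconvexity, $\lambda$-tameness, and the second instanton equation $d(w^*\lambda\circ j)=0$ to kill the Reeb cross-terms and obtain subharmonicity of $\psi\circ w$ outside a compact set. The difference is that the paper treats the maximum-principle step as a black box by invoking Theorem~5.11 of \cite{oh:entanglement1}, whereas you reconstruct that step from first principles; your subharmonicity identity (after cleaning the notation, it reads $-d\bigl(d(\psi\circ w)\circ j\bigr) = -w^*\bigl(d(d\psi\circ J_0)\bigr)$, with the Reeb cross-term eliminated by $\CL_{R_\lambda}d\psi = 0$ together with $d(w^*\lambda\circ j)=0$, and then $R_\lambda\rfloor d(d\psi\circ J_0)=0$ reducing the pullback to the $\xi$-components $d^\pi w$ and $J_0 d^\pi w$) is exactly what that cited theorem packages, so this is a legitimate self-contained unpacking rather than a genuinely different route.

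One point to firm up: $\psi$ is only Lipschitz across $\{z=0\}$, so the strong maximum principle cannot be applied to $\psi\circ w$ directly if the hypothetical interior maximum has $z\circ w(\tau_*,t_*)=0$. Your device of passing to $\psi_\pm = \tfrac{1}{2}|p|_g^2 \pm z$ does repair this (observe $\psi_+ \leq \psi$ everywhere with equality on $\{z\geq 0\}$, so $\psi_+\circ w$ also attains its maximum at $(\tau_*,t_*)$, and $\psi_+$ is smooth and subharmonic there), but it is cleaner --- and is what the paper's own wrap-up actually does --- to bound the two pieces independently via Lemma~\ref{lem:harmonic-subharmonic}: $|p\circ w|^2_g$ is subharmonic and $z\circ w$ is harmonic outside a compact set, which yields ${\rm Image}\, w \subset (|p|^2)^{-1}[-r_1,r_1]\cap z^{-1}(-r_2,r_2)$ and sidesteps the corner of $\psi$ altogether.
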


This is a special case of \cite[Theorem 5.11]{oh:entanglement1}.
For readers's convenience, we provide its proof which is much simpler
for $(J^1B,dz - pdq)$ than for the general case.

We start with the following which is an immediate consequence of Proposition \ref{prop:harmonic}.
\begin{lem}\label{lem:harmonic-subharmonic}
Let $w: \dot \Sigma \to J^1B$ be any contact instanton in $(J^1B, J')$ associated to $J \in \CJ^c_g(J^1B)$. Then  whenever $|p\circ w(z)|_g > r$, the following hold:
\begin{enumerate}
\item the function $z\circ w$ is harmonic.
\item $|p \circ w|^2_g$ is subharmonic.
\end{enumerate}
Here the norm $| \cdot |$ is defined by the given Riemannian metric $g$ on $B$.
\end{lem}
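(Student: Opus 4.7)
The plan is to derive both statements by pulling back to $\dot\Sigma$ the target-level identities for $z$ and $|p|^2_g$ supplied by Proposition \ref{prop:harmonic}, using both contact instanton equations in \eqref{eq:contacton-zero-bdy}. Since $H$ is compactly supported, outside a compact subset of $J^1B$ one has $\phi^t_H = \id$ and hence $J'_t = J_t$, so $w$ is an ordinary contact instanton for $J \in \CJ^c_g(J^1B)$ on the region $\{|p\circ w|_g > r\}$ for $r$ sufficiently large that $J$ coincides there with the Sasakian lift $\widetilde J_g$.

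First I would split $dw = dw^\pi + (w^*\lambda)\otimes R_\lambda$ and, using $J R_\lambda = 0$ together with the $\pi$-holomorphicity $J\circ dw^\pi = dw^\pi \circ j$, derive for any smooth $\psi: J^1B \to \R$ the pullback identity
$$
w^*(d\psi\circ J) \;=\; d(\psi\circ w)\circ j \;-\; \bigl(d\psi(R_\lambda)\bigr)\,(w^*\lambda\circ j).
$$
Taking $d$ of both sides and invoking the second contact instanton equation $d(w^*\lambda\circ j) = 0$ produces
$$
w^*\bigl(d(d\psi\circ J)\bigr) \;=\; d\bigl(d(\psi\circ w)\circ j\bigr) \;-\; d\bigl(d\psi(R_\lambda)\bigr)\wedge (w^*\lambda\circ j),
$$
which, combined with the flat identity $-d\bigl(d(\psi\circ w)\circ j\bigr) = \Delta(\psi\circ w)\,d\tau\wedge dt$ on $\dot\Sigma = \R\times [0,1]$, reduces each claim to controlling $d(d\psi\circ J)$ on the target while forcing the correction term $d\bigl(d\psi(R_\lambda)\bigr)\wedge (w^*\lambda\circ j)$ to vanish.

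For part (1), I specialize to $\psi = z$: since $dz(R_\lambda) = 1$ is constant, $d\bigl(dz(R_\lambda)\bigr) = 0$ and the correction term drops out. Proposition \ref{prop:harmonic}(1) then gives $d(dz\circ J)\equiv 0$ on the region where $J = \widetilde J_g$, so $\Delta(z\circ w) = 0$ wherever $|p\circ w|_g > r$. For part (2), I take $\psi = |p|^2_g$: since $|p|^2_g$ depends only on $(q,p)$ and not on $z$, we have $d(|p|^2_g)(R_\lambda) = d(|p|^2_g)(\del_z) = 0$, so once more the correction term vanishes, and Proposition \ref{prop:harmonic}(2) on the same region yields $\Delta(|p\circ w|^2_g)\geq 0$.

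The only delicate step is the derivation of the pullback identity for $w^*(d\psi\circ J)$: one must carefully track the Reeb component $(w^*\lambda)\otimes R_\lambda$ of $dw$, which is annihilated by $J$ but in general not by $d\psi$, thereby producing the correction term. The choice of the two test functions $z$ and $|p|^2_g$ is exactly what makes this correction disappear, for two different reasons---constancy of $dz(R_\lambda)$ in one case, and $z$-independence of $|p|^2_g$ in the other---allowing the target-level statements of Proposition \ref{prop:harmonic} to pass directly through the contact instanton $w$.
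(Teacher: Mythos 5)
Your proof is correct and the underlying computation coincides with the paper's; the main difference is that you organize it through a single pullback identity and then quote Proposition \ref{prop:harmonic}, whereas the paper's own proof of the lemma re-derives the target-level identities directly in the fibered coordinates $w = (v, f)$ with $v = \pi_{\text{\rm cot}}\circ w$, landing on $-d(d(z\circ w)\circ j) = 0$ and $-d(d|p\circ w|^2_g\circ j) = v^*\omega_0\geq 0$. Your pullback identity
$$
w^*(d\psi\circ J) = d(\psi\circ w)\circ j - \bigl(d\psi(R_\lambda)\circ w\bigr)\,(w^*\lambda\circ j)
$$
is valid (using $J R_\lambda = 0$, $J\circ d^\pi w = d^\pi w\circ j$ from $\overline\del^\pi w = 0$, and the Reeb decomposition $dw = d^\pi w + (w^*\lambda)R_\lambda$), and your two distinct reasons why the correction term dies for $\psi = z$ and $\psi = |p|^2_g$ are exactly the right observations; this is a cleaner way to package what the paper does in two separate displayed computations.

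One caution for part (2). Proposition \ref{prop:harmonic}(2) as stated claims $|p|^2_g$ is \emph{harmonic}; taken literally, your pullback identity would then yield $\Delta(|p\circ w|^2_g) = 0$, not merely $\geq 0$. But the step in the paper's proof of Prop.\ \ref{prop:harmonic}(2) that uses $\widetilde J_g \widetilde J_g = -\mathrm{id}$ is not correct in the Reeb direction: for a CR almost complex structure $\widetilde J_g^2 = -\Pi$, so $dz\circ\widetilde J_g^2 = -dz\circ\Pi = -\pi_{\text{\rm cot}}^*\theta$, and the correct conclusion is $-d(d|p|^2_g\circ\widetilde J_g) = 2\,\pi_{\text{\rm cot}}^*\omega_0$, i.e.\ $|p|^2_g$ is plurisubharmonic rather than harmonic. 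Pulling that back through your identity gives $\Delta(|p\circ w|^2_g)\,d\tau\wedge dt = 2\,v^*\omega_0 \geq 0$, matching the paper's direct computation in the proof of this lemma (modulo a dropped factor of $2$ there). So your stated conclusion $\geq 0$ is correct, but the justification should invoke plurisubharmonicity of $|p|^2_g$, not the harmonicity that Proposition \ref{prop:harmonic}(2) nominally asserts.
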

\begin{proof} By the choice of $J$, the definition the associated $J'$ and
the assumption that $H$ is compactly supported, we have $J' = J = \widetilde J_g$
if $|p(w(z)|_g > r$ for some sufficiently large $r > 0$ near the point $w(z)$. We decompose
$$
w = (\pi_{\text{\rm cot}} \circ w, z \circ w)
$$
and write $v: = \pi_{\text{\rm cot}} \circ w :\dot \Sigma \to T^*B$. Then we have 
$\delbar_{J_g}v =0$ thereon. We derive
\bea\label{eq:Deltazw}
-d(d(z \circ w) \circ j) & = & -d(dz dw \circ j) = -d(dz (d^\pi w + w^*\lambda) \circ j) \nonumber\\
& = & -d (dz J' d^\pi w) - d(w^*\lambda \circ j) \nonumber \\
& = & -d (dz \widetilde J_g\, d^\pi w) - d(w^*\lambda \circ j)
\eea
Since $- dz \widetilde J_g = - \theta \circ \widetilde J_g = \frac12 d|p|_g^2$, we have
\be\label{eq:d|pv|2}
- dz J' d^\pi w = - dz \widetilde J_g\,  d^\pi w = \frac12 d|p|_g^2\,  dv
= \frac12d( d|p\circ v|_g^2).
\ee
Therefore we obtain $ -d (dz \widetilde J_g d^\pi w) =0$.
On the other hand, we have $d(w^*\lambda\circ j) = 0$ by the defining equation of
the contact instanton. Substituting these two into \eqref{eq:Deltazw}, we have proved $\Delta \, (z\circ w) = 0$
which is Statement (1).

For Statement (2), we start with \eqref{eq:d|pv|2}. Then we first compute
\beastar
-d(d|p\circ w|^2 \circ j) & = & -d(d|p\circ v|_g^2 \circ j) = 2 d\left((dz J' d^\pi w)\circ j\right)\\
& = & 2 d\left((dz J' J' d^\pi w)\right) = - 2 d\left((dz d^\pi w)\right).
\eeastar
Then by definition, since $\lambda = dz - \pi^*\theta$, we have
$$
dz(d^\pi w) = \pi^*\theta(d^\pi w) = \theta(dv)
= v^*\theta.
$$
Therefore we have derived
$$
-d(d|p\circ w|^2 \circ j) = -v^*d\theta = v^*\omega_0.
$$
This proves $\Delta |p\circ w|^2_g \geq 0$ thanks to $\delbar_{J_g} v = 0$, i.e., $|p\circ w|^2_g$
 is a subharmonic function.
\end{proof}

\begin{proof}[Wrap-up of the proof of Theorem \ref{thm:max-principle}]
By the asymptotic condition
$$
\lim_{\tau \rightarrow \pm\infty} w(\tau,t) = \overline{\gamma}_\pm (T_\pm t),
$$
it follows from Proposition \ref{prop:harmonic} that
$(J^1B, dz - pdq)$, $(\psi, J')$ and $(\psi^1_H (o_{J^1B}), o_{J^1B})$ satisfy the required conditions for
applying \cite[Theorem 5.11]{oh:entanglement1} for $\psi = |p|^2+ |z|$ and hence
$$
{\rm Image} \, w \subset ((|p|^2)^{-1}[-r_1, r_1]) \cap (z^{-1}(-r_2,r_2))
$$
for some $r_1,r_2 > 0$. By compactness of $o_{J^1B}$ and the nondegeneracy condition
$\psi^1_H (o_{J^1B}) \pitchfork Z_{o_{J^1B}}$ where $Z_{o_{J^1B}}: = o_{T^*B}\times \R$ is the Reeb trace of $o_{J^1B}$,
we have finitely many Reeb chords from $\psi^1_H (o_{J^1B})$ to $o_{J^1B}$. Therefore we can choose $r > 0$
independent of Reeb chords $\overline \gamma$ and
$$
((|p|^2)^{-1}[-r_1, r_1]) \cap (z^{-1}(-r_2,r_2)) \subset D_r(J^1B).
$$
This finishes the proof.
\end{proof}

Applying the gauge transformation $u(\tau, t) = \phi^t_H (w(\tau,t))$ and using the properties
\be\label{eq:J-J'}
J = (\phi^t_H)_\ast J', \quad J' = J,
\ee
we have $J =J' = \widetilde J_g$ outside a compact subset by the compact support 
hypothesis of $H$. Therefore
we have the $C^0$-bounds of perturbed contact instantons by the maximum principle.

\begin{cor}
Under the same hypotheses as in Theorem \ref{thm:max-principle},
there exists a constant $r = r(H,J)$ such that for any perturbed contact instanton
$u : \R \times [0,1] \rightarrow J^1B$ satisfying
\be\label{eq:perturbed-contacton-bdy-oJ1B}
\begin{cases}
(du - X_H \otimes dt)^{\pi(0,1)} = 0, \quad d(e^{g_{H,u}}(u^*\lambda + H dt)\circ j) = 0 \\
u(\tau,0),\; u(\tau,1) \in o_{J^1B}
\end{cases}
\ee
we have
$$
{\rm Image} \, u \subset D_r(J^1B).
$$
\end{cor}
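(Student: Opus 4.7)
The plan is to reduce the corollary to Theorem \ref{thm:max-principle} by means of the gauge transformation described in Subsection 4.1. Given any perturbed contact instanton $u$ satisfying \eqref{eq:perturbed-contacton-bdy-oJ1B}, I would set
\[
w(\tau,t) := (\phi^t_H)^{-1}(u(\tau,t)).
\]
By Proposition \ref{prop:equivalent-eqns}, $w$ then solves the unperturbed contact instanton equation \eqref{eq:contacton-zero-bdy} with respect to $J' = \{J'_t\}$, where $J'_t = (\phi^t_H)^* J_t$. Using $\phi^1_H = \id$ and $\phi^0_H = (\psi^1_H)^{-1}$, the boundary conditions translate as $w(\tau,0) \in \psi^1_H(o_{J^1B})$ and $w(\tau,1) \in o_{J^1B}$, and the asymptotic translated Hamiltonian chords of $u$ correspond, via Lemma \ref{lem:CAHu=CAw}, to honest Reeb chords of $(\psi^1_H(o_{J^1B}), o_{J^1B})$ at $\tau = \pm \infty$.

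Next I would verify that $J'$ remains admissible in the sense required by Theorem \ref{thm:max-principle}. Since $H$ is compactly supported, $X_H$ vanishes off $\supp H$, so $\phi^t_H$ is the identity there and hence $J'_t = J_t$ agrees with the Sasakian lift $\widetilde J_g$ outside a sufficiently large compact set, that is, $J' \in \CJ^c_g(J^1B)$. The boundary-flatness convention on the CI-bulk data also ensures that $J'_t$ inherits $t$-independence near $t = 0,\, 1$, matching the hypothesis imposed in Theorem \ref{thm:max-principle}. Applying that theorem to $w$ furnishes a constant $r_0 = r_0(H, J_0)$ with $\Image\, w \subset D_{r_0}(J^1B)$.

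Finally I would convert this bound back to $u$. Since $u(\tau,t) = \phi^t_H(w(\tau,t))$, the image of $u$ is contained in
\[
K := \bigcup_{t \in [0,1]} \phi^t_H\bigl(\overline{D_{r_0}(J^1B)}\bigr),
\]
which is the image under the continuous evaluation map $(t,y) \mapsto \phi^t_H(y)$ of the compact set $[0,1] \times \overline{D_{r_0}(J^1B)}$, hence compact. Therefore $K \subset D_r(J^1B)$ for some $r = r(H,J)$ depending only on the CI-bulk data, and this $r$ serves as the desired uniform bound.

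No substantial obstacle is expected; the corollary is essentially a packaging of Theorem \ref{thm:max-principle} with the gauge-invariance statement of Proposition \ref{prop:equivalent-eqns}. The two points requiring genuine care are (i) that the admissibility class $\CJ^c_g(J^1B)$ and the boundary-flatness are preserved under the correspondence $J \leftrightarrow J'$, both of which follow from the compact support hypothesis on $H$ through \eqref{eq:J-J'}, and (ii) that the compact family $\{\phi^t_H\}_{t \in [0,1]}$ can enlarge but cannot destroy the compactness of the image, which is immediate from continuity and compactness of $[0,1]$.
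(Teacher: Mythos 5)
Your proof is correct and takes essentially the same approach as the paper: apply the gauge transformation $w=(\phi_H^t)^{-1}\circ u$ to convert $u$ to an unperturbed contact instanton $w$ satisfying \eqref{eq:contacton-zero-bdy}, invoke Theorem \ref{thm:max-principle} (with the observation that compact support of $H$ keeps $J'$ in $\CJ^c_g(J^1B)$) to get $\Image\, w\subset D_{r_0}(J^1B)$, and then use the compactness of $\bigcup_{t\in[0,1]}\phi^t_H(\overline{D_{r_0}(J^1B)})$ to enlarge $r_0$ to a final $r$. Your write-up is somewhat more explicit than the paper's on the boundary conditions and the admissibility of $J'$, but the structure is identical.
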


\begin{proof}
Since ${\rm Image} \, u \subset D_{r'}(J^1B)$ for some $r' > 0$ by above corollary, we have
${\rm Image} \, u \subset \phi^t_H (D_{r'}(J^1B))$. Since it is compact, we have $r > 0$ such that
$$ \phi^t_H (D_{r'}(J^1B)) \subset D_r(J^1B). $$
This finishes the proof.
\end{proof}

\section{Moduli space of finite energy (perturbed) contact instantons}
\label{sec:moduli-space}

In this section, we introduce two main generic transversality results for the (perturbed)
contact instanton moduli spaces by following \cite{oh:contacton-gluing}. 
Since we will be back and forth
between the perturbed and unperturbed contact instantons via the gauge transformation, 
we will distinguish
them by putting `overline' on the unperturbed ones as in \cite{oh:entanglement1}.

Now we consider general \emph{compactly supported} contact Hamiltonian 
$H : J^1B \rightarrow \R$ such that
$$
\psi_H^1(o_{J^1B}) \pitchfork Z
$$
with $Z = Z_{o_{J^1B}}$, which is equivalent to saying that
any Reeb chord from $\psi_H^1(o_{J^1B})$ to $o_{J^1B}$ is nondegenerate.
(We recall the case of  nondegeneracy of Reeb chords for the general pair $(R_0,R_1)$
summarized in Subsection \ref{subsec:reeb-chords}.)

\subsection{Boundary map moduli space}

We denote
$$
\Theta = \R \times [0,1].
$$
\begin{thm}\label{thm:decompose} Denote by $\widetilde \CM(H,J)$ the set of moduli space of solutions of
\eqref{eq:perturbed-contacton-bdy} with finite energy $E(u)< \infty$. Then we have
the decomposition
$$
\widetilde \CM(H,J) = \bigcup_{\gamma^\pm} \widetilde \CM (H,J; \gamma^-, \gamma^+).
$$
where $\widetilde \CM (H,J; \gamma^-, \gamma^+)$ is
the moduli space of solutions of \eqref{eq:perturbed-contacton-bdy} with asymptotic conditions
$$
\lim_{\tau \rightarrow \pm \infty} u(\tau,t) = \gamma^\pm(t)
$$
with  $\gamma^\pm \in \mathfrak X(o_{J^1B},o_{J^1B})$.
\end{thm}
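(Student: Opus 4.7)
The plan is to pass to the unperturbed picture via the gauge transformation $u(\tau,t) = \phi_H^t(w(\tau,t))$, apply the asymptotic convergence results from Subsection \ref{subsec:subsequence-convergence}, and then translate back. Concretely, given a finite energy solution $u$ of \eqref{eq:perturbed-contacton-bdy}, Proposition \ref{prop:equivalent-eqns} shows that the associated map $w$ is an honest (unperturbed) contact instanton for $J' = (\phi_H^t)^\ast J$ on the strip $\Theta$ with Legendrian boundary condition $(\psi_H^1(o_{J^1B}), o_{J^1B})$. The energy identity \eqref{eq:perturbed-energy} (together with the vertical part discussed in Subsection \ref{subsec:lambda-energy}) guarantees that $w$ also has finite total energy.

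Next I would invoke the $C^0$-bound from Theorem \ref{thm:max-principle}, which rests on the tameness of $(J^1B, dz - pdq)$ established in Proposition \ref{prop:tameness-of-one-jet}, to confine $\Image(w)$ to a compact subset $D_r(J^1B)$. Combined with the uniform $C^1$-bounds supplied by the bubbling-off analysis referenced at the end of Subsection \ref{subsec:lambda-energy}, this yields pre-compactness of the families $\{w(\tau+s, \cdot)\}_{s \in \R}$ as $\tau \to \pm\infty$, so subsequential limits exist and consist of contact instantons on $[0,1]$ with boundary in $(\psi_H^1(o_{J^1B}), o_{J^1B})$.

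The crucial analytic input is then Theorem \ref{thm:charge-vanishing}: the asymptotic contact charge $Q$ at each puncture automatically vanishes in our Situation \ref{situ:charge-vanish}, and the convergence to the asymptotic Reeb chord is in fact exponentially fast. Combined with the nondegeneracy hypothesis $\psi_H^1(o_{J^1B}) \pitchfork Z_{o_{J^1B}}$ (ensuring Reeb chords from $\psi_H^1(o_{J^1B})$ to $o_{J^1B}$ are isolated and nondegenerate in the sense of Subsection \ref{subsec:reeb-chords}), this upgrades subsequential convergence to genuine $C^\infty$ convergence: there exist Reeb chords $(\overline\gamma^\pm, T_\pm) \in \mathfrak{Reeb}(\psi_H^1(o_{J^1B}), o_{J^1B})$ such that $w(\tau, \cdot) \to \overline\gamma^\pm(T_\pm \cdot)$ as $\tau \to \pm\infty$.

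Finally, setting $\gamma^\pm(t) := \phi_H^t(\overline\gamma^\pm_{T_\pm}(t))$ produces translated Hamiltonian chords in the sense of Definition \ref{defn:trans-Ham-chords} (equivalently, elements of $\mathfrak X(o_{J^1B}, o_{J^1B})$ via the bijection of Lemma \ref{prop:trnhamchords-trnReebchords}), and by construction $\lim_{\tau \to \pm\infty} u(\tau, t) = \gamma^\pm(t)$. Hence every finite energy $u \in \widetilde \CM(H,J)$ lies in some $\widetilde\CM(H,J; \gamma^-, \gamma^+)$, yielding the claimed decomposition. The main obstacle is the control of the ends: without Theorem \ref{thm:charge-vanishing} one would only get subsequential limits along cylinders at infinity (possibly drifting in the Reeb direction), and the vanishing of the asymptotic charge together with the exponential decay is what forces the limit to be a genuine translated Hamiltonian chord rather than a broken or drifting configuration.
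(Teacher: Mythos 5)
Your proposal is correct and follows essentially the same route as the paper: gauge transformation to the unperturbed picture, energy estimates (including the vertical part), tameness-based $C^0$-bounds, bubbling-off for $C^1$-bounds, and charge vanishing plus nondegeneracy to pin down the asymptotics. The paper's own write-up is terser (it mainly cites the earlier references) but it does additionally foreground the gradient-flow action identity $\frac{d}{d\tau}\widetilde\CA_H(u(\tau)) = -\bigl|\bigl(\tfrac{\del w}{\del\tau}\bigr)^\pi\bigr|^2_{J'}$ as the key analytic input, which your proposal invokes only implicitly through the finite-energy hypothesis.
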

\begin{proof}
This is a consequence of  combination of the results
\cite{oh:contacton-Legendrian-bdy}, \cite{oh:entanglement1} and \cite{oh:perturbed-contacton-bdy}.
More specifically, it follows from the results of asymptotic
convergence and of vanishing charge and the Gromov-Floer-Hofer style
compactification via the bubbling-off analysis similar to the study of
the moduli space of Hamiltonian-perturbed Floer equations.

The latter in turn relies on some energy estimates.
More specifically, we have the following energy identity.

\begin{prop}\label{eq:energy-identity}  Let $u$ satisfy
\eqref{eq:perturbed-contacton-bdy}, and set $w(\tau,t): = (\phi_H^t)^{-1}(u(\tau,t))$.
 Then
\be\label{eq:downward-action-flow}
\widetilde \CA_H(u(- \infty)) - \widetilde \CA_H(u(-\infty))
=- \int_{-\infty}^\infty \left| \left(\frac{\del w}{\del \tau}\right)^\pi \right|_{J^{\prime}}^2\, d\tau.
\ee
\end{prop}
\begin{proof} We recall the identity
$$
\CA_H(u(\tau)) = \CA(w(\tau))
$$
from Lemma \ref{lem:CAHu=CAw}. Therefore
\bea\label{eq:dCAHdtau}
\frac{d}{d \tau} \CA_H (u(\tau)) &=& \frac{d}{d \tau} \CA (w(\tau))\nonumber\\
& = & \delta \CA (w(\tau)) \bigg( \frac{\del w}{\del \tau} \bigg)
= \int_0^1 d\lambda \bigg( \frac{\del w}{\del \tau}, \frac{\del w}{\del t} \bigg) dt
\nonumber\\
&=& \int_0^1 d\lambda \bigg( \frac{\del w}{\del \tau} ^\pi, \frac{\del w}{\del t} ^\pi \bigg) dt
= \int_0^1 d\lambda \bigg( \frac{\del w}{\del \tau} ^\pi, J' \frac{\del w}{\del \tau} ^\pi \bigg) dt \nonumber \\
&=& \left| \left(\frac{\del w}{\del \tau}\right)^\pi \right|_{J^{\prime}}^2.
\eea
Since $u(\tau,1) \in o_{J^1B}$, note that we have
$$
\widetilde \CA _H (u(\tau)) = - \CA_H (u(\tau)) +z(u(\tau,1)) = - \CA_H (u(\tau))
$$
and hence
$$
\frac{d}{d \tau} \widetilde \CA _H (u(\tau)) = - \frac{d}{d \tau} \CA_H (u(\tau)).
$$
By integrating \eqref{eq:dCAHdtau} over $\R$ using the finiteness of
the $\pi$-energy, we have finished the proof.
\end{proof}

Furthermore we also need the uniform bound for the \emph{vertical energy of $u$} $E^\perp(u)$
whose precise definition is postponed till Section
\ref{subsec:Eperp-bound}
in the more general context of continuity maps. This finishes the proof of Theorem \ref{thm:decompose}.
\end{proof}

Recall that the asymptotic limit curves $\gamma^\pm$ appearing above have the form
\be\label{eq:gamma-+-}
\gamma^\pm(t) = \phi^t_H (\overline{\gamma}^\pm_{T_\pm} (t))
\ee
where
$$
(\overline{\gamma}^\pm, T_\pm) \in \mathfrak{Reeb}\left(\psi^1_H(o_{J^1B}), o_{J^1B}\right).
$$
We then denote
$$
\CM (H,J; \gamma^-, \gamma^+) := \widetilde \CM (H,J; \gamma^-, \gamma^+)/\R.
$$
Before the study of transversality for the general Hamiltonian $H = H(t,x)$ turned on, 
we first recall the case of  nondegeneracy of Reeb chords for the general pair $(R_0,R_1)$
summarized in Subsection \ref{subsec:reeb-chords}.

\begin{defn}\label{defn:off-shell}
We define the off-shell function space
$$
\CF := \CF(J^1B, H; o_{J^1B}; \gamma^-, \gamma^+)
$$
to be the set of smooth maps satisfying the boundary condition
$$
u(\tau, i) \in o_{J^1B} \qquad {\rm for} \quad i = 0,1
$$
and the asymptotic condition
$$
\lim_{\tau \rightarrow \pm \infty} u(\tau,t) = \gamma^\pm (t).
$$
\end{defn}
Now we are ready to state the generic transversality results that we need.
We will express transversality
statements in terms of the \emph{gauge-transformed} moduli spaces.

\subsection{Gauge transformation of the moduli space}

Consider the perturbed contact instanton equation
\eqref{eq:perturbed-contacton-bdy} and its associated moduli space
$$
\CM(J^1B, H; o_{J^1B}; \gamma^-, \gamma^+).
$$
After applying the gauge transformation $\Phi_H$ and considering 
\be\label{eq:choice-J'}
J'_1 = (\psi_H^1)^*J_0
\ee
as in \cite{oh:entanglement1}, we convert it to 
$$
\CM (J',\psi^1_H(o_{J^1B}),o_{J^1B}; \overline{\gamma}^-, \overline{\gamma}^+)
$$
that consist of solutions of \eqref{eq:contacton-bdy-J0}
for each associated Reeb chords $(\overline{\gamma^\pm}, T_\pm)$.
We assume that these Reeb chords are nondegenerate in the sense of
Subsection \ref{subsec:reeb-chords}.

We now recall the transversality result under the perturbation of CR almost
complex structures $J$ from \cite{oh:contacton-transversality} 
restricted to the current case of one-jet bundle.

For a given $H$, \emph{after a suitable $W^{k,p}$-completion} whose
details we omit and refer to \cite{oh:contacton-gluing}, we consider the universal section
$$
\Upsilon^{\rm univ} : \CF \times \CP(\CJ^c_g(J^1B)) \rightarrow
\Omega^{(0,1)} (w^*\xi) \oplus \Omega^2(\Theta)
$$
defined by
$$
\Upsilon^{\rm univ} (w, J')
= \left(\overline{\del}^\pi_{J'} w, d(w^*\lambda \circ j)\right)
$$
where $\Theta \cong \R \times [0,1]$.
Then we consider the universal moduli space
$$
{\CM}\left(J^1B, (\psi^1_H(o_{J^1B}),o_{J^1B}); \overline{\gamma}^-, \overline{\gamma}^+\right)
 =: (\Upsilon^{\rm univ})^{-1}(0).
$$
By considering the projection map
$$
\Pi_2 : \CF \times \CP(\CJ^c_g(J^1B) )\rightarrow \CP(\CJ^c_g(J^1B)),
$$
 we have
\beastar
&{}& \CM \left(J', (\psi^1_H(o_{J^1B}),o_{J^1B}); \overline{\gamma}^-, \overline{\gamma}^+\right)\\
& = & \Pi_2^{-1}(J') \cap
\CM\left(J^1B, \psi^1_H(o_{J^1B}),o_{J^1B}); \overline{\gamma}^-, \overline{\gamma}^+\right)
\eeastar
which is independent of $(k,p)$ with $k \geq 2$.

Now we have the following theorem.

\begin{thm}[Theorem 4.2, \cite{oh:contacton-gluing}]\label{thm:generic}
Let $0 < \ell < k - \frac{2}{p}$. Then
\begin{enumerate}
\item ${\CM}(\psi^1_H(o_{J^1B}),o_{J^1B}; \overline{\gamma}^-, \overline{\gamma}^+)$
is an infinite dimensional  manifold.
\item The projection
$$
 \Pi_2|_{(\Upsilon^{\rm univ})^{-1}(0)} : (\Upsilon^{\rm univ})^{-1}(0) \rightarrow \CP(\CJ^c_g(J^1B))
$$
is a Fredholm map (again with a suitable Banach completion mentioned above),
and its index is the same as that of $D\Upsilon(w)$ for any
$$
w \in {\CM}(J', (\psi^1_H(o_{J^1B}),o_{J^1B}; \overline{\gamma}^-, \overline{\gamma}^+).
$$
Here $D\Upsilon(w)$ will be defined in the next section.
\end{enumerate}
\end{thm}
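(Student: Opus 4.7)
The plan is to carry out the standard universal-moduli-space (Sard--Smale) transversality argument adapted to the contact instanton equation. After fixing $W^{k,p}$-completions of the mapping space $\CF$ and a separable $C^\ell$-Banach completion $\CP(\CJ^c_g(J^1B))$ of the admissible CR almost complex structures (localized away from the zero section so that the boundary/asymptotic conditions are preserved), the first task is to show that the universal section $\Upsilon^{\text{\rm univ}}$ is transverse to the zero section at every $(w,J') \in (\Upsilon^{\text{\rm univ}})^{-1}(0)$. Statements (1) and (2) then follow from the implicit function theorem and the abstract lemma that projection to the second factor is Fredholm (of the advertised index) once one knows $D_1\Upsilon$ is Fredholm and $D\Upsilon$ is surjective.

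Concretely, I would first verify that $D_1 \Upsilon(w,J')$---the linearization in the map variable at fixed $J'$---is a Fredholm operator between the relevant Banach spaces. Fredholmness and the index formula are exactly the content of the linear theory for contact instantons with Legendrian boundary and nondegenerate asymptotes developed in the earlier papers in the series (\cite{oh:contacton-Legendrian-bdy} and the index computation of \cite{oh-yso:index}); charge vanishing (Theorem \ref{thm:charge-vanishing}) is what allows the Fredholm package, including exponential decay at the ends, to run as in standard Floer theory. Next, I would split $D\Upsilon^{\text{\rm univ}}(w,J') = D_1\Upsilon \oplus D_2\Upsilon$, observe that the $J'$-variation only affects the $\overline{\del}^\pi$-component through a term of the form $Y \mapsto \frac{1}{2} Y \circ dw \circ j|_\xi$ (the second component $d(w^*\lambda \circ j)$ is independent of $J'$), and reduce surjectivity to the claim that $\operatorname{Image}(D_1\Upsilon) + \operatorname{Image}(D_2\Upsilon)$ exhausts the target.

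The main obstacle, and the only nontrivial point, is surjectivity of this combined linearization, and in particular the surjectivity onto the $(0,1)$-form piece modulo $\operatorname{Image} D_1\Upsilon$. One argues by contradiction: if $\eta \in L^q$ annihilates both images then, from the $L^2$-pairing against $D_1\Upsilon$, $\eta$ satisfies a linear Cauchy--Riemann type equation with boundary and asymptotic decay, so by the Aronszajn/Carleman unique continuation principle $\eta$ vanishes identically unless it vanishes on a full open set. Pairing against $D_2\Upsilon$ at an injective point of $w$ (which exists away from the asymptotic ends because $(w^\pi)'$ cannot vanish on an open set on a nonconstant trajectory---the translated Hamiltonian chord limits are distinct) then forces $\eta \equiv 0$ on an open set, hence everywhere. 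This is where one must be careful: the contact setting requires picking the point where $dw$ has rank two and lies in $\xi$, exploiting the freedom to perturb $J'$ only along $\xi$. Granted this, Smale's theorem applied to the Fredholm projection $\Pi_2|_{(\Upsilon^{\text{\rm univ}})^{-1}(0)}$ yields the desired generic transversality, and the index computation for the projection is the usual $\operatorname{ind}\Pi_2 = \operatorname{ind} D_1\Upsilon(w)$ from the snake lemma.
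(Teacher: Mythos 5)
This theorem is cited from \cite{oh:contacton-gluing} (Theorem 4.2 therein) and the present paper gives no proof, so there is no proof in the paper to compare your sketch against. Your outline does follow the standard Sard--Smale universal-moduli-space template and correctly identifies the two main ingredients (Fredholmness of $D_1\Upsilon$ via the linear theory of the earlier papers, and surjectivity of the universal linearization via a unique-continuation argument at a somewhere-injective point), so as a broad plan it is reasonable.

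However, the sketch has a genuine gap specific to the contact-instanton setting that you should not elide. You reduce surjectivity of $D\Upsilon^{\rm univ}$ to ``surjectivity onto the $(0,1)$-form piece modulo $\operatorname{Image} D_1\Upsilon$,'' implicitly treating the other component as automatic. But the codomain here is $\Omega^{(0,1)}(w^*\xi)\oplus \Omega^2(\Theta)$ and the $J'$-variation contributes \emph{nothing} to the $\Omega^2(\Theta)$ slot (as you yourself note, $d(w^*\lambda\circ j)$ is $J$-independent). So surjectivity onto $\Omega^2(\Theta)$ must be extracted from $D_1\Upsilon$ alone, and moreover the linearization is a genuine $2\times 2$ system with the off-diagonal entries $\frac12\lambda(\cdot)(\CL_{R_\lambda}J)J\del^\pi w$ and $d(((\cdot)\rfloor d\lambda)\circ j)$ coupling the two factors; surjectivity of the full system does not simply split into surjectivity of each diagonal block. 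The point, which your sketch omits, is that the lower-right block $-\Delta(\lambda(\cdot))\,dA$ with the Legendrian (Dirichlet) boundary condition and exponential decay at the strip ends is not merely Fredholm of index zero but an isomorphism; one uses this to solve the $\Omega^2(\Theta)$-equation and reduce the full surjectivity to a Schur-complement operator in the $(0,1)$-form component, at which point the unique-continuation argument and the $J$-perturbation term you describe take over. (The homotopy to a diagonal operator in Proposition \ref{prop:diagonalize} suffices for the \emph{index} identification in part (2), but not for the surjectivity needed in part (1).) Also, your appeal to ``the translated Hamiltonian chord limits are distinct'' is not justified in general; somewhere-injectivity should be argued directly from nondegeneracy of the asymptotics and non-constancy of $d^\pi w$, not from $\overline\gamma^-\neq\overline\gamma^+$.
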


An immediate corollary of Sard-Smale theorem is that for a generic choice of $J'$
$$
{\CM}(J', (\psi^1_H(o_{J^1B}),o_{J^1B}); \overline{\gamma}^-, \overline{\gamma}^+)
$$
is a smooth manifold, and hence $\widetilde{\CM}(H,J; \gamma^-, \gamma^+)$ is also a smooth manifold.
We denote by
$$
\CP^{\rm reg}_H(\CJ^c_g(J^1B))
$$
the set of regular values of the projection
$\Pi_2|_{(\Upsilon^{\rm univ}))^{-1}(0)}$.

\subsection{Chain map moduli space}

We also have to consider the \emph{nonautonomous version} of the
equation \eqref{eq:perturbed-contacton-bdy}, which enters in the construction of
chain maps.

Let $(J^\alpha, H^\alpha)$ and $(J^\beta, H^\beta)$ be two given generic nondegenerate pairs.
For each given path $(\{J^s\} , \{H^s\}) = \{ (J^s, H^s) \}_{s \in [0,1]}$ between them,
we take its elongated path
$$
(J^\rho, H^\rho) = \{ (J^{\rho(\tau)}, H^{\rho(\tau)}) \}_{ -\infty \leq \tau \leq \infty}
$$
and consider the non-autonomous versions of perturbed contact instanton equation
\eqref{eq:perturbed-contacton-bdy}.

We consider the 2-parameter family of contactomorphisms
$
\Psi_{s,t}: = \psi_{H^s}^t.
$
Obviously we have the $t$-developing Hamiltonian $\Dev_\lambda(t \mapsto \Psi_{s,t}) = H^s$.
We then consider the elongated two parameter family
$$
H^\rho(\tau,t,x) =  H^{\rho(\tau)}(t,x)
$$
and write the $\tau$-developing Hamiltonian
$$
G(\tau,t,x) = \Dev_\lambda(\tau \mapsto \Psi_{\tau,t}^\rho)
$$
where $\Psi_{\tau,t}^\rho = \Psi_{\rho(\tau),t}$.

Then we consider the nonautonomous perturbed contact instanton equation
\eqref{eq:perturbed-contacton-bdy-HG}.
After the gauge transformation, this nonautonomous equation becomes the following equation with
\emph{moving boundary condition}
\be\label{eq:rho-contacton-bdy}
\begin{cases}
\overline{\del}^\pi_{(J')^\rho} w
= \big( \frac{\del w}{\del \tau} + (J')^\rho \frac{\del w}{\del t} \big)^\pi = 0 \\
d(w^* \lambda \circ j) = 0 \\
w(\tau,0) \in \psi_{H^\rho} (o_{J^1B}), \quad w(\tau,1) \in o_{J^1B}.
\end{cases}
\ee

Again the finite-energy moduli space is decomposed into submoduli spaces with
specified asymptotics which consist of the pair of \emph{perturbed Hamiltonian chords}
$$
\gamma^\pm \in \mathfrak{X}(H;o_{J^1B},o_{J^1B}).
$$

Then we have the following theorems similar to the above two theorems.
Since the proofs are similar and standard, we omit the proofs.
\begin{thm}
\begin{enumerate}
\item For given $J \in \CP(\CJ^c_g(J^1B))$ and $H^\alpha, H^\beta \in \CH^{\rm reg}$
there exists a residual set $\CP^{\rm reg}_J(H^\alpha,H^\beta)$ of
$$
\mathcal{P}(H^\alpha, H^\beta) := \left\{\{ H^s \}_{0 \leq s \leq 1} \mid H^0 = H^\alpha \; H^1 = H^\beta \right\}
$$
such that all the solutions of \eqref{eq:perturbed-contacton-bdy-HG} with constant $J$ are regular.
\item For given $H \in C^\infty_0 (\R \times J^1B)$ and $J^\alpha, J^\beta \in \CJ^c_g(J^1B)$ there exists a residual set
$\mathcal{P}^{\rm reg}_H(J^\alpha, J^\beta)$ of $\mathcal{P}(J^\alpha, J^\beta)$,
which is defined similarly, such that
all the solutions of \eqref{eq:perturbed-contacton-bdy-HG} with constant $H$ are regular.
\end{enumerate}
\end{thm}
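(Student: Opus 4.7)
The plan is to carry out a standard universal moduli space / Sard--Smale argument, leveraging the gauge transformation to convert the nonautonomous perturbed problem to an unperturbed contact-instanton problem with moving Legendrian boundary, for which the transversality machinery of \cite{oh:contacton-gluing} and Theorem \ref{thm:generic} applies after the requisite modifications. I will describe (1) in detail; the proof of (2) is entirely parallel, with the roles of the parameter spaces $\mathcal{P}(H^\alpha,H^\beta)$ and $\mathcal{P}(J^\alpha,J^\beta)$ interchanged.

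First, fix asymptotic data $\gamma^\alpha\in\mathfrak X(H^\alpha;o_{J^1B},o_{J^1B})$ and $\gamma^\beta\in\mathfrak X(H^\beta;o_{J^1B},o_{J^1B})$, and after an appropriate $W^{k,p}$-completion of the off-shell space $\mathcal F = \mathcal F(J^1B, \{H^s\}; o_{J^1B}; \gamma^\alpha, \gamma^\beta)$ and of $\mathcal P(H^\alpha,H^\beta)$ (with fixed endpoints), introduce the universal section
\begin{equation*}
\Upsilon^{\text{univ}}: \mathcal F \times \mathcal P(H^\alpha,H^\beta) \longrightarrow \Omega^{(0,1)}(u^*\xi)\oplus \Omega^2(\Theta)
\end{equation*}
defined by the left-hand side of \eqref{eq:perturbed-contacton-bdy-HG} (with the fixed $J$ and $\rho$ absorbed into the definition). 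Pulling back by the gauge transformation $\Phi_{H^\rho}$ translates this into the universal section associated to \eqref{eq:rho-contacton-bdy}, whose linearization at a solution $w$ is a Fredholm perturbation of the standard linearized contact-instanton operator $D\Upsilon(w)$ analyzed in Section 5 of \cite{oh:contacton-gluing}, plus an extra term $\xi \mapsto \Xi(\xi)$ depending on the infinitesimal variation $\xi=\{H^s_\xi\}\in T_{\{H^s\}}\mathcal P(H^\alpha,H^\beta)$ along the path direction.

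The main step is to show that $D\Upsilon^{\text{univ}}$ is surjective at every zero. This amounts to verifying that the image of the parameter map $\xi\mapsto \Xi(\xi)$ fills out the cokernel of $D\Upsilon(w)$. By standard elliptic regularity and unique continuation for the linearized Cauchy--Riemann--Reeb operator (as in \cite{oh:contacton-gluing}), any nonzero cokernel element $\eta$ is $L^2$-dual to a section supported somewhere in the interior of $\Theta$ with $w$ \emph{somewhere injective} there (injectivity is guaranteed generically by the asymptotic condition $\gamma^\alpha\ne \gamma^\beta$ together with the maximum principle in Section 5, which forces $u$ into a compact region where the free parameter-variation lemma applies; the asymptotic equality case is disposed of via the $\R$-action or directly via injectivity of $\gamma^\alpha\ne \gamma^\beta$). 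At such an injective point, pairing $\eta$ against a variation $\xi$ of the form $\beta(\tau,t)\,K(y)$ with a cutoff $\beta$ concentrated there and $K$ chosen freely (subject only to the endpoint constraints $\xi|_{s=0,1}=0$, which can be arranged by multiplying by a bump $\sigma(s)$ vanishing at $s=0,1$) produces a nonzero pairing, giving the required surjectivity. This is the place where the argument is most technical, and it is the \emph{main obstacle}: one must ensure that the Hamiltonian variations, constrained to vanish at the two endpoints of the homotopy, still span the cokernel; the boundary-flatness convention for CI-bulk data and the somewhere-injective property of $w$ (coming from the asymptotic nondegeneracy and the maximum principle in Section 5) are exactly what make this work.

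Once surjectivity is established, the universal moduli space $(\Upsilon^{\text{univ}})^{-1}(0)$ is a Banach manifold, and the projection $\Pi_2:(\Upsilon^{\text{univ}})^{-1}(0)\to \mathcal P(H^\alpha,H^\beta)$ is Fredholm of the same index as $D\Upsilon(w)$. Applying the Sard--Smale theorem yields a residual subset of paths $\{H^s\}$ over which all solutions with the given asymptotics are regular. Intersecting over the countable collection of pairs $(\gamma^\alpha,\gamma^\beta)$ (which is countable thanks to nondegeneracy of $H^\alpha,H^\beta\in \mathcal H^{\text{reg}}$) and reducing from $C^k$ to $C^\infty$ by the Taubes trick gives the desired residual set $\mathcal P^{\text{reg}}_J(H^\alpha,H^\beta)$, completing (1). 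Part (2) is proved identically: the variation is now in $J^s$ and one appeals to the parameter-space transversality for $\lambda$-adapted CR almost complex structures already exploited in Theorem \ref{thm:generic}, replacing the Hamiltonian variation lemma by its $J$-variation analogue.
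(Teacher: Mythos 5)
Worth noting first: the paper does not actually prove this theorem. Immediately above the statement it says ``Since the proofs are similar and standard, we omit the proofs,'' deferring to the parametric transversality of Theorem~\ref{thm:generic} (Theorem~4.2 of \cite{oh:contacton-gluing}). So there is no paper proof to compare against; the question is simply whether your reconstruction is plausible.

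Your overall skeleton is the right one: complete the path space, set up the universal section, gauge-transform to the unperturbed problem \eqref{eq:rho-contacton-bdy}, show surjectivity of the universal linearization, then Sard--Smale, intersect over the countably many asymptotic pairs, and descend from $C^k$ to $C^\infty$ by the Taubes argument. This is exactly what ``similar and standard'' is pointing at, and part (2) is indeed the closer analogue of Theorem~\ref{thm:generic} with a $\tau$-dependent family.

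The weak spot is the somewhere-injectivity paragraph, and the way you argue it there is not quite right. First, for the chain-map moduli space the equation is genuinely $\tau$-dependent, so there is \emph{no} $\mathbb{R}$-translation action; invoking ``the $\mathbb{R}$-action'' to dispose of a degenerate case is misplaced here. Second, $\gamma^\alpha$ and $\gamma^\beta$ are translated Hamiltonian chords of \emph{different} Hamiltonians $H^\alpha$ and $H^\beta$, so it does not make sense to hinge the argument on $\gamma^\alpha\neq\gamma^\beta$. Third, the maximum principle of Section~5 supplies a $C^0$ bound, not injectivity. The argument one actually needs is the Floer--Hofer--Salamon style density of regular/injective points for Floer-type strips with nondegenerate asymptotics and Legendrian boundary conditions, established via unique continuation for the linearized Cauchy--Riemann part of the operator (this is the step handled in \cite{oh:contacton-transversality} and \cite{oh:contacton-gluing}). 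Once one has a point $(\tau_0,t_0)$ with $\rho'(\tau_0)\neq 0$ that is injective in the appropriate sense, the rest of your cut-off-Hamiltonian (respectively cut-off-$J$) pairing argument goes through, including the requirement that the variation vanish at $s=0,1$. With the injectivity step replaced by the correct reference, the sketch is a sound reconstruction of the intended ``standard'' proof.
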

We remark that $G\equiv 0$ when $H^s$ does not depend on $s$ as in the case (2) above.

After gauge transformation, the same kind of transversality hold for the moduli space of
solutions of \eqref{eq:rho-contacton-bdy}
with asymptotic conditions
$$
\lim_{\tau \rightarrow -\infty} w(\tau) = \overline{\gamma}^\alpha, \quad
\lim_{\tau \rightarrow \infty} w(\tau) = \overline{\gamma}^\beta
$$
which we denote by
$$
\CM^\rho (\{J^{\prime s}\}; \overline{\gamma}^\alpha, \overline{\gamma}^\beta).
$$
Here we have
\beastar
(\overline{\gamma}^\alpha, T^\alpha) & \in & \mathfrak{Reeb}\left (\psi_{H^\alpha} (o_{J^1B}), o_{J^1B}\right)
\\
(\overline{\gamma}^\beta, T^\beta)  &  \in & \mathfrak{Reeb}\left (\psi_{H^\beta} (o_{J^1B}), o_{J^1B}\right).
\eeastar

\section{Fredholm theory and index formula}

Now we compute $\operatorname{Index} u = \operatorname{Index} w$ in terms of
explicitly defined Maslov indices by specializing the Fredholm theory laid out \cite{oh:contacton,oh:contacton-transversality}. The derivation of the formula for the linearization and its Fredholm theory
are given in \cite{oh:contacton} for the closed string case and \cite{oh:contacton-transversality}
for the open string case in general.
In this section, we will just consider the unperturbed case on the one-jet bundle
which will be enough for the purpose of the present paper after relevant gauge transformations. In this section, we always assume that the pair
$(\psi_H^1(o_{J^1B}),o_{J^1B})$ is nondegenerate with respect to 
the contact form $\lambda = dz - pdq$.

Consider the contact instanton operator
$$
\Upsilon (w) = \left(\overline{\partial}^\pi w, d(w^*\lambda\circ j)\right)
$$
which is a section of the vector bundle
$$
\CC\CD_{k-1,p} \rightarrow \overline{\CW}^{k,p}
$$
as in the preceding section. Here we adopt the notations:
\begin{itemize}
\item  $\CC\CD$ the vector bundle whose fiber at $w$
is given by
$$
\CC\CD_w: = \Omega^{(0,1)} (w^*\xi) \oplus \Omega^2(\Theta)
$$
following the notation from  \cite{oh:contacton-transversality} where `CD' stands for `codomain'. And $\CC\CD_{k-1,p}$ is the completion given by
$$
\CC\CD_{k-1,p} = \Omega^{(0,1)}_{k-1,p} (w^*\xi)
\oplus \Omega^2_{k-2,p}(\Theta).
$$
\item $\CW^{k,p}$ is the $W^{k,p}$-completion of $\CF$  given in
Definition \ref{defn:off-shell} and
$\overline W^{k,p}$ is the completion of $\overline{\CF}$, the gauge transformation of $\CF$.
\end{itemize}
(We refer interested readers to \cite{oh:contacton-transversality} for the
details of the generic transversality results which we use in the
present paper.)

We decompose $ \Upsilon = (\Upsilon_1, \Upsilon_2) $ where
$$
\Upsilon_1 : \overline{\CW}^{k,p} \rightarrow \Omega^{(0,1)}_{k-1,p}(w^*\xi); \quad
\Upsilon_1(w) = \overline{\del}^\pi w
$$
and
$$
\Upsilon_2 : \overline{\CW}^{k,p} \rightarrow \Omega^2_{k-2,p}(\R \times [0,1]); \quad
\Upsilon_2 (w) = d(w^*\lambda \circ j) $$
In this decomposition we have the linearization
$$ D\Upsilon (w) : \Omega^o_{k,p}(w^*T(J^1B);TR,To_{J^1B}) \rightarrow
\CC\CD_{k-1,p}
 $$
where $R = \psi_H^1(o_{J^1B})$.
In this decomposition we can express $D\Upsilon (w)$ as the matrix form (see \cite[(11.3)]{oh:contacton})
$$
\left(
\begin{array}{cc}
\overline{\del}^{\nabla^\pi}+B^{(0,1)}+T^{\pi,(0,1)}_{d w} & \frac{1}{2} \lambda(\cdot)(\mathcal{L}_{R_\lambda}J)J\partial^\pi w \\
d(((\cdot)\rfloor d\lambda)\circ j) & -\Delta(\lambda(\cdot))dA
\end{array} \right)
$$
Note that the desired index of $w$ is exactly $\operatorname{Index} D\Upsilon(w)$.

\begin{prop}[Proposition 11.2 \cite{oh:contacton} ]\label{prop:diagonalize}
The operator $D\Upsilon(w)$ is homotopic to the operator
$$
\left(
\begin{array}{cc}
\overline{\del}^{\nabla^\pi} + B^{(0,1)} + T^{\pi,(0,1)}_{d w} & 0 \\
0 & -\Delta (\lambda (\cdot)) dA
\end{array} \right) =: L_{w}
$$
via the homotopy
$$ s \in [0,1] \mapsto
\left(
\begin{array}{cc}
\overline{\del}^{\nabla^\pi} + B^{(0,1)} + T^{\pi,(0,1)}_{d w} & \frac{s}{2} \lambda (\cdot)(\mathcal{L}_{R_\lambda}J) J \del^\pi w \\
s d(((\cdot) \rfloor d\lambda) \circ j) & -\Delta (\lambda (\cdot)) dA
\end{array} \right) =: L_s
$$
which is a continuous family of operators. This family has the same Legendrian boundary conditions which is elliptic, so it is Fredholm for all $s$. Moreover, this family preserves the index. Therefore we have
$$
{\rm Index} D\Upsilon (w)
= {\rm Index} \left(\overline{\del}^{\nabla^\pi} + B^{(0,1)} + T^{\pi,(0,1)}_{d w}\right) + {\rm Index}(-\Delta). $$
\end{prop}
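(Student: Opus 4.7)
My plan is to prove the proposition in three stages: (i) identify the appropriate Banach spaces and check that $L_s$ makes sense as a bounded family of operators for all $s \in [0,1]$; (ii) verify that each $L_s$ is Fredholm, by showing that the off-diagonal terms are lower-order (compact) perturbations of the block diagonal $L_w = L_0$; (iii) invoke homotopy invariance of the Fredholm index to conclude $\operatorname{Index} D\Upsilon(w) = \operatorname{Index}(L_1) = \operatorname{Index}(L_0)$, and finally note that $L_0$ splits as a direct sum so its index is the sum of the two Fredholm indices.

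For step (ii), the key observation is that the full operator acts on sections of $w^*T(J^1B)$ decomposed along $\xi \oplus \R\langle R_\lambda\rangle$, or equivalently on pairs $(\eta, f)$ with $\eta \in \Omega^0_{k,p}(w^*\xi;TR,T o_{J^1B})$ and $f = \lambda(\cdot) \in W^{k,p}$, with the Legendrian boundary condition splitting accordingly. The top-left block $\overline\partial^{\nabla^\pi} + B^{(0,1)} + T^{\pi,(0,1)}_{dw}$ is a first-order Cauchy--Riemann type operator on $w^*\xi$ with totally real boundary condition, hence Fredholm; the bottom-right block $-\Delta \, dA$ is a scalar Laplacian with Dirichlet-type boundary condition (since $\lambda$ vanishes on $TR$ and $To_{J^1B}$), hence Fredholm. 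The off-diagonal term at position $(1,2)$ is a zeroth-order operator (pointwise multiplication by the smooth tensor $\tfrac{1}{2}(\mathcal{L}_{R_\lambda}J)J\partial^\pi w$), while the off-diagonal term at position $(2,1)$ is first-order, but in both cases the operators are strictly lower order than the corresponding \emph{diagonal} principal symbols (second order for the $(2,2)$-entry, first order for the $(1,1)$-entry). Thus in the Douglis--Nirenberg sense the principal symbol of $L_s$ is $s$-independent and equals that of $L_0$, which is elliptic. Combined with compactness of the Sobolev embedding $W^{k,p} \hookrightarrow W^{k-1,p}$ on the compactified strip, this makes the off-diagonal contributions relatively compact with respect to $L_0$, so $L_s$ is Fredholm for every $s \in [0,1]$.

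For step (iii), since the Legendrian boundary conditions do not depend on $s$, the map $s \mapsto L_s$ is a norm-continuous family of Fredholm operators between the same pair of Banach spaces. By the homotopy invariance of the Fredholm index, $\operatorname{Index} L_0 = \operatorname{Index} L_1 = \operatorname{Index} D\Upsilon(w)$. Since $L_0$ is block diagonal, its kernel and cokernel split as direct sums, giving
\[
\operatorname{Index} D\Upsilon(w) = \operatorname{Index}\!\left(\overline\partial^{\nabla^\pi} + B^{(0,1)} + T^{\pi,(0,1)}_{dw}\right) + \operatorname{Index}(-\Delta).
\]

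The main obstacle I anticipate is verifying the ellipticity (in the Douglis--Nirenberg sense) of the principal symbol of $L_s$ uniformly in $s$ together with the Lopatinskii--Shapiro condition under the Legendrian boundary condition, since the two diagonal blocks live on different function spaces and carry different orders; once this bookkeeping is done, continuity of $s\mapsto L_s$ in the operator norm and the compactness of the off-diagonal perturbations follow from standard Sobolev embedding arguments on the finite rectangles exhausting $\Theta = \R\times[0,1]$, combined with the exponential decay estimates at the punctures that are already established in \cite{oh:contacton-Legendrian-bdy} under the nondegeneracy hypothesis on $(\psi_H^1(o_{J^1B}), o_{J^1B})$.
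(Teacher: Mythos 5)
Your proposal is correct and follows essentially the route the cited Proposition 11.2 of \cite{oh:contacton} takes: show that the off-diagonal entries of $L_s$ do not enter the Douglis--Nirenberg principal symbol (the $(1,2)$ entry is zeroth order where it is allowed to be first, the $(2,1)$ entry is first order where it is allowed to be second), so the principal symbol is $s$-independent, elliptic, and compatible with the Legendrian boundary conditions, yielding Fredholmness for all $s$ and index invariance. Your compact-perturbation framing and the DN-ellipticity framing are two sides of the same coin here, and you correctly identify the one genuinely nontrivial ingredient that makes the compactness argument work on the noncompact strip $\Theta = \R\times[0,1]$: the coefficients of the off-diagonal blocks are tensored against $d^\pi w$ (equivalently $\partial^\pi w$), which decays exponentially at the punctures by the asymptotic convergence to Reeb chords -- exactly the point flagged in the remark following the proposition. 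One small clarification worth stating explicitly: once compactness of the off-diagonal perturbation is established, the equality $\operatorname{Index} L_1 = \operatorname{Index} L_0$ follows directly from stability of the index under compact perturbation, so invoking the homotopy $s \mapsto L_s$ is not logically necessary on your route, though it is harmless and mirrors the statement.
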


\begin{rem} Although it has not been explicitly mentioned in 
\cite{oh:contacton}, it is also important that the off-diagonal terms
exponentially decay in the strip-like coordinate $(\tau,t)$ near the punctures
as $|\tau| \to \infty$.
\end{rem}

It has been shown in \cite[Lemma 10.1]{oh-yso:index}
that  ${\rm Index}(-\Delta) = 0$.

Now we will provide an explicit formula for the index of the operator
$$
D\Upsilon_1(w)= \overline{\del}^{\nabla^\pi} + B^{(0,1)} + T^{\pi,(0,1)}_{d w}
$$
acted upon the $W^{k,p}$-completion $\Omega^o_{k,p}(w^*\xi; TR, To_{J^1B})$.
With the standard coordinates $(\tau,t) \in \R \times [0,1] \subset \R^2$, we have
$$
2\Upsilon_2 (w)(\del_\tau) = 2\overline{\del}^\pi (w)(\del_\tau)
= \left( \frac{\del w}{\del \tau} + J' \frac{\del w}{\del t} \right)^\pi.
$$
By an abuse of notation, we also denote  by $D\Upsilon_1$ the linearization of this $w^*T(J^1B)$-valued operator as usual.

To closely study the linearization $D\Upsilon_1$
we use the canonical trivialization $\Phi$ in
\eqref{eq:canonical-trivialization} again. Then we have the push forward operator
$$
\Phi_\ast D\Upsilon_1|_{w^*\xi} : W^{1,2}(\Sigma,\R^{2n};\Lambda^\Phi(\tau),\R^n)
\rightarrow L^2(\Theta,\R^{2n})
$$
where $\Lambda^\Phi(\tau) = \Phi(T_{w(\tau,0)}R)$ in $\R^{2n}$.
Then by a straightforward computation, one can check that this operator becomes an operator of Cauchy-Riemann type
\be\label{eq:trivialized-cr-operator}
\begin{cases}
\overline{\del}_{J',T}Y := \frac{\del Y}{\del \tau} + J' \frac{\del Y}{\del t} + TY \\
Y(\tau,0) \in \Lambda^\Phi(\tau), Y(\tau,1) \in \R^n
\end{cases}
\ee
for $Y:\Theta \rightarrow \R^{2n}$.

Let $J'_\pm:[0,1] \rightarrow {\rm End}(\R^{2n})$ be defined by the equation
$$
\lim_{\tau\rightarrow\pm\infty}\sup_{0\leq t\leq 1}\|J(\tau,t)-J_\pm(t)\| = 0
$$
and $T_\pm$ is defined similarly. Also let $\Psi_\pm:[0,1]\times Sp(2n)$ be defined by the equations
$$
\frac{\partial \Psi_\pm}{\partial t} - J'_\pm(t)T_\pm(t)\Psi_\pm = 0, \qquad \Psi_\pm(0) = id
$$
Now we can apply the theorem from \cite{rs:spectral}.

\begin{lem}[\cite{rs:spectral}]
The Fredholm operator $\bar{\partial}_{J',T}:W^{1,2}_\Phi \rightarrow L^2$ has the index given by
$$
{\rm Index}\bar{\partial}_{J',T} = -\mu(Gr(\Psi^-),\Lambda^-) + \mu(Gr(\Psi^+),\Lambda^+) + \mu(\Delta,\Lambda)
$$
where $\Lambda = \Lambda_0\oplus  \Lambda_1$, $\Lambda^\pm$ is an asymptotic limit of $\Lambda(\tau)$ and $\Delta$ is the diagonal in $\R^{2n}\oplus\R^{2n}$.
\end{lem}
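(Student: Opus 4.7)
The plan is to specialize the spectral-flow $=$ Maslov-index theorem of Robbin--Salamon to our strip boundary value problem with one fixed and one moving Lagrangian condition. First I would rewrite $\overline{\del}_{J',T} = \del_\tau - A(\tau)$, where
$$
A(\tau) Y := -J'(\tau,\cdot)\,\del_t Y - T(\tau,\cdot) Y
$$
is a $\tau$-dependent unbounded self-adjoint operator on $L^2([0,1],\R^{2n})$ with domain
$$
\{Y \in W^{1,2}([0,1],\R^{2n}) \mid Y(0) \in \Lambda^\Phi(\tau),\ Y(1) \in \R^n\}.
$$
Self-adjointness holds because both boundary conditions are Lagrangian with respect to the standard symplectic form on $\R^{2n}$, and the nondegeneracy assumption on the asymptotic Reeb chords translates into invertibility of the asymptotic operators $A_\pm := \lim_{\tau \to \pm\infty} A(\tau)$.

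With this reformulation the main theorem of Robbin--Salamon yields $\Index\,\overline{\del}_{J',T} = \operatorname{sf}(\{A(\tau)\}_{\tau \in \R})$. To convert the spectral flow into Maslov indices I would pass to the doubled symplectic space $(\R^{2n} \oplus \R^{2n}, \omega \oplus (-\omega))$ via the evaluation map $Y \mapsto (Y(0),Y(1))$. Kernels of $A(\tau) - \mu$ for a generic crossing value $\mu$ correspond bijectively to intersections $\operatorname{Gr}(\Psi_\tau) \cap (\Lambda^\Phi(\tau) \oplus \R^n)$, where $\Psi_\tau$ denotes the time-$1$ flow of $\del_t - J'(\tau,\cdot) T(\tau,\cdot)$ at parameter $\tau$. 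Consequently the spectral flow equals the Maslov index of the path of Lagrangian pairs $\tau \mapsto (\operatorname{Gr}(\Psi_\tau),\ \Lambda^\Phi(\tau) \oplus \R^n)$ in the doubled space. By the homotopy, catenation and naturality axioms of the Robbin--Salamon Maslov index, this combined index splits into three pieces: the Conley--Zehnder-type boundary contributions $\pm \mu(\operatorname{Gr}(\Psi^\pm),\Lambda^\pm)$ at $\tau = \pm\infty$ from the linearized asymptotic symplectic paths, plus a contribution $\mu(\Delta,\Lambda)$ measuring the rotation of the moving total boundary Lagrangian $\Lambda(\tau)$ against the diagonal $\Delta$ coming purely from the $\tau$-variation of the boundary condition at $t=0$.

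The hardest step is the axiomatic decomposition together with careful sign bookkeeping across all three Maslov indices. The simplification I would exploit is that only $\Lambda_0(\tau)$ varies (the $t=1$ boundary $\Lambda_1 = \R^n$ being constant), so the potential contribution at $t=1$ gets fully absorbed into the two asymptotic $\mu(\operatorname{Gr}(\Psi^\pm),\Lambda^\pm)$ terms, and the independent moving-boundary contribution at $t=0$ is exactly $\mu(\Delta,\Lambda)$ in the crossing-form picture. One must also verify that the chosen crossing-form convention matches the Robbin--Salamon convention used in the definition of $\mu(\operatorname{Gr}(\Psi^\pm),\Lambda^\pm)$; reconciling these sign conventions, and verifying that the off-diagonal terms of $D\Upsilon(w)$ discarded in Proposition \ref{prop:diagonalize} indeed make no contribution to the Maslov indices, is the most delicate bookkeeping in the argument.
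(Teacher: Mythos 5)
The paper does not prove this lemma: it is stated with the citation \cite{rs:spectral} in the lemma header and no argument is offered; the text immediately proceeds to specialize the formula to the particular $\Psi_\pm$, $\Lambda_0(\tau)$, $\Lambda_1$ at hand. So there is no ``paper's own proof'' to compare against. Your sketch of the Robbin--Salamon argument that establishes the formula is broadly the right one: rewrite $\overline{\del}_{J',T}=\del_\tau - A(\tau)$ for a family of (densely defined, self-adjoint) operators with Lagrangian boundary conditions, invoke the spectral-flow $=$ Fredholm index theorem, translate crossings of zero eigenvalues into Lagrangian intersections in the doubled space $(\R^{2n}\oplus\R^{2n},\ \omega\oplus(-\omega))$, and then use the catenation and homotopy axioms of the Maslov index to split the total into two asymptotic Conley--Zehnder-type terms (the $t$-paths $\operatorname{Gr}(\Psi^\pm(t))$ against the fixed $\Lambda^\pm$) and a moving-boundary term $\mu(\Delta,\Lambda)$.

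Two points worth flagging. First, the closing paragraph about ``verifying that the off-diagonal terms of $D\Upsilon(w)$ make no contribution'' is out of scope for this lemma: that reduction is the content of Proposition \ref{prop:diagonalize}, which produces the diagonalized operator \emph{before} this lemma is invoked, and what this lemma computes is already the index of the reduced $\xi$-valued CR operator with its boundary conditions. Folding that verification into the present proof conflates two logically separate steps. Second, as worded, ``kernels of $A(\tau)-\mu$ for a generic crossing value $\mu$'' should read ``kernel of $A(\tau_0)$ at a crossing parameter $\tau_0$''; the spectral flow counts signed crossings of the zero eigenvalue as $\tau$ varies, and the relevant bijection is between $\ker A(\tau_0)$ and $\operatorname{Gr}(\Psi_{\tau_0})\cap\Lambda(\tau_0)$ via the evaluation map $Y\mapsto(Y(0),Y(1))$. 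With those corrections your outline is a faithful summary of the Robbin--Salamon proof; for a fully rigorous write-up one should quote the precise theorem number in \cite{rs:spectral} and carefully track the crossing-form sign conventions, as you note.
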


In our case, we have
$$
\Psi_\pm(t) = \Phi\circ T\phi_{R_\lambda}^{T(1-t)}\circ \Phi^{-1} \equiv id, \quad
\Lambda_0 (\tau) = \Lambda^\Phi (\tau), \quad \Lambda_1 (\tau) \equiv \R^n.$$
Therefore
$$\mu(Gr(\Psi^\pm),\Lambda^\pm) = 0 $$
since $Gr(\Psi^\pm)$ and $\Lambda^\pm$ are both constant paths, and hence we have
$$
\operatorname{Index} w = \mu(\Delta,\Lambda^\Phi(\tau)\oplus\R^n) = \mu(\R^n,\Lambda^\Phi(\tau))
$$

In the classical Floer theory, the equivalence of CR-equation with boundary condition $\phi^1_H(o_{T^*B})$ and $o_{T^*B}$ and perturbed CR-equation with boundary condition $o_{T^*B}$ and $o_{T^*B}$ gives that
$$
\mu(\R^n,\Lambda^\Phi(\tau)) = -\mu(\Lambda^\Phi_K(-\infty,t),\R^n) + \mu(\Lambda^\Phi_K(+\infty,t),\R^n)
$$
where we obtain
$$
\Lambda^\Phi_K(\pm\infty,t) = \Phi\circ \psi^t_K\circ\Phi^{-1}(\R) = B_\Phi(\R^n)
$$
for $K : T^*B \rightarrow \R$. (See the proof of \cite[Theorem 5.1]{oh:jdg} for the right-hand-side index formula.) Similarly, in our case, we have
 $$
 \mu(\Lambda^\Phi(\pm\infty,t),\R^n) = -\mu(u(\pm\infty)) = -\mu(\gamma^{\pm}(t)).
 $$

\section{Canonical grading}

We recall the set of translated Hamiltonian chords
$$
\mathfrak{X}(H;o_{J^1B},o_{J^1B})
$$
consists of the paths of the form
\be\label{eq:translated-Ham-chords}
\gamma(t) = \psi_H^t (\overline \gamma(T t)), \quad t \in [0,1]
\ee
where $\overline \gamma$ is a Reeb chord
$\overline \gamma:[0,T] \to J^1B$ with period $T$.
In this regard, we may denote the same set by
$$
\Phi_H ( \mathfrak{Reeb} (\psi_H^1(o_{J^1B}), o_{J^1B}) ).
$$
By the assumption $\psi_H^1(o_{J^1B}) \pitchfork Z$ and compactness of $\psi_H^1(o_{J^1B})$
it is easy to check that there are only finitely many elements in
$\mathfrak{X}(H;o_{J^1B},o_{J^1B})$.

We now consider the perturbed contact instanton equation \eqref{eq:perturbed-contacton-bdy}.
The rest of this section will be occupied by the proofs of the following index formulae.

\begin{thm}[Compare with \cite{oh:jdg}]\label{thm:grading}
For each $\gamma\in \mathfrak{X}(H;o_{J^1B},o_{J^1B})$, there exists a canonically assigned
Maslov index that has the values in ${1\over 2}\Z$.  We
denote this map by
$$
\mu:\mathfrak{X}(H;o_{J^1B},o_{J^1B})\rightarrow \frac{1}{2}\Z.
$$
Furthermore, $\mu$ satisfies the following properties:
\begin{enumerate}
\item  For
each solution $u$ of \eqref{eq:perturbed-contacton-bdy}
 with $u(-\infty )=\gamma^-$,
$u(+\infty )=\gamma^+$, we have
the Fredholm index of $u$ given by
$$
\operatorname{Index} u=\mu(\gamma^- )-\mu(\gamma^+ ).
$$
\item
Consider the time-independent contact Hamiltonian $F=f \circ\pi_{J^1B}, \, f\in
C^\infty(B) $ where $f$ is a Morse function on $B$.
Let $y \in \widetilde{\operatorname{Graph}} (df)\cap o_{J^1B}$ and so
$b = \pi (y) \in \operatorname{Crit}(f)$.  Denote by
$\gamma_{b} \in \mathfrak{X}(H;o_{J^1B},o_{J^1B})$ with $\gamma_{b} (0) \in  o_{J^1B}$,
$\gamma_{b} (1) = b \in o_{J^1B}$.  Then we have
$$
\mu(\gamma_{b}) = \frac{1}{2}\dim B - \mu_f (b) = \frac{n}{2} - \mu_f (b)
$$
where $\widetilde{\operatorname{Graph}}(df) = \{ (b, df(b), f(b)) \in J^1B \mid b \in B\}$ and
$\mu_f$ is the Morse index of $f$ at $b$ on $B$.
\end{enumerate}
\end{thm}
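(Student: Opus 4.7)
The plan is to build on the Fredholm index calculation already carried out in the preceding section and convert its outcome into a canonically assigned index $\mu(\gamma)$ for each $\gamma \in \mathfrak{X}(H;o_{J^1B},o_{J^1B})$. First I would apply the gauge transformation $\Phi_H$ to identify $\gamma$ with a Reeb chord $\overline{\gamma}$ from $\psi_H^1(o_{J^1B})$ to $o_{J^1B}$, and then use the canonical trivialization $\Phi$ of $\overline{\gamma}^*\xi$ (already employed in the index calculation) to convert the linearized Reeb flow applied to $T_{\overline{\gamma}(0)}\psi_H^1(o_{J^1B})$ into a path of Lagrangian subspaces in $(\R^{2n}, \omega_{\rm std})$. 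Define $\mu(\gamma)$ to be the Robbin--Salamon Maslov index of this path relative to the fixed reference Lagrangian $\R^n \cong T o_{J^1B}$. The half-integer nature is forced by the fact that the boundary Lagrangians meet transversely at the endpoints (by the nondegeneracy hypothesis $\psi_H^1(o_{J^1B}) \pitchfork Z$), so the endpoints contribute half-integer correction terms in the Robbin--Salamon formulation.

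For statement (1), the Fredholm index formula already derived reads
\[
\operatorname{Index} u \;=\; \mu(\R^n, \Lambda^\Phi(\tau)),
\]
which by the homotopy invariance and concatenation property of the Robbin--Salamon index decomposes as
\[
-\mu(\Lambda^\Phi(-\infty, \cdot), \R^n) + \mu(\Lambda^\Phi(+\infty, \cdot), \R^n).
\]
With the sign convention fixed above, $\mu(\Lambda^\Phi(\pm\infty, \cdot), \R^n) = -\mu(\gamma^\pm)$, and the desired identity $\operatorname{Index} u = \mu(\gamma^-) - \mu(\gamma^+)$ follows. The contribution from the $-\Delta$ block in the diagonalized linearization vanishes by the already-cited result from \cite{oh-yso:index}, so no further correction is needed.

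For statement (2) I would specialize to the time-independent contact Hamiltonian $F = f \circ \pi_{J^1B}$ with $f$ Morse on $B$. At each $b \in \operatorname{Crit}(f)$ the constant path $\gamma_b$ lies in $\mathfrak{X}(F;o_{J^1B},o_{J^1B})$, and one checks that the linearization $d\psi_F^1$ along $\gamma_b$ preserves the standard splitting of the contact hyperplane $\xi_{(b,0,f(b))} \cong T_bB \oplus T_b^*B$ and acts on it as the symplectic shear $(q,p) \mapsto (q, p + \operatorname{Hess}_b f \cdot q)$. Computing the Robbin--Salamon index of the resulting Lagrangian path against the fixed reference $\R^n$ yields
\[
\mu(\gamma_b) \;=\; \tfrac{n}{2} - \mu_f(b)
\]
by the standard Morse--Maslov correspondence: each negative eigenvalue of $\operatorname{Hess}_b f$ contributes $-1$ to the total count, while the half-integer crossings at the two endpoints accumulate to the $n/2$ shift.

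The main obstacle is bookkeeping of sign conventions and the normalization of the $n/2$ shift, which are notoriously delicate in the Robbin--Salamon framework. In particular one must verify that the Reeb-direction component contributes trivially, which follows from $J R_\lambda = 0$ and the block-diagonal structure of the triad metric between $\xi$ and $\R\langle R_\lambda \rangle$, and that the gauge transformation $\Phi_H$ induces no index shift. Once these checks are in place, the computation for (2) essentially reduces to the cotangent-bundle Morse--Maslov calculation of \cite{oh:jdg}, lifted to $J^1B$ via the canonical correspondence discussed in the subsection on the effective action functional.
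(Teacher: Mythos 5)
Your proposal takes essentially the same route as the paper: gauge-transform to the Reeb chord picture, use a canonical Lagrangian-splitting trivialization of $\gamma^*\xi$ in which $T o_{J^1B}$ maps to the fixed reference $\R^n$, define $\mu(\gamma)$ via the Robbin--Salamon index of $t\mapsto d\psi_H^t(\R^n)$ against $\R^n$, and deduce statement (1) from the spectral-flow index formula plus the vanishing of ${\rm Index}(-\Delta)$. For statement (2) you correctly identify the shear $d\psi_F^t|_\xi = \begin{pmatrix} I & 0 \\ t\,d^2f & I\end{pmatrix}$, which is exactly the paper's computation.

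The only wobble is in your bookkeeping of the $n/2$ shift. In the situation at hand there is exactly one crossing, at $t=0$, because $d\psi_F^t(\R^n)\cap\R^n\ne 0$ only when $t=0$ (nondegeneracy of $d^2f(b)$ kills the $t=1$ intersection). The boundary crossing at $t=0$ contributes $\tfrac12\,{\rm sign}\,d^2f(b) = \tfrac12(n_+-n_-) = \tfrac{n}{2}-\mu_f(b)$ all by itself; there is no second half-integer contribution ``at the two endpoints'', and it is not that each negative eigenvalue ``contributes $-1$''. Your final formula is right, but the narrative account of how the $n/2$ arises should be tightened: the shift comes from the sign-versus-index identity ${\rm sign}\,Q = n - 2\,\mu_f(b)$ applied to the single crossing form $\Gamma = d^2f(b)$, not from an accumulation over two endpoint crossings.
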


\subsection{Canonical Lagrangian splittings of contact distribution $\xi$}

Similarly to \cite{oh:jdg}, we first construct a certain canonical class of symplectic trivializations
$$
\Phi:\gamma^*\xi \rightarrow [0,1]\times\C^n
$$
of the contact distribution $\xi \subset TJ^1B$ 
for each given translated Hamiltonian chord $\gamma\in\mathfrak{X}(H;o_{J^1B},o_{J^1B})$.

\subsubsection{Lagrangian splitting in canonical coordinates}

We first recall the splitting 
\be\label{eq:TJ1B}
T(J^1B) = \xi \oplus \R \langle R_\lambda \rangle 
= \xi \oplus \R \left \langle \frac{\del}{\del z} 
\right\rangle.
\ee
Moreover with respect to any canonical coordinates 
$(q_1, \ldots, q_n, p_1 \ldots, p_n)$
we also have the Lagrangian splitting
\be\label{eq:Lag-splitting}
\xi = \span \left\{\frac{\del}{\del p_i}\right\}_{1 \leq i \leq n} \oplus
\span \left \{\frac{D}{\del q_i} \right\}_{ 1 \leq i \leq n}
\ee
where $\frac{D}{\del q_i} = \frac{\del}{\del q_i} + p_i \frac{\del}{\del z}$. 
Recalling the natural diagram
$$
\xymatrix{& \ar[dl]_{ \pi_{\text{\rm cot}}}  J^1B  \ar[dr]^{\pi_{\text{\rm front}}
} & \\
T^*B &  & B \times \R,
}
$$
we have 
\beastar
\span \left\{\frac{\del}{\del p_i}\right\}_{1 \leq i \leq n} & = &\ker d\pi_{\text{\rm front}}
 \\
\span \left \{\frac{D}{\del q_i} \right\}_{ 1 \leq i \leq n} 
& = & (d    \pi_{\text{\rm cot}}|_\xi)^{-1} \span \left \{\frac{\del}{\del q_i} \right\}_{ 1 \leq i \leq n}.
\eeastar
Together with the splitting
\eqref{eq:TJ1B}, we obtain the splitting
$$
T(J^1B) = \xi \oplus \span \{ R_\lambda \}.
$$
We summarize the above discussion into the following.
\begin{prop} The splitting \eqref{eq:TJ1B} depends only on the choice of
contact form $\lambda = dz - pdq$ independent of the choice of canonical coordinates 
of $T^*B$.
\end{prop}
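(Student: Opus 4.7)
The plan is to verify that each of the two summands in \eqref{eq:TJ1B} is characterized purely by the contact form $\lambda$, with no appeal to a particular system of canonical coordinates on $T^*B$. For the first summand, I would simply observe that $\xi = \ker\lambda$ is defined as the kernel of $\lambda$ as a $1$-form, so it is manifestly intrinsic to $\lambda$. For the second summand, I would invoke the standard intrinsic characterization of the Reeb vector field $R_\lambda$ as the unique vector field on $J^1 B$ satisfying $\lambda(R_\lambda) = 1$ and $R_\lambda \rfloor d\lambda = 0$; since this characterization uses only $\lambda$ and $d\lambda$, the line $\R\langle R_\lambda\rangle$ is determined by $\lambda$ alone.

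Next I would verify the identification $R_\lambda = \partial/\partial z$ claimed in \eqref{eq:TJ1B}. In any canonical coordinates $(q,p,z)$ we have $d\lambda = -dp\wedge dq$, so $\partial/\partial z \rfloor d\lambda = 0$ and $\lambda(\partial/\partial z) = 1$; by uniqueness of the Reeb vector field, $\partial/\partial z = R_\lambda$. The directness of the sum then follows from $\lambda(R_\lambda) = 1 \neq 0$, which forces $R_\lambda \notin \xi$, combined with the dimension count $\dim\xi + 1 = 2n+1 = \dim T(J^1 B)$. None of these steps refer to the particular choice of the canonical frame $(\partial/\partial q_i,\partial/\partial p_i)$ used to express $\lambda$ in the form $dz - p\,dq$.

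There is no substantive obstacle here; the argument is essentially a reading of definitions. The only point worth stressing — and the reason the proposition is stated at all — is the contrast with the finer Lagrangian splitting \eqref{eq:Lag-splitting} of $\xi$ discussed just before: the horizontal summand $\span\{D/\partial q_i\}$ is expressed via $(d\pi_{\mathrm{cot}}|_\xi)^{-1}\span\{\partial/\partial q_i\}$ and hence depends on the choice of canonical coordinates on $T^*B$, whereas the coarser Reeb decomposition \eqref{eq:TJ1B} — obtained from \eqref{eq:Lag-splitting} by forgetting the internal Lagrangian splitting of $\xi$ and adjoining $\R\langle R_\lambda\rangle$ — is genuinely intrinsic to $\lambda$. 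I would close by noting that this is exactly the feature needed in the sequel to justify that the canonical trivialization used later depends only on $\lambda$ up to operations within $\xi$.
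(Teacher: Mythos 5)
Your proof is correct and matches the argument the paper intends: the Proposition is stated merely as a summary of the preceding coordinate-free identifications ($\xi = \ker\lambda$, and $R_\lambda$ characterized by $\lambda(R_\lambda)=1$, $R_\lambda \rfloor d\lambda = 0$), and your verification that $\del/\del z = R_\lambda$ for $\lambda = dz - pdq$ supplies the one computation the paper leaves implicit. Your closing contrast with the coordinate-dependent Lagrangian splitting \eqref{eq:Lag-splitting} is precisely the point being emphasized.
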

\subsubsection{Lagrangian splitting in the Sasaki metric of $J^1B$}

When we equip $B$ with a Riemannian metric $g$ in addition, it naturally induces the associated
Sasaki metric on $TB$ and $T^*B$ so that the naturally orthogonal splitting
$$
T(T^*B) = H_g \oplus V
$$
induced by the Levi-Civita connection of $g$. (See Appendix \ref{sec:JonT*B} and \ref{sec:sasaki-J} for some
basic facts on the Sasaki metric and almost complex structure on $T^*B$.)

It follows that the splitting, also written as
$$
\xi = H_g \oplus V
$$
with slight abuse of notation, is a Lagrangian splitting and bundle isomorphic to 
$\pi^*T(T^*B)$ via $(d    \pi_{\text{\rm cot}})|_\xi$ which preserves the Lagrangian splittings 
of $\xi$ and  of $T(T^*B) \cong H_g \oplus V$.

\begin{defn}[$\CT_g$ and $\CT$] Let $\gamma\in\mathfrak{X}(H;o_{J^1B},o_{J^1B})$, 
consider the class of symplectic trivializations
\be\label{eq:canonical-trivialization}
\Phi: \gamma^*\xi \rightarrow [0,1]\times\R^n\oplus(\R^n)^*\cong [0,1]\times\C^n
\ee
that satisfies
$$
\Phi(H_{g;\gamma(t)}) \equiv \R^n, \quad \Phi(V_{\gamma(t)}) \equiv (\R^n)^*\cong i\R^n
 $$
for all $t\in [0,1]$; we denote the class by $\CT_g$. We then consider the union
$$
\CT = \bigcup_{g \in \text{Riem}(B)} \CT_g.
$$
\end{defn}
It is easy to check that each $\CT_g$ is contractible and so is $\CT$.
Under any of  this trivialization, we will have
$$ 
\Phi(T_{\gamma(0)}o_{J^1B}) = \Phi(T_{\gamma(1)}o_{J^1B}) = \R^n 
$$
and the map $B_\Phi:[0,1] \rightarrow {\rm Sp}(2n)$ by
$$ 
B_\Phi(t) := \Phi\circ d\psi^t_H\circ\Phi^{-1}:\C^n\cong\{0\}\times\C^n \rightarrow \{t\}\times\C^n\cong\C^n
 $$
Following the definition of \cite{rs:maslovindex}, we now consider the Maslov index
$$ 
\mu({\rm Gr}(B_\Phi),\R^n\oplus\R^n) 
$$
which is the same as $\mu(B_\Phi(\R^n),\R^n)$ \cite{rs:maslovindex}.

\subsection{Calculation of Maslov index: Proof of Theorem \ref{thm:grading}}

Note that this Maslov index is independent of the choice of trivializations $\Phi\in\CT$
by the contractibility thereof .  (See \cite[Lemma 5.8]{oh:jdg} in the context of $T^*B$).

For the proof of Statement (1) of Theorem \ref{thm:grading}, we just mention that 
it is an immediate consequence of \cite[Theorem 2.4]{rs:maslovindex}, 
and so we will just prove the second statement. 

Recall that we have a 1-1 correspondence between the two moduli spaces
$$
\widetilde \CM (H,J;o_{J^1B}, o_{J^1B}), \quad
\widetilde \CM (J',(\psi^1_H (o_{J^1B})),o_{J^1B}))
$$ 
by the gauge transformation. We will consistently denote by $u$ an element  in $
 \widetilde \CM (H,J)$ and by $w$ the corresponding element
in $\widetilde \CM (J',(\psi^1_H (o_{J^1B})), o_{J^1B})$ in the following discussion.
 
Note that the contact Hamiltonian flow of the Hamiltonian $F = f\circ\pi_{J^1B}$ is just given by
the fiberwise translation
$$ 
\psi^t_F(q,p,z) = (q,p + t\, df(q), z + t\, f(q)) 
$$
\begin{lem}\label{lem:Reeb-chords-f}
Let  $\overline{\gamma}_{b}$  be the Reeb chord from $\psi^1_F(o_{J^1B})$ 
to $o_{J^1B}$ with its final point 
$$
\overline{\gamma} (1) = (b,0,0) \in o_{J^1B}.
$$
Then the corresponding $\gamma_{b} \in \mathfrak{X}(H;o_{J^1B},o_{J^1B})$ is of the form
\beastar
\gamma_{b}(t) & =  & \psi^t_F\circ(\psi^1_F)^{-1}\overline{\gamma}(t) \\
& = & (b, (t-1)df(b), f(b)(1-t) + f(b)(t-1)) = (b,0,0).
 \eeastar
In particular $\gamma_{b}$ is a constant path. 
\end{lem}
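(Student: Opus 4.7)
The proof is a direct calculation in canonical coordinates, using the explicit form of the contact Hamiltonian flow of $F = f\circ \pi_{J^1B}$ stated just above the lemma together with the gauge-transformation formula $\phi^t_F = \psi^t_F \circ (\psi^1_F)^{-1}$. My plan is first to identify the Reeb chord $\overline{\gamma}_b$ explicitly, then to transport it by $\phi^t_F$, and finally to exploit the critical-point condition $df(b) = 0$ to collapse the resulting expression to a constant.

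First I would compute $\psi^1_F(o_{J^1B})$: applying the fiberwise translation formula to $(q,0,0) \in o_{J^1B}$ gives $\psi^1_F(o_{J^1B}) = \{(q, df(q), f(q)) \mid q \in B\} = \widetilde{\operatorname{Graph}}(df)$. Since the Reeb flow of $\lambda = dz - p\,dq$ is the $z$-translation $(q,p,z) \mapsto (q,p,z+s)$, a Reeb chord $\overline{\gamma}$ from $\psi^1_F(o_{J^1B})$ to $o_{J^1B}$ keeps $(q,p)$ constant and moves only the $z$-coordinate; its endpoints force $p = 0 = df(q)$, so $q \in \operatorname{Crit}(f)$. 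Specifying $\overline{\gamma}(1) = (b,0,0)$ fixes $q = b$, and the chord becomes $\overline{\gamma}_b(t) = (b,0,(1-t)f(b))$ with period $T = -f(b)$.

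Next I would compute $\phi^t_F$ by composing the explicit flow formula with its inverse: from $\psi^t_F(q,p,z) = (q, p + t\,df(q), z + t\,f(q))$ one reads off $(\psi^1_F)^{-1}(q,p,z) = (q, p - df(q), z - f(q))$, hence
\begin{equation*}
\phi^t_F(q,p,z) \;=\; \bigl(q,\ p + (t-1)\,df(q),\ z + (t-1)\,f(q)\bigr).
\end{equation*}
Substituting $\overline{\gamma}_b(t) = (b,0,(1-t)f(b))$ into this expression produces
\begin{equation*}
\gamma_b(t) \;=\; \bigl(b,\ (t-1)\,df(b),\ (1-t)f(b) + (t-1)f(b)\bigr),
\end{equation*}
which is exactly the formula displayed in the lemma. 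The final step is the observation that $b \in \operatorname{Crit}(f)$ gives $df(b) = 0$, while the $z$-coordinate already simplifies identically to $0$; hence $\gamma_b(t) \equiv (b,0,0)$, a constant path.

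There is no real obstacle here: once the flow formula is in hand, everything reduces to bookkeeping. The one place where care is needed is the sign/direction convention for Reeb chords (the chord has ``period'' $-f(b)$, which can be negative; this is consistent with the paper's allowance of $T \neq 0$ of either sign). I would include a brief sentence noting that the nondegeneracy hypothesis $\psi_F^1(o_{J^1B}) \pitchfork Z_{o_{J^1B}}$ here amounts to Morse-ness of $f$, so that the finitely many Reeb chords are in bijection with $\operatorname{Crit}(f)$, matching the enumeration used in Theorem~7.1(2).
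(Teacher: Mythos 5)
Your proof is correct and follows essentially the same route as the paper: identify the Reeb chord in coordinates, use the boundary conditions to conclude $b \in \operatorname{Crit}(f)$, compute $\phi^t_F$ explicitly, and observe that the $z$-coordinate cancels identically while the $p$-coordinate vanishes because $df(b)=0$. You spell out the derivation of $\phi^t_F$ and the Reeb-flow geometry a bit more fully than the paper, which simply asserts the form of $\overline{\gamma}_b$ and reads off $q_0 = b$, $df(q_0)=0$ from the endpoint condition, but the content is identical.
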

\begin{proof} By definition,  the Reeb chord $\overline{\gamma}_{b}$ must be of the form
$$ 
\overline{\gamma}_{b}(t) = (b, 0, (1-t)\, f(b)).
$$
Since its starting point $\overline\gamma_b(0)$ is contained in $\psi_H^1(o_{J^1B})$,
we also have
$$
(b, 0, (1-t)\, f(b)) = \psi_H^1(q_0,0) = (q_0, df(q_0), f(q_0)) 
$$
for some $q_0 \in B$. Therefore we obtain $q_0 = b, \, df(q_0) = 0, \, f(b) = f(q_0)$.
In particular $b = q_0$ is a critical point of $f$. This finishes the proof.
\end{proof} 

Now we examine the path of Lagrangian subspaces
$$
t \mapsto d_{\gamma_b(t)} \psi^t_F \left(d(\psi_F^1)^{-1}(To_{J^1B})\right)
$$
in the given trivialization of $\gamma^*\xi$. We compute
$$
(d\psi^t_F)_{(q,p,z)} =
\left(
\begin{array}{ccc}
I & 0 & 0 \\
td^2f(q) & I & 0 \\
tdf(q) & 0 & 1
\end{array} \right)
$$
Then we compute the restriction of $d\psi^t_F$ to $\xi$. In terms of the splitting
$$
\xi_{(q,p,z)} = H \oplus V,
$$
we have the matrix representation of $d\psi^t_F|_\xi$ given by
$$
d\psi^t_F|_\xi =
\left(
\begin{array}{cc}
I & 0 \\
t\, d^2f(q) & I
\end{array} \right)
$$
Since $f$ is a Morse function, $d^2f(q)$ is nonsingular at every $q \in {\rm Crit}(f)$
and then $d\psi^t_F\cdot\R^n \cap \R^n \neq 0$ if and only if $t=0$. Therefore
it follows from \cite{rs:maslovindex} that
\beastar
\mu(\gamma_{b}) &= &\mu(d\psi^t_F\cdot\R^n,\R^n) 
= \frac{1}{2}{\rm sign}\Gamma(d\psi^t_F\cdot\R^n,\R^n,0)
\\
& = &\frac{1}{2}{\rm sign}d^2f(b) = \frac{1}{2}\dim B - \mu_f(b)
\eeastar
This finishes the proof of Theorem \ref{thm:grading}.
\qed

Now we have the following corollary from the dimension formula Theorem \ref{thm:generic}.
\begin{cor}
The dimension of a manifold $\widetilde \CM (H,J; \gamma^-, \gamma^+)$ is the same as
$$
{\rm Index} D\Upsilon (w) = \mu(\gamma^-) - \mu(\gamma^+).
$$
\end{cor}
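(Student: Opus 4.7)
The plan is to recognize that this Corollary is the natural synthesis of the generic transversality results of Section \ref{sec:moduli-space} and the Fredholm index computations carried out in the preceding two sections. Concretely, the two ingredients are: (a) for regular CI-bulk data $(H,J)$, the moduli space is a smooth manifold whose dimension equals the Fredholm index of the linearization $D\Upsilon(w)$ at any of its points; and (b) this Fredholm index equals $\mu(\gamma^-) - \mu(\gamma^+)$ via the explicit index formula derived from Proposition \ref{prop:diagonalize} together with the Robbin--Salamon Maslov calculation.

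For ingredient (a), I would first transport the problem through the gauge transformation $u(\tau,t) = \phi_H^t(w(\tau,t))$ and the associated family $J'_t = (\phi_H^t)^*J_t$ from \eqref{eq:perturbed-cplx-str}. By Proposition \ref{prop:equivalent-eqns} this induces a smooth bijection between $\widetilde\CM(H,J;\gamma^-,\gamma^+)$ and $\CM(J';\psi_H^1(o_{J^1B}),o_{J^1B};\overline\gamma^-,\overline\gamma^+)$, and the gauge transformation is a diffeomorphism on the level of off-shell function spaces. Hence both the smooth-manifold structure and the Fredholm index are preserved. Applying Theorem \ref{thm:generic} and the Sard--Smale theorem to the universal moduli space, for $J' \in \CP^{\rm reg}_H(\CJ^c_g(J^1B))$ the unperturbed moduli space is a smooth manifold, and its dimension at $w$ coincides with $\operatorname{Index} D\Upsilon(w)$.

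For ingredient (b), I would invoke Proposition \ref{prop:diagonalize} to deform $D\Upsilon(w)$ through Fredholm operators to the block-diagonal operator $L_w$, which decomposes the index as $\operatorname{Index}(\overline\del^{\nabla^\pi}+B^{(0,1)}+T^{\pi,(0,1)}_{dw}) + \operatorname{Index}(-\Delta)$. The Laplace--Dirichlet piece contributes $0$ by \cite[Lemma 10.1]{oh-yso:index}, and the $\xi$-component is identified with the trivialized Cauchy--Riemann operator \eqref{eq:trivialized-cr-operator} under any $\Phi \in \CT$. The Robbin--Salamon formula then reduces the index to a Maslov index along the path of Lagrangians $\Lambda^\Phi(\tau)$, which was computed in Theorem \ref{thm:grading}(1) to be exactly $\mu(\gamma^-) - \mu(\gamma^+)$.

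The only mild subtlety I expect — really a bookkeeping point rather than an obstacle — is ensuring that the regularity in the statement is framed consistently with the current convention: the transversality in Section \ref{sec:moduli-space} is stated for the unperturbed equation after gauge transformation, so one must verify that the bijection between the two moduli spaces is in fact a diffeomorphism of Banach manifolds (not merely a set-theoretic bijection), so that regular points correspond under the transformation. This is straightforward once one notes that the gauge transformation $\Phi_H$ acts smoothly and bijectively on the relevant $W^{k,p}$-completions, and that $D\Upsilon_H(u)$ is conjugate to $D\Upsilon_{J'}(w)$ up to a zero-order isomorphism that does not alter the index.
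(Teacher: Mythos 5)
Your proposal is correct and follows the paper's own route: the Corollary is obtained by combining Theorem \ref{thm:generic}, which identifies the dimension of the (gauge-transformed) moduli space with $\operatorname{Index} D\Upsilon(w)$ for regular data, with the index computation culminating in Theorem \ref{thm:grading}(1), which gives $\operatorname{Index} u = \mu(\gamma^-) - \mu(\gamma^+)$. Your extra remark about the gauge transformation $\Phi_H$ being a Banach-manifold diffeomorphism (so that regularity and the Fredholm index transport across the two pictures) is an accurate unpacking of a step the paper leaves implicit.
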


\begin{rem}
We can also compute ${\rm Index}D\Upsilon (w)$ by using the method of graded anchors introduced in \cite{oh-yso:index}.
More explicitly, for a contact instanton $w$ with Legendrian boundary conditions $(\psi_H^1(o_{J^1B}), o_{J^1B})$
we can canonically choose a graded anchored Legendrian pairs
$$
((\psi_H^1(o_{J^1B}), \ell_0, \alpha_0), (o_{J^1B}, \ell_1, \alpha_1))
$$
where we put $q_0: = w(0,0)$ and
$$
\begin{cases}
\ell_1 (t) \equiv q_0,\,  & \alpha_1 (t) \equiv T_{q_0} o_{J^1B}, \\
\ell_0 (t) = \psi^t_H (q_0),\,  & \alpha_0 (t) := d\psi^t_H (T_{q_0} o_{J^1B}).
\end{cases}
$$
Then $w$ is \emph{admissible} to the anchored Legendrian pair.
The admissibility is defined in \cite[Definition 8.2]{oh-yso:index}.
Then, by \cite[Theorem 10.3]{oh-yso:index},
$$
 {\rm Index}D\Upsilon (w) = n - \mu_{\rm anc}([w^+_{01}, \overline{\gamma}^+]; \alpha_{0})
- \mu_{\rm anc}([w^+_{10}, \overline{\gamma}^-]; \alpha_{0}).
$$
(Here we computed the index by putting the (positive) strip-like coordinate
$[0,\infty) \times [0,1]$.)
Moreover by following the computation and reduction in \cite[Appendix]{oh:cag} we can check that
$$ \mu_{\rm anc}([w^+_{01}, \overline{\gamma}^+]; \alpha_{0}) = \frac{n}{2} + \mu(\gamma^+) $$
and similarly
$$ \mu_{\rm anc}([w^+_{10}, \overline{\gamma}^-]; \alpha_{0}) = \frac{n}{2}  - \mu(\gamma^-) $$
and therefore we have the same result as Theorem \ref{thm:grading} (1).
\end{rem}

\part{Legendrian contact instanton cohomology and spectral invariants}

\section{Legendrian contact instanton cohomology of $H$}
\smallskip

With all the above preparation we are now ready to construct
the Legendrian Floer cohomology via (perturbed) contact instantons.
To avoid the orientation issue, we will use the $\Z_2$-coefficients.
From now on we consider only generic pairs $(H,J)$ as in the previous section. Moreover by the dimensional reason,
we may and will assume
\be\label{eq:cap-empty}
\psi_H^1(o_{J^1B}) \cap o_{J^1B} = \emptyset
\ee
by perturbing $H$ in a $C^\infty$-small way.

\subsection{Contact instanton complex and its boundary map}
\label{subsec:bdymap}

We form a $\frac{1}{2}\Z$-graded free $\Z_2$-module
$$
CI^*(H,J:B) = \Z_2 \{\mathfrak{X}(H;o_{J^1B},o_{J^1B})\}.
$$

Recall that for each $\gamma^-, \gamma^+ \in \mathfrak{X}(H;o_{J^1B},o_{J^1B})$
satisfying  $\mu(\gamma^+) - \mu(\gamma^-) = 1$,
$\widetilde \CM (H,J; \gamma^-, \gamma^+)$ is an 1-dimensional manifold
and hence the quotient
$$
\CM (H,J; \gamma^-, \gamma^+) := \widetilde \CM (H,J; \gamma^-, \gamma^+) / \R
$$
is a compact 0-dimensional manifold by the transversality condition $ \psi_H^1(o_{J^1B}) \pitchfork Z $.
We define
$$
n_{(H,J)} (\gamma^- , \gamma^+ ) := \#_{\Z_2}\left(\CM (\gamma^-, \gamma^+)\right)
$$
for such a pair $(\gamma^-, \gamma^+)$, and
a homomorphism
$$
\delta_{(H,J)} : CI^*(H,J: B) \rightarrow CI^*(H,J: B)
$$
given by
$$
\delta_{(H,J)} ( \gamma^+ ) = \sum_\beta n_{(H,J)} (\gamma^+ , \gamma^- ) \gamma^-.
$$
By definition, $\delta_{(H,J)}$ has degree $+1$ with
respect to the grading given above.

\begin{rem} We attract readers' attention that we put the input at $+\infty$ and the output
at $-\infty$ which may be considered as the \emph{cohomological convention} in our
convention of the sign put in the action functional. The upshot is the inequality
$$
\widetilde \CA_H(\text{\rm ``output''}) \geq
\widetilde \CA_H(\text{\rm ``input''}).
$$
\end{rem}

Now we prove that $\delta_{(H,J)}$ satisfies
$$
\delta_{(H,J)} \circ \delta_{(H,J)} = 0
$$
by using the property $ T_\lambda (J^1B; o_{J^1B}) = \infty $
as in \cite[Section 13]{oh:contacton-gluing} 
so that 
$$
(CI^*(H,J; B), \delta_{(H,J)})
$$
becomes a graded complex.
In this regard, we borrow the following theorem from \cite{oh:entanglement1}.

\begin{thm}[Theorem 1.3 \cite{oh:entanglement1}] \label{thm:bdymap}
Suppose $(M,\xi)$ is tame and $R \subset M$ is a compact Legendrian submanifold. Let $\lambda$ be a tame contact form such that
\begin{itemize}
\item $\psi = \psi^1_H$ and $ \| H \| < T_\lambda (M,R) $.
\item the pair $(\psi(R), R)$ is transversal in the sense that $\psi (R) \pitchfork Z_R$.
\end{itemize}
Let $J$ be a $\lambda$-adapted almost complex structure. Then
$$
\delta_{(H,J)} \circ \delta_{(H,J)} = 0.
$$
Furthermore for two different choices of such $J$ or of $H$, the complex are chain-homotopic to each other.
\end{thm}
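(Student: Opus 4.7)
The plan is to follow the standard Floer-theoretic strategy: analyze the compactified 1-dimensional component of the moduli space and identify its boundary with broken trajectories. Fix $\gamma^\pm$ with $\mu(\gamma^+) - \mu(\gamma^-) = 2$. By the index formula of Section 6 and the generic transversality of Theorem \ref{thm:generic}, $\CM(H,J;\gamma^-,\gamma^+)$ is a smooth 1-manifold. The key is to control what happens at its ends. First I would invoke the $C^0$-bound (via the maximum principle, Theorem \ref{thm:max-principle}, once reduced to the tame setting by the hypothesis that $\lambda$ is tame) to keep the images in a fixed compact subset, so that the usual Sachs--Uhlenbeck/Gromov/Hofer bubbling-off analysis of \cite{oh:contacton,oh:entanglement1} applies to the total off-shell energy $E(u) = E^\pi(u) + E^\perp(u)$.

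The heart of the argument—and the hardest step—is ruling out bubbling by means of the hypothesis $\|H\| < T_\lambda(M,R)$. Using the energy identity $E^\pi_{J,H}(u) = \CA_H(\gamma^+) - \CA_H(\gamma^-)$ and the Hofer-style a priori bound on $\CA_H$-differences by $\|H\|$ (this is the perturbed analogue of the $\pi$-energy estimate of Theorem \ref{thm:Epi-bound-intro}), the total $\pi$-energy of any trajectory in the moduli space is strictly less than $T_\lambda(M,R) = \min\{T(M,\lambda), T(M,\lambda;R)\}$. Any interior sphere bubble (in the unperturbed gauge-equivalent equation \eqref{eq:contacton-bdy-J0}) would absorb a closed Reeb orbit of period at least $T(M,\lambda)$, and any disk bubble with boundary on $\psi^1_H(R)$ or $R$ would absorb a Reeb chord of period at least $T(M,\lambda;R)$; both are excluded by the action budget. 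Similarly, one cannot split off a nonconstant contact instanton carrying a Reeb chord/orbit at a puncture. Hence the only possible ends of the moduli space are broken trajectories of the form $\CM(\gamma^-,\gamma)\times\CM(\gamma,\gamma^+)$ with $\mu(\gamma)-\mu(\gamma^-)=1=\mu(\gamma^+)-\mu(\gamma)$.

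A standard gluing argument (parallel to \cite{oh:contacton-gluing}) supplies the converse: each such broken pair occurs as a genuine boundary point of $\overline{\CM}(\gamma^-,\gamma^+)$, and the latter is a compact 1-manifold with boundary. Counting $\partial\overline{\CM}$ modulo 2 and interpreting the total as the coefficient of $\gamma^-$ in $\delta^2(\gamma^+)$ yields $\delta_{(H,J)}\circ\delta_{(H,J)} = 0$.

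For chain-homotopy invariance, I would take a path $\{(H^s,J^s)\}_{s\in[0,1]}$ joining the two admissible pairs, arranging that $\|H^s\| < T_\lambda(M,R)$ uniformly in $s$ (the set of such $H$ is open and can be kept open along an interpolation, shrinking the interpolation parameter if necessary). Plugging an elongation $\rho$ into \eqref{eq:perturbed-contacton-bdy-HG} and counting index-0 solutions defines a chain map $\Phi$; reusing the same energy-budget/bubbling exclusion argument on these \emph{non-autonomous} moduli spaces (via Theorem \ref{thm:Epi-bound-intro} and the vertical-energy bound Theorem \ref{thm:Eperp-bound-intro}) establishes compactness. A homotopy-of-homotopies then produces a chain homotopy $K$ between $\Phi$ for two different paths, by the same boundary-of-1-manifold accounting. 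The main conceptual obstacle throughout is the bubbling analysis for the two-component elliptic system $(\overline\partial^\pi_H,\, d(e^{g_{H,u}}(u^*\lambda_H\circ j)))$: one must simultaneously control the $\pi$-energy and the vertical $\lambda$-energy, and monitor the conformal factor $e^{g_{H,u}}$, which is what distinguishes the present setup from classical symplectic Floer theory.
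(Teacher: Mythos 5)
The paper does not prove Theorem~\ref{thm:bdymap}; it imports it wholesale from \cite{oh:entanglement1}, so there is no in-text proof against which to compare your argument. What the paper \emph{does} say (Section~9) is that in its actual application, namely $(M,R) = (J^1B,o_{J^1B})$, the period gap $T_\lambda(J^1B;o_{J^1B}) = \infty$ by Lemma~\ref{lem:upshot}, so the hypothesis $\|H\| < T_\lambda(M,R)$ is vacuous and the bubbling exclusion is automatic. That is a significant simplification you do not remark on: your argument aims at the general statement, whereas the paper only needs the degenerate case.

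Your sketch is the standard Floer strategy and is surely in the spirit of \cite{oh:entanglement1}, but one step is stated more loosely than it can bear. You write that ``the Hofer-style a priori bound on $\CA_H$-differences by $\|H\|$'' keeps the $\pi$-energy under $T_\lambda(M,R)$, and you cite Theorem~\ref{thm:Epi-bound-intro}. That theorem, however, is an estimate for the \emph{non-autonomous} continuation moduli spaces with elongated datum $H^\chi$; the boundary map moduli space is autonomous, and there its $\pi$-energy is exactly the action difference $E^\pi_{J,H}(u) = \CA_H(\gamma^+) - \CA_H(\gamma^-) = T_+ - T_-$, with no extra $\|H\|$-term coming from an elongation. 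To rule out bubbling you need the separate fact that the action spectrum of the $\lambda$-translated intersections (equivalently, the periods of Reeb chords of $(\psi^1_H(R),R)$) is confined to an interval of width $\le \|H\|$. That spread estimate is what makes the hypothesis $\|H\| < T_\lambda(M,R)$ bite, and it is not a consequence of the energy identity alone nor of Theorem~\ref{thm:Epi-bound-intro}. You should state and prove it (or cite the specific estimate from \cite{oh:entanglement1}) rather than fold it into a Hofer-continuity gesture.

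Two secondary comments. First, in describing the possible degenerations you list ``any interior sphere bubble\ldots would absorb a closed Reeb orbit''; more precisely, for the contact instanton system $d(w^*\lambda\circ j)=0$ every nonconstant bubble is \emph{necessarily noncompact} (a plane or half-plane), since a compact bubble would have vanishing $\pi$-energy and hence be constant; so the dichotomy you need is between broken Floer trajectories and Reeb-chord/orbit carrying planes, not holomorphic spheres/disks in the usual sense. Second, for the chain-homotopy statement you would need, in addition to keeping $\|H^s\| < T_\lambda(M,R)$ along the homotopy, the full vertical-energy bound (Theorem~\ref{thm:Eperp-bound-intro}) on the non-autonomous moduli space; you do mention this, which is good, but the conformal factor $e^{g_{H,u}}$ also enters the $\pi$-energy density and deserves a sentence rather than being swept into ``monitor the conformal factor.''
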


Now we are ready to define the \emph{perturbed} contact instanton
cohomology associated to the cochain complex
$ (CI^\ast (H,J:B), \delta_{(H,J)} ) $.

\begin{defn} For each regular parameter $(H,J)$, we define
$$
HI^* (H,J;B)=\mbox{ Ker }\delta_{(H,J)} / \mbox{Im}\delta_{(H,J)}
$$
and call it the \emph{(perturbed) contact instanton Floer cohomology} of $(H,J)$ on $B$.
\end{defn}

Moreover, the following has been proven in \cite{oh:entanglement1}, \cite{oh:contacton-gluing}.

\begin{thm}[Theorem 10.6 \cite{oh:entanglement1} \& Corollary 11.11 \cite{oh:contacton-gluing}]
Consider the case $(M,R) = (J^1B,o_{J^1B})$. Then there is a natural 
isomorphism 
$ H^* (B,\Z_2)$ to $HI^*(o_{J^1B},\Z_2)$ induced by
the correspondence between $\text{\rm Crit}(f)$ and
$\frak{Reeb}(f\circ \pi;o_{J^1B}, o_{J^1B})$  appearing in Theorem \ref{thm:grading}.
\end{thm}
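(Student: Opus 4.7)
The plan is to reduce the computation of $HI^*(o_{J^1B}; \Z_2)$ to Morse cohomology of the base $B$ by making a judicious choice of CI-bulk data and invoking the invariance part of Theorem \ref{thm:bdymap}. Fix a Morse--Smale pair $(f,g)$ on $B$ and consider the lifted Hamiltonian $H_\epsilon := \epsilon(f \circ \pi_B)$ for $\epsilon > 0$ small enough that $(H_\epsilon, J)$ is regular and $\psi_{H_\epsilon}^1(o_{J^1B}) \pitchfork Z_{o_{J^1B}}$, paired with $J = \widetilde J_g \in \CJ^c_g(J^1B)$ from Section \ref{subsec:lifted-Jg}. By the invariance theorem, the isomorphism type of $HI^*$ is independent of this choice, so it suffices to compute with this specific data.

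With this choice, Lemma \ref{lem:Reeb-chords-f} identifies $\mathfrak{X}(H_\epsilon; o_{J^1B}, o_{J^1B})$ with $\Crit(f)$ via $b \mapsto \gamma_b \equiv (b, 0, 0)$, and Theorem \ref{thm:grading}(2) yields the grading $\mu(\gamma_b) = \frac{n}{2} - \mu_f(b)$. Hence, as graded $\Z_2$-vector spaces, $CI^*(H_\epsilon, J; B)$ agrees with the Morse cochain complex of $(f,g)$ up to an overall degree shift of $n/2$.

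The main technical step is to identify the Floer boundary operator $\delta_{(H_\epsilon, J)}$ with the Morse differential $\partial^{\rm Morse}_{(f,g)}$. After gauge transformation, each solution of \eqref{eq:perturbed-contacton-bdy-oJ1B} with asymptotics $\gamma_{b_\pm}$ becomes a contact instanton $w$ with moving Legendrian boundary on $\psi_{H_\epsilon}^1(o_{J^1B})$ and $o_{J^1B}$. I would then project via $\pi_{\text{\rm cot}}$: using the harmonicity of $z \circ w$ from Proposition \ref{prop:harmonic} together with the $T^*B$-lifted nature of $J = \widetilde J_g$, the $\xi$-component of the contact instanton equation projects to $\overline\partial_{J_g} v = 0$ for $v := \pi_{\text{\rm cot}} \circ w$, with $v$ satisfying Lagrangian boundary conditions on $\varphi_{\epsilon f}^1(o_{T^*B})$ and $o_{T^*B}$, where $\varphi_{\epsilon f}^t$ denotes the symplectic flow of $\epsilon f$ on $T^*B$. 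The standard adiabatic limit argument (see \cite{floer:Witten} and \cite[Section 5]{oh:jdg}) then establishes, for $\epsilon$ sufficiently small, a bijection between these strips and negative $g$-gradient trajectories of $f$ of index difference one, yielding the chain-level identification $\delta_{(H_\epsilon, J)} = \partial^{\rm Morse}_{(f,g)}$.

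The main obstacle will be executing the adiabatic analysis in the contact setting rather than the purely symplectic one, and in particular verifying that every finite-energy contact instanton $w$ arises as a canonical Sasakian lift of a pseudoholomorphic strip $v$ in $T^*B$---no genuinely ``vertical'' $z$-modes appear in the limit. Compactness of the moduli spaces is controlled by three preliminary results: the $C^0$ bound of Theorem \ref{thm:max-principle} confines $w$ to a compact subset of $J^1B$; the asymptotic charge vanishing of Theorem \ref{thm:charge-vanishing} gives exponential convergence at the ends; and Lemma \ref{lem:upshot}, which yields $T_\lambda(J^1B; o_{J^1B}) = \infty$, precludes nontrivial Reeb-chord breaking as well as disk and sphere bubbling from the zero-section boundary condition. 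Once the chain-level identification is established, passing to cohomology produces the natural isomorphism $H^*(B; \Z_2) \cong HI^*(o_{J^1B}; \Z_2)$ asserted in the statement.
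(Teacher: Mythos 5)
The paper itself offers no proof of this theorem; it is imported from the two cited references, so there is no internal argument to match your proposal against. That said, your strategy---lift a Morse--Smale pair to $H_\epsilon=\epsilon(f\circ\pi_B)$ with $\widetilde J_g$, identify generators and grading via Lemma~\ref{lem:Reeb-chords-f} and Theorem~\ref{thm:grading}(2), and reduce the differential to Morse trajectory counting through the Floer--Witten adiabatic argument---is exactly what the statement advertises (``induced by the correspondence between $\operatorname{Crit}(f)$ and $\mathfrak{Reeb}(f\circ\pi;o_{J^1B},o_{J^1B})$'') and is surely the route taken in the cited sources.

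Two points merit tightening. First, after you gauge-transform, $w$ solves $\overline\partial^\pi_{J'}w=0$ with $J'_t=(\phi^t_{H_\epsilon})^*\widetilde J_g$, which is $t$-dependent and does not coincide with $\widetilde J_g$; so the projection $v=\pi_{\text{\rm cot}}\circ w$ is not $J_g$-holomorphic as you wrote but holomorphic for the projected $t$-dependent family. Floer's argument tolerates this, but it is cleaner to stay in the perturbed picture: the $\xi$-component of \eqref{eq:perturbed-contacton-bdy-oJ1B} projects under $\pi_{\text{\rm cot}}$ directly to the perturbed Floer equation for $\epsilon f\circ\pi_{T^*B}$ on $T^*B$ with both boundary components on $o_{T^*B}$, exactly the setting of \cite{floer:Witten} and \cite[Section 5]{oh:jdg}. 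Second, the claim that ``no vertical $z$-modes appear'' should be anchored to Proposition~\ref{prop:harmonic}: for $J\in\CJ^c_g(J^1B)$, the function $z\circ w$ is harmonic, and the boundary data ($z=0$ on $o_{J^1B}$ and $z=\epsilon f(q)$ on $\psi^1_{H_\epsilon}(o_{J^1B})$) together with the asymptotics pose a Dirichlet problem whose solution is determined by $v$. Making this explicit is the contact-specific step that completes the bijection between contact instanton strips and pseudoholomorphic strips in $T^*B$.
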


We would now like to continue this isomorphism to $HI^*(H,J;o_{J^1B})$ by
the contact instanton counterpart of Floer's continuation map. 
For later purpose, we will need to explicitly write the following two kinds of chain maps,
one over the change of $J$ and the other over that of $H$:
\begin{enumerate}
\item For a fixed $H$ and generic $J^\alpha$, $J^\beta$ in the \emph{perturbed}
sense of Section \ref{sec:moduli-space}, we define
$$
h_{\beta\alpha;\{J^s\}}: CI^\ast (H, J^\beta ; B)
\rightarrow CI^\ast (H,J^\alpha; B)
$$
\item For a fixed $J$ and generic $H^\alpha$ and $H^\beta$, we define
$$
h_{\beta\alpha;\{H^s\}}: CI^\ast (H^\beta,J ; B)
\rightarrow CI^\ast (H^\alpha,J; B)
$$
\end{enumerate}

The construction of the chain homotopy map is entirely analogous to that of the case of
Lagrangian Floer homology as done in \cite{oh:jdg}. Therefore we will be brief just by
indicating the modifications needed to handle the current case of contact instantons.

\subsection{Construction of the chain map over $\{J^s\}$}

In this subsection, we fix a nondegenerate $H$ and vary CR almost complex structures from
$J^\alpha$ to $J^\beta$ for a given $H$-generic $J^\alpha, \, J^\beta$ with
$J^\alpha = \{J^\alpha_t\}_{t \in [0,1]}$ and 
$J^\beta = \{J^\beta_t\}_{t \in [0,1]}$.
Denote by $J = \{J^s\}_{s \in [0,1]}$ a smooth path satisfying $J^0 = J^\alpha$ and
$J^1 = J^\beta$. Let $\rho:\R \to [0,1]$ the standard
elongation function of the type given satisfying
$$
\rho(\tau) = \begin{cases} 1 \quad & \tau \geq 1 \\
0 \quad & \tau \leq 0.
\end{cases}
$$
We denote by $J^\rho = \{J^{\rho(\tau)}\}_{\tau \in \R}$ the associated elongated family.

For given pairs of translated Hamiltonian chords
$$
\gamma^\alpha, \, \gamma^\beta \in  \mathfrak X(H; o_{J^1B}, o_{J^1B}),
$$
we consider the associated moduli space
$\CM (H,J^\rho; {\gamma}^\alpha, \gamma^\beta)$
of solutions for  \eqref{eq:rho-contacton-bdy}
of its virtual dimension 0 or 1.

Its dimension can be calculated by linearizing the equation similarly as in the case of
boundary maps.

\begin{prop} We have
$$
\dim \CM (H, J^\rho; \gamma^\alpha, \gamma^\beta)
= \mu(\overline \gamma^\alpha) - \mu(\overline \gamma^\beta).
$$
\end{prop}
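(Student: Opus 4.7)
The plan is to linearize the equation \eqref{eq:rho-contacton-bdy} at a solution $w$ and apply the same diagonalization-and-spectral-flow argument as in the proof of Theorem \ref{thm:grading}, with the key modification that no $\R$-quotient is taken, since the $\tau$-dependence of $J^\rho$ breaks the translation invariance of the equation.

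First I would pass to the gauge-transformed moduli space: the map $w = \Phi_H^{-1}(u)$ satisfies \eqref{eq:rho-contacton-bdy} with fixed Legendrian boundary condition $(\psi_H^1(o_{J^1B}), o_{J^1B})$, but with a $\tau$-dependent family $(J')^{\rho(\tau)}$ of CR almost complex structures. The asymptotic limits are the Reeb chords $\overline{\gamma}^\alpha$ at $\tau=-\infty$, where $\rho = 0$ and $J'=(J^\alpha)'$, and $\overline{\gamma}^\beta$ at $\tau=+\infty$, where $\rho = 1$ and $J'=(J^\beta)'$. Linearizing the operator $\Upsilon = (\overline{\del}^\pi, d((\cdot)^\ast \lambda \circ j))$ and applying Proposition \ref{prop:diagonalize}, I would homotope $D\Upsilon(w)$ through a continuous family of Fredholm operators to the diagonal operator $L_w$; this preserves the index because the off-diagonal terms are lower-order and decay exponentially in the strip-like ends. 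The $(2,2)$-block is the Dirichlet Laplacian and contributes index $0$ by \cite[Lemma 10.1]{oh-yso:index}, while the $(1,1)$-block reduces to a Cauchy--Riemann type operator on $w^\ast \xi$ with Lagrangian boundary data $(\Lambda^\Phi(\tau),\R^n)$.

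Next I would apply the Robbin--Salamon spectral flow formula \cite{rs:spectral} to this reduced operator in the canonical trivialization $\Phi \in \CT$. Because $H$ is $\tau$-independent, the path of boundary Lagrangians $\Lambda^\Phi(\tau) = \Phi(T\psi_H^1(o_{J^1B}))$ is $\tau$-independent along each boundary, so the $\tau$-dependence enters only through the coefficients of the reduced operator, whose asymptotic operators at $\pm\infty$ are precisely those associated to $(\overline{\gamma}^\alpha, J^\alpha)$ and $(\overline{\gamma}^\beta, J^\beta)$ respectively. The resulting Fredholm index is therefore given purely by the spectral flow between these two asymptotic operators, yielding
$$
\operatorname{Index} D\Upsilon(w) = \mu(\overline{\gamma}^\alpha) - \mu(\overline{\gamma}^\beta),
$$
with $\mu(\overline{\gamma}) = \mu(\gamma)$ via the gauge correspondence from Theorem \ref{thm:grading}. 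Since the continuation equation lacks $\tau$-translation symmetry, the dimension of $\CM(H,J^\rho;\gamma^\alpha,\gamma^\beta)$ equals the Fredholm index directly, with no $-1$ shift.

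The main obstacle I anticipate is verifying that the $\tau$-dependence of $(J')^{\rho(\tau)}$ enters only through compact and uniformly bounded perturbations, so that the family of linearizations is Fredholm of constant index. This reduces to the exponential decay of all $\tau$-dependent coefficients in the strip-like neighborhoods of $\pm\infty$, which follows from the $C^0$-bounds established via the maximum principle, the exponential asymptotic convergence of Theorem \ref{thm:charge-vanishing}, and the fact that $\rho$ is constant outside the compact interval $[0,1]$. Once this exponential-decay package is in place, the spectral flow identification of the index is immediate and the dimension formula follows.
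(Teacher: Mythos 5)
Your proof is correct and follows essentially the same route as the paper: gauge-transform to the unperturbed equation with $\tau$-dependent $(J')^{\rho(\tau)}$, invoke the diagonalization of Proposition \ref{prop:diagonalize}, use $\operatorname{Index}(-\Delta)=0$, and identify the remaining CR-block index via Robbin--Salamon spectral flow. The paper elides the exponential-decay verification and the observation that no $\R$-quotient (hence no $-1$ shift) is taken; your spelling these out is a helpful but not essential clarification, and your phrasing that $\Lambda^\Phi(\tau)$ is ``$\tau$-independent'' should more precisely read that the underlying Legendrian submanifold $\psi_H^1(o_{J^1B})$ is fixed (the trivialized subspace $\Lambda^\Phi(\tau)=\Phi(T_{w(\tau,0)}\psi_H^1(o_{J^1B}))$ still varies with $\tau$ through the base point).
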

\begin{proof}
We consider the \emph{nonautonomous} operator
$$
\Upsilon^\rho_{\{J^s\}}(u) = \left(\Upsilon_{(H,J^\rho\},1} (u),
\Upsilon_{(H,J^\rho),2} (u)\right).
$$
Note that the second component $\Upsilon_{H,2} (u)$ is independent of $\{J^s\}$.

Similar to the proof of Theorem \ref{thm:grading}, we take the gauge transformation $w$ of $u$
and its linearization$D\overline{\Upsilon}^\rho_{\{J^{\prime s}\}}(w)$ which is homotopic to the diagonal operator
$$
D\Upsilon^\rho_{\{J^{\prime s}\}}(w)(Y)
= \left(\overline{\del}^{\nabla^\pi}_{J^{\prime\rho}} + B^{(0,1)}_{J^{\prime\rho}}
+ T^{\pi,(0,1)}_{J^{\prime\rho}}.
-\Delta \right)
$$
We know that ${\rm Index}(-\Delta) = 0$ as in the proof of Theorem \ref{thm:grading} and
$D\Upsilon^\rho_1|_\xi$ is the parameterized CR-type equation with Lagrangian boundary condition. The
index of $D\Upsilon^\rho_{\{J^{\prime s}\}}(w)$ provides the dimension of
$\CM(J^{\prime \rho};\overline \gamma^\alpha,\overline \gamma^\beta)$
which is the same as $\CM(H,J^\rho; \gamma^\alpha,\gamma^\beta)$.
This finishes the proof.
\end{proof}

Now we define the homomorphism
$$
h_{\beta\alpha;J^\rho} : CI^* (H,J^\beta; B) \rightarrow CI^* (H,J^\alpha; B)
$$
by
$$
h_{\beta\alpha;J^\rho}(\gamma^\beta)
= \sum_{\gamma^\alpha;\mu_{J^\beta} (\gamma^\beta) = \mu_{J^\alpha} (\gamma^\alpha)} \#(\CM(J^{\prime \rho}; \overline{\gamma}^\alpha, \overline{\gamma}^\beta)) \gamma^\alpha.
$$
The chain map property of this map is proved in \cite{oh:entanglement1,oh:contacton-gluing}.
This finishes the construction of the chain map.

\subsection{Construction of chain map over $\{H^s\}$}

Similarly, for fixed $J$ and given generic $H^\alpha, H^\beta$, let
$$
\{H^s\}_{s \in [0,1]} \in  \mathcal{P}^{\text{\rm reg}}_J(H^\alpha, H^\beta)
\subset \mathcal{P}(H^\alpha, H^\beta),
$$
and let $\rho: \R \to [0,1]$ be an elongation function. Then we define
the chain map moduli space
$\CM (H^\rho,J; \gamma^\alpha, \gamma^\beta)$ as in \eqref{eq:rho-contacton-bdy} with fixed $J$.

Again we consider the elongated 2-parameter family of contactomorphisms
$$
\Psi_{\tau,t}^\rho: = \psi_{H^{\rho(\tau)}}^t
$$
and its $t$-developing Hamiltonian $\Dev_\lambda(t \mapsto \Psi_{(s,t)}) = H^s$
and the $\tau$-developing Hamiltonian
$$
G(\tau,t,x) = \Dev_\lambda(\tau \mapsto \Psi_{\rho(\tau),t}).
$$

Then we consider the 2-parameter perturbed contact instanton equation \eqref{eq:perturbed-contacton-bdy-HG}
and define the homomorphism $h_{\beta\alpha;\{H^s\}}$  by
$$
h_{\beta\alpha;\{H^s\}}(\gamma^\beta)
= \sum_{\gamma^\alpha} \#_{\Z_2}(\CM(H^\rho,J; \overline{\gamma}^\alpha, \overline{\gamma}^\beta)) \gamma^\alpha
$$
with $\mu_{H^\beta} (\gamma^\beta) = \mu_{H^\alpha} (\gamma^\alpha)$, which finishes the proof.

\section{Legendrian spectral invariants via contact instantons}
\label{sec:spectral}

Recall that the perturbed contact instanton Legendrian cohomology $HI^* (H,J;B)$ is 
isomorphic to the singular cohomology
$H^* (B;\Z_2)$. We denote this isomorphism by
$$
h_H^{\text{\rm PSS}} : H^* (B,\Z_2) \rightarrow HI^* (H,J;B).
$$
In this section, we carry out the mini-max theory of the
effective action functional $\widetilde \CA_H$ defined in \eqref{eq:tildeCAH}.
We first recall  that the perturbed contact instanton equation
\eqref{eq:perturbed-contacton-bdy}
 is a gradient-like flow of the perturbed action functional $\CA_H$ so that it preserves the downward filtration given by the values of $\widetilde \CA _H$.

\subsection{Definition of $\rho(H,J;a)$}

The following is an immediate corollary.

\begin{cor} Define $CI_c^* (H,J;B)$ to be
 the $\Z_2$-free module generated by
$\gamma \in \mathfrak{X} (H; o_{J^1B}, o_{J^1B})$ with $\widetilde \CA_H (\gamma) \geq c$. For each $c \in \R$,
we define $(CI_c^* (H,J;B), \delta_{(H,J)})$
forms a subcomplex of $(CI_c^* (H,J;B), \delta_{(H,J)})$.
\end{cor}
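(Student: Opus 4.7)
The plan is to show that the boundary map $\delta_{(H,J)}$ preserves the downward action filtration induced by $\widetilde\CA_H$, i.e.\ that $\delta_{(H,J)}(CI^*_c(H,J;B)) \subset CI^*_c(H,J;B)$. The only real ingredient is the energy identity already established in the paper; everything else is bookkeeping about conventions.

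First I would recall the energy identity from Proposition \ref{eq:energy-identity}: for any finite energy solution $u$ of \eqref{eq:perturbed-contacton-bdy} with asymptotes $\gamma^\pm = u(\pm\infty)$, if we write $w(\tau,t) = (\phi_H^t)^{-1}(u(\tau,t))$ for the gauge transform, then
$$
\widetilde\CA_H(\gamma^-) - \widetilde\CA_H(\gamma^+) \;=\; \int_{-\infty}^\infty \left|\left(\tfrac{\partial w}{\partial \tau}\right)^{\!\pi}\right|^2_{J'} d\tau \;\geq\; 0.
$$
(Indeed, since $\gamma(1) \in o_{J^1B}$ has $z = 0$, the definition \eqref{eq:tildeCAH} reduces to $\widetilde\CA_H = -\CA_H$ on the chords of interest, and one combines this with the identity $E^\pi_{J,H}(u) = \CA_H(\gamma^+) - \CA_H(\gamma^-)$ from Theorem 3.10 of \cite{oh:perturbed-contacton-bdy}.) Consequently the action of the asymptotic chord at $-\infty$ dominates that at $+\infty$, which is exactly the inequality $\widetilde\CA_H(\text{``output''}) \geq \widetilde\CA_H(\text{``input''})$ highlighted in the remark preceding the corollary.

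Now fix $c \in \R$ and let $\gamma^+ \in \mathfrak{X}(H;o_{J^1B},o_{J^1B})$ be a generator of $CI^*_c(H,J;B)$, so that $\widetilde\CA_H(\gamma^+) \geq c$. Each $\gamma^-$ contributing to $\delta_{(H,J)}(\gamma^+) = \sum_{\gamma^-} n_{(H,J)}(\gamma^+,\gamma^-)\,\gamma^-$ with nonzero coefficient admits at least one element of $\CM(H,J;\gamma^-,\gamma^+)$, hence by the energy identity $\widetilde\CA_H(\gamma^-) \geq \widetilde\CA_H(\gamma^+) \geq c$. Therefore $\gamma^- \in CI^*_c(H,J;B)$, which proves $\delta_{(H,J)}(\gamma^+) \in CI^*_c(H,J;B)$.

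Since $CI^*_c(H,J;B)$ is a graded $\Z_2$-submodule of $CI^*(H,J;B)$ stable under the coboundary $\delta_{(H,J)}$ (whose square vanishes by Theorem \ref{thm:bdymap}), it forms a subcomplex. There is no real obstacle: the only subtle point is keeping straight the cohomological sign convention (that $\delta_{(H,J)}$ flows the $+\infty$ end to the $-\infty$ end), which is precisely why the filtration is defined by $\widetilde\CA_H(\gamma) \geq c$ rather than $\leq c$.
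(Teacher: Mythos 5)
Your proof is correct and takes exactly the approach the paper intends: the corollary is declared an immediate consequence of the gradient-like flow structure (Proposition~\ref{eq:energy-identity}), and you have simply unwound that declaration. Your derivation that $\widetilde\CA_H(\gamma^-)\ge\widetilde\CA_H(\gamma^+)$ whenever $\CM(H,J;\gamma^-,\gamma^+)\ne\emptyset$, and that $\delta_{(H,J)}$ therefore preserves the filtration $\{\widetilde\CA_H\ge c\}$, matches the paper's cohomological convention (input at $+\infty$, output at $-\infty$) and fills in precisely the bookkeeping the paper leaves implicit.
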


Then we define the quotient
$$
CI^*_{(c_1, c_2]} = CI^*_{c_1} / CI^*_{c_2}
$$
for $c_1 < c_2$. Then we have the short exact sequence
$$
0 \to CI^*_{c_2} \to CI^*_{c_1} \to CI^*_{(c_1, c_2]} \to 0.
$$
Then the differential $\delta_{(H,J)}$ induces the (relative) differential map
$$
\delta_{(H,J)} = \delta_{(H,J)}^{(c_1, c_2]} :CI^*_{(c_1, c_2 ]}(H,J;B) \rightarrow CI^*_{(c_1, c_2]}(H,J;B)
$$
for any $c_1 < c_2$. We define the relative cohomology groups by
$$
HI^*_{(c_1, c_2]}(H,J;B) := \mbox{Ker} \delta_{(H,J)}^{(c_1, c_2]} \Big\slash \mbox{Im} \delta_{(H,J)}^{(c_1, c_2]}.
$$
From the definition, there is a natural homomorphism
$$
j: HI^*_{(c_1, c_2]} \rightarrow HI^*_{(\mu_1, \mu_2]}
$$
when $c_1  \geq \mu_1$ and $c_2 \geq \mu_2$. In particular, there exists a natural homomorphism
\be\label{eq:j}
j_c : HI^*_{(c, \infty)} \rightarrow HI^*_{(-\infty, \infty)} = HI^*.
\ee

\begin{defn}
For given $0 \neq a \in H^*(B; \Z_2)$, we define the real number $\rho (H,J;a)$ by
$$
\rho (H,J;a) := \sup_c \left\{ c \in \R \mid h_H^{\text{\rm PSS}}(a) \in \Im  j_c \subset HI^* (H,J;B)\right \}.
$$
\end{defn}

For a generic $H$ in the sense that $\psi_H^1(o_{J^1B}) \pitchfork Z$,
it follows from the finiteness of the cardinality of $\mathfrak{X} (H; o_{J^1B}, o_{J^1B})$
that $ \rho (H,J;a) $ is well-defined, i.e., $\rho (H,J;a) \neq +\infty$.

The number $\rho (H,J;a)$ can be also realized as a mini-max value as follows. Recall that each chain
$\alpha \in CI^* (H,J;B)$ is a finite linear combination
$$
\alpha = \sum_{\gamma \in \mathfrak{X} (H;o_{J^1B},o_{J^1B})} a_\gamma [\gamma], \quad a_\gamma \in \Z_2.
$$
We define the support of $a$ by
$$
{\rm supp}\, \alpha = \{ \gamma \in \mathfrak{X} (H;o_{J^1B},o_{J^1B}) \mid a_\gamma \neq 0 \}.
$$
A cocycle $\alpha$ is a chain with $\delta_{(H,J)} (\alpha) = 0$.

Now for each cocycle $\alpha$, we define its level by
$$
\ell_H(\alpha) := \min_{\gamma \in {\rm supp}\, \alpha} \{ \widetilde \CA_H (\gamma)\}. $$
Then by definition, we have
$$
\rho (H,J;a) = \sup_{\alpha; [\alpha] = h_H^{\text{\rm pss}} (a)} \{\ell_H (\alpha)\}.
$$
From this point of view we can easily check that $\rho (H,J;a)$ is a critical value of $ \widetilde \CA_H$.

\subsection{Independence of $\rho(H,J;a)$ on $J$}

Next, we will remove the $J$-dependence of $\rho (H,J;a)$
for fixed $H$ when $J$ varies among $\CP^{\rm reg}_H(\CJ^c_g(J^1B))$.

\begin{lem} \label{lem:J-inv}
Let $J^\alpha, J^\beta \in \CP^{\rm reg}_H(\CJ^c_g(J^1(B))$.
Then we have
$$
\rho (H,J^\alpha; B; a) = \rho(H,J^\beta; B; a).
$$
\end{lem}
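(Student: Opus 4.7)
The strategy is to show that the chain map $h_{\beta\alpha;J^\rho}$ constructed in the preceding subsection can be refined to a \emph{filtered} chain map with respect to $\widetilde \CA_H$, and that it intertwines the PSS isomorphisms. With these two facts in hand, the equality of spectral invariants follows by a standard sandwiching argument, the reverse inequality being obtained from the chain map associated to the reverse-elongation $\tau \mapsto 1-\rho(\tau)$.

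First I would establish the filtered refinement. Because $H$ is fixed along the homotopy $\{J^s\}$, the derivation of the action identity \eqref{eq:action-identity} goes through essentially verbatim for the nonautonomous equation \eqref{eq:rho-contacton-bdy}: computing $\frac{d}{d\tau}\CA(w(\tau))$ via the first variation formula, and using that the boundary contributions vanish by the Legendrian condition ($\lambda$ vanishes on both $T\psi_H^1(o_{J^1B})$ and $To_{J^1B}$), one obtains
\begin{equation*}
\CA(\overline\gamma^\beta) - \CA(\overline\gamma^\alpha) \;=\; \int_{\R}\int_0^1 \bigl|w_\tau^\pi\bigr|^2_{(J')^{\rho(\tau)}}\, dt\, d\tau \;\geq\; 0
\end{equation*}
for every $w \in \CM(J^{\prime\rho}; \overline\gamma^\alpha,\overline\gamma^\beta)$. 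Since $\gamma(1)\in o_{J^1B}$ forces $z(\gamma(1))=0$, Lemma \ref{lem:CAHu=CAw} gives $\widetilde \CA_H(\gamma) = -\CA(\overline\gamma)$, and the inequality becomes $\widetilde \CA_H(\gamma^\alpha) \geq \widetilde \CA_H(\gamma^\beta)$. Consequently $h_{\beta\alpha;J^\rho}$ restricts, for every $c\in\R$, to a chain map of filtered subcomplexes
\begin{equation*}
h_{\beta\alpha;J^\rho} : CI^*_c(H, J^\beta;B) \longrightarrow CI^*_c(H, J^\alpha;B).
\end{equation*}

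The next step is to verify that $h_{\beta\alpha;J^\rho}$ intertwines the PSS isomorphisms of $(H,J^\alpha)$ and $(H,J^\beta)$, namely that
\begin{equation*}
h_{\beta\alpha;J^\rho} \circ h_H^{\mathrm{PSS},\beta} \;=\; h_H^{\mathrm{PSS},\alpha}
\end{equation*}
on cohomology. This is a standard Floer-theoretic argument: one interpolates the defining data of the two PSS maps through the path $\{J^s\}$ and counts the resulting isolated configurations in a two-parameter moduli space to produce a cochain homotopy. Granting this, the conclusion is immediate: given $c < \rho(H,J^\beta;a)$, choose a cocycle $\alpha^\beta \in CI^*_c(H,J^\beta;B)$ with $[\alpha^\beta] = h_H^{\mathrm{PSS},\beta}(a)$; then $h_{\beta\alpha;J^\rho}(\alpha^\beta) \in CI^*_c(H,J^\alpha;B)$ is a cocycle representing $h_H^{\mathrm{PSS},\alpha}(a)$, whence $\rho(H,J^\alpha;a) \geq c$, and taking the supremum over such $c$ yields $\rho(H,J^\alpha;a) \geq \rho(H,J^\beta;a)$. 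Applying the same construction with $\rho$ replaced by $1-\rho$ gives the reverse inequality.

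The principal technical obstacle is the PSS compatibility in the second step, since the precise PSS construction is not spelled out in the present excerpt. However, all the analytic ingredients needed to set up and compactify the relevant two-parameter moduli space --- the $C^0$-bound of Theorem \ref{thm:max-principle} from the tameness of $J^1B$, the uniform $\pi$- and vertical-energy bounds of Theorems \ref{thm:Epi-bound-intro} and \ref{thm:Eperp-bound-intro}, and the asymptotic convergence and charge vanishing from Subsection \ref{subsec:subsequence-convergence} --- are already in place, so this is a routine adaptation of the standard Floer-theoretic interpolation argument rather than a new analytical difficulty.
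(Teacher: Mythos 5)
Your proposal follows essentially the same route as the paper's proof: establish the filtered chain-map property by showing $\frac{d}{d\tau}\widetilde\CA_H(u(\tau)) = -\bigl|(\partial w/\partial\tau)^\pi\bigr|^2_{J^{\prime\rho}} \leq 0$, feed this into a commutative diagram with the PSS maps to get one inequality, then reverse the homotopy (your $1-\rho$ elongation is the same device as the paper's swap of $\alpha$ and $\beta$) for the other. You are slightly more explicit than the paper in naming the PSS-intertwining step $h_{\beta\alpha;J^\rho}\circ h_H^{\mathrm{PSS},\beta} = h_H^{\mathrm{PSS},\alpha}$ as a separate requirement, whereas the paper folds it silently into the commutative-diagram argument, but the underlying content is identical.
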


\begin{proof}
Recall that we can choose a regular path
$\{J ^s\} \in \CP^{\rm reg}(J^\alpha, J^\beta)$
in the \emph{perturbed} sense of Section \ref{sec:moduli-space}, and recall that we have constructed the homomorphism
$$
h_{\beta\alpha;\{J^s\}} : CI^*(H,J^\beta) \rightarrow CI^*(H, J^\alpha)
$$
defined by
$$
h_{\beta\alpha;\{J^s\}} (\gamma^\beta)
= \sum \#_{\Z_2} (\CM(J^{\prime\rho}; \overline{\gamma}^\alpha, \overline{\gamma}^\beta)) \gamma^\alpha
$$
This is an isomorphism defined in Theorem \ref{thm:bdymap}.

To see how $\rho(H,J; B; a)$ vary under the change of $J$, we need to estimate the difference
$ \widetilde \CA_H(\gamma^\beta) - \widetilde \CA_H(\gamma^\alpha) $
whenever $\#_{\Z_2} (\CM^\rho (\overline{\gamma}^\alpha, \overline{\gamma}^\beta)) \neq 0$.
Let $w$ be an element in $\CM (J^{\prime\rho}; \overline{\gamma}^\alpha, \overline{\gamma}^\beta)$ and $u$ its gauge transformation. We write
$$ \widetilde \CA_H (\gamma^\beta) - \widetilde \CA_H (\gamma^\alpha)
= \int_{-\infty}^{\infty} \frac{d}{d\tau} \widetilde \CA_H (u(\tau)) d\tau.
$$
We also have
$$
\frac{d}{d\tau} \widetilde \CA_H (u(\tau))
= -  \left|\left( \frac{\del w}{\del \tau}\right)^\pi
\right|^2_{J^{\prime\rho}} \leq 0
$$
with the exactly same calculation as \eqref{eq:downward-action-flow}. Hence we have proved that
$$ \widetilde \CA_H (\gamma^\beta) \leq \widetilde \CA_H (\gamma^\alpha). $$
This shows that the map $h_{\beta\alpha;\{J^s\}} : CI^* (H,J^\beta; B) \rightarrow CI^* (H,J^\alpha; B)$
restricts to a map
$$
h_{\beta\alpha;\{J^s\}} : CI^*_{(c,\infty)} (H,J^\beta; B) \rightarrow CI^*_{(c,\infty)} (H,J^\alpha; B)
$$
for any $c \in \R$ and so induces a homomorphism
$$
(h_{\beta\alpha;\{J^s\}})_* : HI^*_{(c,\infty)} (H,J^\beta; B) \rightarrow HI^*_{(c,\infty)} (H,J^\alpha; B).
$$
Now consider the commutative diagram
$$
\xymatrix{
HI^* _{(c,\infty)} (H,J^\beta; B) \ar[r] \ar[d] &  HI^* (H,J^\beta; B) \ar[d] \\
HI^* _{(c,\infty)} (H,J^\alpha; B) \ar[r] & HI^* (H,J^\alpha; B )}
$$
where all downward arrows are induced by the
canonical homomorphisms $h_{\beta\alpha;\{J^s\}}$ and the horizontal
ones by the canonical inclusion-induced map $j_c$.

Since $h_{\beta\alpha;\{J^s\}}$ on the right hand side is an isomorphism, if
$[a]^\beta: =  h_{(H,J^\beta)}^{\text{\rm pss}} (a) \in \operatorname{Im} j_c^\beta$, so is
$[a]^\alpha \in \operatorname{Im} j_c^\alpha$.
Therefore, we have proved
$$
\rho(H, J_\alpha; B; a) \geq \rho(H, J_\beta; B; a).
$$
By changing the role of $\alpha$ and $\beta$, we also obtain
$$
\rho(H, J_\beta; B; a) \geq \rho(H,J_\alpha; B; a)
$$
which finishes the proof of $\rho(H,J_\alpha; B; a) = \rho(H,J_\beta; B; a)$.
\end{proof}

\begin{defn}
For generic $H$, we define
$$ \rho(H;a) := \rho(H,J;a) $$
for some $J \in \CP^{\rm reg}_H(\CJ^c_g(J^1B))$.
\end{defn}

\section{Basic properties of the assignment $H \mapsto \rho(H;a)$}

Now we study the dependence of $\rho(H;a)$ on $H$.
The proof of the following theorem will occupy the whole section.

\begin{thm}\label{thm:spectral-invariants}
For any nondegenerate $H^\alpha, \, H^\beta$,
the following properties hold:
\begin{enumerate}
\item for $ a \in H^* (B) $ we have
\be \label{eq:general-H-estimate}
\int_0^1 \min_{y} (H^\beta - H^\alpha) dt \leq \rho(H^\beta; a) - \rho(H^\alpha; a)
\leq \int_0^1 \max_{y} (H^\beta - H^\alpha) dt.
\ee
In particular, for $H \in \CH^{\rm reg}$ we have
\be \label{eq:H-estimate}
\int_0^1 \min_{y} H dt \leq \rho(H; a) \leq \int_0^1 \max_{y} H dt
\ee
\item We have
$$
| \rho(H^\beta; a) - \rho(H^\alpha; a) | \leq \osc (H^\beta - H^\alpha)
$$
where
$$
\osc (H^\beta - H^\alpha) = \max_{y} (H^\beta - H^\alpha) - \min_{y} (H^\beta - H^\alpha).
$$
In particular, one can extend the assignment $H \mapsto \rho(H; a)$ to all $\CH := C^\infty_0 (\R \times J^1B; \R)$ as a continuous function in the $C^0$-topology of $\CH$. We will continue to denote the extension by $\rho(H; a)$.
\end{enumerate}
\end{thm}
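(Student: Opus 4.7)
The plan is to prove the upper Hofer shift bound in Part~(1) via a continuation chain map argument, and to derive everything else as formal consequences. Concretely, I would construct the continuation chain map
\[
h_{\beta\alpha;\{H^s\}}: CI^*(H^\beta,J;B) \to CI^*(H^\alpha,J;B)
\]
using the specific non-linear monotone homotopy $\{H^s\}$ and the elongation function $\chi$ prescribed in Remark~\ref{rem:choice-homotopy}, and show that it shifts the action filtration by at most $E := \int_0^1 \max_y(H^\beta_t - H^\alpha_t)\, dt$.

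The heart of the argument is Theorem~\ref{thm:Epi-bound-intro}. Since $E_H^\pi(u)\ge 0$, the $\pi$-energy bound instantly yields
\[
\widetilde{\CA}_{H^\alpha}(\gamma^\alpha) \;\geq\; \widetilde{\CA}_{H^\beta}(\gamma^\beta) - E
\]
for every $u \in \widetilde{\CM}(H^\chi,J;\gamma^\alpha,\gamma^\beta)$, so any nonzero matrix element $\langle h_{\beta\alpha;\{H^s\}}(\gamma^\beta), \gamma^\alpha\rangle$ forces the output action to be at most $E$ below the input action. Hence $h_{\beta\alpha;\{H^s\}}$ restricts to a filtered map $CI^*_{(c,\infty)}(H^\beta) \to CI^*_{(c-E,\infty)}(H^\alpha)$, and it intertwines the two PSS isomorphisms (up to chain homotopy in the unfiltered theory) via the standard continuation argument. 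Chasing a filtered representative of $h^{\text{\rm PSS}}_{H^\beta}(a)$ through the resulting diagram and applying the inclusion $j_c$ of \eqref{eq:j} gives $\rho(H^\beta;a) - \rho(H^\alpha;a) \leq E$. Interchanging $\alpha \leftrightarrow \beta$ produces the lower half of \eqref{eq:general-H-estimate}, and specializing $H^\alpha$ to a $C^0$-small nondegenerate perturbation of zero built from a tiny Morse function (whose $\rho$ tends to $0$ by Lemma~\ref{lem:Reeb-chords-f}, together with the Lipschitz bound just obtained) yields \eqref{eq:H-estimate}.

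Part~(2) is then purely formal. The two halves of \eqref{eq:general-H-estimate} combine to give $|\rho(H^\beta;a) - \rho(H^\alpha;a)| \leq \osc(H^\beta - H^\alpha)$, which is bounded above by twice the $C^0$-distance between $H^\alpha$ and $H^\beta$. Since nondegenerate Hamiltonians are $C^0$-dense in $\CH$, the map $H\mapsto\rho(H;a)$ extends uniquely and continuously to all of $\CH$, and the extension automatically inherits the same oscillation estimate by passing to $C^0$-limits on both sides.

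The main obstacle is extracting the clean error term $E$ in Theorem~\ref{thm:Epi-bound-intro} with \emph{no} weighting by the conformal exponent $e^{g}$. As flagged in Remark~\ref{rem:choice-homotopy}, the naive linear interpolation $(1-s)H^\alpha + sH^\beta$ together with a generic elongation will produce $g$-dependent factors in the action identity, because the $s$-derivative of $\widetilde{\CA}_{H^s}$ along a solution of the perturbed equation is not a pure Hamiltonian difference. The fix is to build $\{H^s\}$ from the product formula \eqref{eq:product-Hamiltonian} of Lemma~\ref{lem:inverse-Hamiltonian} — so that $\partial_s H^s$ interacts cleanly with the conformal exponent along the contact flow — and to pair it with the curvature-free equation \eqref{eq:perturbed-contacton-bdy-oJ1B} in place of the generic \eqref{eq:perturbed-contacton-bdy-HG}, which removes the $\tau$-developing Hamiltonian $G$ from the picture. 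Verifying that this pair of choices is simultaneously compatible with Fredholm regularity, the transversality package of Section~\ref{sec:moduli-space}, and the desired sharp energy identity is the delicate step; once it is in hand, Steps~2 and~3 are standard filtration bookkeeping.
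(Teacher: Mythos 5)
Your high-level skeleton matches the paper exactly: construct the continuation chain map $h_{\beta\alpha;\{H^s\}}$, establish the a priori $\pi$-energy bound (Theorem~\ref{thm:Epi-bound-intro}), use nonnegativity of $E^\pi_H(u)$ to deduce the action shift, run it through the filtration to get the upper half of \eqref{eq:general-H-estimate}, swap $\alpha$ and $\beta$ for the lower half, and obtain \eqref{eq:H-estimate} by specializing $H^\alpha$ to a tiny Morse perturbation of zero combined with the spectrality (Proposition~\ref{prop:spectrality}). Part (2) as a formal consequence of (1) is also exactly what the paper says. You also correctly identified the crux: eliminating the conformal exponent from the action difference along the continuation trajectory.

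The gap is in your fix for that crux, which is precisely the piece Remark~\ref{rem:choice-homotopy} flags as the non-trivial step. You propose building $\{H^s\}$ from the product formula \eqref{eq:product-Hamiltonian} of Lemma~\ref{lem:inverse-Hamiltonian}, and you describe the homotopy as ``monotone.'' Neither matches the paper. The paper's homotopy is the \emph{V-shaped homotopy through the zero Hamiltonian}: the two-parameter family $\psi_{\tau,t} = \psi_{H^\alpha}^{\chi(\tau)t}$ for $\tau\le 0$ and $\psi_{\tau,t} = \psi_{H^\beta}^{\chi(\tau)t}$ for $\tau\ge 0$, with a non-monotone elongation $\chi$ satisfying \eqref{eq:chi}: $\chi\equiv 1$ for $|\tau|\ge 2$, $\chi\equiv 0$ for $|\tau|\le 1$, so $\chi'\le 0$ on the left and $\chi'\ge 0$ on the right. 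After the gauge transformation (which already eliminates the $\tau$-developing Hamiltonian $G$, using $\phi^1_{H}= \psi^1_H(\psi^1_H)^{-1}=\mathrm{id}$ so that $w(\tau,1)\in o_{J^1B}$), the only boundary contribution to $-\frac{d}{d\tau}\CA(w(\tau))$ is $\lambda(\frac{\partial w}{\partial\tau}(\tau,0))$, which by \eqref{eq:s-Ham} equals $\chi'(\tau)H^{\alpha}_{\chi(\tau)}(w(\tau,0))$ on the left and $\chi'(\tau)H^{\beta}_{\chi(\tau)}(w(\tau,0))$ on the right \emph{with no conformal weight at all}. The sign of $\chi'$ on each half then converts these cleanly into $-\int_0^1\min_y H^\alpha_t\,dt + \int_0^1\max_y H^\beta_t\,dt = \int_0^1\max_y(H^\beta_t-H^\alpha_t)\,dt$ (Lemma~\ref{lem:lambdadwdtau}), with the conformal exponent never appearing. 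The product-formula interpolation you suggest would still carry factors $e^{g'_t}$ in $\partial_s H^s$, and it is not clear it would cancel them; you acknowledge you have not verified this, and since it is the central technical point of the section, that leaves a material gap. The correct mechanism is geometric (pass through $\mathrm{id}$ so that only un-weighted Hamiltonian values survive at the moving boundary) rather than algebraic (re-expressing the homotopy via $\#$).
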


The proof of (2) immediately follows from \eqref{eq:general-H-estimate}
and so we have only to prove (1).

\begin{rem}[Choice of homotopy $\{H^s\}$]
For the proof of a similar inequality for the Lagrangian
spectral invariants constructed in \cite{oh:jdg}, the linear homotopy
\be\label{eq:linear-homotopy}
s \mapsto (1-s) H^\alpha + s H^\beta
\ee
was used. However this homotopy cannot be used for the purpose of
proving the inequality \eqref{eq:general-H-estimate}:
\emph{Common calculation
used in symplectic Floer theory from \cite{chekanov:dmj}, \cite{oh:jdg}
will inevitably give rise to some
conformal factor in estimate in the current contact case.}
It turns out that the correct homotopy
to be used is \emph{the one through the zero Hamiltonian}.
\end{rem}

\subsection{Choice of the homotopy for the continuity map}

For this purpose, we consider the following type of elongation function $\chi : \R \rightarrow [0,1]$ satisfying
\begin{enumerate}
\item
\be\label{eq:chi}
\chi(\tau) = \begin{cases} 1 \quad &\text{\rm for }\,  |\tau | \geq 2\\
0 \quad & \text{\rm for }\, |\tau| \leq 1
\end{cases}
\ee
\item $ \chi'(\tau) \leq 0 $ when $-2 \leq \tau \leq -1$ and $ \chi'(\tau) \geq 0$ when $1 \leq \tau \leq 2$.
\end{enumerate}

Now we consider a 2-parameter family of contactomorphisms
$$
\psi_{\tau,t} :=
\left\{
\begin{array}{ll}
\psi_{H^\alpha}^{\chi(\tau) t} & {\rm for} \quad \tau \leq 0 \\
\psi_{H^\beta}^{\chi(\tau) t} & {\rm for} \quad \tau \geq 0
\end{array}
\right.
$$
and denote the corresponding homotopy between $H^\alpha$ and $H^\beta$ by
$$
H^\chi (\tau,t,y) = H^{\chi(\tau)}(t,y)
$$
for $\tau \in \R$, $t \in [0,1]$, and $y \in J^1B$.

We postpone the proof of the following uniform bound for $\pi$-energy till the next section.

\begin{thm}[Uniform $\pi$-energy bound]\label{thm:Epi-bound} Assume that $H$ is compactly supported.
Let $\gamma^\alpha, \, \gamma^\beta \in \mathfrak{X}(J^1B,H;o_{J^1B}, o_{J^1B})$ and
$u \in \widetilde \CM(H^\chi,J;\gamma^\alpha,\gamma^\beta)$. Then we have
$$
E_H^\pi(u)  \leq  \widetilde \CA_{H^\alpha}(\gamma^\alpha) - \widetilde \CA_{H^\beta}(\gamma^\beta) + \int^{1}_{0} \max_{y}(H^{\beta}_{t}(y)- H^{\alpha}_{t}(y)) \, dt.
$$
\end{thm}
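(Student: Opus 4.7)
The plan is to derive the bound from an energy--action identity, obtained by integrating the infinitesimal flow identity for $\widetilde{\CA}_{H^{\chi(\tau)}}$ over $\tau \in \R$, and then controlling the extra term produced by the explicit $\tau$-dependence of $H^\chi$. Throughout I will use the curvature-free form \eqref{eq:perturbed-contacton-bdy-HG} of the equation (as stressed in Remark~\ref{rem:choice-homotopy}) so that no stray conformal-exponent contributions appear when the homotopy varies in $\tau$.

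The first step is to differentiate $\widetilde{\CA}_{H^{\chi(\tau)}}(u(\tau,\cdot))$ along $u$. Because $u(\tau,1)\in o_{J^1B}$ forces $z\circ u(\tau,1)=0$, the effective action simplifies to $\widetilde{\CA}_{H^{[\tau]}}(u(\tau,\cdot)) = -\CA_{H^{[\tau]}}(u(\tau,\cdot))$, reducing the task to differentiating $\CA_{H^{[\tau]}}$. Using the first-variation formula \eqref{eq:perturbed-first-var}, the Legendrian boundary condition (which kills the boundary terms) and the equation \eqref{eq:perturbed-contacton-bdy-HG}, and isolating the explicit $\tau$-dependence of $H^\chi$, I expect to arrive at
\begin{equation*}
\frac{d}{d\tau}\widetilde{\CA}_{H^{\chi(\tau)}}(u(\tau,\cdot)) = -\left|\left(\tfrac{\partial w}{\partial\tau}\right)^{\!\pi}\right|^2_{\!J'} - \int_0^1 e^{g_{H^\chi, u}}\,(\partial_\tau H^\chi)(\tau,t,u(\tau,t))\,dt,
\end{equation*}
where $w$ is the gauge transform of $u$. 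Integrating in $\tau$ and invoking the asymptotic convergence of Theorem~\ref{thm:charge-vanishing} at $\tau=\pm\infty$ then gives
\begin{equation*}
E^\pi_H(u) = \widetilde{\CA}_{H^\alpha}(\gamma^\alpha) - \widetilde{\CA}_{H^\beta}(\gamma^\beta) - \int_{-\infty}^{\infty}\!\!\int_0^1 e^{g_{H^\chi, u}}\,(\partial_\tau H^\chi)\,dt\,d\tau.
\end{equation*}
By the support properties of $\chi'$ in \eqref{eq:chi}, the remaining double integral is supported on $[-2,-1]\cup[1,2]$: on the left piece $\partial_\tau H^\chi = \chi'(\tau)H^\alpha$ with $\chi'\leq 0$ and $\int_{-2}^{-1}\chi'\,d\tau = -1$, while on the right piece $\partial_\tau H^\chi = \chi'(\tau)H^\beta$ with $\chi'\geq 0$ and $\int_1^2\chi'\,d\tau = +1$; on the central strip $[-1,1]$ the Hamiltonian $H^\chi$ vanishes and $e^{g_{H^\chi,u}}\equiv 1$.

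The main obstacle is the final estimate bounding the double integral by $\int_0^1\max_y(H^\beta-H^\alpha)(t,y)\,dt$. The direct pointwise bound (replacing $H^\alpha(u)$ by $\min_y H^\alpha$ and $H^\beta(u)$ by $\max_y H^\beta$) yields only the weaker estimate $\int_0^1(\max_y H^\beta - \min_y H^\alpha)\,dt = E^+(H^\beta)+E^-(H^\alpha)$, which is exactly the bound needed for the vertical energy in Theorem~\ref{thm:Eperp-bound-intro} but falls short of the sharp $\pi$-energy bound asserted here. To upgrade to the max-of-difference bound I plan to re-parameterize $s=\chi(\tau)$ separately on the two transition windows so the contributions re-assemble into a single $s$-integral of $H^\beta(t,u_+(s,t))-H^\alpha(t,u_-(s,t))$, and then to use the reduced contact-instanton equation on the central strip (where $H^\chi\equiv 0$, $e^{g_{H^\chi,u}}\equiv 1$) to pair $u_+(s,t)$ and $u_-(s,t)$ at essentially the same spatial point modulo controlled displacement, before taking the supremum. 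This coupling of the two transition windows through the zero-Hamiltonian central strip is the precise technical role played by the homotopy-through-zero and the curvature-free equation, and is where I expect the bulk of the work to lie.
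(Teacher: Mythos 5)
Your overall scheme matches the paper's: differentiate the action along the continuation trajectory, isolate the non-gradient contribution on the support of $\chi'$, and estimate it. The paper, however, gauge-transforms to $w$ and differentiates the \emph{unperturbed} functional $\CA(w(\tau))$, which kills the conformal weight $e^{g_{H^\chi,u}}$ and---more to the point---removes the $\tau$-dependence of that weight, which your displayed derivative formula for $\widetilde{\CA}_{H^{\chi(\tau)}}(u(\tau,\cdot))$ omits (the exponent $g_{(\phi_{H^{\chi(\tau)}}^t)^{-1}}$ also depends on $\tau$ through $H^\chi$, so differentiating the perturbed action in $\tau$ produces one more term than you recorded). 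That said, your observation that the pointwise bound on the two transition windows yields only $E^+(H^\beta) + E^-(H^\alpha) = \int_0^1(\max_y H^\beta_t - \min_y H^\alpha_t)\,dt$ is correct, and it points at a genuine soft spot: this is exactly what the paper's Lemma~\ref{lem:lambdadwdtau} obtains in its penultimate line, and the final line's asserted identity $-\int_0^1\min_y H^\alpha_t\,dt + \int_0^1\max_y H^\beta_t\,dt = \int_0^1\max_y(H^\beta_t - H^\alpha_t)\,dt$ is in general only an inequality ($\geq$), so the sharper bound stated in the theorem is not delivered by the written argument.

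Your proposed repair, on the other hand, is not in the paper and does not work. After the reparametrization $s=\chi(\tau)$, the two boundary traces live on $\psi_{H^\alpha}^s(o_{J^1B})$ (left window) and $\psi_{H^\beta}^s(o_{J^1B})$ (right window), and there is no reason for the restriction of the contact instanton to these two boundary arcs to evaluate at nearby points of $J^1B$ for a given $s$; the central zero-Hamiltonian strip carries an unperturbed contact instanton with boundary on $o_{J^1B}$ but provides no rigidity that ``pairs'' the two windows. To close the estimate with $\int_0^1\max_y(H^\beta_t - H^\alpha_t)\,dt$ one would need a homotopy whose $s$-derivative is \emph{pointwise} $H^\beta - H^\alpha$, which is exactly the linear interpolation the paper rules out because it reintroduces conformal-exponent contributions.
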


We will also prove the bound for the $\lambda$-energy as well in the next section. 
Recall the definition \eqref{eq:||H||} of quantities $E^\pm(H)$ and $\|H\|$ used below.

\begin{thm}[Uniform vertical energy bound]\label{thm:Eperp-bound}
Let $u$ be any finite energy solution of \eqref{eq:H-para-contacton}. Then we have
\be\label{eq:Eperp-bound}
E_H^\perp(u) \leq |\widetilde \CA_{H^\alpha}(\gamma^\alpha)| + |\widetilde \CA_{H^\beta}(\gamma^\beta)| + E^+(H^\beta) + E^-(H^\alpha)
\ee
\end{thm}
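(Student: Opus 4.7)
The plan is to decompose $E^\perp_H(u) = E^\lambda_+(u) + E^\lambda_-(u)$ with each summand localized at one asymptotic end, and to bound them separately; by symmetry I focus on the $+\infty$ end, aiming for $E^\lambda_+(u) \leq |\widetilde\CA_{H^\beta}(\gamma^\beta)| + E^+(H^\beta)$, while the analogous $-\infty$ estimate contributes $|\widetilde\CA_{H^\alpha}(\gamma^\alpha)| + E^-(H^\alpha)$. Since $\chi(\tau) \equiv 1$ for $\tau \geq 2$, on $[2,\infty) \times [0,1]$ the equation reduces to the autonomous perturbed contact instanton equation for $H^\beta$, so I gauge-transform $u(\tau,t) = \phi^t_{H^\beta}(w(\tau,t))$ to convert $u$ into an unperturbed contact instanton $w$ with Legendrian boundary in $(\psi^1_{H^\beta}(o_{J^1B}), o_{J^1B})$; by the gauge invariance analogous to Proposition \ref{prop:equivalent-eqns} the $\lambda$-energies of $u$ and $w$ on this strip coincide.

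For this $w$, vanishing of the asymptotic charge (Theorem \ref{thm:charge-vanishing}) ensures that the contact instanton potential $f(z) := \int_{+\infty}^z w^*\lambda \circ j$ is well-defined on $[2,\infty) \times [0,1]$ with exponential convergence $f \to 0$ at the puncture. For any $\varphi \in \CC$ with primitive $\psi$ normalized by $\psi(-\infty) = 0$ and $\psi(+\infty) = 1$, the closedness $d(w^*\lambda \circ j) = 0$ yields $d(\psi(f)\, df \circ j) = \varphi(f)\, df \wedge df \circ j$, so Stokes' theorem on $[R,S] \times [0,1]$ converts the vertical-energy integrand into a boundary integral. The contributions from the two Legendrian sides vanish since $\partial_\tau w$ is tangent to $\psi^1_{H^\beta}(o_{J^1B})$ or $o_{J^1B}$ and $\lambda$ annihilates Legendrian directions. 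Sending $S \to \infty$ and using the exponential decay of $\partial_\tau w$, the puncture contribution is identified (via Lemma \ref{lem:CAHu=CAw}) with a multiple of $\CA(\overline\gamma^\beta) = \CA_{H^\beta}(\gamma^\beta)$; taking the supremum over $\varphi \in \CC$ and then letting $R \to 2^+$ gives
\begin{equation*}
E^\lambda_+\bigl(u|_{[2,\infty) \times [0,1]}\bigr) \leq |\CA(\overline\gamma^\beta)| = |\widetilde\CA_{H^\beta}(\gamma^\beta)|,
\end{equation*}
where the last equality uses $z(\gamma^\beta(1)) = 0$ since $\gamma^\beta(1) \in o_{J^1B}$.

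It remains to absorb the contribution of the transition strip $[1,2] \times [0,1]$ into $E^+(H^\beta)$. On this strip the gauge transformation is $\tau$-dependent and an additional term of the form $\int \psi(f)\, u^* H^\chi\, dt$ appears in the Stokes analysis; monotonicity of $\chi$ on $[1,2]$ together with $0 \leq \psi \leq 1$ bounds its absolute value by $\int_0^1 \max_y H^\beta_t(y)\, dt = E^+(H^\beta)$. Combining these two estimates yields $E^\lambda_+(u) \leq |\widetilde\CA_{H^\beta}(\gamma^\beta)| + E^+(H^\beta)$, and the mirror reasoning at the $-\infty$ end contributes $|\widetilde\CA_{H^\alpha}(\gamma^\alpha)| + E^-(H^\alpha)$; together they give the claimed bound.

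The principal obstacle is the rigorous control of the transition region, where the nonautonomous $\tau$-dependence of $H^\chi$ and the conformal factor $e^{g_{H,u}}$ in the perturbed equation both contribute boundary terms in the Stokes calculation. Ensuring that these combine into the clean one-sided bound $E^+(H^\beta)$ rather than an unwanted two-sided oscillation term involving $\min$ is delicate, and it is precisely why the elongation $\chi$ is chosen to be monotone on each half-line and flat (equal to $0$) on $[-1,1]$ so that $H^\chi \equiv 0$ there and no spurious contribution arises from the central region; synchronizing this sign structure with the sign conventions of the effective action functional $\widetilde\CA_H$ is what makes the asymmetric bound \eqref{eq:Eperp-bound} work.
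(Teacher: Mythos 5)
Your decomposition of $E^\perp_H(u) = E^\lambda_+(u) + E^\lambda_-(u)$ and attempt to bound each summand by a local Stokes argument on a half-strip runs into a genuine gap: when you apply Stokes' theorem on $[2,\infty) \times [0,1]$, you pick up not only the asymptotic contribution at $\tau = +\infty$ and the two Legendrian side contributions, but also an inner boundary term $\int_{\{2\}\times[0,1]} \psi(f)\, w^*\lambda$, and this term is not controlled by anything in your argument. "Letting $R \to 2^+$" does not make it disappear, and the same uncontrolled junction contributions appear at $\tau = \pm 1$ between the transition strips and the middle region. The paper avoids this entirely by doing a \emph{single} Stokes computation on the whole strip $\R \times [0,1]$: since the integrand $-w^*\lambda \wedge d(\psi(f)) = \varphi(f)\, df\circ j \wedge df \geq 0$ is pointwise non-negative, the whole-strip integral dominates both localized quantities $E^\lambda_\pm$, and the whole-strip Stokes has \emph{no} internal junctions — its only boundary contributions are the ends $\tau \to \pm\infty$ (bounded by $|T^\pm|$) and the sides $t = 0,1$ (the $t=1$ side vanishes because $o_{J^1B}$ is Legendrian; the $t=0$ side is controlled by the moving boundary condition).

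A second, related imprecision: you attribute the $H$-contribution on the transition strip to "the $\tau$-dependent gauge transformation" producing a term $\int \psi(f)\, u^*H^\chi\, dt$, but the actual mechanism is the moving Legendrian boundary at $t=0$. Since $w(\tau,0) \in \psi_{\chi(\tau),1}(o_{J^1B})$, differentiating the parametrization gives $\lambda\bigl(\tfrac{\partial w}{\partial\tau}(\tau,0)\bigr) = \chi'(\tau)\, H^{\chi(\tau)}(w(\tau,0))$ exactly as in \eqref{eq:dpsidtau}--\eqref{eq:s-Ham}, and this expression is supported precisely where $\chi' \neq 0$; combining it with $0 \leq \psi(f) \leq 1$ and the sign of $\chi'$ on each half-line gives the one-sided bound $E^+(H^\beta) + E^-(H^\alpha)$. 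Your sign discussion of $\chi$ at the end is correct in spirit, but the missing inner-boundary control means the proposed proof does not close as written; the whole-strip Stokes argument is not merely a matter of bookkeeping convenience but is what makes the estimate self-contained.
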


In particular, \eqref{eq:Epi-bound} implies the inequality
$$
\widetilde \CA_H(\gamma^\beta) \leq  \widetilde \CA_H(\gamma^\alpha) + \int_0^1 \max_{y} (H^\beta - H^\alpha) dt.
$$
Once this is achieved, by the same mini-max argument as the one used in \cite{oh:jdg}, we obtain
$$
\rho(H^\beta; a) - \rho(H^\alpha; a) \leq \int_0^1 \max_{y} (H^\beta - H^\alpha) dt.
$$
By changing the role of $\alpha$ and $\beta$, we also have
$$
\rho(H^\beta; a) - \rho(H^\alpha; a) \geq \int_0^1 \min_{y} (H^\beta - H^\alpha) dt.
$$

Now \eqref{eq:general-H-estimate} enables us to continuously extend to arbitrary Hamiltonian $H \in \CH$,
not just in $\CH^{\rm reg}$.
More precisely, we choose any $C^\infty$-approximation
$H_i \in \CH^{\rm reg}$ of $H$ and then define
$$
\rho(H;a) := \lim_{i \to \infty} \rho(H_i ; a)
$$
which does not depend on the choice of $C^\infty$ approximation.

\subsection{Spectrality of $\rho(H;a)$ and $C^0$-bounds thereof}

The remaining is to obtain the inequality \eqref{eq:H-estimate}.
For this purpose we need the following spectrality property. Recall the definition of the functional
$$
\widetilde{\CA}_H: \CL(J^1B,(o_{J^1B},o_{J^1B})) \to \R.
$$

\begin{prop}[Spectrality]\label{prop:spectrality}
For any $ H \in \CH $, the value $\rho(H;a)$
 is a critical value of $\widetilde{\CA}_H$, i.e.,
 it lies in $\Spec(H; o_{J^1B}, o_{J^1B}): = \Spec(\lambda;\psi_H^1(o_{J^1B}),o_{J^1B}) $.
\end{prop}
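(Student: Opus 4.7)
The plan is a two-step reduction: first handle the nondegenerate case directly from the mini-max definition, then promote to general $H$ by a continuity-plus-compactness argument.

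For nondegenerate $H$, meaning $\psi_H^1(o_{J^1B}) \pitchfork Z_{o_{J^1B}}$, the set $\mathfrak{X}(H;o_{J^1B},o_{J^1B})$ is finite and hence
$$
\Spec(H;o_{J^1B},o_{J^1B}) = \{\widetilde \CA_H(\gamma) \mid \gamma \in \mathfrak{X}(H;o_{J^1B},o_{J^1B})\}
$$
is a finite subset of $\R$. The mini-max representation
\begin{equation*}
\rho(H;a) = \sup_{[\alpha] = h_H^{\text{\rm PSS}}(a)} \min_{\gamma \in \supp \alpha} \widetilde \CA_H(\gamma)
\end{equation*}
takes values only in this finite set, so the supremum is attained and equals $\widetilde \CA_H(\gamma_0)$ for some generator $\gamma_0$. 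This places $\rho(H;a)$ in the spectrum in the nondegenerate case.

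For a general $H \in \CH$, I would approximate by a sequence $H_i \in \CH^{\rm reg}$ with $H_i \to H$ in $C^\infty$, all supported in a common $D_r(J^1B)$. Hofer continuity (Theorem \ref{thm:spectral-invariants}(2)) yields $\rho(H_i;a) \to \rho(H;a)$, and the nondegenerate case furnishes $\gamma_i \in \mathfrak{X}(H_i;o_{J^1B},o_{J^1B})$ with $\rho(H_i;a) = \widetilde \CA_{H_i}(\gamma_i) = -T_i$, where $(\overline \gamma_i, T_i) \in \mathfrak{Reeb}(\psi_{H_i}^1(o_{J^1B}),o_{J^1B})$. Because $R_\lambda = \partial_z$ on $(J^1B, dz - pdq)$, each $\overline \gamma_i$ is a vertical segment with constant $(q,p) = (q_i,0)$ whose $z$-coordinate runs from $-T_i$ to $0$; since $\psi_{H_i}^1(o_{J^1B})$ lies in a fixed compact region of $J^1B$ by $C^\infty$-convergence of $H_i$, both the periods $T_i$ and the images of $\overline \gamma_i$ are uniformly bounded.

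Uniform compact support of $\phi_{H_i}^t$ then produces uniform $C^0$-, and via the defining ODE uniform $C^k$-, bounds on $\gamma_i(t) = \phi_{H_i}^t(\overline \gamma_i(T_i t))$. Passing to a subsequence, I may assume $T_i \to T_\infty$, $\overline \gamma_i(0) \to (q_\infty,0,-T_\infty) \in \psi_H^1(o_{J^1B})$, and $\phi_{H_i}^t \to \phi_H^t$ in $C^\infty$ on compact sets; the limit $\gamma_\infty(t) = \phi_H^t(\overline \gamma_\infty(T_\infty t))$ therefore lies in $\mathfrak{X}(H;o_{J^1B},o_{J^1B})$, and continuity of the action under $C^\infty$-convergence gives $\widetilde \CA_H(\gamma_\infty) = \lim_i \widetilde \CA_{H_i}(\gamma_i) = \rho(H;a)$, so $\rho(H;a) \in \Spec(H;o_{J^1B},o_{J^1B})$. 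The step I expect to be most delicate is the extraction of the limit chord: on a general tame contact manifold it would require the full strength of the maximum principle (Theorem \ref{thm:max-principle}) combined with tameness (Proposition \ref{prop:tameness-of-one-jet}), but the explicit form $R_\lambda = \partial_z$ on $J^1B$ collapses this compactness into a routine Arzel\`a-Ascoli argument; the auxiliary point of constructing nondegenerate approximations $H_i$ with uniform compact support is a standard Sard-type perturbation of $\psi_H^1$ relative to the Reeb trace $o_{T^*B} \times \R$ of $o_{J^1B}$.
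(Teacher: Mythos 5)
Your proposal is correct and follows essentially the same route as the paper's proof: approximate $H$ by nondegenerate $H_i$ with uniformly compactly supported $X_{H_i}$, note that in the nondegenerate case the mini-max is achieved on the finite set $\mathfrak X(H_i;o_{J^1B},o_{J^1B})$ so that $\rho(H_i;a)=\widetilde\CA_{H_i}(\gamma_i)$, use Hofer continuity to pass the limit, and extract a convergent subsequence of the gauge-transformed Reeb chords $\overline\gamma_i$. The one refinement you offer --- observing that $R_\lambda=\partial_z$ on $(J^1B,dz-pdq)$ forces each $\overline\gamma_i$ to be the vertical segment from $(q_i,0,-T_i)\in\psi_{H_i}^1(o_{J^1B})$ to $(q_i,0,0)\in o_{J^1B}$, so that the uniform compact support bounds $T_i$ and the chord images at once --- sharpens the paper's appeal to ``$|\dot{\overline\gamma}_i|\leq C$, hence equicontinuity, hence a convergent subsequence,'' since that uniform velocity bound is precisely $|T_i|$ and is most transparently obtained as you do. This is a clarification of the same compactness step rather than a different argument.
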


\begin{proof}
For each $H \in \CH$, let $H_i$ be a $C^\infty$ approximation of $H$ such that $\supp X_{H_i} \subset D^r (J^1B) $ for large $r > 0$ which is independent of $i$. We have shown that $\rho(H;a)$ is a finite value. It remains to prove that
$$
\rho(H;a) = \widetilde \CA_H (\gamma)
$$
for some $\gamma \in \mathfrak{X} (H; o_{J^1B}, o_{J^1B})$. By definition, we have
$$\rho(H;a) = \lim_{i \to \infty} \rho(H_i ; a).$$
Note that
$$\rho(H_i ; a) = \widetilde \CA_{H_i} (\gamma_i)$$
for some path $\gamma_i : [0,1] \to J^1B \in \mathfrak{X} (H_i ; o_{J^1B}, o_{J^1B})$,
or equivalently,
$$
\overline \gamma _i \in \mathfrak{Reeb} (\psi_{H_i} (o_{J^1B}), o_{J^1B}).
$$
Since $H_i \to H$, we have $\psi^1_{H_i} \to \psi^1_H$. Moreover since $\supp X_{H_i} \subset D^r (J^1B)$, we have
$$ | \dot {\overline \gamma} _i (t) | \leq C $$
for some $C > 0$ independent of $i$ and $t$ so that $\overline \gamma _i$ are equi-continuous.
On the other hand	the boundary condition $\overline \gamma _i (1) \in o_{J^1B}$ and $o_{J^1B}$ is compact. Then there exists a subsequence, still denoted by $\overline \gamma_i $, converging to a smooth path $\overline \gamma$ lying in
$\mathfrak{Reeb} (\psi_H^1(o_{J^1B}), o_{J^1B})$. Note that $\gamma \in \mathfrak{X} (H; o_{J^1B})$ via the gauge transformation. Therefore we have
$$ \rho (H; a) = \lim_{i \to \infty} \rho (H_i; a) = \lim_{i \to \infty} \widetilde \CA_{H_i} (\gamma _i)
= - \lim_{i \to \infty} \CA_0 (\overline \gamma _i) = - \CA_0 (\overline \gamma)
= \widetilde \CA_H (\gamma) $$
which finishes the proof.
\end{proof}

From this spectrality and $\mathfrak{Reeb} (o_{J^1B}, o_{J^1B}) = \emptyset$,
all elements of $\mathfrak{X} (0;o_{J^1B}, o_{J^1B})$ consisting of constant paths.
By the same approximation argument $H_i \to 0$ in $C^\infty$-topology utilizing the \eqref{eq:H-estimate},
we can easily check that
$$
\rho(0;a) = 0
$$
for every $a \in H^*(B)$.
Applying $H ^\beta = H$ and $H ^\alpha = 0$, we have the inequality \eqref{eq:H-estimate}.

\section{Energy estimate for the continuity map}
\label{sec:energy-estimate}

We take a (parametrically) generic $\{H^s\}$ in the sense that the parameterized moduli space
$\widetilde \CM (J,\{H^s\})$ is regular. We follow the calculation performed in the proof of
\cite[Proposition 10.2]{oh:entanglement1} below.

We first consider the gauge transformed paths
$$
\overline \gamma ^\alpha := (\phi^t_{H^\alpha})^{-1} (\gamma^\alpha) \quad
\overline \gamma ^\beta := (\phi^t_{H^\beta})^{-1} (\gamma^\beta).
$$
We consider  \eqref{eq:perturbed-contacton-bdy-HG} with $\rho$ replaced by $\chi$
and then its the gauge transformation $\Psi_{H^\chi}$
\be \label{eq:H-para-contacton}
\begin{cases}
\overline \del ^\pi w = 0, \quad
d(w ^* \lambda \circ j) = 0 \\
w(\tau,0) \in \psi_{\chi(\tau),1} (o_{J^1B}), \quad w(\tau,1) \in o_{J^1B} \\
\lim_{\tau \rightarrow - \infty} w(\tau,t) = \overline \gamma ^\alpha (T_\alpha t),
\quad \lim_{\tau \rightarrow \infty} w(\tau,t) = \overline \gamma ^\beta (T_\beta t).
\end{cases}
\ee
Then we have
$$
\widetilde \CA_{H^\beta}(\gamma^\beta) - \widetilde \CA_{H^\alpha}(\gamma^\alpha)
= - \int_{-\infty}^\infty \frac{d}{d\tau} \CA (w(\tau)) d\tau
$$
and
$$
\CA (w(\tau)) := \int_0^1 (w(\tau))^*\lambda
$$
where $w : \R \times [0,1] \rightarrow J^1B$ is a solution of \eqref{eq:H-para-contacton}.

\subsection{A priori uniform $\pi$-energy bound}

We now prove the following uniform bound for $\pi$-energy.

\begin{thm}[Uniform $\pi$-energy bound] Assume that $H$ is compactly supported.
Let
$$
\gamma^\alpha \in\mathfrak{X}(J^1B,H^\alpha;o_{J^1B}, o_{J^1B}), \quad \, \gamma^\beta \in \mathfrak{X}(J^1B,H^\beta;o_{J^1B}, o_{J^1B})
$$
and
$u \in \widetilde \CM(H^\chi,J;\gamma^\alpha,\gamma^\beta)$. Then we have
\be\label{eq:Epi-bound}
E^\pi(u) \leq \widetilde \CA_{H^\alpha}(\gamma^\alpha) - \widetilde \CA_{H^\beta}(\gamma^\beta) + \int^{1}_{0} \max_{y}(H^{\beta}_{t} - H^{\alpha}_{t}) dt
\ee
\end{thm}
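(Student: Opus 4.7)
The plan is to convert the $\pi$-energy of the perturbed equation for $u$ into that of a contact instanton $w$ with moving boundary via the gauge transformation $w(\tau,t) = (\phi^t_{H^{\chi(\tau)}})^{-1}(u(\tau,t))$. By the energy identity $E^\pi_{J,H}(u) = E^\pi(w)$, it suffices to compute the latter. Since $\gamma^{\alpha}(1), \gamma^{\beta}(1) \in o_{J^1B}$ have $z$-coordinate zero, Lemma \ref{lem:CAHu=CAw} gives $\widetilde\CA_{H^{\alpha/\beta}}(\gamma^{\alpha/\beta}) = -\CA(\overline\gamma^{\alpha/\beta})$, and hence
\[
\widetilde\CA_{H^\alpha}(\gamma^\alpha) - \widetilde\CA_{H^\beta}(\gamma^\beta) = \CA(\overline\gamma^\beta) - \CA(\overline\gamma^\alpha) = \int_{-\infty}^\infty \frac{d}{d\tau}\CA(w(\tau))\, d\tau.
\]

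Differentiating under the integral via the commutator identity $\frac{\del}{\del\tau}[\lambda(\del_t w)] = d\lambda(\del_\tau w, \del_t w) + \frac{\del}{\del t}[\lambda(\del_\tau w)]$ and integrating $t$ from $0$ to $1$, the Legendrian condition at $t=1$ kills the upper boundary term. The contact instanton equation $\overline\del^\pi w = 0$ then gives $\del_t w^\pi = J'\,\del_\tau w^\pi$, so by $J'$-compatibility $\int_0^1 d\lambda(\del_\tau w^\pi, \del_t w^\pi)\, dt = \int_0^1 |\del_\tau w^\pi|^2_{J'}\, dt$, and integrating in $\tau$ recovers $E^\pi(w)$. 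The rearranged key identity is
\[
E^\pi(w) = \widetilde\CA_{H^\alpha}(\gamma^\alpha) - \widetilde\CA_{H^\beta}(\gamma^\beta) + \int_{-\infty}^\infty \lambda(\del_\tau w(\tau,0))\, d\tau.
\]

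The remaining task is to estimate the moving-boundary integral by $\int_0^1 \max_y(H^\beta_t - H^\alpha_t)\, dt$. Parametrize $w(\tau,0) = \psi^{\chi(\tau)}_{H^\alpha}(p(\tau))$ for $\tau \leq 0$ and similarly with $H^\beta$ for $\tau \geq 0$, where $p(\tau) = u(\tau,0) \in o_{J^1B}$. The vector $\del_\tau w(\tau,0)$ splits into a $\chi'(\tau)$-multiple of the contact Hamiltonian vector field $X_{H^{\alpha/\beta}}(\chi(\tau),\cdot)$ evaluated at $w(\tau,0)$, plus a $d\psi$-image of $\dot p(\tau)$; the latter contribution vanishes under $\lambda$ because $\psi^*\lambda = e^g\lambda$ and $\dot p(\tau) \in T o_{J^1B} \subset \xi$. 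Applying the convention $\lambda(X_H) = -H$ and changing variables $s = \chi(\tau)$ on the intervals $[-2,-1]$ and $[1,2]$ where $\chi$ is strictly monotone, the correction reduces to an integral of $H^\alpha$ and $H^\beta$ over $s \in [0,1]$ evaluated along the boundary trace $\tau \mapsto w(\tau,0)$.

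The hard part will be arranging the resulting contributions so that the bound collapses to the sharp $\int_0^1 \max_y(H^\beta_t - H^\alpha_t)\, dt$ rather than the naive $E^+(H^\alpha) + E^-(H^\beta)$. The symmetric elongation $\chi$ with $\chi(\pm\infty)=1$, $\chi(0)=0$ and the homotopy \emph{through the zero Hamiltonian} (rather than the linear interpolation $(1-s)H^\alpha + sH^\beta$) are both essential: as flagged in Remark \ref{rem:choice-homotopy}, the linear homotopy would force conformal-exponent factors into the estimate, whereas the through-zero choice used here avoids them entirely, and the symmetric structure of $\chi$ is what allows the separate $H^\alpha$- and $H^\beta$-contributions from the two halves of $\R$ to merge into the single pointwise maximum of their difference.
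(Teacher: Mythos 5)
Your argument tracks the paper's proof in Section~\ref{sec:energy-estimate} step by step: you apply the gauge transformation, use $E^\pi_{J,H}(u)=E^\pi(w)$, differentiate $\CA(w(\tau))$ by Cartan's formula, kill the $t=1$ boundary term with the Legendrian condition, convert $d\lambda(\del_\tau w^\pi,\del_t w^\pi)$ to $|\del_\tau w^\pi|^2_{J'}$ via the contact instanton equation, and arrive at the same key identity
$E^\pi(w) = \widetilde\CA_{H^\alpha}(\gamma^\alpha) - \widetilde\CA_{H^\beta}(\gamma^\beta) + \int_\R \lambda(\del_\tau w(\tau,0))\,d\tau$.
Your treatment of the boundary term is also correct: the $d\psi_{\tau,1}(\dot q)$-part is killed by $\lambda$ because $\psi^*\lambda=e^g\lambda$ and $\dot q\in To_{J^1B}\subset\xi$, and you use the paper's sign convention $\lambda(X_H)=-H$, so $\lambda(\del_\tau w(\tau,0))=-\chi'(\tau)H^{\alpha/\beta}(\chi(\tau),w(\tau,0))$. (For what it is worth, your sign here is the correct one; the paper's equation~\eqref{eq:s-Ham} omits the minus sign.)

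However, the part you call ``the hard part'' is in fact a genuine gap, not a step that follows from the choice of $\chi$ or from the through-zero homotopy. With the correct sign, term-by-term estimation gives
$\int_\R\lambda(\del_\tau w(\tau,0))\,d\tau \le \int_0^1\max_y H^\alpha_s\,ds - \int_0^1\min_y H^\beta_s\,ds$,
i.e.\ the ``naive'' bound $E^+(H^\alpha)+E^-(H^\beta)$ you identified. The contributions from $\tau\le 0$ (involving $H^\alpha$) and from $\tau\ge 0$ (involving $H^\beta$) are evaluated along disjoint portions of the boundary trace $\tau\mapsto w(\tau,0)$, with no relation between the two traces, so they cannot be merged into a single pointwise maximum $\max_y(H^\beta_t-H^\alpha_t)$ by any symmetry of $\chi$. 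The paper's own Lemma~\ref{lem:lambdadwdtau} attempts exactly this collapse: after the same two term-by-term bounds (with the aforementioned sign slip, yielding $-\int\min H^\alpha + \int\max H^\beta$ there), it asserts an \emph{equality} $-\int_0^1\min_y H^\alpha_t\,dt + \int_0^1\max_y H^\beta_t\,dt = \int_0^1\max_y(H^\beta_t-H^\alpha_t)\,dt$. This is false in general: one has $\max_y(H^\beta_t-H^\alpha_t)\le \max_y H^\beta_t - \min_y H^\alpha_t$ with equality only when the max and min are attained at the same point, and even then the inequality goes the wrong way for the purpose of strengthening the bound. A quick sanity check: take $H^\alpha=H^\beta=H$ nonconstant; then $\int\max(H^\beta-H^\alpha)=0$ but each of the naive bounds is $\int\osc H_t\,dt>0$. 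So your hesitation is well founded; asserting that the homotopy choice ``is what allows the contributions to merge'' does not supply the missing argument, and as written neither your proposal nor the paper's Lemma~\ref{lem:lambdadwdtau} establishes the sharp right-hand side of~\eqref{eq:Epi-bound}.
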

\begin{proof}
We compute
\bea \label{eq:general-estimate}
- \frac{d}{d\tau} \CA (w(\tau))
&=& - \frac{d}{d\tau} \int_{[0,1]} w^* \lambda
= - \delta \CA_0 (w (\tau)) \cdot \frac{\del w}{\del \tau} \nonumber \\
&=& - \int_0^1 d \lambda \left(\frac{\del w}{\del \tau}, \frac{\del w}{\del t}\right)\, dt
- \lambda \left(\frac{\del w}{\del \tau}(\tau, 1)\right)
+ \lambda \left(\frac{\del w}{\del \tau} (\tau, 0)\right) \nonumber \\
& = & - \int_0^1 \left| \frac{\del w}{\del \tau} ^\pi\right|_{J^{\prime\chi}}^2 \, dt
+ \lambda \left(\frac{\del w}{\del \tau} (\tau, 0)\right)
\eea
where the last equality follows from
\beastar
d \lambda \left(\frac{\del w}{\del \tau}, \frac{\del w}{\del t}\right)
& = & \Big| \frac{\del w}{\del \tau} ^\pi \Big|_{J^{\prime\chi}}^2 \geq 0, \\
\frac{\del w}{\del \tau} (\tau, 1) & \in& To_{J^1B} \subset \xi.
\eeastar

Since $w (\tau, 0) \in \psi_{\chi(\tau),1} (o_{J^1B})$, we write
$$
w (\tau, 0) = \psi_{\tau,1}(q(\tau),0,0)
$$
for some $q(\tau) \in B$. Then
$$
\frac {\del w}{\del \tau} (\tau, 0) = \frac{\del \psi_{\tau,1}}{\del \tau} (q(\tau),0,0)
+ (d\psi_{\tau,1})_{(q(\tau),0,0)} \left(\frac{\del q(\tau)}{\del \tau}\right).
$$
On the other hand, from the moving boundary condition, we derive
\be\label{eq:dpsidtau}
\frac{\del \psi_{\tau, t}}{\del \tau} (y)
=
\left\{
\begin{array}{ll}
\chi'(\tau)t\, X^{\chi(\tau)t}_{H^\alpha} \left(\psi^{\chi(\tau)t}_{H^\alpha} (y)\right) & {\rm for} \quad \tau \leq 0 \\
\chi'(\tau)t\, X^{\chi(\tau)t}_{H^\beta} \left(\psi^{\chi(\tau)t}_{H^\beta} (y)\right) & {\rm for} \quad \tau \leq 0 \\
\end{array}
\right.
\ee
Therefore we obtain
\be \label{eq:s-Ham}
\lambda \left(\frac{\del w}{\del \tau} (\tau, 0)\right)
=
\left\{
\begin{array}{ll}
\chi'(\tau) H^{\alpha}_{\chi(\tau)} (w(\tau,0)) & {\rm for} \quad \tau \leq 0 \\
\chi'(\tau) H^{\beta}_{\chi(\tau)} (w(\tau,0)) & {\rm for} \quad \tau \leq 0
\end{array}
\right.
\ee
Here we use again the fact that
$$
(d\psi_{\tau,1})_{(q(\tau),0,0)}\left( \frac{\del q(\tau)}{\del \tau}\right) \in \xi.
$$
Using this, we now prove the following.

\begin{lem}\label{lem:lambdadwdtau}
\be\label{eq:lambdadwdtau}
\lambda \left(\frac{\del w}{\del \tau} (\tau, 0)\right)
\leq \int^{1}_{0} \max_{y}(H^{\beta}_{t} - H^{\alpha}_{t}) dt.
\ee
\end{lem}
\begin{proof} Using the inequalities
\beastar
\chi'(\tau) &\leq&  0  \quad \text{\rm for }\, \tau \leq 0,\\
\chi'(\tau) & \geq & 0 \quad \text{\rm for } \, \tau \geq 0
\eeastar
we compute
\beastar
\lambda \left(\frac{\del w}{\del \tau} (\tau, 0)\right)
& = &  \int^{0}_{-\infty} \chi'(\tau) H^{\alpha}_{\chi(\tau)} (w(\tau,0)) d\tau
+ \int^{+\infty}_{0} \chi'(\tau) H^{\beta}_{\chi(\tau)} (w(\tau,0)) d\tau \\
& \leq & \int^{0}_{-\infty} \chi'(\tau) \min_{y}(H^{\alpha}_{\chi(\tau)}) d\tau
+ \int^{+\infty}_{0} \chi'(\tau) \max_{y} (H^{\beta}_{\chi(\tau)}) d\tau \\
&=& - \int^{1}_{0} \min_{y}(H^{\alpha}_{t}) dt + \int^{1}_{0} \max_{y} (H^{\beta}_{t}) dt \\
&=& \int^{1}_{0} \max_{y}(H^{\beta}_{t} - H^{\alpha}_{t}) dt \\
\eeastar
\end{proof}

We now integrate \eqref{eq:general-estimate} over $-\infty <\tau < \infty$ and then substitute
\eqref{eq:lambdadwdtau} thereinto. This finishes the proof of \eqref{eq:Epi-bound}.
\end{proof}

\subsection{A priori uniform bound for vertical energy}
\label{subsec:Eperp-bound}

We also prove the bound for the $\lambda$-energy as well. For this purpose, we
introduce the following quantities
\bea\label{eq:||H||}
E^+(H) & := &  \int^{1}_{0} \max_{y} H_{t}(y)\, dt \nonumber\\
E^-(H) & := & \int^1_0 - \min_y  H_{t}(y)\,  dt
\eea
similarly as in the symplectic geometry.

The proof of the following is essentially the same as that of
\cite[Proposition 13.1]{oh:entanglement1}. Since the present setting is somewhat different
therefrom, we provide the full details of its proof for readers' convenience and for
the self-containedness of the paper.

\begin{thm}[Uniform vertical energy bound]
Let $u$ be any finite energy solution of \eqref{eq:H-para-contacton}. Then we have
$$
E_H^\perp(u) \leq |\widetilde \CA_{H^\alpha}(\gamma^\alpha)| + |\widetilde \CA_{H^\beta}(\gamma^\beta)| + E^+(H^\beta) + E^-(H^\alpha)
$$
\end{thm}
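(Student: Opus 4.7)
The plan is to estimate the two summands of $E_H^\perp(u) = E^\lambda_+(w) + E^\lambda_-(w)$ separately at each puncture, where $w$ is the gauge transform of $u$ satisfying \eqref{eq:H-para-contacton}. Choose $R \geq 2$ so that $\chi(\tau) \equiv 1$ for $|\tau| \geq R$; then on each strip-like end $[R,\infty) \times [0,1]$ and $(-\infty, -R] \times [0,1]$ the Legendrian boundary is the \emph{fixed} pair $(\psi_{H^\beta}^1(o_{J^1B}), o_{J^1B})$ and $(\psi_{H^\alpha}^1(o_{J^1B}), o_{J^1B})$ respectively, so closedness of $w^*\lambda \circ j$ provides a local potential $f$ with $df = w^*\lambda \circ j$, uniquely determined up to an additive constant (irrelevant by Proposition \ref{prop:a-independent}).

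At the puncture $+\infty$, for any $\varphi \in \CC$ with primitive $\psi$, I will apply the identity
\begin{equation*}
\varphi(f)\, df \circ j \wedge df = -d(\psi(f)\, df \circ j) + \psi(f)\, d(df \circ j)
\end{equation*}
on $\Omega_A = [R, A] \times [0,1]$ and invoke Stokes' theorem. Because $\partial_\tau w$ is tangent to the fixed Legendrians along $[R,A] \times \{0,1\}$, $\lambda(\partial_\tau w) \equiv 0$ there, killing those boundary contributions. A direct calculation using $\overline{\del}^\pi w = 0$ gives $d(w^*\lambda) = |\partial_t^\pi w|^2\, d\tau \wedge dt \geq 0$ and $df \circ j = -w^*\lambda$, so $\psi(f)\, d(df \circ j) \leq 0$ and the interior term may be discarded for an upper bound. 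Letting $A \to \infty$ and invoking $w(A, \cdot) \to \overline{\gamma}^\beta$ with $\lambda(\partial_t w)(A, t) \to T^\beta$, the asymptotic contribution is bounded in absolute value by $|T^\beta|$ uniformly in $\varphi$, while shifting $f$ by a constant so that $\supp\varphi$ lies outside the range of $f(R, \cdot)$ eliminates the inner-boundary term at $\tau = R$. Since $\overline{\gamma}^\beta(1) = \gamma^\beta(1) \in o_{J^1B}$ gives $z(\gamma^\beta(1)) = 0$, Lemma \ref{lem:CAHu=CAw} combined with $\CA(\overline{\gamma}^\beta) = T^\beta$ yields $\widetilde{\CA}_{H^\beta}(\gamma^\beta) = -T^\beta$, hence $|T^\beta| = |\widetilde{\CA}_{H^\beta}(\gamma^\beta)|$. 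A symmetric analysis at the puncture $-\infty$ produces $E^\lambda_-(w) \leq |\widetilde{\CA}_{H^\alpha}(\gamma^\alpha)|$.

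The remaining terms $E^+(H^\beta) + E^-(H^\alpha)$ arise from the middle region $|\tau| \leq R$, where the Legendrian boundary moves with $\chi$. Performing the Stokes manipulation on the full strip $\R \times [0,1]$ with a globally defined primitive $f$ produces an additional boundary contribution on the moving Legendrian of the form \eqref{eq:s-Ham}, namely $\int \chi'(\tau)\, H^\bullet_{\chi(\tau)}(w(\tau, 0))\, d\tau$ with $\bullet = \alpha$ for $\tau \leq 0$ and $\bullet = \beta$ for $\tau \geq 0$, and the sign analysis of $\chi'$ used in Lemma \ref{lem:lambdadwdtau} bounds this quantity by $\int_0^1 \max_y H^\beta_t\, dt + \int_0^1 (-\min_y H^\alpha_t)\, dt = E^+(H^\beta) + E^-(H^\alpha)$. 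The main obstacle will be carrying out this global Stokes argument consistently: the primitive $f$ must be extended across the middle region despite $w^*\lambda \circ j$ being closed only on each strip-like end separately, and the interplay between the shift of $f$, the support of $\varphi$, and the horizontal boundary integral over the moving-boundary interval requires careful accounting to reproduce exactly the Hamiltonian quantities $E^+(H^\beta) + E^-(H^\alpha)$ on the right-hand side.
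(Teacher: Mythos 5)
Your plan to estimate $E^\lambda_+(w)$ and $E^\lambda_-(w)$ separately by running Stokes on the truncated end $\Omega_A = [R,A]\times[0,1]$ is close in spirit to the paper's argument, but the step you use to kill the inner boundary at $\tau=R$ does not work. You claim that shifting $f$ by a constant $c$ so that $f(R,\cdot)+c$ lies below $\supp\varphi$ eliminates the term $\int_0^1\psi(f(R,t))\,w^*\lambda(\partial_t)(R,t)\,dt$. But the shift-invariance you are invoking (Proposition \ref{prop:a-independent}) comes from the change of variables $I_\varphi(f+c)=I_{\varphi(\cdot+c)}(f)$ together with the translation-invariance of $\CC$. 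Consequently, when you shift $f$ to $f+c$ you must simultaneously replace $\varphi$ by $\varphi(\cdot-c)$ (equivalently $\psi$ by $\psi(\cdot-c)$) to compute the same quantity $I_\varphi(f)$, and then the boundary integrand becomes $\psi(\cdot-c)\bigl(f(R,t)+c\bigr)=\psi(f(R,t))$, identical to before. The inner boundary term is therefore not killed by any normalization of $f$; it is a genuine $\varphi$-dependent quantity equal to $\pm\int_0^1\psi(f(R,t))\,w^*\lambda(\partial_t)(R,t)\,dt$, which is not \emph{a priori} bounded by $|T^\pm|$. Had your two local estimates $E^\lambda_\pm(w)\le|T^\pm|$ actually held, they would already give a bound strictly stronger than the theorem, with no room left for the $E^\pm(H)$ terms you say ``arise from the middle region'' --- that internal tension is a symptom of the gap.

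Your second difficulty, that ``the primitive $f$ must be extended across the middle region despite $w^*\lambda\circ j$ being closed only on each strip-like end separately,'' is not a difficulty at all: $d(w^*\lambda\circ j)=0$ is the second defining equation in \eqref{eq:H-para-contacton} and holds on \emph{all} of $\R\times[0,1]$, not just on the ends. Since $\R\times[0,1]$ is simply connected, a global primitive $f$ with $df = w^*\lambda\circ j$ exists (this is exactly \eqref{eq:f-defn}), and the paper's proof runs Stokes once on the full strip. Doing so avoids introducing any artificial interior boundary at $\tau=R$: the only boundary contributions are the two asymptotic ends (each bounded by $|T^\pm|=|\widetilde\CA_{H^{\alpha,\beta}}(\gamma^{\alpha,\beta})|$ since $0\le\psi\le1$), the $t=1$ side (identically zero since $o_{J^1B}$ is a fixed Legendrian), and the $t=0$ side, where \eqref{eq:s-Ham} and the sign of $\chi'$ give the bound $E^+(H^\beta)+E^-(H^\alpha)$. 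So the global argument you gesture at in your last paragraph is in fact the whole proof, and your worry about whether it can be ``carried out consistently'' evaporates once you notice that $f$ is globally defined. In short: drop the local-puncture pre-estimate, use the global potential, and the rest of your boundary bookkeeping is essentially correct.
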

\begin{proof}
By the defining equation $d(w^*\lambda \circ j) = 0$ of contact instantons and the vanishing of charge,
we have a globally defined function $f: \R \times [0,1] \to \R$ in \eqref{eq:f-defn} such that
$$
w^*\lambda \circ j = df.
$$

By definition of $E^\perp$, we need to get a uniform bound for the integral
$$
\int_{\R \times [0,1]} (-w^*\lambda) \wedge d(\psi(f)) \geq 0.
$$
(See Definition \ref{defn:CC-energy}.) By integration by parts, we rewrite
$$
\int_{\R \times [0,1]} (-w^*\lambda) \wedge d(\psi(f)) =
\int_{\R \times [0,1]} d(\psi(f) w^*\lambda) - \psi(f)  dw^*\lambda.
$$
Recall that  $dw^*\lambda = \frac12 |d^\pi w|^2$ for any
$w$ satisfying $\delbar^\pi w = 0$. Then similarly as we prove
Theorem \ref{thm:Epi-bound}, we derive
\beastar
0 & \leq & \int_{\R \times [0,1]} (-w^*\lambda) \wedge d(\psi(f))
\leq \int_{\R \times [0,1]} d(\psi(f) w^*\lambda)\\
& = & \int_{\{-\infty\} \times [0,1] } \psi(f(-\infty,t)) ({\overline \gamma^-})^*\lambda
- \int_{\{\infty\} \times [0,1] } \psi(f(\infty,t)) ({\overline \gamma^+})^*\lambda\\
&{}& + \int_{-\infty}^{\infty} \psi(f(\tau,0)) \lambda\left(\frac{\del w}{\del \tau}(\tau,0) \right)\, d\tau
 - \int_{0}^{\infty} \psi(f(\tau,1)) \lambda\left(\frac{\del w}{\del \tau}(\tau,1) \right)\, d\tau
\eeastar
By the charge vanishing Theorem \ref{thm:charge-vanishing}, we
have the asymptotic convergence of  $w_\tau^*\lambda \to T^\pm \, dt$
as $|\tau| \to \infty$,
where we put
$$
 T^+: = \CA_{H^\beta}(\gamma^+), \quad T^- := \CA_{H^\alpha}(\gamma^-).
$$
 Then since $0 \leq \psi \leq 1$, we obtain
\beastar
\left|\int_0^1 \psi(f(-\infty,t)) ({\overline \gamma^-})^*\lambda \right| & \leq&  |T^-|,\\
\left|\int_0^1 \psi(f(\infty,t)) ({\overline \gamma^+})^*\lambda \right| & \leq & |T^+|.
\eeastar
On the other hand, we have
$$
\lambda\left(\frac{\del w}{\del \tau}(\tau,1)\right) = 0
$$
since the $\tau$-developing Hamiltonian $G$ vanishes since $\phi_H^1 = \psi_H^1 (\psi_H^1)^{-1} = id$ and hence $w(\tau, 1) \in o_{J^1B}$ which is
a Legendrian submanifold.
This proves
\beastar
&{}& \int_{\R \times [0,1]} (-w^*\lambda) \wedge d(\psi(f))\\
& \leq & |T^-| + |T^+| + \int_{0}^{\infty} \psi(f(\tau,0)) \lambda\left(\frac{\del w}{\del \tau}(\tau,0) \right)\, d\tau \\
& \leq &  |T^-| + |T^+| + \int_0^1 - \min \psi(f(\cdot,t)) H_t)\, dt + \int_0^1 \max (\psi(f(\cdot,t)) H_t) \, dt \\
& \leq & |T^-| + |T^+| + \int_0^1 (\max  H_t^\beta - \min H_t^\alpha) \, dt \\
&\leq & |T^-| + |T^+| + E^+(H^\beta) + E^-(H^\alpha).
\eeastar
Here for the penultimate inequality, we employ the following:
\begin{itemize}
\item
We use the same calculations as the ones performed in the proof of
Theorem \ref{thm:Epi-bound}, and apply Theorem \ref{thm:Epi-bound}.
\item Moreover, we also have used the inequality
$$
\chi'
\begin{cases}
 \leq 0 \quad & \text{\rm for }\, \tau \in (-\infty,0]\\
\geq 0 \quad & \text{\rm for }\, \tau \in [0,\infty).
\end{cases}
$$
\end{itemize}
Then for the last equality, we use the fact $0 \leq \psi(f) \leq 1$.
Combining all the above discussion, we have finished the proof of
$$
\int_{\R \times [0,1]} (-w^*\lambda) \wedge d(\psi(f)) \leq |T^-| + |T^+| + E^+(H^\beta) + E^-(H^\alpha).
$$
for any $\psi \in \CC$ and hence the proof of the proposition by definition of
$E^\perp$.
\end{proof}

\appendix

\section{Legendrian spectral invariants of $\GFQI$}
\label{sec:gfqi-spectral-invariants}

In this appendix we review basic results on the generating functions of
Legendrian submanifolds and their spectral invariants, or the Viterbo-type
invariants.

Let $\pi_E: E \to B$ be a vector bundle and $S: E \to \R$ be a
function that is quadratic at infinity, abbreviated as \GFQI

We define the subset of $\mathbb{R}$,
\be\label{eq:spectrum}
\Spec(S) = \{S(e) \in \R \mid dS(e) = 0 \}
\ee
and call it the \emph{spectrum} of $S$.
Since $B$ is assumed to be compact and $S$ is quadratic at infinity
$\Spec(S) \subset \R$ is a compact subset of measure zero in general.

We consider the sub-level set
$$
E^c = \{e \in E \mid S(e) \leq c\}
$$
for $ c \in \R \cup \{\infty\}$ and by $E^{-\infty}$ the set $E^{-c}$ for
a sufficiently large $\lambda$. (The pair $(E,E^{-c})$ is homotopy equivalent to $(E,E^{-\infty})$
for any sufficiently large $c$.) Let $i_c: (E^c,E^{-\infty}) \to (E, E^{-\infty})$
be the inclusion map, and the induced map on cohomology
$$
i_c^*: H^*(B) \cong H^*(E, E^{-\infty}) \to H^*(E^c, E^{-\infty}).
$$
We define
$$
c(a;S): = \inf\{c \in \R \mid i_c^*(a) \neq 0\}
$$
for each $a \neq 0 \in H^*(B)$.

Denote by $Q_0$ a generic unspecified fiberwise quadratic form on $E$.
The following lemma is essentially proved by
Viterbo \cite{viterbo} who considered the symplectic case, and  extended to the
contact case by Th\'eret \cite{theret}.

\begin{thm}\label{thm:spec-S} Assume that $B$ is a closed manifold. Let $S: E \to \R$ be \GFQI.
The map $(a,S) \mapsto c(a;S)$ satisfies the following:
\begin{enumerate}
\item (Spectrality)  $c(a;S) \in \Crit S$ for all $a \neq H^*(B)$.
\item ($C^0$ continuity)  Suppose that $S_1, \, S_2: E \to \R$ be
\GFQI's such that $S_1 \equiv S_2$ outside a compact subset $K \subset E$.
Then if $\|S_1-S_2\|_{C^0} \leq \epsilon$, then
$$
|c(a;S_1) - c(a;S_2)| \leq \epsilon.
$$
\item For any $a, \, b \in H^*(B)$, $c(a\cup b,S_1 + S_2) \geq c(a;S_1) + c(b,S_2)$.
\item Let $S: E \to \R$ be a $\GFQI$and $\overline S: E \to \R$ be
the $\GFQI$ defined by $\overline S(q,e) = -S(q,-e)$. Then
$$
c((\mu, \overline S) = - c(1;S)
$$
where $\mu = PD[pt] \in H^n(B)$ is the orientation class, i.e., the Poincar\'e dual to the point class.
\item  $c(1;S) \leq 0$.
\end{enumerate}
\end{thm}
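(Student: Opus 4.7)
The plan is to execute the standard Lyusternik–Schnirelmann minimax theory for sublevel sets of $S$, in the form due to Viterbo. The principal tool is a deformation lemma: because $S$ is quadratic at infinity, a cut-off version of $-\nabla S$ generates a complete gradient-like flow $\phi^t$ whose forward orbits preserve sublevel sets $E^c$, and whenever $[a,b]\subset\R\setminus\Spec(S)$ a compactness argument on the slice $a\le S\le b$ produces a time $T>0$ with $\phi^T(E^b)\subset E^a$. Choosing the cut-off to be supported where $S$ differs from $Q_0$ keeps all flows trivial near infinity, so the pair $(E^a, E^{-\infty})\hookrightarrow (E^b, E^{-\infty})$ is a relative homotopy equivalence through regular intervals. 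Every property below is obtained by combining this deformation input with naturality of $i_c^\ast$.

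For \emph{Spectrality} (1), if $c_0:=c(a;S)$ were a regular value, the deformation lemma applied to $[c_0-\epsilon,c_0+\epsilon]$ would yield $i_{c_0-\epsilon}^\ast a\ne 0\iff i_{c_0+\epsilon}^\ast a\ne 0$, contradicting the infimum definition. For \emph{$C^0$-continuity} (2), the inequality $\|S_1-S_2\|_{C^0}\le\epsilon$ gives the sandwich $E^{c-\epsilon}_{S_1}\subseteq E^c_{S_2}\subseteq E^{c+\epsilon}_{S_1}$. Because $S_1\equiv S_2$ outside the compact set $K$, the linear interpolation $S_t=(1-t)S_1+tS_2$ consists of GFQI's with a common quadratic form at infinity, so all sublevel pairs have the same "infinity" and the inclusions commute with the maps $i_c^\ast$. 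Chasing them yields $c(a;S_2)\le c(a;S_1)+\epsilon$, and swapping roles gives the matching lower bound.

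For \emph{submultiplicativity} (3), introduce the external sum $(S_1\boxplus S_2)(q_1,e_1,q_2,e_2):=S_1(q_1,e_1)+S_2(q_2,e_2)$ on $E_1\oplus E_2\to B\times B$. The inclusion $E^{c_1}_{S_1}\times E^{c_2}_{S_2}\subset E^{c_1+c_2}_{S_1\boxplus S_2}$ and Künneth imply $c(a\times b;\,S_1\boxplus S_2)\ge c(a;S_1)+c(b;S_2)$, and pulling back by the diagonal $\Delta:B\to B\times B$ converts $a\times b$ into $a\cup b$ and $S_1\boxplus S_2$ into $S_1+S_2$, proving (3). For the \emph{duality} (4), the fiberwise map $e\mapsto -e$ identifies $\{\overline S\le c\}$ with $\{S\ge -c\}$, and Poincar\'e--Lefschetz duality for the pair $(E,E^{-\infty})$ (which retracts onto $B$) interchanges the sublevel and superlevel filtrations while pairing $1\in H^0(B)$ with $\mu\in H^n(B)$; tracking the classes through this exchange yields $c(\mu;\overline S)=-c(1;S)$.

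For \emph{nonpositivity} (5), observe that the zero section $o_E(B)\subset E$ consists of critical points of $S$ restricted to the fibers, and after the Thom isomorphism $H^\ast(E,E^{-\infty})\cong H^{\ast-q}(B)$ (with $q$ the Morse index of $Q_0$) the class $1\in H^0(B)$ is the Thom class. A homotopy argument along the family connecting $S$ to $Q_0$ combined with (2) and the fact that $c(1;Q_0)=0$ (a direct computation) then yields $c(1;S)\le 0$ under the standard normalization convention for the GFQI. The main obstacle in this program is item (4): the sign bookkeeping in Poincar\'e--Lefschetz duality between the sublevel filtrations of $S$ and $\overline S$ is the only place where an orientation choice could propagate through the argument and spoil the identity. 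Everything else is a clean consequence of the deformation lemma together with naturality; the detailed bookkeeping for (4) is where I would expect to spend most of the verification effort, and I would rely on Viterbo's original treatment as adapted by Th\'eret, Bhupal and Sandon to the Legendrian setting.
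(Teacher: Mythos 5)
The paper itself does not prove Theorem~\ref{thm:spec-S}; it simply records it as a review of Viterbo's construction (extended by Th\'eret, Bhupal, Sandon) and cites \cite{viterbo,theret}, so there is no in-paper proof to compare against. Your outline is therefore a reconstruction, and most of it (the deformation lemma, spectrality, the $C^0$ sandwich, the duality sketch) is in line with the standard minimax treatment.

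However, your argument for item (3) has a genuine direction error. With the convention $c(a;S) = \inf\{c : i_c^\ast a \neq 0\}$ used in the paper, the inclusion $E^{c_1}_{S_1}\times E^{c_2}_{S_2}\subset E^{c_1+c_2}_{S_1\boxplus S_2}$ together with the K\"unneth theorem gives that nonvanishing of $i^\ast_{c_1}a$ and $i^\ast_{c_2}b$ forces nonvanishing of $i^\ast_{c_1+c_2}(a\times b)$, which yields the inequality $c(a\times b;S_1\boxplus S_2)\le c(a;S_1)+c(b;S_2)$, not $\ge$. The triangle inequality asserted in the theorem runs in the opposite direction, and it requires the \emph{other} decomposition of the sublevel set, namely
$$
\{S_1\oplus S_2 \leq a_1+a_2\} \;\subset\; \pi_1^{-1}\{S_1\le a_1\}\;\cup\;\pi_2^{-1}\{S_2\le a_2\},
$$
together with the relative cup product $H^\ast(E,\pi_1^{-1}E_1^{a_1}) \otimes H^\ast(E,\pi_2^{-1}E_2^{a_2}) \to H^\ast(E,\pi_1^{-1}E_1^{a_1}\cup\pi_2^{-1}E_2^{a_2})$. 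One takes $a_i<c(\cdot;S_i)$ so that $a$ and $b$ lift to the relative groups $H^\ast(E_i,E_i^{a_i})$; the relative product then shows $a\cup b$ restricts to zero on $E^{a_1+a_2}_{S_1\oplus S_2}$, forcing $c(a\cup b;S_1\oplus S_2)\ge a_1+a_2$ and hence the desired $\ge$. The diagonal-pullback step you appeal to does not repair this: restricting to a smaller total space only increases $c$, so it cannot convert the $\le$ you obtained on the external product into the required $\ge$ on the fiber product.

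A smaller remark: the statement $c(1;S)\le 0$ in item (5) is not true for a completely arbitrary GFQI (one can shift $S$ by a positive constant), so your appeal to $c(1;Q_0)=0$ implicitly uses a normalization convention that should be stated explicitly.
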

\section{Review of compatible almost complex structures on $T^*B$}
\label{sec:JonT*B}

In this section we recall that if a Riemannian metric $g$ is given to
$B$, the associated Levi-Civita connection induces a natural
almost complex structure on $T^*B$ called the Sasakian almost
complex structure, which we denote by $J_g$.

It is well-known and easy to check that this canonical
almost complex structure has the following properties:
\begin{prop} We have
\begin{enumerate}
\item $J_g$ is compatible to the canonical symplectic
structure $\omega_0$ of $T^*B$.
\item On the zero section $o_B\subset T^*B \cong T_{(q,0)}o_B$, $J_g$
assigns to each $v\in T_qB \subset T_{(q,0)}(T^*B)$ the
cotangent vector $J_g(v)=g(v,\cdot )\in T^*_qB\subset
T_{(q,0)}(T^*B)$.  Here we use the canonical splitting
$$
T_{(q,0)}(T^*B)\cong T_q B \oplus T^*_q B.
$$
\item The metric $g_{J_g} := \omega_0(\cdot, J_g \cdot)$ on $T^*B$
defines a Riemannian metric that has bounded curvature and
injectivity radius bounded away from 0.
\item $J_g$ is invariant under the anti-symplectic reflection $\frak r: T^*B \to T^*B$
mapping $(q,p) \mapsto (q,-p)$.
\end{enumerate}
\end{prop}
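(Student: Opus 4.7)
The plan is to work in Darboux--normal coordinates around a base point $q_0 \in B$ and directly verify each of the four items from the definition of $J_g$. Recall that the Levi-Civita connection $\nabla^g$ induces a horizontal-vertical splitting
\[
T_{(q,p)}(T^*B) = H_{(q,p)} \oplus V_{(q,p)}, \qquad H_{(q,p)} \cong T_q B, \quad V_{(q,p)} \cong T_q^*B,
\]
where $H$ is the kernel of the connection form and $V = \ker d\pi$. The metric $g$ gives a bundle isomorphism $\flat_g: TB \to T^*B$, $v \mapsto g(v,\cdot)$. The Sasakian almost complex structure is then defined fiberwise by
\[
J_g(X^H) = (\flat_g X)^V, \qquad J_g(\alpha^V) = -(\flat_g^{-1}\alpha)^H,
\]
so that $J_g^2 = -\mathrm{id}$. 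In canonical coordinates $(q_i, p_i)$ with $g_{ij}(q_0) = \delta_{ij}$ and $\Gamma^k_{ij}(q_0) = 0$, at points of the fiber over $q_0$ the horizontal lift is just $\partial/\partial q_i$ and $J_g(\partial/\partial q_i) = \partial/\partial p_i$.

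For (1), compatibility with $\omega_0 = \sum dp_i \wedge dq_i$ is checked pointwise: in the above coordinates one reads off $\omega_0(J_g X, J_g Y) = \omega_0(X,Y)$ from the block structure, and $\omega_0(X, J_g X) = |X|_g^2 \geq 0$ on horizontal vectors and analogously on vertical ones, with cross-terms vanishing; the associated metric $g_{J_g}$ restricts to $g$ on horizontal subspaces and to the dual metric $g^*$ on vertical ones. For (2), at a point $(q,0)$ of the zero section the connection form vanishes trivially on $T_qB \subset T_{(q,0)}(T^*B)$, so the horizontal subspace coincides with the canonically embedded $T_qB$, and the definition of $J_g$ on $H$ is literally $v \mapsto \flat_g v = g(v,\cdot)$.

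For (3), since $B$ is compact (as assumed in the main body of the paper) the metric $g$ has bounded geometry, hence the Christoffel symbols and the Riemann curvature tensor $R^g$ are uniformly bounded on $B$. The curvature and Christoffel symbols of the Sasaki metric $g_{J_g}$ on $T^*B$ are given by standard universal expressions polynomial in $g$, $R^g$, $\nabla^g R^g$ and the fiber coordinate $p$ (cf.\ the Dombrowski/O'Neill formulas for the Sasaki metric). Fiberwise homogeneity and compactness of $B$ then give bounded curvature on $T^*B$ together with a uniform positive lower bound on the injectivity radius via the standard tubular-neighborhood argument around the zero section combined with fiberwise translation symmetry.

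For (4), the anti-symplectic reflection $\mathfrak{r}(q,p) = (q,-p)$ preserves the fibration $\pi: T^*B \to B$ and acts by $-\mathrm{id}$ on each fiber, so $d\mathfrak{r}$ sends $V_{(q,p)}$ to $V_{(q,-p)}$ by $\alpha \mapsto -\alpha$. Because the connection form of $\nabla^g$ depends linearly on $p$, the horizontal distribution is also preserved by $\mathfrak{r}$, but with $d\mathfrak{r}|_H = \mathrm{id}$ under the natural identifications $H_{(q,p)} \cong T_qB \cong H_{(q,-p)}$. Plugging these into the definition of $J_g$ gives $d\mathfrak{r} \circ J_g = J_g \circ d\mathfrak{r}$, which is the claimed invariance. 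The only step that requires any care is (3); items (1),(2),(4) are essentially bookkeeping in the horizontal/vertical splitting.
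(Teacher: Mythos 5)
The paper supplies no proof of this proposition; it is stated as ``well-known and easy to check,'' so your argument must stand on its own. Items (1) and (2) are fine: the computation in the horizontal/vertical splitting with normal coordinates is exactly the natural verification, and the remark that $g_{J_g}$ is block-diagonal, restricting to $g$ on $H$ and $g^*$ on $V$, is correct.

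Item (4) contains a sign error that inverts the claimed relation. You note correctly that $d\mathfrak r$ acts as $+\operatorname{id}$ on $H$ (under the canonical identifications $H_{(q,p)}\cong T_qB\cong H_{(q,-p)}$, since the connection form is odd in $p$) and as $-\operatorname{id}$ on $V$. But $J_g$ interchanges $H$ and $V$, so applying $d\mathfrak r$ before or after $J_g$ flips the sign: the correct identity is $d\mathfrak r \circ J_g = - J_g\circ d\mathfrak r$, equivalently $\mathfrak r^* J_g = -J_g$. This is the meaningful notion of ``invariance'' for an anti-symplectic involution, since $\mathfrak r^*\omega_0 = -\omega_0$ combined with $\mathfrak r^*J_g = -J_g$ gives $\mathfrak r^*g_{J_g} = g_{J_g}$, i.e.\ $\mathfrak r$ is a $g_{J_g}$-isometry and is anti-holomorphic for $J_g$. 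The relation $d\mathfrak r\circ J_g = J_g\circ d\mathfrak r$ that you assert would force $\mathfrak r^*g_{J_g} = -g_{J_g}$, a contradiction.

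Item (3) has a genuine gap. The phrase ``fiberwise homogeneity'' does not apply: the Sasaki metric is not invariant under fiber translations $p\mapsto p+c$, because the connection one-form $\delta p_i = dp_i - p_a\Gamma^a_{ij}dq^j$ entering the vertical block depends affinely on $p$. You in fact acknowledge that the curvature of $g_{J_g}$ is given by expressions polynomial in the fiber coordinate $p$; by the Kowalski/Dombrowski formulas these expressions have nontrivial quadratic-in-$p$ terms of the schematic form $R^g(\cdot, p)R^g(\cdot, p)$ whenever $R^g\ne 0$, so the curvature tensor is \emph{unbounded} along the fibers unless $g$ is flat. Thus the argument as written would prove something false for generic $g$. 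Any correct treatment of (3) must either restrict the bounded-geometry claim to a bounded neighborhood of the zero section (which is all the Floer-theoretic applications in this paper ever need, given the $C^0$ a~priori bounds established in Part~1), or replace the plain Sasaki metric by a conformally rescaled model metric with genuine bounded geometry at infinity, as is done in some treatments of cotangent-bundle Floer theory. As stated, your appeal to homogeneity does not close the gap and the claim needs this qualification.
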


We consider the class of compatible almost complex
structures $J$ on $T^*B$ such that
$$
J \equiv J_g\;\mbox{ outside a compact set in $T^*B$,}
$$
and denote the class by
\beastar
\JJ^c_g(T^*B) &:= &\{J\mid\, J\, \mbox{is
compatible to } \, \omega \, \mbox{and }\,  J\equiv J_g \\
&{}& \quad \mbox{ outside a compact subset in } \,
T^*B \}.
\eeastar
We define and denote the \emph{support} of $J$ by
$$
\operatorname{supp }J:=\mbox{ the closure of } \, \{ x\in T^*B\mid J(x)\not = J_g(x)\}.
$$
We then consider
$$
\CP(\JJ^c(T^*B)) : = C^\infty([0,1] \to \JJ^c(T^*B)).
$$
For each given $J = \{J_t\}_{0 \leq t \leq 1}$, we consider the associated family of compatible metrics
$g_{J_t}$. This family induces an $L^2$-metric on the space of paths on $T^*B$ defined by
\be\label{eq:L2metric}
\ll \xi_1, \xi_2 \gg_J = \int_0^1 g_{J_t}(\xi_1(t), \xi_2(t))\, dt
= \int_0^1\omega (\xi_1(t), J_t \,\xi_2(t))\, dt.
\ee
We denote by $\widetilde g: = g_{J_g}$ the induced metric on $T^*B$.

We can express such a lifted CR-almost complex structure $J$ on $J^1B$
in terms of the coordinate $w = (u,f)$ where $u = \pi \circ w$ and $f = z \circ w$
as follows.

Recall the general decomposition
$$
dw = d^\pi w + w^*\lambda \, R_\lambda.
$$
In the current case of one jet bundle with $\lambda = dz - \pi_{\text{\rm cot}}
^*\theta$,
we can express $dw$ also as
$$
dw = Du + df
$$
in terms of the expression $w = (u,f)$: we have $d^\pi w = Du$ where
$Du$ is the horizontal lift of $du$ for the projection $\xi \to T(T^*B)$.
More specifically we have
$$
Du = (du)^\sharp: T\Sigma \to \xi
$$
is the horizontal lifting of $du \in \Omega^1(u^*T(T^*B))$ to one in $\Omega^1(u^*\xi)$
which induced by the map
$$
\frac{\del}{\del q_i} \to \frac{\del}{\del q_i} + q_i \frac{\del}{\del z} =: \frac{D}{\del q_i},
\quad \frac{\del}{\del p_i} \to  \frac{\del}{\del p_i}.
$$
Let $J$ be a $T^*B$-lifted CR almost complex structure on $J^1B$ and consider the case $\dot \Sigma = \R \times [0,1]$.
Then we have the decomposition
$$
Du = (Du)^{(1,0)} + (Du)^{(0,1)}
$$
with the complex linear and the anti-complex linear part of $Du:(T\Sigma, j) \to(\xi,J)$.
By definition, we have
$$
\delbar^\pi w = (Du)^{(0,1)}, \quad \del^\pi w = (Du)^{(1,0)}.
$$

\section{Sasakian almost complex structure: Proof of Lemma \ref{lem:d|p|2}}
\label{sec:sasaki-J}

Let $g$ be a Riemannian metric of $B$, and consider its dual metric on $T^*B$
which we denote by  $h$.
We then consider the induced kinetic energy Hamiltonian function $K:T^*B\to\R$;
$$
K(\alpha)=\frac{1}{2}|\alpha|^2_h.
$$
Its associated Hamiltonian vector field
$X_{K}$ is defined to satisfy $\omega_{0}(X_{K},\ \cdot\ )=d{K}$, and the flow 
of $X_{K}$ recovers the geodesic flow on the cotangent bundle.

Let us start with Levi-Civita connection $\nabla=\nabla^g$ and an induced (co-)frame fields 
$H_i,V_i$ (and $H^i,V^i$) on $T^*B$ given as follows:
\beastar
H_i &= & \partial_{q^i}+p_a\Gamma^a_{ij}\partial_{p_j}, \quad V_i= \partial_{p_i},\\
H^i &=  & dq^i, \quad V^i=  dp_i-p_a\Gamma^a_{ij}dq^j.
\eeastar
Here $\Gamma^a_{ij}$ are Christoffel symbols for the connection $\nabla$ and we used the Einstein summation convention.

In Riemannian geometry, they are commonly denoted by
$$
H_i = \frac{D}{\del q^i}, \quad V^i = \nabla p_i
$$
with respect to the splitting $T(T^*B) = H \oplus B \cong TB \oplus T^*B$. We will 
also adopt this notation which facillates the tensor calculations below.

An induced Riemannian metric $\widetilde h$ on $T^*B$ with respect to the (co-)frame fields is given by
$$
h_{ij}dq^idq^j+h^{ij} dp_i dp_j
$$
where $(h^{ij})_{i,j}$ is the inverse matrix of $(h_{ij})_{i,j}$ and $\delta p_i=V^i$.
In a matrix form we have
\begin{displaymath}
\left(\begin{array}{c|c}
h_{ij} & 0 \\
\hline
0& h^{ij}
\end{array}\right).
\end{displaymath}

The canonical symplectic 2-form on $T^*B$ is given by
$$
\omega=\sum_{i=1}^n dq^i\wedge dp_i = \sum_{i=1}^n H^i\wedge  V^i.
$$
The so called \emph{Sasakian almost
complex structure $J_h$} associated to the Levi-Civita connections of $h$ is given as follows.
First the Levi-Civita connection induces the splitting
$$
T_{(q,p)}(T^*B) = H_{(q,p)} \oplus V_{(q,p)} \simeq T_q B \oplus T_q^*B
$$
at each point $(q,p) \in T^*B$, where the isomorphism is obtained by
\be\label{eq:split-iso}
H_j \mapsto \frac{\del}{\del q^j}, \qquad V_j \mapsto dq^j.
\ee

In canonical coordinates, the almost complex structure $J_h:T(T^*B) \to T(T^*B)$
is given by the formulae
\beastar
H_i \mapsto  h_{ij}V^j, \quad
V^i \mapsto -h^{ij}H_j,
\eeastar
which can be expressed in the following matrix
\begin{displaymath}
\left(\begin{array}{c|c}
0 & -h^{ij} \\
\hline h_{ij} & 0
\end{array}\right)
\end{displaymath}
with respect to the above frame fields.
Then the compatibility condition
$$
\widetilde h(\cdot,\cdot)=\omega_0(\cdot,J_h\cdot)
$$
between the triple $(\widetilde h,\omega_0,J_h)$ can be guaranteed by the following matrix multiplication:
\begin{displaymath}
\left(\begin{array}{c|c}
h_{ij} & 0 \\
\hline
0& h^{ij}
\end{array}\right)
=
\left(\begin{array}{c|c}
0 & \delta_{ij} \\
\hline
-\delta_{ij}& 0
\end{array}\right)\cdot
\left(\begin{array}{c|c}
0 & -h^{ij} \\
\hline
h_{ij} & 0
\end{array}\right)
\end{displaymath}

With this preparation, a direct, but somewhat tedious computation, shows the following
identity which is equivalent to Lemma \ref{lem:d|p|2}.
\begin{prop} We have
$$
dK\circ J =-  \sum_{i=1}^n p_i dq^i = - \theta.
$$
\end{prop}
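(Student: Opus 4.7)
The plan is to verify $dK \circ J_h = -\theta$ by a direct coordinate computation using the adapted frame $(H_i, V_i)$ and coframe $(H^i, V^i)$ just introduced. Since everything in sight is tensorial, it suffices to check the identity on the frame $\{H_i, V_i\}$; the only nontrivial input is the metric-compatibility of the Levi-Civita connection, which will be used to absorb all horizontal ($dq^k$) contributions of $dK$ into the vertical coframe $V^k = dp_k - p_a\Gamma^a_{kj}dq^j$.

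First I would expand $K = \tfrac{1}{2}h^{ij}p_ip_j$ in canonical coordinates to obtain
\[
 dK = h^{jk}p_j\, dp_k + \tfrac{1}{2}(\partial_k h^{ij})p_i p_j\, dq^k.
\]
Substituting $dp_k = V^k + p_a\Gamma^a_{kj}dq^j$ and invoking $\nabla h = 0$ in the form $\partial_k h^{ij} = -h^{ia}\Gamma^j_{ka} - h^{jb}\Gamma^i_{kb}$, a short index manipulation — exploiting symmetry of $h$ in $(i,j)$ and of $\Gamma^a_{ij}$ in its lower indices — shows that the two $dq^k$ contributions cancel. The result is the compact identity
\[
 dK = h^{jk} p_j\, V^k,
\]
asserting that $dK$ is purely vertical in the adapted coframe.

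Next I would evaluate $(dK \circ J_h)(X)$ on a general tangent vector $X = X^i H_i + Y_i V_i$. Using the matrix form of $J_h$ — namely $J_h(H_i) = h_{ij}V_j$ and $J_h(V_i) = -h^{ij}H_j$ — together with the dualities $V^k(V_j) = \delta^k_j$, $V^k(H_j) = 0$, one sees that $J_h$ sends the vertical summand of $X$ into the horizontal subspace, on which $dK$ vanishes by the step above. Only the $H$-component survives, and collapsing the metric contraction $h^{jk}h_{ki} = \delta^j_i$ yields $(dK\circ J_h)(X) = \pm X^ip_i = \pm \theta(X)$, where the overall sign is fixed by the stated compatibility convention $\widetilde h(\cdot,\cdot) = \omega_0(\cdot, J_h\cdot)$ together with $\omega_0 = dq^i\wedge dp_i = -d\theta$.

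The main obstacle is nothing conceptual but rather the sign bookkeeping: one must track the minus in the $J_h$ block matrix, the ordering convention $\omega_0 = -d\theta$, and the sign in the Christoffel identity for $\partial_k h^{ij}$. Once the signs are settled, the claimed equivalence with Lemma \ref{lem:d|p|2} is immediate: one applies $J_h$ once more to both sides of $dK \circ J_h = -\theta$ and uses $J_h^2 = -\mathrm{Id}$ on $T(T^*B)$, together with the musical identification $|p|^2_h = |p|^2_g$, to recover $-\theta \circ J_g = \tfrac{1}{2}d(|p|^2_g)$ as in Lemma \ref{lem:d|p|2}.
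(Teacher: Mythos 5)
Your approach matches the paper's: reduce $dK$ to the purely vertical one-form $h^{ij}p_iV^j$ using $\nabla h = 0$, then apply $J_h$. The paper compresses the first step to one line via the Leibniz rule, $dK = \tfrac12 d\langle\alpha,\alpha\rangle_h = h^{ij}p_i\nabla p_j$; your explicit Christoffel cancellation is the same calculation unrolled. Structurally you are fine.

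The gap is the sign, which you leave as $\pm$ while asserting the equivalence with Lemma \ref{lem:d|p|2} is immediate. Carry the computation through with $J_h(H_i)=h_{ij}V_j$, $J_h(V_i)=-h^{ij}H_j$ and $V^j(V_l)=\delta^j_l$:
\[
(dK\circ J_h)(H_k) = dK(h_{kl}V_l) = h^{ij}p_i\,h_{kl}\,\delta^j_l = p_k = \theta(H_k),
\qquad (dK\circ J_h)(V_k) = dK(-h^{kl}H_l) = 0 = \theta(V_k),
\]
so $dK\circ J_h = +\theta$, not $-\theta$. (The paper's own line $V^j\circ J = -h_{aj}H^a$ has a spurious minus: $(V^j\circ J)(H_k)=V^j(h_{kl}V_l)=h_{kj}$.) Only the corrected sign is consistent with Lemma \ref{lem:d|p|2}: from $-\theta\circ J_g = \tfrac12 d|p|_g^2 = dK$, composing with $J_g$ gives $\theta = dK\circ J_g$. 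If you instead start from $dK\circ J_h = -\theta$ as proposed, composing with $J_h$ yields $dK = +\theta\circ J_h$, which \emph{contradicts} Lemma \ref{lem:d|p|2} rather than recovering it. Committing to the sign both repairs your final paragraph and corrects the statement of the proposition.
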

\begin{proof} We utilize Einstein's summation convention below.
By definition, we have
$$
K(\alpha) = \frac12 \langle \alpha, \alpha \rangle_h = \frac12 h^{ij} p_i p_j
$$
when $\alpha = p_i dq^i$ in the canonical coordinates $(q^1,\ldots, q^n,p_1, \ldots, p_n)$.

Then we compute
$$
dK = \frac12 d\langle \alpha, \alpha \rangle_h = \frac12 d(h^{ij} p_ip_j) = 
h^{ij} p_i \nabla p_j  
$$
where $\nabla p_j = V^j$ is the covariant differential. Therefore we have
$$
dK \circ J = h^{ij} p_i \nabla p_j \circ J 
= - h^{ij} p_i (h_{aj} H^a) = - \delta_a^i p_i H^a = - p_i dq^i = - \theta.
$$
\end{proof}

\end{document}